\def\expandafter\normalsize\expandafter{%
    \normalsize
    \setlength\abovedisplayskip{5pt}
    \setlength\belowdisplayskip{5pt}
    \setlength\abovedisplayshortskip{0pt}
    \setlength\belowdisplayshortskip{5pt}
}
\newcommand{\mb}[1]{{\mathbf{#1}}}
\newcommand{\ds}{\displaystyle}
\newcommand{\mc}[1]{{\mathcal{#1}}}
\newcommand{\dd}[2]{{\partial_{#1} #2}}
\newcommand{\WH}[1]{\mathbf{\widehat{\text{$#1$}}}}
\newtheorem{Remark}{Remark}[section]
\newcommand{\WHO}{{\WH{\omega}}}
\newcommand{\WHG}{{\WH{\gamma}}}
\DeclareMathOperator{\arcsinh}{arcsinh}
\title{Dispersion Analysis of Finite Difference and Discontinuous Galerkin Schemes for Maxwell's Equations in Linear Lorentz Media}
\author{Yan Jiang
	\thanks{School of Mathematical Sciences, University of Science and Technology of China, 
	Hefei, Anhui 230026, People’s Republic of China
		{\tt jiangy@ustc.edu.cn}. }
	\and
        Puttha Sakkaplangkul
	\thanks{Department of Mathematics, Faculty of Science, King Mongkut's Institute of Technology Ladkrabang, Ladkrabang, Bangkok 10520, Thailand.
		{\tt puttha.sa@kmitl.ac.th}. }
	\and
        Vrushali A. Bokil
	\thanks{Department of Mathematics, Oregon State University,
		Corvallis, OR 97331 U.S.A.
		{\tt bokilv@math.oregonstate.edu}. Research is supported by NSF grant  DMS-1720116}%
	\and
	Yingda Cheng
        \thanks{Department of Mathematics, Department of  Computational Mathematics, Science and Engineering, Michigan State University,
               East Lansing, MI 48824 U.S.A.
          {\tt ycheng@msu.edu}. Research is supported by NSF grants  DMS-1453661, DMS-1720023 and the Simons Foundation under award number 558704.}
	\and
	Fengyan Li
	\thanks{Department of Mathematical Sciences, Rensselaer Polytechnic Institute, Troy, NY 12180 U.S.A.
		{\tt  lif@rpi.edu}. Research is supported by NSF grant  DMS-1719942.}
	}
\date{\today}
\begin{document}

\maketitle

\begin{abstract}
  In this paper, we consider Maxwell's equations in linear dispersive media described by a single-pole Lorentz model for electronic polarization. We study two classes of commonly used spatial discretizations: finite difference methods (FD) with arbitrary even order accuracy in space and high spatial order discontinuous Galerkin (DG) finite element methods. Both types of spatial discretizations are coupled with second order semi-implicit leap-frog and implicit trapezoidal temporal schemes. By performing detailed dispersion analysis for the semi-discrete and fully discrete schemes, we obtain rigorous quantification of the dispersion error for Lorentz dispersive dielectrics. In particular, comparisons of dispersion error can be made taking into account the model parameters, and mesh sizes in the design of the two types of schemes. This work is a continuation of our previous research on energy-stable numerical schemes for nonlinear dispersive optical media \cite{bokil2017energy,bokil2018high}. The results for the numerical dispersion analysis of the reduced linear model, considered in the present paper, can guide us in the optimal choice of discretization parameters for the more complicated and nonlinear models. The numerical dispersion analysis of the fully discrete FD and DG schemes, for the dispersive Maxwell model considered in this paper, clearly indicate the dependence of the numerical dispersion errors on spatial and temporal discretizations, their order of accuracy, mesh discretization parameters and model parameters. The results obtained here cannot be arrived at by considering discretizations of Maxwell's equations in free space. In particular, our results contrast the advantages and disadvantages of using high order FD or DG schemes and leap-frog or trapezoidal time integrators over different frequency ranges using a variety of measures of numerical dispersion errors. Finally, we highlight the limitations of the second order accurate temporal discretizations considered.
  
\end{abstract}

\begin{keywords}
Maxwell's equations, Lorentz model, numerical dispersion, finite differences, discontinuous Galerkin finite elements.
\end{keywords}

\section{Introduction}

%
%
%
%
%
%
%
%

The electromagnetic (EM) field inside a material is governed by the macroscopic Maxwell's equations along with constitutive laws that account for the response of the material to the electromagnetic field. In this work, we consider a linear dispersive material in which the delayed response to the EM field is modeled as a damped vibrating system for the polarization accounting for the average dipole moment per unit volume over the atomic structure of the material. The corresponding mathematical equations are called the Lorentz model for electronic polarization. Such dielectric materials have actual physical dispersion. The complex-valued electric permittivity of such a dispersive material is frequency dependent and includes physical dissipation, or attenuation. It is well known that numerical discretizations of (systems of) partial differential equations (PDEs) will have numerical errors. These errors include dissipation, the dampening of some frequency modes, and dispersion, the frequency dependence of the phase velocity of numerical wave modes \cite{trefethen1982group}. To preserve the correct physics, it is important that the dispersion and dissipation effects are accurately captured by numerical schemes, particularly for long time simulations. Thus, an understanding of how numerical discretizations affect the dispersion relations of PDEs is important in constructing good numerical schemes that correctly predict wave propagation over long distances. 
  When the PDEs have physical dispersion modeling a retarded response of the material to the imposed electromagnetic field, the corresponding numerical discretizations will support numerical dispersion errors that have a complicated dependence on mesh step sizes, spatial and temporal accuracy and model parameters. 
 
  In this paper, we perform dispersion analysis of high spatial order discontinuous Galerkin (DG) and  a class of high order finite difference (FD) schemes, both coupled with second order implicit trapezoidal or semi-implicit leap-frog temporal discretizations for Maxwell's equations in linear Lorentz media.  The fully discrete time domain (TD) methods are the leap-frog DGTD or FDTD methods and the trapezoidal DGTD or FDTD methods. This paper is a continuation of our recent efforts on energy stable numerical schemes for nonlinear dispersive optical media. In \cite{bokil2017energy,bokil2018high}, we developed fully discrete energy stable DGTD and FDTD methods, respectively, for Maxwell's equations with linear Lorentz and nonlinear Kerr and Raman responses via the {\it auxiliary differential equation} (ADE) approach. These schemes include second order modified leap-frog or trapezoidal temporal schemes combined with high order DG or FD methods for the spatial discretization. In the ADE approach, ordinary differential equations (ODEs) for the evolution of the electric polarization are appended to Maxwell's equations. The two spatial discretizations that were used, the DG method and the FD method are very popular methods for electromagnetic simulations in the literature.  The DG methods, which are a class of finite element methods using discontinuous polynomial spaces, have grown to be broadly adopted for EM simulations in the past two decades. They have been developed and analyzed for time dependent linear models, including Maxwell's equations in free space (e.g.,
\cite{chung2013convergence, cockburn2004locally, hesthaven2002nodal}), and in dispersive media (e.g., \cite{gedney2012discontinuous, huang2011interior, lanteri2013convergence,lu2004discontinuous}).  The Yee scheme \cite{yee1966numerical} is a leap-frog FDTD method that was initially developed for Maxwell's equations in linear dielectrics, and is one of the gold standards for numerical simulation of EM wave propagation in the time domain. The Yee scheme has been extended to linear dispersive media \cite{joseph1991direct, KF_Deb, KF_Lor} (see the books \cite{taflove2005computational, taflove2013advances} and references therein), and then to nonlinear dispersive media \cite{ziolkowski1994nonlinear, goorjian1992computational, joseph1991direct, hile1996numerical, sorensen2005kink}. Additional references for Yee and other FDTD methods for EM wave propagation in linear and nonlinear Lorentz dispersion can be found in \cite{joseph1997fdtd, joseph1994spatial, bourgeade2005numerical, greene2006general, ramadan2015systematic} for the 1D case, and in \cite{fujii2004high, joseph1993direct, ziolkowski1994nonlinear} for 2D and 3D cases. 

In our recent work \cite{bokil2017energy,bokil2018high}, we proved energy stability of fully discrete new FDTD and DGTD schemes for Maxwell's equations with Lorentz, Kerr and Raman effects. Both types of schemes employ second order time integrators, while utilizing high order discretizations in space. The schemes are benchmarked on several one-dimensional test examples and their performance in stability and accuracy are validated. The objective of the present work, is to conduct numerical dispersion analysis of the aforementioned DGTD and FDTD schemes for Maxwell's equations in linear Lorentz media, which can guide us in the optimal choice of numerical discretization parameters for more general dispersive and nonlinear models.

There has been abundant study on the dispersion analysis of DG methods. Most work was carried out for semi-discrete schemes, e.g., for scalar linear conservation laws \cite{ainsworth2004dispersive, hu1999analysis, hu2002eigensolution, sherwin2000dispersion}, and for the second-order wave equation \cite{ainsworth2006dispersive}. Dispersive behavior of fully discrete DGTD schemes is studied for the one-way wave equation \cite{yang2013dispersion, ainsworth2014dispersive} and two-way wave equations \cite{cheng2017L2}. Particularly, in \cite{sarmany2007dispersion} the accuracy order of the dispersion and dissipation errors of nodal DG methods with Runge-Kutta time discretization for Maxwell's equations in free space are analyzed numerically.
The stability and dispersion properties of a variety of FDTD schemes applied to Maxwell's equations in free space are also well known (see \cite{taflove2005computational}). Additionally, various time domain finite element methods have been devised for the numerical approximation of Maxwell's equations in free space (see \cite{monk1992comparison, lee1997time}  and the references therein). There has been relatively less work on phase error analysis for dispersive dielectrics; see \cite{taflove2005computational, petropoulos1994stability, prokopidis2004fdtd, bokil2012} for finite difference methods and \cite{banks2009analysis} for finite element methods.

To the best of our knowledge, the present work is the first in the literature to conduct dispersion analysis of fully discrete DGTD methods for Maxwell's equations in  Lorentz dispersive media and providing comparisons of the numerical dispersion errors with those of fully discrete FDTD methods. By rigorous quantification of the numerical dispersion error for such dispersive Maxwell systems, we make  comparisons of the DGTD and FDTD methods taking into account the model parameters, spatial and temporal accuracy and mesh sizes in the design of the schemes. Given the popularity of both DGTD and FDTD methods in science and engineering, such a comparison of errors between the two schemes will provide practioners of these methods with guidelines on their proper implementation. 

Our dispersion analysis indicates that there is a complicated dependence of dispersion errors on the model parameters, orders of spatial and temporal discretizations, CFL conditions as well as mesh discretization parameters. We compute and plot a variety of different measures of numerical dispersion errors as functions of the quantity $\frac{\omega}{\omega_1}$, where $\omega_1$ is the resonance frequency of the Lorentz material, and $\omega$ is an angular frequency. These measures include normalized phase and group velocities, attenuation constants and an energy velocity \cite{gilles2000comparison}. The parameter range of the quantity $\frac{\omega}{\omega_1}$ separates the response of the material into distinct bands. We find that some counterintuitive results can occur for high-loss materials where a low order scheme can have smaller numerical dispersion error than a higher order scheme. Since this situation does not occur in non-dispersive dielectrics, our results demonstrates the need to analyze and study the numerical dispersion relation for the Lorentz media beyond those for the case of free space that commonly appear in the literature. We have made quantitative comparisons of the high order FD and DG schemes based on the metrics discussed above. We also identify the differences in numerical dispersion due to the temporal integrator used. In particular, our results clearly identify the limitation of the second order temporal accuracy of our time discretizations, by identifying distinct bands in the frequency parameter ranges where the high order spatial accuracy of either the DG or FD schemes is unable to alleviate the error in numerical dispersion due to time discretization. 

The rest of the paper is organized as follows. In Section \ref{model} we introduce Maxwell's equations in a one spatial dimensional Lorentz dispersive material. In Sections \ref{dispersion} and \ref{time}, we present and analyze the dispersion relations and the relative phase errors for the PDE model, and two semi-discrete in time finite difference numerical schemes, respectively. In Sections \ref{semifdtd} and \ref{fdtd} numerical dispersion errors in semi-discrete in space staggered FD methods, and fully space-time discrete FDTD methods, respectively, are considered, while numerical dispersion errors in semi-discrete in space DG methods and fully discrete DGTD methods are studied in Sections \ref{semidg}, and \ref{dgtd}, respectively. In Section \ref{sec:numerical}, we define four quantities that provide different measures of numerical dispersion error and compare these for the FDTD and DGTD methods. Interpretations and conclusions of our results are made in Section \ref{conclude}.

\section{Maxwell's Equations in a Linear Lorentz Dielectric}
\label{model}

We begin by introducing Maxwell's equations in a non-magnetic, non-conductive medium $\Omega \subset \mathbb{R}^d$, $d=1,2,3$, from time 0 to $T$, containing no free charges, that govern the dynamic evolution of the electric field $\mathbf{E}$ and the magnetic field $\mathbf{H}$ in the form 
\begin{subequations}\label{eq:max}
	\begin{align}\label{eq:max1}
		&\ds\dd{t}{\mathbf{B}}+{\bf{\nabla}}\times \mathbf{E} = 0, \ \text{in} \  (0,T]\times \Omega,  \\[1.5ex]
		\label{eq:max2}
		&\ds\dd{t}{\mathbf{D}} -{\bf{\nabla}}\times \mathbf{H} = 0, \  \text{in} \ (0,T]\times \Omega, \\[1.5ex]
		\label{eq:max3}
		& {\bf{\nabla}}\cdot \mathbf{B}  = 0, \ {\bf{\nabla}}\cdot \mathbf{D} = 0, \ \text{in} \  (0,T]\times \Omega,
	\end{align}
\end{subequations}
along with initial data that satisfies the Gauss laws \eqref{eq:max3}, and appropriate boundary data. System \eqref{eq:max} has to be completed by constitutive laws on $[0,T] \times \Omega$. The electric flux density $\mathbf{D}$, and the magnetic induction $\mathbf{B}$, are related to the electric field and magnetic field, respectively, via the constitutive laws 
\begin{equation}
	\label{eq:constD}
	\mathbf{D} = \epsilon_0(\epsilon_\infty\mathbf{E}+\mathbf{P}), \ \ \mathbf{B} = \mu_0\mathbf{H}.
\end{equation}
The parameter $\epsilon_{0}$ is the electric permittivity of free space, while $\mu_0$ is the magnetic permeability of free space. The term $\epsilon_0 \epsilon_\infty \mb{E}$ captures the linear instantaneous response of the medium to the EM fields, with $\epsilon_{\infty}$ defined as the relative electric permittivity in the limit of infinite frequencies. The macroscopic  {\em (electric) retarded polarization} $\mathbf{P}$  is modeled as a single pole resonance Lorentz dispersion mechanism, 
%
in which the time dependent evolution of the polarization follows the second order ODE \cite{gilles2000comparison, taflove2005computational} 
\begin{equation}
	\label{eq:polar:P}
	\frac{\partial^2 \mathbf{P}}{\partial t^2} + 2\gamma\frac{\partial\mathbf{P}}{\partial t} +\omega_1^2\mathbf{P} =\omega_p^2\mb{E}.
\end{equation}
In the ODE \eqref{eq:polar:P}, $\omega_1$ and $\omega_p$ are the resonance and plasma frequencies of the medium, respectively, and $\gamma$ is a damping constant. The plasma frequency is related to the resonance frequency via the relation $\omega_p^2 = (\epsilon_s-\epsilon_\infty)\omega_1^2 := \epsilon_d\omega_1^2$. Here $\epsilon_s$ is defined as the relative permittivity at zero frequency, and $\epsilon_d$ measures the strength of the electric field coupling to the linear Lorentz dispersion model. We note that the limit $\epsilon_d\rightarrow 0$, or $\epsilon_s\rightarrow \epsilon_\infty$ corresponds to a  linear dispersionless dielectric.

In this paper, we focus on a one dimensional Maxwell model on $\Omega=\mathbb{R}$  that is obtained from \eqref{eq:max}, \eqref{eq:constD} and \eqref{eq:polar:P} by assuming an isotropic and homogeneous material in which electromagnetic plane waves are linearly polarized and propagate in the $x$ direction. Thus, the electric field is represented by one scalar component $E := E_z$, while the magnetic field is represented by the one component $H := H_y$. All the other variables are similarly represented by single scalar components.  We convert the second order ODE \eqref{eq:polar:P} for the linear retarded polarization $P$ to first order form by introducing the linear polarization  current density $J$,
\begin{align}
\label{eq:P_ODE}
	\frac{\partial P}{\partial t} = J, \quad
	\frac{\partial J}{\partial t} = -2\gamma J -\omega_1^2P+\omega_p^2E. 
\end{align} 

%

We consider a rescaled formulation of the resulting one spatial dimensional Maxwell-Lorentz system with the following scaling: let the reference time scale be $t_0$, and reference space  scale be $x_0$ with $x_{0}=ct_{0}$ and $c=1/\sqrt{\mu_0\epsilon_0}$. Henceforth, the rescaled fields and constants are defined based on  a reference electric field $E_0$ as follows,
\begin{align*}
& (H/E_{0})\sqrt{\mu_{0}/\epsilon_{0}}\rightarrow H, \ \ \
 D/(\epsilon_{0}E_{0}) \rightarrow D, \ \ \
 P/E_{0}\rightarrow P, \ \ \
(J/E_{0})t_{0}\rightarrow J,\ \ \
E/E_{0}\rightarrow E, \\
& \omega_{1}t_{0}\rightarrow\omega_{1}, \ \ \
 \omega_{p}t_{0}\rightarrow\omega_{p}, \ \ \
 \gamma t_{0}\rightarrow \gamma, 
\end{align*}
where for simplicity, we have used the same notation to denote the scaled and original variables. In summary,  we arrive at the following dimensionless Maxwell's equations with linear Lorentz dispersion in one dimension:
\begin{subequations}
\label{eq:sys}
	\begin{align}
	\frac{\partial H}{\partial t} 
	&= \frac{\partial E}{\partial x}, \label{eq:sys1}\\ 
	\frac{\partial D}{\partial t} 
	&= \frac{\partial H}{\partial x}, \label{eq:sys2}\\
	\frac{\partial P}{\partial t} 
	&= J, \label{eq:sys3}\\
	\frac{\partial J}{\partial t} 
	&= -2\gamma J -\omega_1^2P+\omega_p^2E, \label{eq:sys4} \\
	D &= \epsilon_\infty E +P. \label{eq:sys7}
	\end{align}
\end{subequations} 

\section{Dispersion Relations}
\label{dispersion}
The Maxwell-Lorentz system \eqref{eq:sys} is a linear dispersive system, i.e. it admits plane wave solutions of the form $e^{i\left( kx - \omega t \right)}$ for all its unknown field variables, with the property that the speed of propagation of these waves is not independent of the wave number $k$ or the angular frequency $\omega$ \cite{trefethen1982group}. 
In this section, we derive the dispersion relation of  \eqref{eq:sys} and highlight its main properties. We assume the space-time harmonic variation
\begin{align}
\displaystyle
\label{ExDis}
X(x,t) \equiv X_0 e^{i\left( kx - \omega t \right)},
\end{align}

\noindent of all field components $X \in \{H, E, P, J\}$. Substituting \eqref{ExDis} in \eqref{eq:sys} yields the system 
\begin{subequations}
	\label{DisEx}
	\begin{align}
	\omega H_0 + k E_0 &= 0, \\
	kH_0 + \epsilon_\infty \omega E_0 + \omega P_0 &= 0, \\
	i \omega P_0 + J_0 &= 0, \\
	\omega_p^2 E_0 - \omega_1^2 P_0 + \left(i \omega - 2\gamma\right)J_0 &= 0.
	\end{align}
\end{subequations}

Define the vector $\displaystyle \textbf{U}= [H_0, E_0, P_0, J_0]^T$ containing all amplitudes of the field solution, then \eqref{DisEx} can be rewritten as a linear system, given by
\begin{align}
\label{DisEx2}
\mathcal{A}\textbf{U} = \textbf{0}, \quad \text{with} \quad
\mathcal{A} =\begin{pmatrix}
\omega & k & 0 & 0 \\
k &  \epsilon_\infty \omega  & \omega & 0 \\
0 & 0 & i \omega & 1  \\
0 & \omega_p^2  &  - \omega_1^2  & i \omega - 2\gamma \\
\end{pmatrix}.
\end{align}
By solving $\det(\mathcal{A})=0$, we obtain the exact dispersion relation for  \eqref{eq:sys} as
 \begin{align}
	\displaystyle
	\label{DisEx4}
	k 
	= \pm k^{\text{ex}}, \qquad \textrm{with} \quad k^{\text{ex}} 
	= \omega \sqrt{\epsilon(\WH{\omega}; \mathbf{p})}, 
	\ \ \textrm{and} \ \  
	\epsilon(\WH{\omega}; \mathbf{p}) = \epsilon_\infty \left(1- 
	\frac{\epsilon_{d}/\epsilon_\infty }{\displaystyle \WH{\omega}^2 + 2 i \WH{\gamma} \WH{\omega} - 1}
	\right).
	\end{align}
Here, $\epsilon(\WH{\omega}; \mathbf{p}) $ is the permittivity of the medium dependent on the ``relative" frequency $\WH{\omega}=\omega/\omega_{1}$ and the parameter set $\mathbf{p} = [\epsilon_s, \epsilon_\infty, \WH{\gamma}]$, with $\WH{\gamma}=\gamma/\omega_1$. The permittivity is clearly frequency dependent and displays the dispersive nature of the system. A major goal in the design and construction of numerical methods for linear dispersive PDEs is to devise methods that accurately capture the medium's complex permittivity \cite{taflove2005computational}. We will assume that $\epsilon_s >0, \epsilon_\infty >0$ and  $\epsilon_d=\epsilon_s-\epsilon_\infty>0$. These assumptions are based on physical considerations \cite{taflove2005computational}. 
In the dispersion analysis, we assume $\omega$ is a real number, and 
  restrict
$\omega\geq0$ in this work.
Note that $\epsilon(\WH{\omega}; \mathbf{p}) $ and $k$ can be complex, depending on the values that certain parameters assume.

For  lossless materials (i.e. $\WH{\gamma}=0$), the {\it medium absorption band} is defined by $\WH{\omega}\in [1, \sqrt{\epsilon_{s}/\epsilon_{\infty}}]$, in which $\epsilon(\WH{\omega}; \mathbf{p})\le0$  and $k^{\text{ex}}$ is an imaginary number or zero.  Outside the medium absorption band, i.e. for other $\WH{\omega}$ values, we have $\epsilon(\WH{\omega}; \mathbf{p})>0$  and $k^{\text{ex}}$ is a real number.
Moreover, it is easy to check $|k^{\text{ex}}|\rightarrow\infty$ as $\WH{\omega}$ approaches $1$ (the \emph{resonance frequency}, which is  also the lower bound of the medium absorption band) and $k^{\text{ex}}=0$ at the upper bound $\WH{\omega}=\sqrt{\epsilon_{s}/\epsilon_{\infty}}.$ In this paper, we are mainly interested in low-loss materials, i.e.   $\WH{\gamma}>0$ with $\WH{\gamma}\ll1$. In this case, the dispersion relation retains similar properties, which means $|k^{\text{ex}}|$ is a large number around $\WH{\omega}=1$ and a small number near $\WH{\omega}=\sqrt{\epsilon_{s}/\epsilon_{\infty}}$. This behavior of the exact dispersion relation has implications for the numerical dispersion errors, as illustrated in later sections.


\begin{Remark}
\label{rk1}
  In the literature, dispersion relations can be presented in two ways; 1) representing the continuous or discrete angular frequency $\omega\in\mathbb{C}$ as a function of the exact and continuous  wave number $k\in\mathbb{R}$
(and also  of the model parameters and possible mesh parameters);
 2) representing the continuous or discrete wave number $k\in \mathbb{C}$ as a function of the exact and continuous 
 angular frequency $\omega\in \mathbb{R}$ \cite{taflove2005computational}.
 In the first approach, we will obtain a fourth order polynomial for $\omega$ as a function of $k$ and other parameters. We provide some insight into approach 1 in Appendix A, for the semi-discrete in space FDTD discretizations in which the effect of high order FDTD spatial approximations on the dispersion relation is clearly evident in terms of the {\it symbol} of the spatial discretization operators.  
 In this paper, we mainly use  the second approach since in this approach we are able to explicitly identify the effects of discretization on the permittivity of the Maxwell-Lorentz model \eqref{eq:sys}. 
\end{Remark}


Before we proceed, for convenience of the readers, we gather some notations frequently used in the paper, together with the place of their first appearances in a table listed below. 



\begin{table}[H]
	\label{tab:notation}
	\centering
	\caption{Notations used and the place of their first appearance. The symbol $*$ in the superscript can be LF (for the leap-frog temporal scheme) or TP (for the trapezoidal temporal scheme).}
	\renewcommand{\arraystretch}{1.5}
	\begin{tabular}{c c c c c c c c c} \hline
		&	$\WH{\omega}$  &  $\WH{\gamma}$ & $W$ & $W_{1}$ &  $K$ & $\epsilon(\WH{\omega}; \mathbf{p})$ &  $ \delta(\WH{\omega};\mathbf{p})$  
		&  $\Psi^*(\WH{\omega})$
		\\\hline
		\textbf{Def} &	$\omega / \omega_{1}$ &  
		$\gamma/ \omega_{1}$ & 
		$\omega\Delta t$ & 
		$\omega_{1}\Delta t$ & 
		$ k^{\text{ex}} h$  & 
		$\ds  \epsilon_\infty - \frac{\epsilon_{d}}{\displaystyle \WH{\omega}^2 + 2 i \WH{\gamma}\, \WH{\omega} - 1}$  &
		$\ds\frac{\epsilon_{d} \, \WH{\omega} \left( \WH{\omega}+i  \WH{\gamma} \right)} { ( \WH{\omega}^2+2i \WH{\gamma}\,\WH{\omega} -1)^2}$  
		& $\ds \left| \frac{k^{\text{ex}}(\WH{\omega}) - k^*(\WH{\omega})}{k^{\text{ex}}(\WH{\omega})} \right|$\\
		\textbf{Eqn} &	  \eqref{DisEx4}  &   \eqref{DisEx4}  & 
		\eqref{eq:matrixA_LF}  &  \eqref{eq:matrixA_LF} & 
		\eqref{Dissemi4}  &    \eqref{DisEx4} & 
		\eqref{delta}
		& \eqref{eq:error}, \eqref{ERRTP} \\
		\hline
	\end{tabular}
	\renewcommand{\arraystretch}{1}
\end{table}

\vspace{-.5cm}
\begin{table}[H]
	\centering
	\renewcommand{\arraystretch}{1.5}
	\begin{tabular}{c c c c c } \hline
		&  $\Psi_{\text{FD},2M}(\WH{\omega})$
	    &  $\Psi^*_{\text{FD},2M}(\WH{\omega})$ 
	    &  $\Psi_{\text{DG},p}(\WH{\omega})$
	    &  $\Psi^*_{\text{DG},p}(\WH{\omega})$ 
		\\\hline
		\textbf{Def} 
	    & $\ds \left| \frac{k^{\text{ex}}(\WH{\omega}) - k_{\text{FD},2M}(\WH{\omega})}{k^{\text{ex}}(\WH{\omega})} \right|$
		& $\ds \left| \frac{k^{\text{ex}}(\WH{\omega}) - k^*_{\text{FD},2M}(\WH{\omega})}{k^{\text{ex}}(\WH{\omega})} \right|$ 
		 & $\ds \left| \frac{k^{\text{ex}}(\WH{\omega}) - k_{\text{DG},p}(\WH{\omega})}{k^{\text{ex}}(\WH{\omega})} \right|$
		& $\ds \left| \frac{k^{\text{ex}}(\WH{\omega}) - k^*_{\text{DG},p}(\WH{\omega})}{k^{\text{ex}}(\WH{\omega})} \right|$ \\
		\textbf{Eqn} 
	    &\eqref{ERRTP2}  & 
		Figure \ref{Fig:Phase_Error_FD_fully1}  & Figure \ref{Fig:semi_DG} & Figure \ref{Fig:Phase_Error_DG_fully1}  \\
		\hline
	\end{tabular}
	\renewcommand{\arraystretch}{1}
\end{table}

  
\section{Second Order Accurate Temporal Discretizations}
\label{time}

This section concerns the dispersion analysis of the semi-discrete in time schemes. 
Continuing from our previous work  \cite{bokil2017energy,bokil2018high}, we consider two types of commonly used second-order time schemes for the linear system \eqref{eq:sys}, both implicit in the ODE parts. Let $\Delta t > 0$ be a temporal mesh step. Suppose  $u^{n}(x)$ is the solution at time $t^{n}= n\Delta t, n \in \mathbb{N}$, with $u=H,\, E,\, D,\, P,\, J$. Then, we compute $u^{n+1}(x)$ at time $t^{n+1}=t^{n}+\Delta t$ by the following methods. The first scheme uses a staggered leap-frog discretization in time for the PDE part, with the magnetic field $H$ staggered in time from the rest of the field components. The scheme is given by:
\begin{subequations}
	\label{eq:LF}
	\begin{align}
	\frac{H^{n+1/2}-H^{n}}{\Delta t/2}
	&= \frac{\partial E^n}{\partial x}, \label{eq:LF1}\\
	\frac{D^{n+1}-D^{n}}{\Delta t}
	&= \frac{\partial H^{n+1/2}}{\partial x}, \label{eq:LF2}\\
	\frac{P^{n+1}-P^{n}}{\Delta t} 
	&= \frac{1}{2} \left(J^{n}+J^{n+1}\right), \label{eq:LF3}\\
	\frac{J^{n+1}-J^{n}}{\Delta t} 
	&= -\gamma  \left(J^{n}+J^{n+1}\right) -\frac{\omega_{1}^{2}}{2} \left(P^{n}+P^{n+1}\right) + \frac{\omega_{p}^2}{2} \left(E^{n}+E^{n+1}\right), \label{eq:LF4}\\
	D^{n+1} 
	&= \epsilon_{\infty} E^{n+1} + P^{n+1}, \label{eq:LF5}\\
	\frac{H^{n+1}-H^{n+1/2}}{\Delta t/2}
	&= \frac{\partial E^{n+1}}{\partial x}. \label{eq:LF6}
	\end{align}
\end{subequations}

The second scheme, which is a fully implicit scheme based on the trapezoidal rule, is given as follows:
\begin{subequations}
	\label{eq:TP}
	\begin{align}
	\frac{H^{n+1}-H^{n}}{\Delta t} 
	 =& \frac{1}{2} \left( \frac{\partial E^{n+1}}{\partial x} + \frac{\partial E^{n}}{\partial x}\right), \label{eq:TR1} \\
	\frac{D^{n+1}-D^{n}}{\Delta t} 
	 =& \frac{1}{2} \left( \frac{\partial H^{n+1}}{\partial x} + \frac{\partial H^{n}}{\partial x}\right), \label{eq:TR2} \\
	\frac{P^{n+1}-P^{n}}{\Delta t} =& \frac{1}{2} \left(J^{n}+J^{n+1}\right), \label{eq:TR3}\\
	\frac{J^{n+1}-J^{n}}{\Delta t} =& -\gamma  \left(J^{n}+J^{n+1}\right) -\frac{\omega_{1}^{2}}{2} \left(P^{n}+P^{n+1}\right) + \frac{\omega_{p}^2}{2} \left(E^{n}+E^{n+1}\right), \label{eq:TR4}\\
	D^{n+1} =& \epsilon_{\infty} E^{n+1} + P^{n+1}. \label{eq:TR5}
	\end{align}
\end{subequations}

Similar to the continuous case, we can perform dispersion analysis on the semi-discrete schemes \eqref{eq:LF} and \eqref{eq:TP} by  assuming the time discrete plane wave solution as
\begin{align}
\label{eq:disper_semi}
X^{n}(x)\equiv X_{0} e^{i(k^{\text{*}}x-\omega t_{n})},
\end{align} 
where $*$ can be LF (with respect to the leap-frog scheme \eqref{eq:LF}) or TP (with respect to the trapezoidal scheme \eqref{eq:TP}).
Define  $\displaystyle \textbf{U}= [H_0, E_0, P_0, J_0]^T$ as the vector containing all amplitudes of the field solutions. Substituting \eqref{eq:disper_semi} in the schemes \eqref{eq:LF} or \eqref{eq:TP}, we obtain linear systems for each case in the form 
\begin{align}
\mathcal{A}^{*}\textbf{U} = \textbf{0}.
\end{align} 
The semi-discrete numerical dispersion relation can be then obtained from $\det(\mathcal{A}^{*})=0.$ 

For the leap-frog scheme \eqref{eq:LF}, we have 
\renewcommand{\arraystretch}{1.5}
\begin{align}
\label{eq:matrixA_LF}
\mathcal{A}^{\text{LF}} =\begin{pmatrix}
	\sin\left( \frac{W}{2} \right) &
\frac{\Delta t}{2}k^{\text{LF}}  & 0 & 0 \\
	\frac{\Delta t}{2}k^{\text{LF}} & 
 \epsilon_\infty \sin\left( \frac{W}{2} \right) & 
  \sin\left( \frac{W}{2} \right)  & 0 \\
0 & 0 &	i \sin\left( \frac{W}{2} \right) &
 \frac{\Delta t}{2} \cos\left( \frac{W}{2} \right) \\
 0 & \frac{\Delta t}{2} \omega_p^2 \cos\left( \frac{W}{2} \right) & 
 - \frac{\Delta t}{2} \omega_1^2 \cos\left( \frac{W}{2} \right) &
 i \sin\left( \frac{W}{2} \right) - \gamma \Delta t \cos\left( \frac{W}{2} \right) \\
\end{pmatrix},
\end{align}
\renewcommand{\arraystretch}{1}
 where $W:=\omega \Delta t = \WH{\omega} W_1,$ with $W_1:=\omega_1 \Delta t.$ This yields the dispersion relation
\begin{align}
\label{DissemiLF3-2}
k^{\text{LF}}
=
\pm \omega
\sqrt{\epsilon(\WH{\omega}^{\mathrm{LF}}; \mathbf{p}^{\mathrm{LF}})}, \ \ \
\textrm{with} \quad
\epsilon(\WH{\omega}^{\mathrm{LF}};\mathbf{p}^{\mathrm{LF}}) =
\epsilon_\infty^{\mathrm{LF}}\left( 
1 - 
\frac{
	\epsilon_{d}^{\mathrm{LF}}/\epsilon_\infty^{\mathrm{LF}}
}{
	\left(\WH{\omega}^{\mathrm{LF}}\right)^2 + 2i \WH{\gamma}^{\mathrm{LF}}\, \WH{\omega}^{\mathrm{LF}} - 1
}\right),
\end{align}
where 
 $s_\omega := \displaystyle \frac{\sin(\frac{W}{2})}{\frac{W}{2}} $ and $r_\omega := \displaystyle \frac{\tan(\frac{W}{2})}{\frac{W}{2}} $ as in  \cite{petropoulos1994stability}, and 
 $\WH{\omega}^{\mathrm{LF}} = \WH{\omega} r_\omega, \mathbf{p}^{\mathrm{LF}} = [\epsilon_s^{\mathrm{LF}}, \epsilon_\infty^{\mathrm{LF}}, \WH{\gamma}^{\mathrm{LF}}]$, with components given by the identities
\begin{equation}
\label{notation1.1}
\displaystyle
\epsilon_s^{\mathrm{LF}} = \epsilon_s s_\omega^2, \ \ \epsilon_\infty^{\mathrm{LF}} = \epsilon_\infty s_\omega^2, \ \ \WH{\gamma}^{\mathrm{LF}} = \WH{\gamma}. 
\end{equation}  
In this form, we can clearly identify how the leap-frog time discretization misrepresents the permittivity  by misrepresenting   the parameters of the model. These misrepresentations are solely due to the discretizations of the ODEs by the leap-frog time integrator. The misrepresentations depend on the value of the (exact) angular frequency that is chosen, and in particular as $\frac{W}{2}$ approaches zero, the discrete parameters approach the continuous ones. Thus, a guideline for practitioners using this time integrator to control these misrepresentation, is to choose $\Delta t$ so that $\cos \left( \frac{W}{2} \right) \approx 1$ across the range of frequencies present in the short pulse that propagates in the medium \cite{petropoulos1994stability}. 

To further analyze the dispersion error, we consider the regime when $W  \ll 1$, and obtain the Taylor expansion of \eqref{DissemiLF3-2} with respect to $W$ as
\begin{align}
\label{eq:dis_LF}
k^{\text{LF}} 
= \pm  k^{\text{ex}}\left( 1 + \frac{1}{12}\left(  \frac{\delta(\WH{\omega};\mathbf{p})} {\epsilon(\WH{\omega};\mathbf{p})} -\frac{1}{2} \right) W^2 + \mathcal{O}(W^4) \right),  
\end{align}
where 
\begin{equation}
\label{delta} 
\displaystyle
\delta(\WH{\omega};\mathbf{p}) 
= \frac{\epsilon_{d} \, \WH{\omega} \left( \WH{\omega}+i  \, \WH{\gamma} \right)} { \left( \WH{\omega}^2 +2i \, \WH{\gamma}\,\WH{\omega} -1 \right)^2}.  
\end{equation}

   We define the {\it relative phase error} for the LF scheme to be the ratio 
\begin{equation}
\label{eq:error}
\Psi^{\text{LF}}(\WH{\omega})
:=
\left|\frac{k^{\text{LF}}(\WH{\omega})-k^{\text{ex}}(\WH{\omega})}{k^{\text{ex}}(\WH{\omega})} \right| 
=  \left|\frac{\sqrt{\epsilon(\WH{\omega}^{\text{LF}};\mathbf{p}^{\text{LF}})} -\sqrt{\epsilon(\WH{\omega};\mathbf{p})}}{\sqrt{\epsilon(\WH{\omega};\mathbf{p})}}\right |. 
\end{equation}
Here, we consider $k^{{\text{LF}}}$ in \eqref{DissemiLF3-2}  with plus sign in front.  A similar definition will be used for all semi-discrete and fully discrete schemes that appear in this paper, and provides quantitative measurement of the numerical dispersion error. Equation \eqref{eq:dis_LF} verifies a second order dispersion error in time of the leap-frog scheme in the small time step limit.

%
%
Similarly,  for the trapezoidal method \eqref{eq:TP}, we can obtain
	\renewcommand{\arraystretch}{1.5}
	\begin{align}
	\mathcal{A}^{\text{TP}} =\begin{pmatrix}
	\sin\left( \frac{W}{2} \right) &
	\frac{\Delta t}{2}k^{\text{TP}}\cos\left( \frac{W}{2} \right) & 0 & 0 \\
	\frac{\Delta t}{2}k^{\text{TP}}	\cos\left( \frac{W}{2} \right) & 
	\epsilon_\infty \sin\left( \frac{W}{2} \right) & 
	\sin\left( \frac{W}{2} \right)  & 0 \\
	0 & 0 &	i \sin\left( \frac{W}{2} \right) &
	\frac{\Delta t}{2} \cos\left( \frac{W}{2} \right) \\
	0 & \frac{\Delta t}{2} \omega_p^2 \cos\left( \frac{W}{2} \right) & 
	- \frac{\Delta t}{2} \omega_1^2 \cos\left( \frac{W}{2} \right) &
	i \sin\left( \frac{W}{2} \right) - \gamma \Delta t \cos\left( \frac{W}{2} \right) \\
	\end{pmatrix}.
	\end{align}
	\renewcommand{\arraystretch}{1}
 This leads to the dispersion relation 
	\begin{align}
	\label{DissemiLF3-3}
	\displaystyle 
	k^{\text{TP}}
	&=
	\pm \omega
	\sqrt{\epsilon(\WH{\omega}^{\mathrm{TP}}; \mathbf{p}^{\mathrm{TP}})}
	= \frac{s_\omega}{r_\omega}k^{\text{LF}}
	, \ \ \
	\textrm{with} \quad
	\epsilon(\WH{\omega}^{\mathrm{TP}}; \mathbf{p}^{\mathrm{TP}}) =
	\epsilon_\infty^{\mathrm{TP}}\left( 
	1 - 
	\frac{\displaystyle 
		\epsilon_{d}^{\mathrm{TP}}/\epsilon_\infty^{\mathrm{TP}}
	}{
		\displaystyle 
		\left(\WH{\omega}^{\mathrm{TP}}\right)^2 + 2i \, \WH{\gamma}^{\mathrm{TP}} \, \WH{\omega}^{\mathrm{TP}} - 1
	}\right),
	\end{align}
	where $\WH{\omega}^{\mathrm{TP}} = \WH{\omega} r_\omega, \mathbf{p}^{\mathrm{TP}} = [\epsilon_s^{\mathrm{TP}}, \epsilon_\infty^{\mathrm{TP}}, \WH{\gamma}^{\mathrm{TP}}]$, with components given as
	\begin{equation}
	\label{notation1.2}
	\displaystyle
	\epsilon_s^{\mathrm{TP}} = \epsilon_s r_\omega^2, \ \ \epsilon_\infty^{\mathrm{TP}}= \epsilon_\infty r_\omega^2, \ \ \WH{\gamma}^{\mathrm{TP}}= \WH{\gamma}. 
	\end{equation}  
Again, we can clearly identify how the trapezoidal  time discretization misrepresents the permittivity. In particular, this method misrepresents the dissipation and medium resonance in the same manner as the leap-frog method. However, the relative permittivities $\epsilon_\infty$ and $\epsilon_s$ are misrepresented in a different manner. Thus, the speeds of propagation of discrete plane waves are different in these two discretizations. In particular, the slow and fast speeds in the medium, corresponding to relative permittivities $\epsilon_s$ and $\epsilon_\infty$, respectively, are different.

In the small time step limit, for $W\ll 1$, we have
\begin{align}
\label{eq:dis_TP}
k^{\text{TP}} 
= \pm  k^{\text{ex}}\left( 1 + \frac{1}{12}\left(  \frac{\delta(\WH{\omega};\mathbf{p})}{\epsilon(\WH{\omega};\mathbf{p})} +1 \right) W^2 + \mathcal{O}(W^4) \right),
\end{align}
which indicates second order accuracy in time for the \emph{relative phase error} for the trapezoidal scheme defined, in a similar manner to the leap-frog scheme, as 
\begin{equation}
\label{ERRTP}
\displaystyle 
\Psi^{\text{TP}}(\WH{\omega})
:=\left| \frac{k^{\text{TP}}(\WH{\omega})-k^{\text{ex}}(\WH{\omega})}{k^{\text{ex}}(\WH{\omega})}\right|
=  \left|\frac{\sqrt{\epsilon(\WH{\omega}^{\text{TP}};\mathbf{p}^{\text{TP}})} -\sqrt{\epsilon(\WH{\omega};\mathbf{p})}}{\sqrt{\epsilon(\WH{\omega};\mathbf{p})}}\right |.
\end{equation}

Finally, we make qualitative comparisons of the leap-frog and trapezoidal temporal discretizations. 
For low-loss materials,  the conclusions can be implied from considering  the case of $\WH{\gamma}=0$. For this case, for a given set of parameters $\mathbf{p}$, $\epsilon(\WH{\omega}; \mathbf{p})$ and $\delta(\WH{\omega}; \mathbf{p})\geq0$ are real numbers. When $\WH{\omega}\rightarrow\sqrt{\epsilon_{s}/\epsilon_{\infty}}$, we have $\epsilon(\WH{\omega}; \mathbf{p})\rightarrow0$. This means  $\displaystyle\frac{\delta(\WH{\omega}; \mathbf{p})}{\epsilon(\WH{\omega}; \mathbf{p})}\rightarrow \infty$, and thus the leading error term of both temporal schemes would be approaching $\infty$. On the other hand, when $\WH{\omega} \rightarrow 1$, it is easy to check that $\displaystyle\frac{\delta(\WH{\omega};\mathbf{p})}{\epsilon(\WH{\omega}; \mathbf{p})}\rightarrow \infty$ as well. Hence, both time schemes will give large dispersion error at $\WH{\omega}=1$ and $\WH{\omega}=\sqrt{\epsilon_{s}/\epsilon_{\infty}}$, which are the two endpoints of the medium absorption band. 
In addition, when $W\ll 1$, if $\WH{\omega} \in (1, \sqrt{\epsilon_{s}/\epsilon_{\infty}})$, i.e. for values in the interior of the medium absorption band, we can prove that $\displaystyle\frac{\delta(\WH{\omega}; \mathbf{p})} {\epsilon(\WH{\omega}; \mathbf{p})}<-1$, which leads to the relation $\displaystyle \left | \frac{\delta(\WH{\omega}; \mathbf{p})} {\epsilon(\WH{\omega}; \mathbf{p})}-\frac{1}{2} \right | \geq \left | \frac{\delta(\WH{\omega}; \mathbf{p})} {\epsilon(\WH{\omega}; \mathbf{p})}+1 \right |$. 
This means the leap-frog scheme has a larger relative phase error than the trapezoidal scheme in the interior of the medium absorption band. For other values of $\WH{\omega}$ outside the medium absorption band we obtain $\epsilon(\WH{\omega}; \mathbf{p})>0$ and $\displaystyle \left | \frac{\delta(\WH{\omega}; \mathbf{p})} {\epsilon(\WH{\omega}; \mathbf{p})}-\frac{1}{2}\right | \leq \left | \frac{\delta(\WH{\omega};\mathbf{p})} {\epsilon(\WH{\omega};\mathbf{p})}+1\right |$. Hence, the leap-frog scheme would give a small relative phase error outside the absorption band.
For low-loss Lorentz medium, i.e., when $\WH{\gamma}\ll1$, we believe that these conclusions are still valid with a slight change in two peak positions (see Figure \ref{Fig:semi_time0}). 

Now we choose the following set of parameters, which are the same as in \cite{gilles2000comparison}, representing a low-loss Lorentz medium:
\begin{align}
\label{eq:parameter}
\displaystyle
\epsilon_s = 5.25, 
\quad  
\epsilon_\infty = 2.25, 
\quad 
\WH{\gamma}= 0.01.
\end{align}
 Taking $W_1$ as $\{\pi/15$, $\pi/30$, $\pi/60\}$, the relative phase errors  are plotted against $\WH{\omega}\in[0,3]$ in Figure \ref{Fig:semi_time0}.
We can observe that the phase errors always have two peaks around $\WH{\omega} = 1$ and $\WH{\omega} = \sqrt{\epsilon_{s}/\epsilon_{\infty}} \approx 1.527$, with no appreciable distinction between two temporal discretizations at these peaks. 
The relative phase errors in the two temporal schemes are basically the same for small $\WH{\omega}$. As $\WH{\omega}$ is increased towards 1, we see that the error in the leap-frog scheme is slightly smaller than that in the trapezoidal scheme. When $\WH{\omega}$ is between 1 and 1.527, the leap-frog scheme presents larger error than the trapezoidal method. Beyond 1.527, the trapezoidal scheme generates larger error than the leap-frog scheme. 
 There is no obvious difference between the dispersion errors of two time discretizations at the peaks.  
Therefore, in the last graph of Figure  \ref{Fig:semi_time0}, we  only plot the errors of the leap-frog time schemes at the peaks. We verify  the second order accuracy of the method when the mesh size is varied.  These observations are consistent with our analysis. 

\begin{figure}[h]
	\centering
	\includegraphics[scale= 0.28] {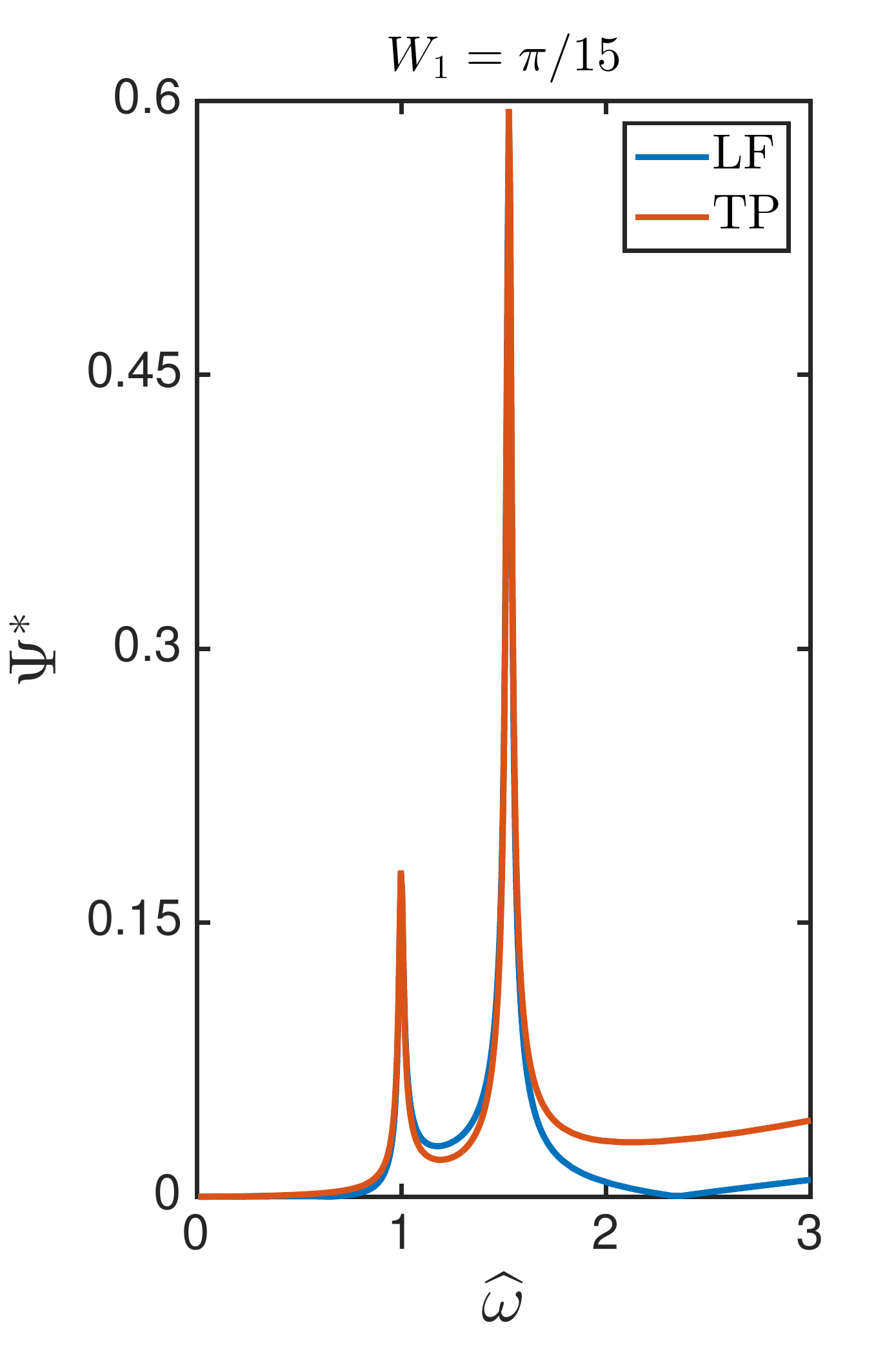}
	\includegraphics[scale= 0.28] {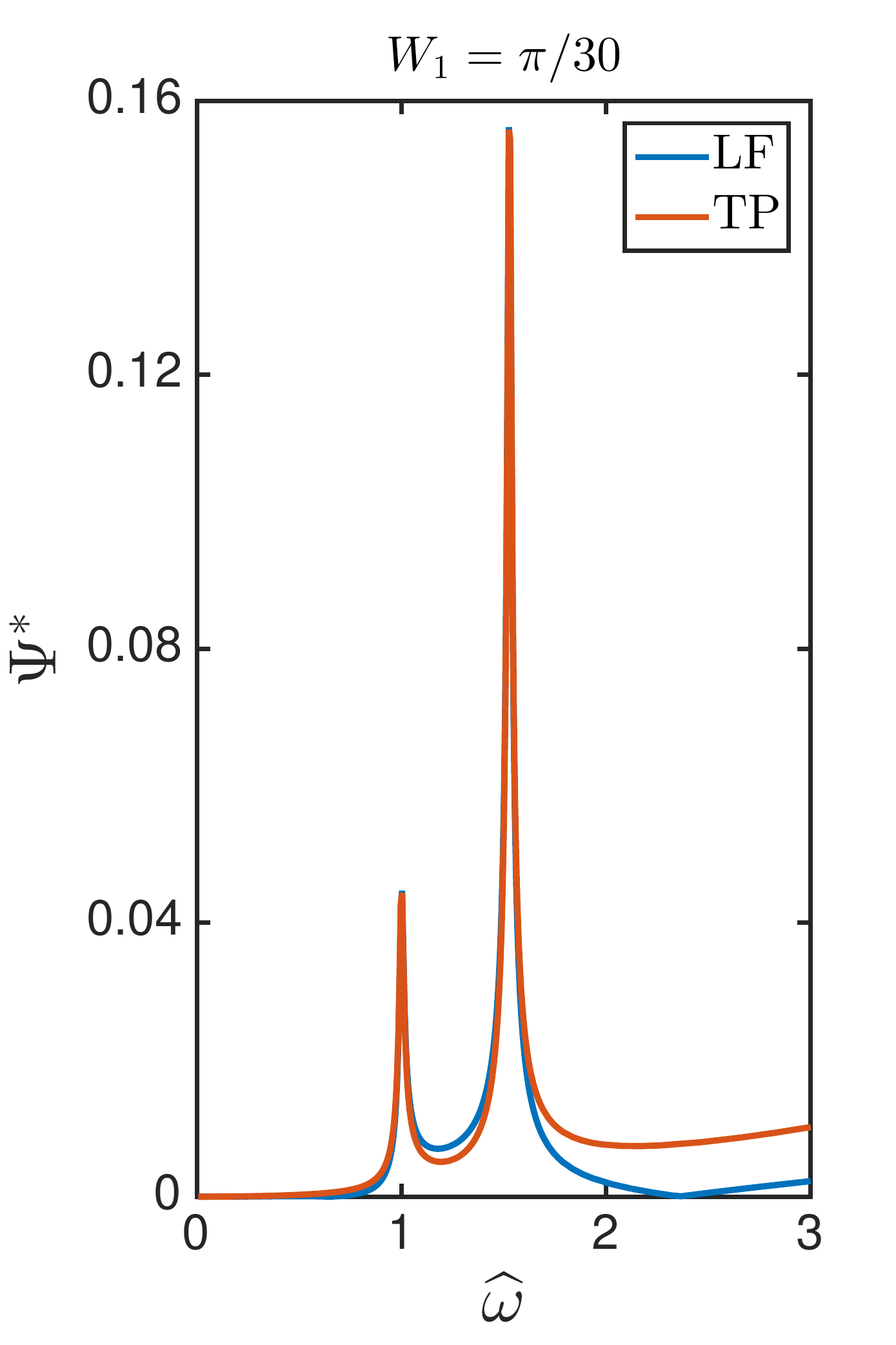}
	\includegraphics[scale= 0.28] {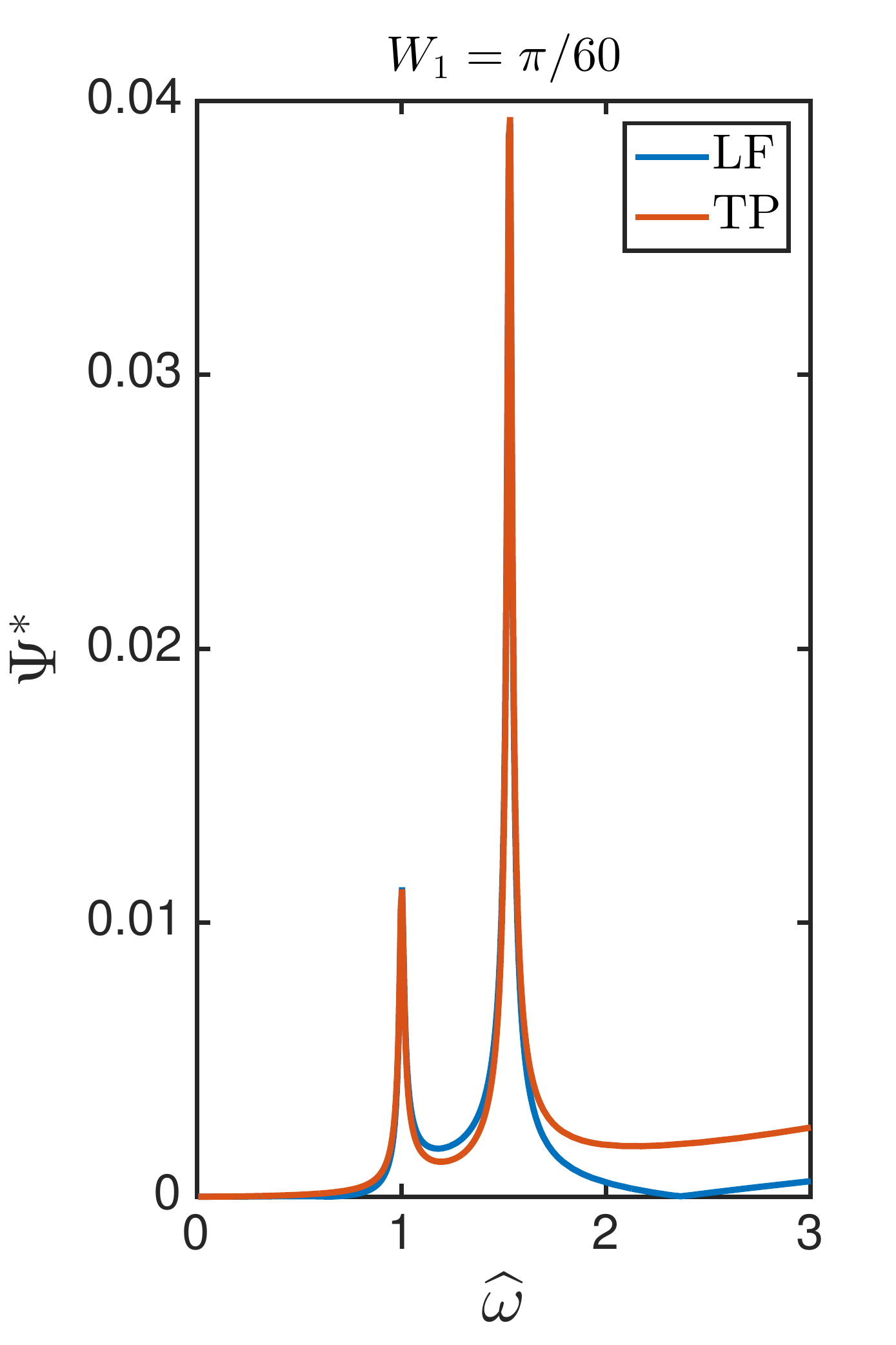}
	\includegraphics[scale= 0.28] {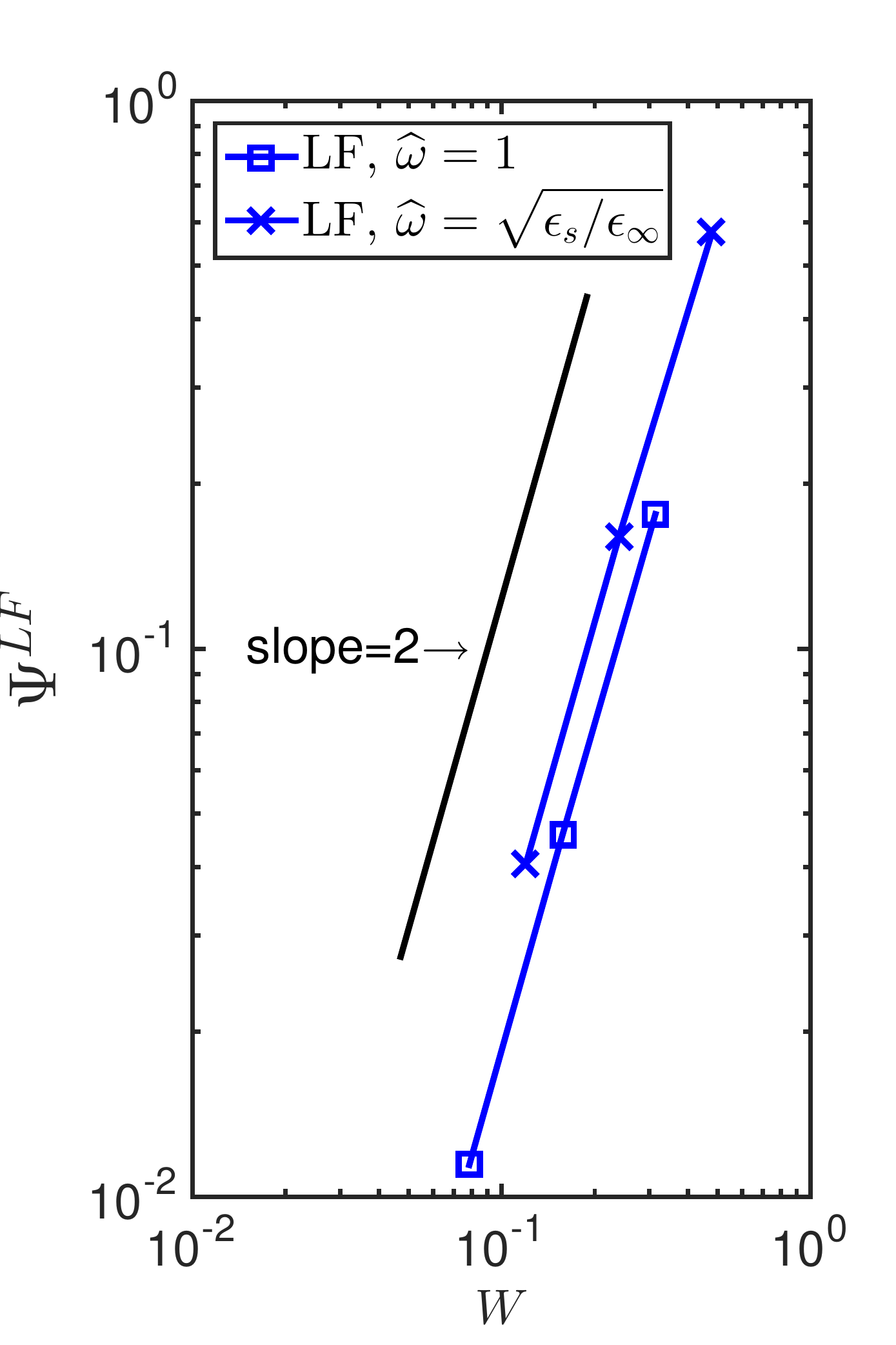}
	\caption{The relative phase error of leap-frog (LF) and trapezoidal (TP) time discretizations. In the first three plots we fix $W_1 \in \{\pi/15, \pi/30, \pi/60\}$, respectively, while we vary $\WH{\omega}\in[0,3]$. In the fourth plot, we fix $\WH{\omega} = 1$ or $\WH{\omega} = \sqrt{\epsilon_{s}/\epsilon_{\infty}}$ and consider three different values of $W$ corresponding to $W_1 \in \{\pi/15, \pi/30, \pi/60\}$, with leap-frog time discretization.}
	\label{Fig:semi_time0}
\end{figure}

\section{Spatial Discretization: High Order Staggered Finite Difference Methods}
\label{semifdtd}


In this section, we consider semi-discrete in space staggered finite difference schemes for   \eqref{eq:sys}. The spatial discretizations that we consider here for system \eqref{eq:sys} combined with a nonlinear instantaneous Kerr response and a Raman retarded nonlinear response have been recently developed in \cite{bokil2018high}.  The electric and magnetic fields are staggered in space and the discrete spatial operators have arbitrary even order, $2M, M \in \mathbb{N}$, accuracy in space. Below, we describe the semi-discrete spatial schemes, denoted as the FD2M scheme, and then we obtain and discuss dispersion relations of these  schemes.

As in \cite{bokil2018high}, we define two staggered grids on $\mathbb{R}$ with spatial step size $h$, the primal grid $G_{p}$, and the dual grid $G_{d}$, defined respectively, as 
\begin{align}
\label{eq:GP}
G_{p} = \{ j h \ | \  j\in\mathbb{Z} \}, \quad\text{and}\quad 
G_{d} = \{ (j+\frac{1}{2}) h \ | \ j\in\mathbb{Z} \}. 
\end{align}

The discrete magnetic field will be approximated at spatial nodes on the dual grid. These approximations are denoted by $H_{j+1/2}$, termed as  {\it degrees of freedom} (DoF) of $H$. All the other discrete fields will have their DoF at spatial nodes on the primal grid. For a continuous field variable $V$, $V_h$ denotes its corresponding {\it grid function}, defined as the set of all DoF on its respective grid. The semi-discrete scheme is given as follows:
\begin{subequations}
	\label{eq:semi}
	\begin{align}
	\frac{\partial H_{j+1/2}}{\partial t} 
	&= \left(\mathcal{D}^{(2M)}_{h} E_h\right)_{j +\frac{1}{2}}, \label{eq:semi1}\\ 
	\frac{\partial D_{j}}{\partial t} 
	&= \left(\widetilde{\mathcal{D}}^{(2M)}_{h} H_h\right)_j, \label{eq:semi2}\\
	\frac{\partial P_{j}}{\partial t} 
	&= J_{j}, \label{eq:semi3}\\
	\frac{\partial J_{j}}{\partial t} 
	&= -2\gamma J_{j} -\omega_1^2 P_{j}+\omega_p^2 E_{j}, \label{eq:semi4} \\
	D_{j}& = \epsilon_\infty E_{j} +P_{j}, \label{eq:semi5}
	\end{align}
\end{subequations} 
where $\displaystyle \mathcal{D}^{(2M)}_{h}$ and $\displaystyle \widetilde{\mathcal{D}}^{(2M)}_{h}$ are the  $2M$-th order finite difference approximations (with $M\in \mathbb{N}$) of the spatial differential operator $\partial_{x}$, on the primal and dual grids, respectively. These approximations are defined as 
\begin{subequations}
	\label{eq:D2M}
	\begin{align}
	& \left(\mathcal{D}^{(2M)}_{h} E_h \right)_{j+1/2}
	= \frac{1}{h} 
	\sum_{p=1}^{M} \frac{\lambda_{2p-1}^{2M}}{(2p-1)} 
	\left(E_{j+p} - E_{j-p+1}\right),\\
	& \left(\widetilde{\mathcal{D}}^{(2M)}_{h} H_h\right) _{j}
	= \frac{1}{h} 
	\sum_{p=1}^{M} \frac{\lambda_{2p-1}^{2M}}{(2p-1)} 
	\left(H_{j+p-\frac{1}{2}} - H_{j-p+\frac{1}{2}} \right),
	\end{align}
\end{subequations}

\noindent and $\lambda_{2p-1}^{2M},$ is  given as \cite{bokil2018high} 
\begin{equation}
\label{eq:lambda}
\lambda^{2M}_{2p-1} =
\ds\frac{2(-1)^{p-1}[(2M-1)!!]^2}{(2M+2p-2)!!(2M-2p)!!(2p-1)},
\end{equation} 
\noindent with the double factorial $n!!$  defined as
\begin{equation}
n!! = \begin{cases}
n\cdot (n-2)\cdot (n-4) \ldots 5\cdot 3\cdot 1 & n>0, \ \text{odd}\\ 
n\cdot (n-2)\cdot (n-4) \ldots 6\cdot 4\cdot 2 & n>0, \ \text{even}\\ 
1, & n = -1,0.
\end{cases}
\end{equation}

\subsection{Semi-discrete in space dispersion analysis} 
\label{semi}

In this section we analyze the spatial semi-discrete system \eqref{eq:semi}, i.e., the FD2M scheme. We assume that the semi-discrete system \eqref{eq:semi} has plane wave solutions of the form
\begin{align} 
\label{SemiDis} \displaystyle
X_j(t) \equiv
X_0 e^{i\left( k_{\text{FD},2M}j h - \omega  t \right)},
\end{align}
where $\ds k_{\text{FD},2M}$ represents the numerical wave number of the semi-discrete FD2M scheme. 
By substituting \eqref{SemiDis} in \eqref{eq:semi} we obtain the linear system   
\begin{align}
\label{Dissemi2}
\mathcal{A}_{\text{FD},2M}\textbf{U}_{\text{FD}} = \textbf{0}, 
\end{align} 
where  the vector $\displaystyle \textbf{U}_{\text{FD}} = [H_0, E_0, P_0, J_0]^T,$ and the matrix $\mathcal{A}_{\text{FD},2M}$ is given by 
\begin{align}
\mathcal{A}_{\text{FD},2M}=\begin{pmatrix}
\omega & \Lambda_{2M} & 0  & 0 \\
\Lambda_{2M} & \epsilon_\infty \omega & \omega & 0 \\
0 & 0 &	i \omega & 1 \\
0 &	\omega_p^2 & - \omega_1^2& i \omega - 2\gamma \\
\end{pmatrix}
\quad \text{with} \quad
\Lambda_{2M}=\frac{2}{h}\sum_{p=1}^{M} \frac{\lambda_{2p-1}^{2M}}{(2p-1)} 
\sin \left[ \left( p-\frac{1}{2} \right) k_{\text{FD},2M} h \right].
\end{align}
The numerical dispersion relation of the FD2M method is obtained by solving the characteristic equation of matrix $\mathcal{A}_{\text{FD},2M}$ and is given as 
\begin{align}
\displaystyle 
\label{Dissemi3}
\Lambda_{2M} 
= \pm \omega \sqrt{\epsilon(\WH{\omega};\mathbf{p})} = \pm \omega
\sqrt{\epsilon_\infty\left(1- 
	\frac{\epsilon_{d}/ \epsilon_\infty}{\displaystyle \WH{\omega}^2 + 2i \, \WH{\gamma} \, \WH{\omega} - 1}\right) } 
= \pm k^{\text{ex}}.
\end{align}
Using results from \cite{bokil2018high}, we can rewrite this numerical dispersion relation as 
\begin{align}
\displaystyle 
\label{Dissemi4}
\frac{1}{2}\,h\, \Lambda_{2M} = \sum_{p=1}^{M} \frac{[(2p-3)!!]^2}{(2p-1)!} 
\sin^{2p-1} \left( \frac{k_{\text{FD},2M} h}{2} \right)
=\pm \frac{1}{2}K,
\end{align}
with $K:=k^{\text{ex}} h$.
In general, for any $M\geq1$, we will have $(4M-2)$ discrete wave numbers $k_{\text{FD},2M}$ that satisfy \eqref{Dissemi4}. In particular, when $M=1$, i.e. for the FD2 scheme,  the numerical dispersion relation \eqref{Dissemi4} is 
\begin{align}
\label{tay1}
\displaystyle
\sin \left( \frac{k_{\text{FD},2} h}{2} \right)
= \pm \frac{1}{2} K.
\end{align}
Thus, considering $K \ll 1$, and performing a Taylor expansion of \eqref{tay1} we obtain
	\begin{align}
	\label{eq:dis_FD_semi1}
	k_{\text{FD},2} =\pm  k^\text{ex} \left(  1+ \frac{1}{24} K^2 + \frac{3}{640} K^4 + \mathcal{O}( K^6) \right), 
	\end{align}
which indicates that the numerical dispersion error of the FD2 scheme is second order accurate in space.

\noindent 

For the case $M=2$,  i.e. for the FD4 scheme, the numerical dispersion relation \eqref{Dissemi4} becomes 
\begin{align}
\displaystyle 
\label{DisExact2}
\frac{1}{6}\sin^3 \left( \frac{k_{\text{FD},4} h}{2} \right)
+
\sin\left( \frac{k_{\text{FD},4} h}{2} \right)
= 
\pm \frac{1}{2} K. 
\end{align}

\noindent The Taylor expansions of all roots in equation \eqref{DisExact2} are given by 
\begin{subequations}
	\begin{align}
	\label{eq:dis_FD_semi2}
	k_{\text{FD}^{\text{ phys}},4}
	&= \pm k^\text{ex} \left( 1 + \frac{3}{640}K^4 - \frac{1}{3584} K^6 + \mathcal{O}(K^8) \right), \\
	\label{eq:dis_FD_semi2s1}
	k_{\text{FD}^{\text{ spur1}},4}
	&= \pm k^\text{ex} \left( i \frac{\arcsinh(2\sqrt{42})}{K} - \frac{1}{2\sqrt{7}} 
	+ i\frac{9}{1568}\sqrt{42}K + \mathcal{O}(K^2) \right),\\
	\label{eq:dis_FD_semi2s2}
	k_{\text{FD}^{\text{ spur2}},4} 
	&= \pm k^\text{ex} \left(- i \frac{\arcsinh(2\sqrt{42})}{K} - \frac{1}{2\sqrt{7}}
	- i\frac{9}{1568}\sqrt{42}K	+ \mathcal{O}(K^2) \right),
	\end{align}
\end{subequations}
\noindent where $k_{\text{FD}^\text{ phys},4}$ and $k_{\text{FD}^{\text{ spur1}},4}, k_{\text{FD}^{\text{ spur2}},4}$ are wave numbers corresponding to the physical modes and spurious modes, respectively, of the FD4 scheme. The physical modes indicate a fourth order accurate numerical dispersion error, while the leading terms in the spurious modes of $k$ are proportional to $\mathcal{O}(1/h)$, indicating an exponential increase or damping corresponding to the opposite sign in front. The existence of spurious, or non-physical modes for a variety of discretizations has been discussed in the literature, e.g., \cite{ainsworth2006dispersive, cohen1, cohen2}. The presence of spurious modes in not ideal, however in practice these have not shown to be serious issues for numerical methods. We would like to note, that to the best of the authors' knowledge, the existence of spurious modes for high order FD discretizations has not been analytically identified in the literature. Equations \eqref{eq:dis_FD_semi2} and \eqref{eq:dis_FD_semi2s2} provide explicit formulas for the spurious modes that we have not found in the literature.

 Below, we focus on  the physical modes, and prove that for the FD scheme of order $2M$ (FD2M), the dispersion error is of $2M$-th order. Thus, the dispersion error is of the same order as the local truncation error for the finite difference schemes.

\begin{theorem}\label{thm5}
	The physical modes of the dispersion relation \eqref{Dissemi4}, for the spatial semi-discrete finite difference method FD2M, results in the dispersion error identity
	\begin{align} \label{q1}
	\displaystyle
	k_{\text{FD}^{\text{ phys}},2M} = \pm k^{\text{ex}} \left( 1 + \varsigma_{_{2M}} \right),
	\end{align}
	\noindent for any $M\geq1$, where 
	\begin{align} \label{q0}
	\displaystyle
	\varsigma_{_{2M}}
	= \frac{[(2M-1)!!]^2}{2^{2M}(2M+1)!} K^{2M}
	+ \mathcal{O}(K^{2M+2}).
	\end{align}

\noindent In other words, the dispersion error of the FD2M scheme \eqref{eq:semi} is  of order $2M$. \\	
\end{theorem}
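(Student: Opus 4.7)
My plan is to exploit the fact that the polynomial appearing on the left-hand side of \eqref{Dissemi4}, viewed as a function of $y := \sin(k_{\text{FD},2M} h / 2)$, is precisely the degree $(2M-1)$ Maclaurin polynomial of $\arcsin$. The standard Taylor series $\arcsin(y) = \sum_{p=1}^{\infty} \frac{(2p-2)!}{4^{p-1}[(p-1)!]^2 (2p-1)}\, y^{2p-1}$ can be rewritten, using the double-factorial identity $(2p-3)!! = (2p-2)!/(2^{p-1}(p-1)!)$ with the convention $(-1)!!=1$, as $\arcsin(y) = \sum_{p=1}^{\infty} \frac{[(2p-3)!!]^2}{(2p-1)!}\, y^{2p-1}$, whose partial sums match the FD coefficients in \eqref{Dissemi4} exactly. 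Setting $\theta := k_{\text{FD},2M} h / 2$ and denoting the Taylor remainder by
$$ R_M(y) := \arcsin(y) - \sum_{p=1}^{M} \frac{[(2p-3)!!]^2}{(2p-1)!}\, y^{2p-1} = \frac{[(2M-1)!!]^2}{(2M+1)!}\, y^{2M+1} + \mathcal{O}(y^{2M+3}), $$
the dispersion relation \eqref{Dissemi4} collapses to the implicit equation $\theta - R_M(\sin\theta) = \pm K/2$.

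Next, I would isolate the physical branch via a perturbation argument. Define $F_M(\theta; K) := \theta - R_M(\sin\theta) \mp K/2$; since $F_M(0;0) = 0$ and $\partial_\theta F_M(0;0) = 1 - R_M'(0)\cos(0) = 1$ (because $R_M'(0) = 0$ for $M \geq 1$), the analytic implicit function theorem produces a unique real-analytic solution $\theta(K)$ with $\theta(0) = 0$, which is by definition the physical mode $k_{\text{FD}^{\text{phys}},2M}\, h / 2$. The ansatz $\theta = \pm K/2 + \mathcal{O}(K^{2M+1})$ is self-consistent since $R_M(\sin\theta)$ vanishes to order $\theta^{2M+1}$. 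Substituting this into the fixed-point form $\theta = \pm K/2 + R_M(\sin\theta)$ and using $\sin\theta = \pm K/2 + \mathcal{O}(K^3)$ then gives
$$ \theta = \pm\frac{K}{2} + \frac{[(2M-1)!!]^2}{(2M+1)!}\left(\pm\frac{K}{2}\right)^{2M+1} + \mathcal{O}(K^{2M+3}). $$
Multiplying by $2/h$ and using $k^{\text{ex}} = K/h$ yields \eqref{q1}--\eqref{q0} with the claimed leading coefficient $[(2M-1)!!]^2 /(2^{2M}(2M+1)!)$.

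The main obstacle is the algebraic identification of the sum in \eqref{Dissemi4} as a truncation of the $\arcsin$ series; verifying the equivalence $[(2p-3)!!]^2/(2p-1)! = (2p-2)!/(4^{p-1}[(p-1)!]^2(2p-1))$ is elementary but requires careful handling of double factorials, especially the boundary case $p = 1$ where the convention $(-1)!! = 1$ must be enforced. Once this structural identity is secured, the remainder of the argument is a direct application of the implicit function theorem plus a single leading-order Taylor substitution; the other $(4M-3)$ spurious roots (two pairs of which, for $M=2$, appear explicitly in \eqref{eq:dis_FD_semi2s1}--\eqref{eq:dis_FD_semi2s2}) correspond to branches along which $|\sin\theta|$ is bounded away from zero and are automatically separated from the physical branch by the nonvanishing derivative $\partial_\theta F_M(0;0) = 1$.
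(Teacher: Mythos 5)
Your proposal is correct, and it reaches the stated leading coefficient by a genuinely different route from the paper. The paper's proof works with the unreduced form $k^{\text{ex}}\varsigma_{2M} = k_{\text{FD}} - \frac{2}{h}\sum_{p}\frac{\lambda_{2p-1}^{2M}}{2p-1}\sin\left[(p-\tfrac12)k_{\text{FD}}h\right]$, Taylor-expands each sine, and then invokes the moment identities $\sum_p \lambda_{2p-1}^{2M}=1$, $\sum_p \lambda_{2p-1}^{2M}(2p-1)^{2\ell}=0$ for $\ell=1,\dots,M-1$, and $\sum_p \lambda_{2p-1}^{2M}(2p-1)^{2M}=(-1)^{M+1}[(2M-1)!!]^2$ inherited from the construction of the FD coefficients; it also has to \emph{assume} the ansatz $k_{\text{FD}^{\text{ phys}},2M}=k^{\text{ex}}(1+\mathcal{O}(K^\tau))$ to isolate the physical branch and then bootstraps $\tau$ to $2M$. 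You instead start from the already-reduced relation \eqref{Dissemi4}, recognize its left-hand side as the degree-$(2M-1)$ Maclaurin polynomial of $\arcsin$ evaluated at $\sin\theta$ (the identity $[(2p-3)!!]^2/(2p-1)! = (2p-2)!/\bigl(4^{p-1}[(p-1)!]^2(2p-1)\bigr)$ checks out, including $p=1$), so that the equation collapses to $\theta - R_M(\sin\theta)=\pm K/2$ with $R_M(y)=\frac{[(2M-1)!!]^2}{(2M+1)!}y^{2M+1}+\mathcal{O}(y^{2M+3})$, and then a single fixed-point substitution delivers the coefficient $[(2M-1)!!]^2/(2^{2M}(2M+1)!)$. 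What your approach buys is a cleaner isolation of the physical mode: the implicit function theorem at $(\theta,K)=(0,0)$ with $\partial_\theta F_M(0;0)=1$ gives existence and uniqueness of the analytic branch through the origin, replacing the paper's a priori ansatz. What it costs is the combinatorial identification with the $\arcsin$ series, which the paper sidesteps because the moment conditions are already established in the cited reference. Two small caveats: the step $\arcsin(\sin\theta)=\theta$ is only valid on the physical branch where $\theta$ stays in a neighborhood of the origin (which is all you use, but worth stating), and your parenthetical count of $(4M-3)$ spurious roots is off --- the paper has $4M-2$ roots in total, of which two (the $\pm$ pair) are physical, leaving $4M-4$ spurious; for $M=2$ these are the two $\pm$ pairs in \eqref{eq:dis_FD_semi2s1}--\eqref{eq:dis_FD_semi2s2}. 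Neither caveat affects the validity of the main argument.
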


\begin{proof}
  Here, we only consider $k_{\text{FD}^{\text{ phys}},2M}$ with plus sign in front. Define $\varsigma_{_{2M}} := \displaystyle \frac{k_{\text{FD}^{\text{ phys}},2M}-k^{\text{ex}}}{k^{\text{ex}}}$. Then, substituting from \eqref{Dissemi3} for $k^{\text{ex}}$, rearranging, and (using results from \cite{bokil2018high}) we obtain the identity
	\begin{align*}
	\displaystyle 
	k^{\text{ex}} \varsigma_{_{2M}}
	&= k_{\text{FD}^{\text{ phys}},2M}
	- \frac{2}{h}\sum_{p=1}^{M} \frac{\lambda_{2p-1}^{2M}}{(2p-1)}
	\sin\left[\left(p - \frac{1}{2}\right) k_{\text{FD}^{\text{ phys}},2M} h\right], 
	\end{align*}
i.e, $k_{\text{FD}^{\text{ phys}},2M}$ satisfies \eqref{q1} for $\varsigma_{_{2M}}$ as defined in \eqref{q0}. Next, we prove the identity \eqref{q0}. Because we only consider the physical modes here, it is reasonable to assume that $k_{\text{FD}^{\text{ phys}},2M} = k^{\text{ex}}\left(1 + O(K^{\tau}) \right)$ for some $\tau>0$. Hence, when  $k_{\text{FD}^{\text{ phys}},2M}h=  K \left(1 + \mathcal{O}(K^{\tau})\right)$  is small enough, performing a Taylor expansion with respect to $k_{\text{FD}^{\text{ phys}},2M} h$ we get
	\begin{align*}
	\displaystyle 
	k^{\text{ex}}\varsigma_{_{2M}}
	&= k_{\text{FD}^{\text{ phys}},2M}
	- \frac{2}{h}\sum_{p=1}^{M} \frac{\lambda_{2p-1}^{2M}}{(2p-1)} 
	\sum_{\ell = 0}^{\infty} \frac{(-1)^{\ell}}{(2\ell + 1)!}
	\left[\frac{1}{2} \left(2p - 1\right) k_{\text{FD}^{\text{ phys}},2M} h\right]^{2\ell + 1} \notag \\
	&= k_{\text{FD}^{\text{ phys}},2M}
	- \frac{2}{h}\sum_{\ell = 0}^{\infty}\sum_{p=1}^{M} \frac{\lambda_{2p-1}^{2M}}{(2p-1)} 
	\frac{(-1)^{\ell}}{(2\ell + 1)!}
	\left[\frac{1}{2} \left(2p -1 \right) k_{\text{FD}^{\text{ phys}},2M} h\right]^{2\ell + 1} \notag \\
	&= k_{\text{FD}^{\text{ phys}},2M}
	- k_{\text{FD}^{\text{ phys}},2M} \sum_{\ell = 0}^{\infty}
	\left[\sum_{p=1}^{M} \lambda_{2p-1}^{2M}(2p-1)^{2\ell}\right]
	\frac{(-1)^{\ell}}{2^{2\ell}(2\ell + 1)!}
	\left( k_{\text{FD}^{\text{ phys}},2M} h \right)^{2\ell}.
	\end{align*}

	\noindent Based on the derivation of $\displaystyle \lambda_{2p-1}^{2M}$ as discussed in \cite{bokil2018high}, we have the following identities
	\begin{align*}
	\displaystyle
	& \sum_{p=1}^M \lambda_{2p-1}^{2M}  = 1, \\
	& \sum_{p=1}^M \lambda_{2p-1}^{2M}(2p-1)^{2\ell}  = 0,
	\quad \text{ for } \ell = 1,2,..,M-1, \\
	& \sum_{p=1}^M \lambda_{2p-1}^{2M}(2p-1)^{2M} = (-1)^{M+1} \left[(2M-1)!!\right]^2.
	\end{align*}

	\noindent Therefore, 
	\begin{align}
	\displaystyle
	\label{eq:qq4} 
	k^{\text{ex}}\varsigma_{_{2M}}
	&= k_{\text{FD}^{\text{ phys}},2M}-k_{\text{FD}^{\text{ phys}},2M}
	\left[ 1
	-\frac{1}{2^{2M}} \frac{[(2M-1)!!]^2}{(2M+1)!}\left( k_{\text{FD}^{\text{ phys}},2M} h \right)^{2M}
	+ \mathcal{O}\left(\left( k_{\text{FD}^{\text{ phys}},2M} h \right)^{2M+2}\right)
	\right] \notag \\
	&= -k_{\text{FD}^{\text{ phys}},2M}
	\left[
	-\frac{1}{2^{2M}} \frac{[(2M-1)!!]^2}{(2M+1)!}\left( K \right)^{2M}  
	+ \mathcal{O}\left(K^{2M+\tau}+K^{2M+2}\right)
	\right] \notag \\
	&= -k^{\text{ex}}
	\left[
	-\frac{1}{2^{2M}} \frac{[(2M-1)!!]^2}{(2M+1)!}\left( K \right)^{2M}  
	+ \mathcal{O}\left(K^{2M+\tau}+K^{2M+2}\right)
	\right],
	\end{align}

	\noindent which proves \eqref{q0}. Hence, with the assumption  $k_{\text{FD}^{\text{ phys}},2M} = k^{\text{ex}}\left(1 + \mathcal{O}(K^{\tau})\right)$  and $\tau>0$, we can deduce that  $k_{\text{FD}^{\text{ phys}},2M}= k^{\text{ex}}\left(1 + \mathcal{O}(K^{2M}) \right)$. 
\end{proof}

Next, we illustrate the relative phase errors of the FD2M scheme for \eqref{eq:semi}, $M=1,\ldots,5$, with the parameter set $\mathbf{p}$ fixed at values given in \eqref{eq:parameter}. The numerical wave number $k_{\text{FD},2M}$ is obtained by solving \eqref{Dissemi4} exactly or with the help of a Newton solver (we set the tolerance at $10^{-18}$). Since 
$$k^{\text{ex}}h = \WH{\omega}\, \sqrt{\epsilon(\WH{\omega};\mathbf{p})}\, \omega_{1}h,$$
then $k_{\text{FD},2M}h$ depends on $\WH{\omega}$, $\mathbf{p}$, $\omega_{1}h$, and $M$,  and so does the relative phase error
\begin{align}
\label{ERRTP2}
\displaystyle 
\Psi_{\text{FD},2M}(\WH{\omega})
:=
\left| \frac{k_{\text{FD},2M}(\WH{\omega})-k^{\text{ex}}(\WH{\omega})}{k^{\text{ex}}(\WH{\omega})}  \right|
= \left| \frac{k_{\text{FD},2M}(\WH{\omega})h -k^{\text{ex}}(\WH{\omega})h}{k^{\text{ex}}(\WH{\omega})h}\right|. 
\end{align}
First, we fix $\omega_{1} h=\pi/30$, and present the relative phase errors  as functions of $\WH{\omega}\in[0,3]$ in Figure \ref{Fig:FD}. 
Because the leading error term in the numerical wave number for the FD2M scheme is proportional to $K^{2M}$, we expect $2M$ order accuracy of the relative phase error with respect to $K$ at a fixed angular frequency. We observe that all schemes have significantly larger error around $\WH{\omega}=1$, while the error fades out near $\WH{\omega}=\sqrt{\epsilon_{s}/\epsilon_{\infty}}$, where $K$ is close to zero. As expected from analysis, higher order spatial accuracy does result in reduced relative phase errors.  We present the relative phase  errors at $\WH{\omega}=1$ with $\omega_1 h =\pi/30$  in the left plot in Figure \ref{Fig:FD}, while in the right plot we depict the $2M$ order convergence of relative phase errors with respect to $K$ for fixed $\WH{\omega}=1$. The slopes of phase errors in this plot are shown to be the same as those of reference lines with slope $2M$, indicating the $2M$th order of accuracy for each FD2M scheme, which agrees with the results in Theorem \ref{thm5}.   We note the presence of just one peak in these plots as compared to the presence of two peaks in analogous plots of phase errors for temporal discretizations presented in Section \ref{time}. 

\begin{figure}[htb]
	\centering
	\includegraphics[scale= 0.3] {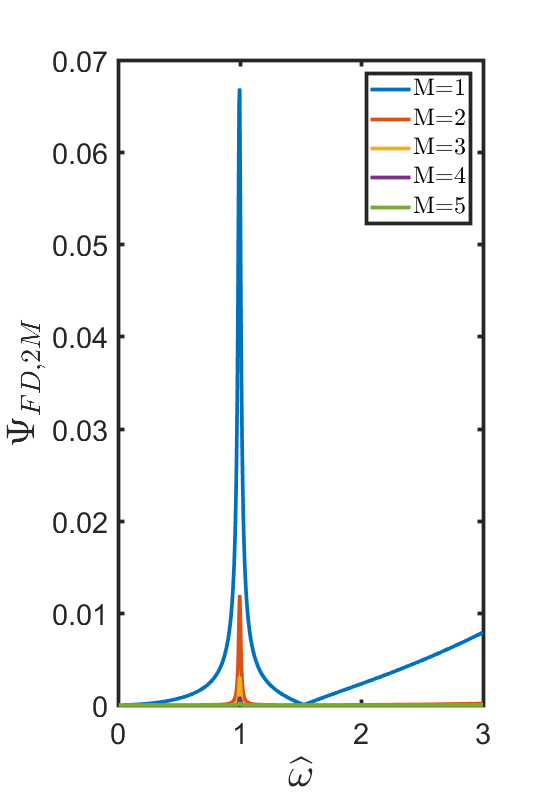}
	\includegraphics[scale= 0.3] {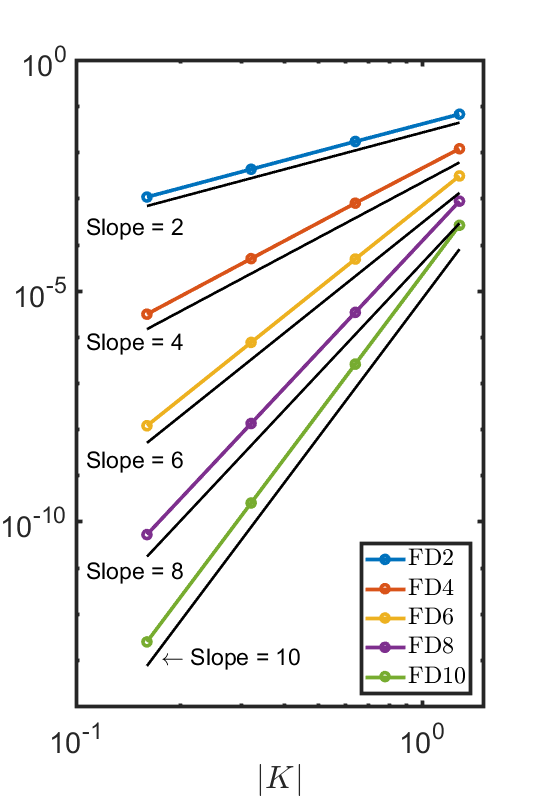}
	\caption{The relative phase error of  physical modes for the spatial discretization FD2M. Left: fix $\omega_{1}h=\pi/30$ with $\WH{\omega}\in[0,3]$; right: fix $\WH{\omega}=1$ with different $\omega_{1}h \in\left\{ \pi/30,\pi/60,\pi/120,\pi/240\right\} $.}
	\label{Fig:FD}
\end{figure}
\section{Fully discrete FDTD Methods} $\,$ 
\label{fdtd}
In this section, we consider the high order staggered spatial discretizatons \eqref{eq:semi} combined with either the leap-frog scheme in time \eqref{eq:LF} or the trapezoidal scheme in time \eqref{eq:TP} presented in Section \ref{time}. These fully discrete methods are second order accurate in time and $2M$-th order accurate in space, thus we denote them as $(2,2M)$ leap-frog FDTD schemes or $(2,2M)$ trapezoidal FDTD methods. In particular, the $(2,2)$ leap-frog method is the extension of the standard Yee FDTD method to Lorentz dispersive media. Finally, comparisons will be made among all finite difference schemes under considerations.

We first compute the dispersion relation for the fully discrete ($2,2M$) schemes. To do so, we assume the plane wave solutions
\begin{align} 
\label{disp} \displaystyle
X_j^n \equiv
X_0 e^{i\left( k_{\text{FD},2M}^{*} j h - \omega n \Delta t \right)},
\end{align}
where $*$ is either LF or TP. Substituting \eqref{disp} into the appropriate $(2,2M)$ FDTD method (which we have not explicitly written out here for brevity), 
we obtain the linear system 
\begin{align}
\label{DisLF2}
\mathcal{A}^{\text{*}}_{\text{FD},2M}\textbf{U}^{\text{*}}_{\text{FD}} = \textbf{0}, 
\end{align}
where the coefficient matrix for the two schemes will be discussed in the next two sections.

\subsection{ Fully discrete dispersion analysis: $(2,2M)$ leap-frog-FDTD schemes} 
We first consider the $(2,2M)$ leap-frog FDTD scheme. For the leap-frog temporal discretization the coefficient matrix in the linear system \eqref{DisLF2} is given as
\renewcommand{\arraystretch}{1.5}
\begin{align}
\mathcal{A}^{\text{LF}}_{\text{FD},2M} = \begin{pmatrix}
	\sin \left( \frac{W}{2} \right)  & \Lambda^{\text{LF}}_{2M} & 0 & 0\\
	\Lambda^{\text{LF}}_{2M} & \epsilon_\infty \sin \left( \frac{W}{2} \right) & 
	\sin \left( \frac{W}{2} \right) & 0 \\
	0 & 0 & i\sin \left( \frac{W}{2} \right) & 
	\frac{\Delta t}{2} \cos \left( \frac{W}{2} \right) \\
	0 & \frac{\Delta t}{2}\omega_p^2 \cos \left( \frac{W}{2} \right) &
	- \frac{\Delta t}{2}\omega_1^2 
	\cos \left( \frac{W}{2} \right)  & 
	i\sin \left( \frac{W}{2} \right) -\gamma\Delta t \cos \left( \frac{W}{2} \right) \\
\end{pmatrix},
\end{align}
\renewcommand{\arraystretch}{1}
with $$\Lambda^{\text{LF}}_{2M} = \frac{\Delta t}{h} \sum_{p=1}^{M} \frac{\lambda_{2p-1}^{2M}}{(2p-1)} 
\sin \left[ \left( p-\frac{1}{2} \right) k_{\text{FD},2M}^{\text{LF}} h \right]. $$
Based on previous discussions, we can derive the identity 
\begin{align}
\displaystyle 
\label{DisLF7}
\sum_{p=1}^{M} \frac{[(2p-3)!!]^2}{(2p-1)!} 
\sin^{2p-1} \left( \frac{k_{\text{FD},2M}^{\text{LF}} h}{2} \right)
 =  \frac{1}{2} k^{\text{LF}} h.
\end{align}

 For both the fully discrete methods, we focus our discussions on the physical modes. For the case of  $W\ll 1$ and $K\ll 1$, we analyze the Taylor expansion of the physical modes  for the fully discrete leap-frog FDTD schemes and observe the following pattern:

\begin{align}
\displaystyle 
k^{\text{LF}}_{\text{FD}^{\text{ phys}},2M}
&=  \pm k^{\text{ex}}\left(
1+ \frac{1}{12}\left( \frac{\delta(\WH{\omega};\textbf{p})}{\epsilon(\WH{\omega};\textbf{p})} - \frac{1}{2} \right)W^2+ \frac{[(2M-1)!!]^2}{2^{2M}(2M+1)!}K^{2M} 
+  \mathcal{O}(K^{2M+2} + K^{2M} W^2+W^4)  
\right), \quad M \geq 1. 	\label{eq:disp_FD_LF1}
\end{align}	
Furthermore, with the relation $\displaystyle K=\frac{\sqrt{\epsilon(\WH{\omega};\mathbf{p})}}{\nu\sqrt{\epsilon_{\infty}}}W$, we can treat $k_{\text{FD}^{\text{ phys}},2M}^{\text{LF}}$ as a function of $W$ and  $\ds \nu=\frac{\Delta t}{h\sqrt{\epsilon_{\infty}}}$, the CFL  (Courant-Friedrich-Lewy) number subject to the stability constraint for the $(2,2M)$ leap-frog FDTD scheme.  
Assuming $\nu=O(1),$ we have
\begin{align}
\displaystyle k^{\text{LF}}_{\text{FD}^{\text{ phys}},2M}
= 
\begin{cases}
\pm \displaystyle k^{\text{ex}}
\left(
1 + 
\frac{1}{12}\left( \frac{\delta(\WH{\omega};\textbf{p})}{\epsilon(\WH{\omega};\textbf{p})} - \frac{1}{2} + \frac{\epsilon(\WH{\omega};\textbf{p})}{2\epsilon_\infty\nu^2}\right)W^2
+ \mathcal{O}(W^4)
\right),  & M = 1, \\ \\
\displaystyle
\pm k^{\text{ex}}
\Big(
1 + 
\frac{1}{12}\left( \frac{\delta(\WH{\omega};\textbf{p})}{\epsilon(\WH{\omega};\textbf{p})} - \frac{1}{2} \right)W^2
+  \mathcal{O}(W^{4})  
\Big),  & M \geq 2. 
\end{cases}    \label{eq:disp_FD_LF2}
\end{align}

We can see that due to the second order time discretizations employed,
 the fully discrete scheme always results in a second order dispersion error. Particularly for all $M\ge 2,$ the leading term in the dispersion error is identical,   and independent of $\nu$ which comes solely from the temporal discretization. To compare the performance of the scheme for $M=1$ and $M\geq2$, we will focus on comparison of the coefficients of leading error terms in \eqref{eq:disp_FD_LF2}.

We first consider $\WHG=0$. We can make the following conclusions. 
\begin{itemize}
\item For $\WHO$ in the medium absorption band, i.e. $\WHO \in (1, \sqrt{\epsilon_{s}/\epsilon_{\infty}}),$ it is easy to check that 
$$\displaystyle 
\left| \frac{\delta(\WHO;\mathbf{p})}{\epsilon(\WHO;\mathbf{p})} -\frac{1}{2} + \frac{\epsilon(\WHO;\mathbf{p})}{2 \epsilon_{\infty}\nu^2} \right|
 \geq 
 \left| \frac{\delta(\WHO;\mathbf{p})}{\epsilon(\WHO;\mathbf{p})} -\frac{1}{2} \right|$$ based on the inequalities 
 $\displaystyle\frac{\delta(\WHO;\mathbf{p})}{\epsilon(\WHO;\mathbf{p})}\leq-1$ and $\displaystyle\frac{\epsilon(\WHO;\mathbf{p})}{2 \epsilon_{\infty}\nu^2}\leq0,$ which implies that  the high order schemes   reduce the dispersion error as one would expect. This is true independent of other parameter choices.
 \item For other $\WHO$ values, the outcome will depend on the parameters. We can show that the general condition for the higher order scheme ($M\ge2$) to be more accurate in its dispersion error is equivalent to the inequality
\begin{align}
\displaystyle\left(\frac{\epsilon_d}{\epsilon_\infty}\right)^2
	+ (1-2\nu^2)(\WH{\omega}^2 - 1)^2
	- 2 \left(\frac{\epsilon_d}{\epsilon_\infty}\right)\left(\WH{\omega}^2-1 + \nu^2(1-3\WH{\omega}^2)\right) \geq 0. \label{Sp333}
\end{align}
This is a quadratic inequality in $\WHO^2.$ We can conclude that with the CFL condition $\nu\leq1,$ which is a necessary condition to ensure the fully discrete $(2,2M)$ leap-frog-FDTD scheme is stable for any $M\geq1$ \cite{bokil2012,bokil2018high}, we have
\begin{itemize}
	\item if $0< \nu \leq \frac{1}{\sqrt{2}}$, the condition \eqref{Sp333} always holds for all $\WHO \geq 0$. 
	\item if $\frac{1}{\sqrt{2}}< \nu < 1$ and 
	\begin{itemize}
		\item if $0<\epsilon_{d}/\epsilon_{\infty}\leq 2\nu^2-1$, then the condition \eqref{Sp333} holds on
		\[\displaystyle
		\WHO_{L} \leq \WHO \leq \WHO_{R},
		\]
		\item if $\epsilon_{d}/\epsilon_{\infty} \geq 2\nu^2-1$, then the condition \eqref{Sp333} holds on
		\[\displaystyle
		0 \leq \WHO \leq \WHO_{R},
		\]
		 \end{itemize}
	 where
	  \end{itemize}	 
  \begin{align*}
  \WH{\omega}_{L} &= \sqrt{ \frac{-1 - \epsilon_{d}/\epsilon_{\infty} + 2 \nu^2 + 3 \epsilon_{d}/\epsilon_{\infty} \nu^2-\nu \sqrt{ -4 \epsilon_{d}/\epsilon_{\infty} - 4 (\epsilon_{d}/\epsilon_{\infty})^2 + 8 \epsilon_{d}/\epsilon_{\infty} \nu^2 + 9 (\epsilon_{d}\nu/\epsilon_{\infty})^2 }}{ 2 \nu^2 -1 } }, \\
\WH{\omega}_{R} &= \sqrt{ \frac{-1 - \epsilon_{d}/\epsilon_{\infty} + 2 \nu^2 + 3 \epsilon_{d}/\epsilon_{\infty} \nu^2+ \nu \sqrt{-4 \epsilon_{d}/\epsilon_{\infty}  - 4 (\epsilon_{d}/\epsilon_{\infty})^2 + 8 \epsilon_{d}/\epsilon_{\infty} \nu^2 + 9 (\epsilon_{d}\nu/\epsilon_{\infty})^2 }}{2 \nu^2 -1} }.
\end{align*}

 \end{itemize}

 The case of $\WHG>0$ is even more complicated. For low loss materials, in general we expect similar conclusions as in the lossless case.
We now perform a numerical study, and  compare the leading error terms in \eqref{eq:disp_FD_LF2} with $\nu=0.6$ (which is small enough to guarantee that the scheme is stable for arbitrary $M$ (see next section and \cite{bokil2018high}). The absolute values of coefficients of leading error terms are plotted in Figure \ref{Fig:coeff}, with $\epsilon_{\infty}=2.25$, $\epsilon_{s}=5.25$ and various $\WHG$ values. 
It is observed that we can not determine which method performs better for the general case. 
From   Figure \ref{Fig:coeff}, it's clear that higher order schemes  have smaller dispersion error for $\WHG=0, 0.01$ in the range $\WHO \in[0,3].$ This is no longer true for $\WHG=0.1, 1.$ The   discussion here reveals an interesting fact. For some parameter values, we can have counterintuitive results that the lower order scheme performs better than higher order scheme when numerical dispersion is present.

\begin{figure}[h]
	\centering
	\includegraphics[scale= 0.28] {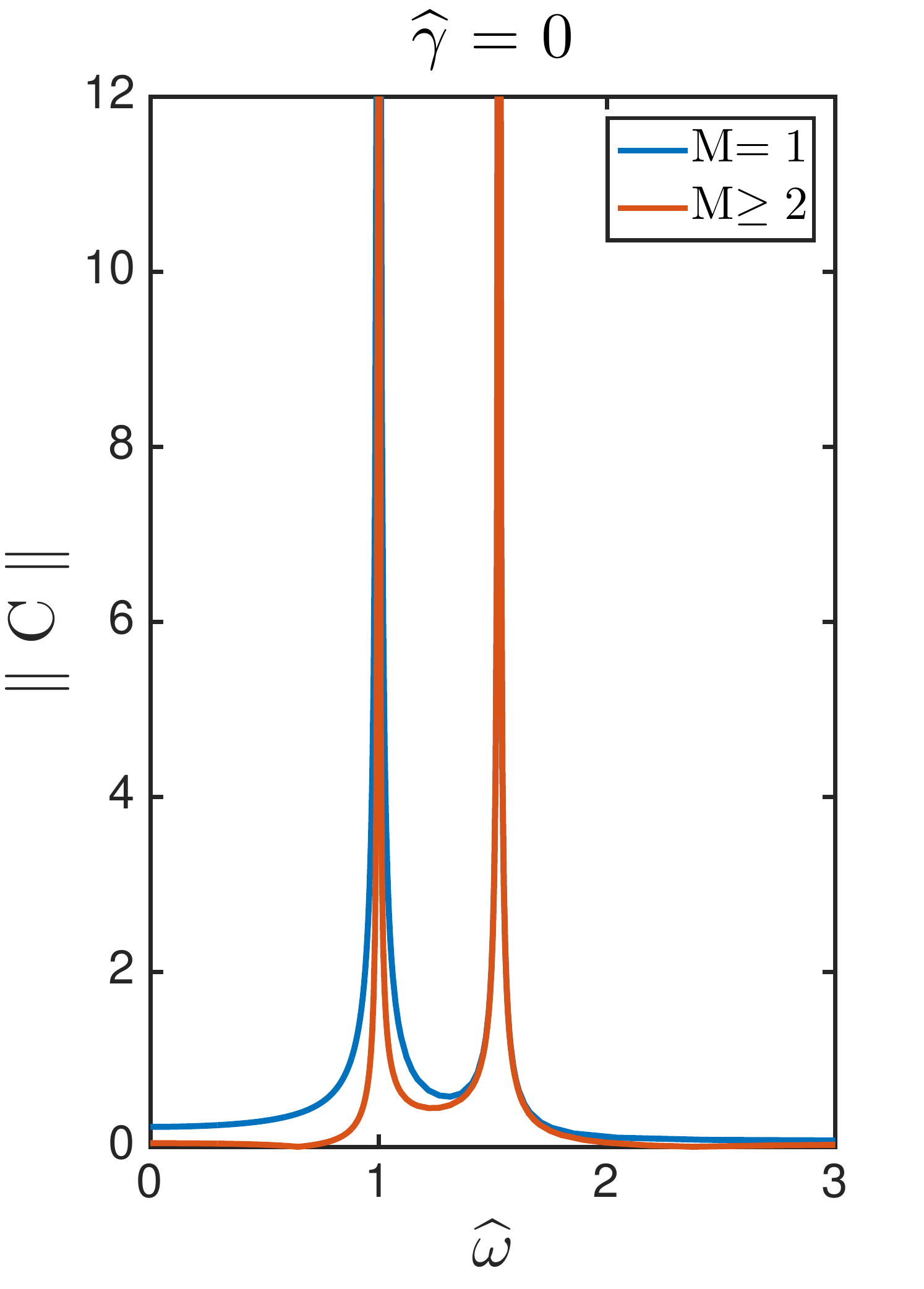}
	\includegraphics[scale= 0.28] {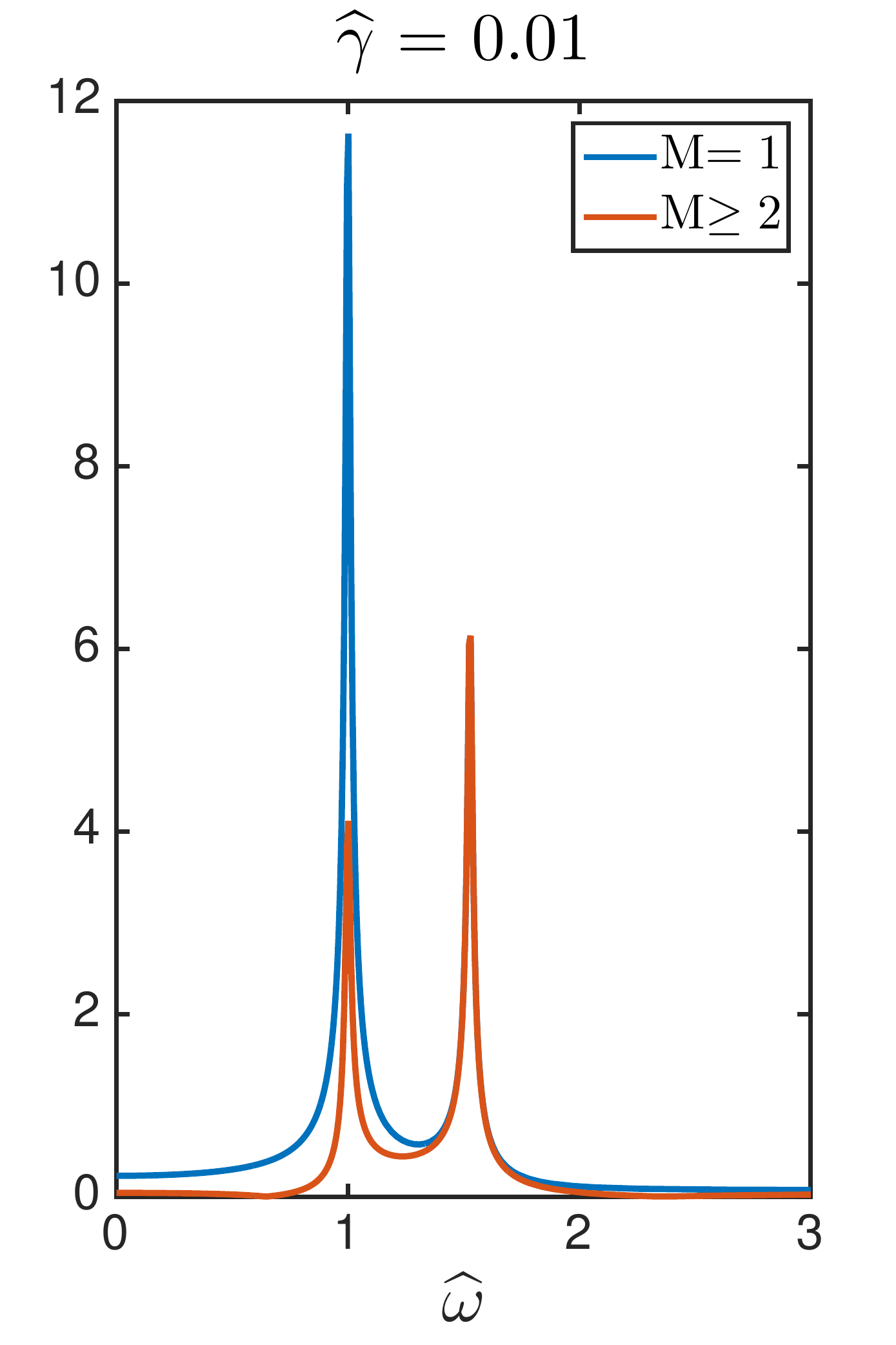}
	\includegraphics[scale= 0.28] {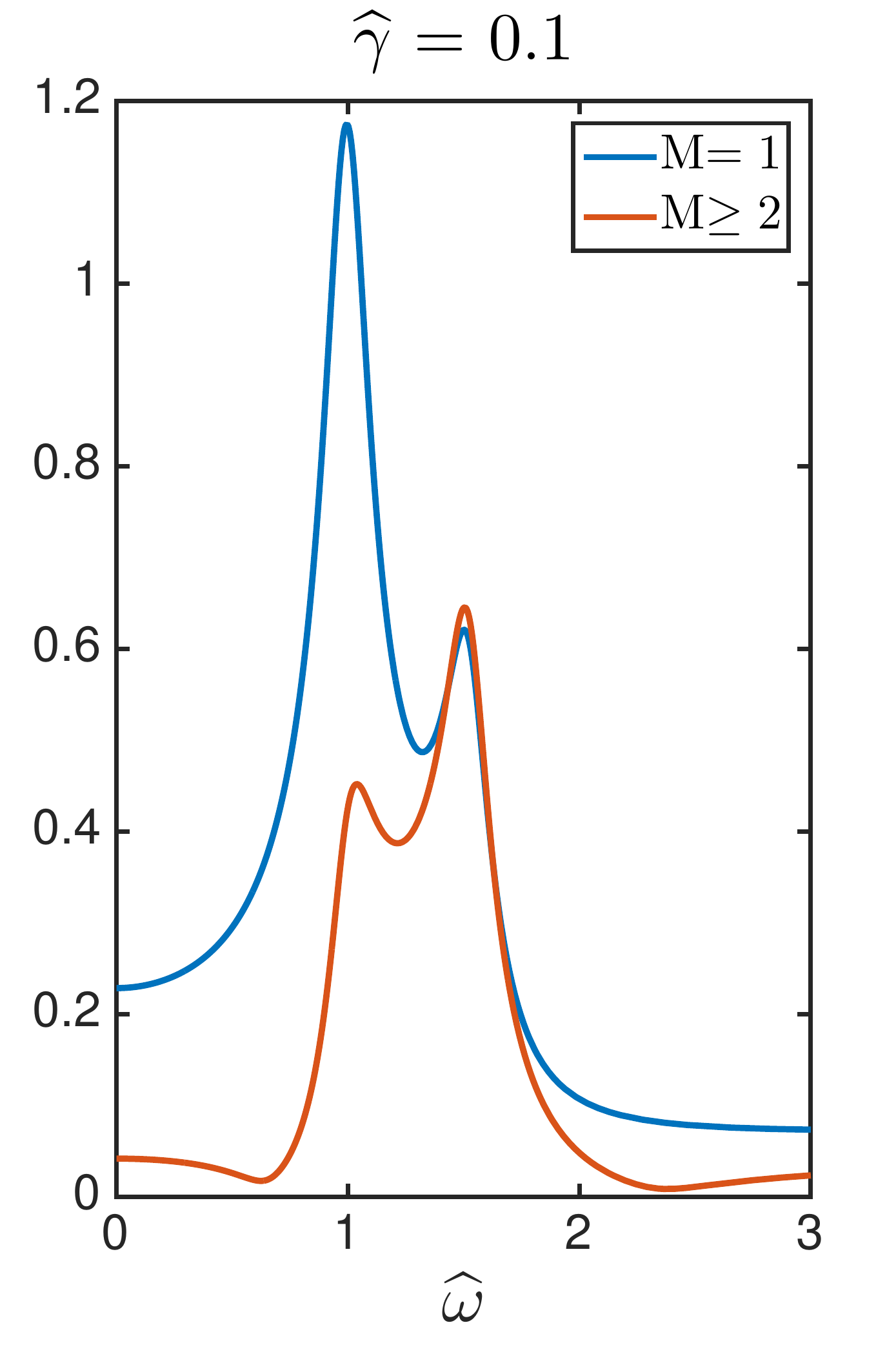}
	\includegraphics[scale= 0.28] {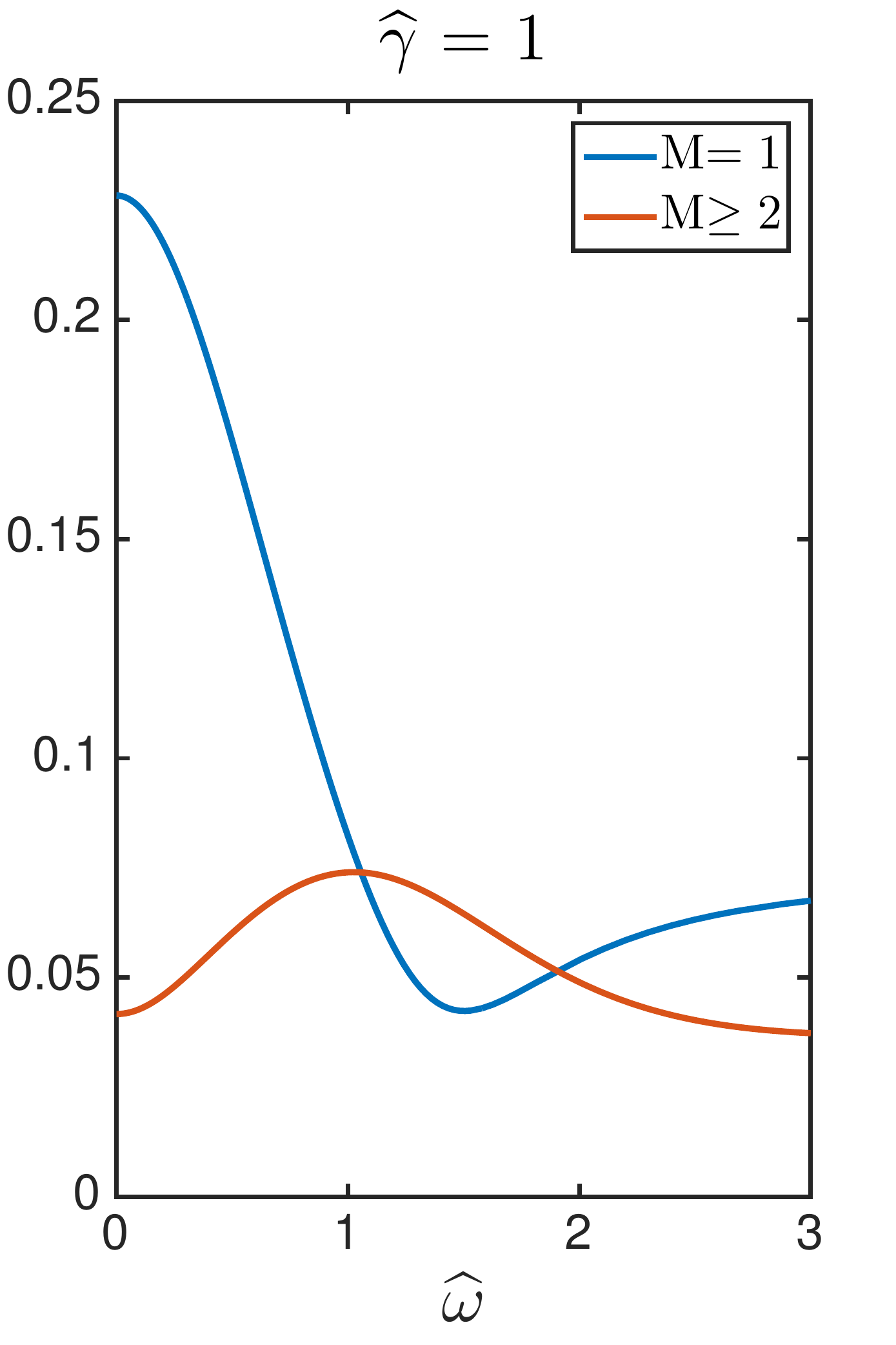}
	\caption{Absolute value of coefficients of leading error terms in \eqref{eq:disp_FD_LF2} (denoted by $C$) for the $(2,2M)$  leap-frog FDTD scheme.  }
	\label{Fig:coeff}
\end{figure}


\subsection{ Fully discrete dispersion analysis: $(2,2M)$ trapezoidal-FDTD schemes} 

We repeat the analysis done in the previous section for the fully discrete $(2,2M)$ trapezoidal FDTD schemes. 
We obtain the numerical dispersion relation for these schemes by setting the determinant of the matrix 
\begin{align}
\renewcommand{\arraystretch}{1.5}
\mathcal{A}_{\text{FD},2M}^{\text{TP}}=\begin{pmatrix}
	\sin \left( \frac{W}{2} \right) & \Lambda_{2M}^{\text{TP}} & 0 & 0 \\
	\Lambda_{2M}^{\text{TP}}  & \epsilon_\infty \sin \left( \frac{W}{2} \right) & 
	\sin \left( \frac{W}{2} \right) & 0 \\
	0 & 0 & i\sin \left( \frac{W}{2} \right)  
	& \frac{\Delta t}{2} \cos \left( \frac{W}{2} \right) \\
	0 & \frac{\Delta t}{2}\omega_p^2 \cos \left( \frac{W}{2} \right) & 
	- \frac{\Delta t}{2}\omega_1^2 \cos \left( \frac{W}{2} \right) & 
	i\sin \left( \frac{W}{2} \right) -\gamma \Delta t \cos \left( \frac{W}{2} \right) \\
\end{pmatrix}
\renewcommand{\arraystretch}{1}
\end{align}
to zero. In the above, we have
$$\ds \Lambda_{2M}^{\text{TP}}=\frac{\Delta t}{h} \sum_{p=1}^{M} \frac{\lambda_{2p-1}^{2M}}{(2p-1)} 
\sin \left[ \left( p-\frac{1}{2} \right) k_{\text{FD},2M}^{\text{TP}} h \right]
\cos \left( \frac{W}{2} \right).$$ 
The numerical dispersion is given by 
\begin{align}
\displaystyle 
\label{DisTP7}
\sum_{p=1}^{M} \frac{[(2p-3)!!]^2}{(2p-1)!} 
\sin^{2p-1} \left( \frac{k_{\text{FD},2M}^{\text{TP}} h}{2} \right)
= \frac{1}{2} k^\text{TP} h.
\end{align}

\noindent By requiring $W\ll 1$ and $K\ll 1$, we can obtain the physical modes   in the form  

\begin{align}
	\displaystyle	k^{\text{TP}}_{\text{FD}^{\text{ phys}},2M}
	&=  \pm k^{\text{ex}}\left(
	1+ \frac{1}{12}\left( \frac{\delta(\WH{\omega};\textbf{p})}{\epsilon(\WH{\omega};\textbf{p})} + 1 \right)W^2 + \frac{[(2M-1)!!]^2}{2^{2M}(2M+1)!}K^{2M} 
	+  \mathcal{O}(K^{2M+2} + K^{2M} W^2+W^4)  
	\right), \quad M \geq 1.    \label{eq:disp_FD_TP1}
	\end{align}
For $W\ll 1$ with   $\nu =\mathcal{O}(1)$, Taylor expansion gives us 
\begin{align}
\renewcommand{\arraystretch}{2}
\displaystyle k^{\text{TP}}_{\text{FD}^{\text{ phys}},2M}
= \left\{ \begin{array}{ll}
\displaystyle  \pm k^{\text{ex}}
\left(
1 + 
\frac{1}{12}\left( \frac{\delta(\WH{\omega};\textbf{p})}{\epsilon(\WH{\omega};\textbf{p})} +1 + \frac{\epsilon(\WH{\omega};\textbf{p})}{2\epsilon_\infty \nu^2}\right)W^2
+ \mathcal{O}(W^4)
\right),  & M = 1, \\ 
\displaystyle
\pm k^{\text{ex}}
\left(
1 + 
\frac{1}{12}\left( \frac{\delta(\WH{\omega};\textbf{p})}{\epsilon(\WH{\omega};\textbf{p})} +1 \right)W^2
+  \mathcal{O}(W^{4})  
\right),  & M \geq 2. 
\end{array}
\right. 
 \label{eq:disp_FD_TP2}
\renewcommand{\arraystretch}{1}
\end{align}

This shows  second order dispersion error in all cases. When $\WHG=0$, 
it is easy to check that 
$\ds \left| \frac{\delta(\WHO;\mathbf{p})}{\epsilon(\WHO;\mathbf{p})} +1 + \frac{\epsilon(\WHO;\mathbf{p})}{2\epsilon_{\infty}\nu^2} \right| \geq \left| \frac{\delta(\WHO;\mathbf{p})}{\epsilon(\WHO;\mathbf{p})} +1 \right|$ for any $\WHO\geq0$ and $\nu>0$. Hence, the high order FDTD schemes with $M \geq 2$ always have smaller dispersion error than the $(2,2)$ FDTD scheme.  
On the other hand, numerical tests comparing the coefficients of leading order error terms in the trapezoidal FDTD schemes are provided in Figure \ref{Fig:coeff2}, with various $\WHG$ values and the same parameters as used in Figure \ref{Fig:coeff}. The plots indicate that it is again difficult to determine which coefficient (for $M=1$ or $M\geq 2$) is larger when $\WHG$ is large.

\begin{figure}[h]
	\centering
	\includegraphics[scale= 0.28] {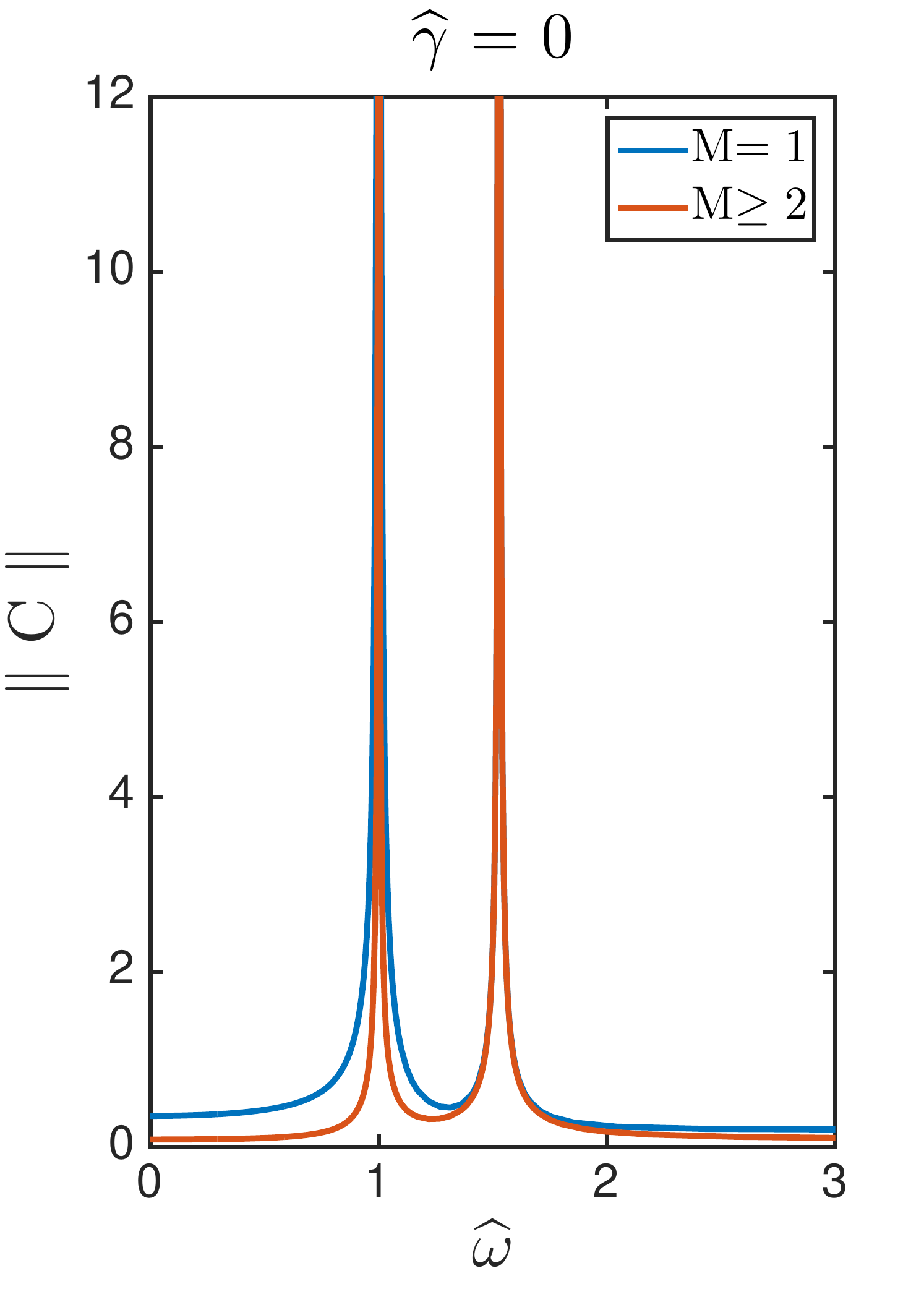}
	\includegraphics[scale= 0.28] {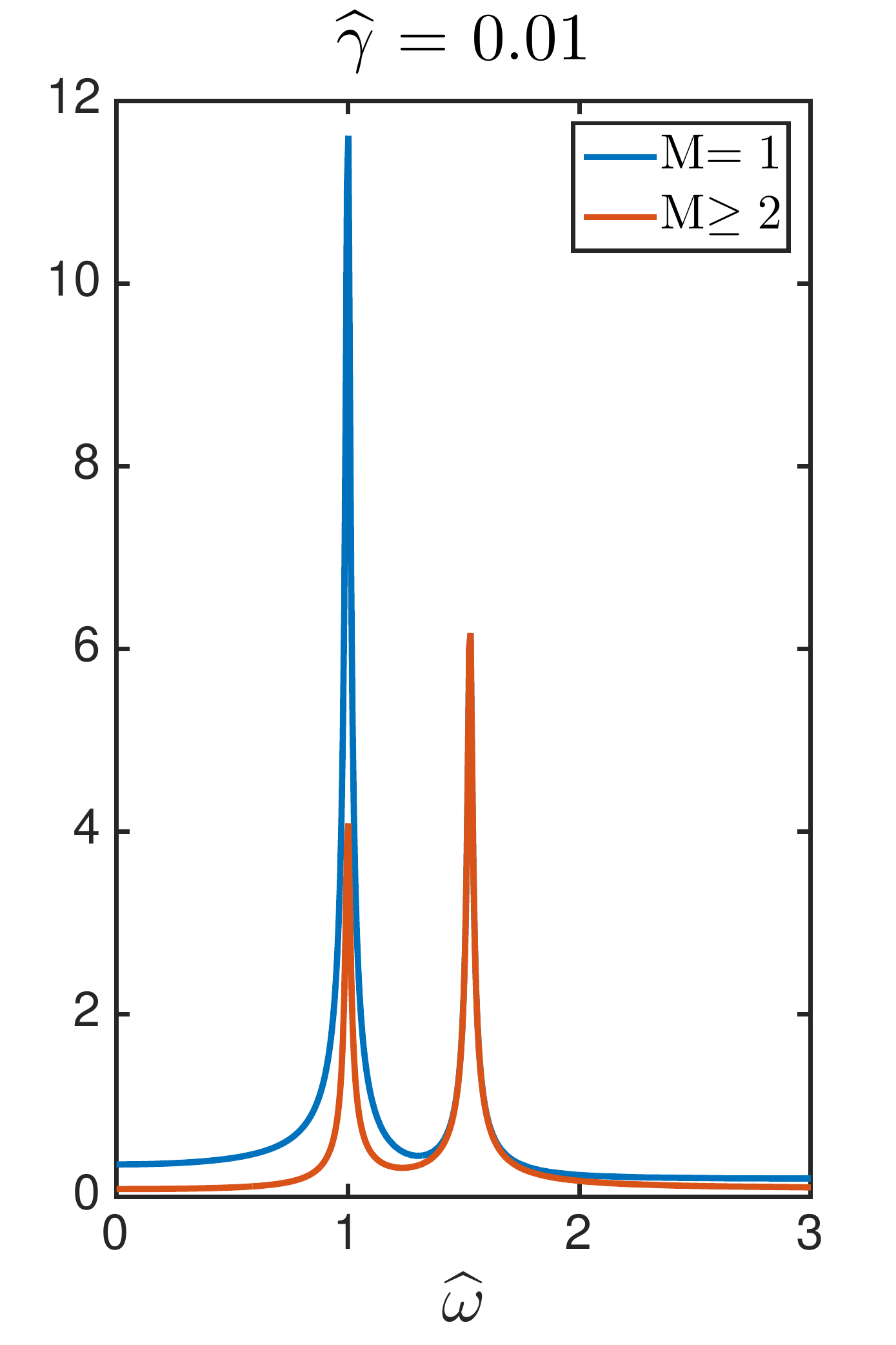}
	\includegraphics[scale= 0.28] {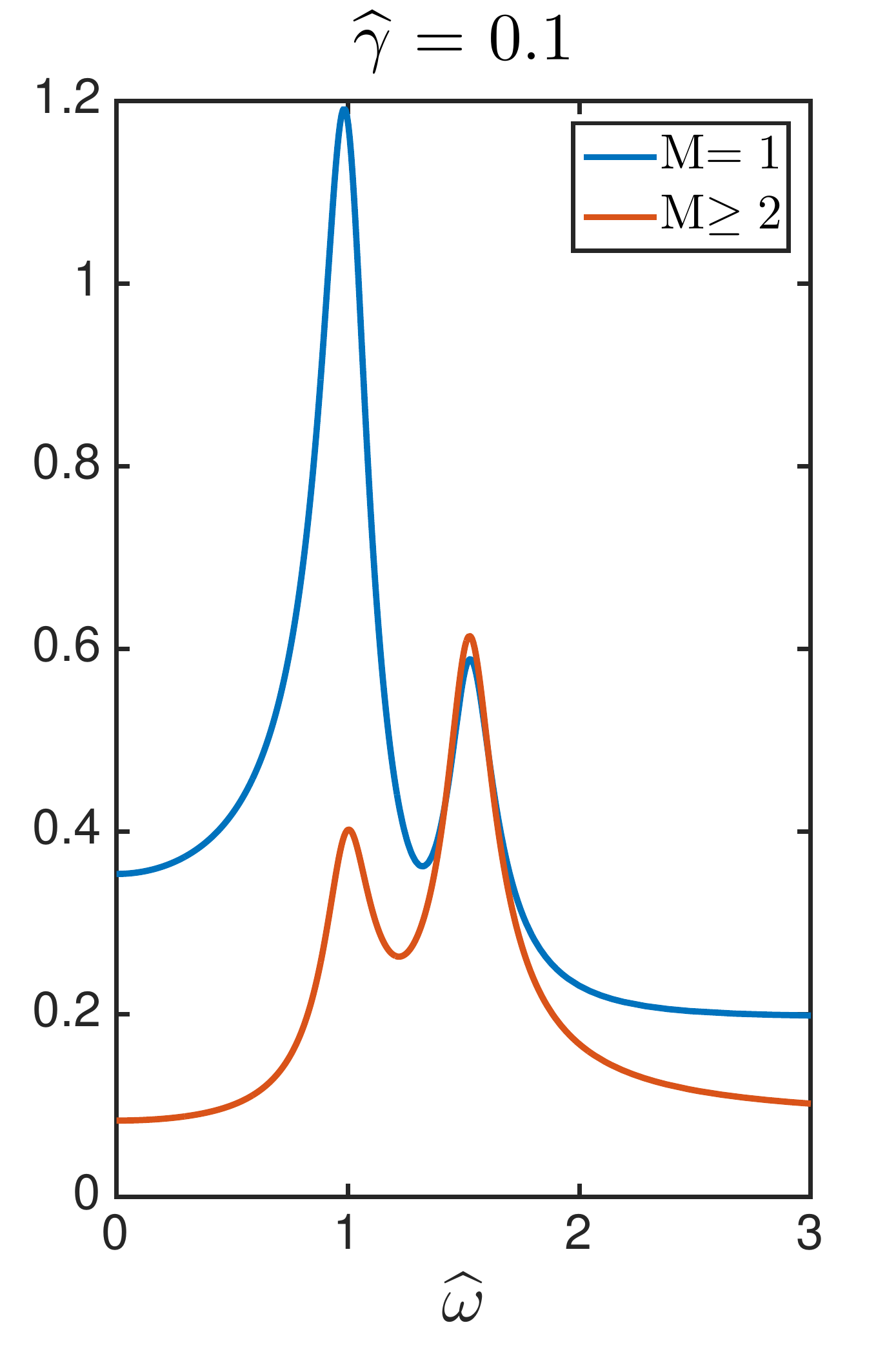}
	\includegraphics[scale= 0.28] {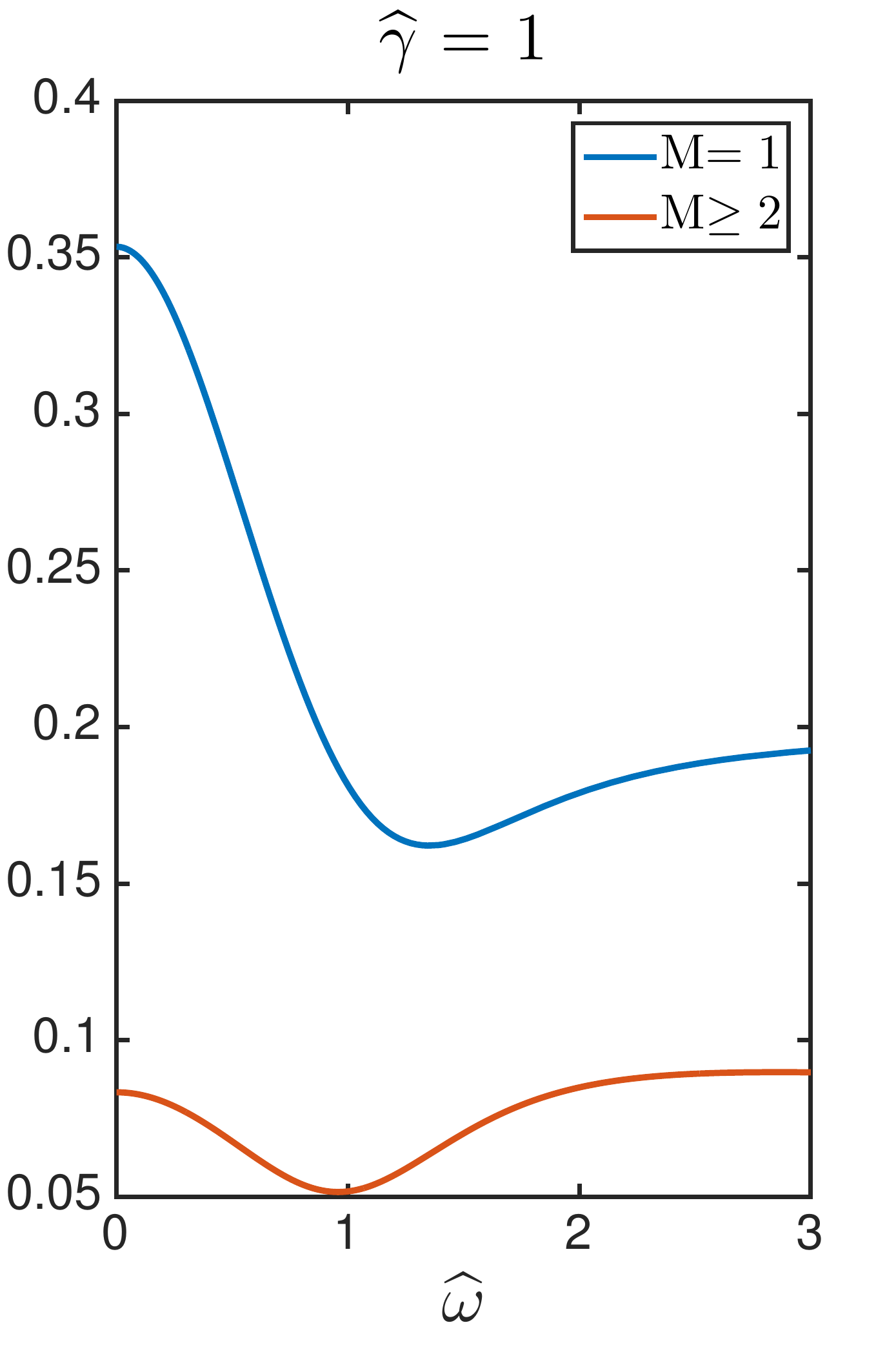}
	\caption{Absolute value of coefficients of leading error terms in \eqref{eq:disp_FD_TP2} (denoted by $C$) for the $(2,2M)$ trapezoidal FDTD scheme.  }
	\label{Fig:coeff2}
\end{figure}

\subsection{Comparison among fully discrete FDTD schemes}

Here, we will present comparisons of the relative phase error  for both the leap-frog and trapezoidal FDTD schemes using the parameters values fixed as in \eqref{eq:parameter}. For the fully discrete schemes, $\omega_{1}\Delta t$ and $\omega_{1}h$ are needed to determine $k^{*}_{\text{FD},2M}$. As shown in \cite{bokil2017energy, bokil2018high}, the schemes based on the trapezoidal rule are unconditionally stable, while the leap-frog schemes are conditionally stable, with the stability condition  as $\nu \leq \nu^{2M}_{max}$, with $\nu_{max}^{2M}$ defined as the largest CFL number of the $(2,2M)$ leap-frog-FD scheme, given by the formula \cite{bokil2018high, bokil2012}
\begin{align}
\label{eq:CFL_max}
\displaystyle 
\nu_{max}^{2M} = \frac{1}{\displaystyle \sum_{p=1}^{M} \frac{[(2p-3)!!]^2}{(2p-1)!}}.
\end{align}
We note that as $M$ increases, $\nu_{max}^{2M}$ decreases but is bounded from below by $\nu_{max}^{\infty}=2/\pi$, i.e. in the limiting case ($M\rightarrow\infty$), $\nu_{max}^{2M}$ approaches $2/\pi$ \cite{bokil2012}.

First, we will consider the schemes with  a normalized CFL number $\ds \nu/\nu^{2M}_{max} = 0.7$   for both types of temporal discretizations. Relative phase errors are plotted in the range $\WHO\in[0,3]$. 
In Figure \ref{Fig:Phase_Error_FD_fully1}, we show errors of LF$(2,2M)$ and TP$(2,2M)$    with $W_1 = \pi/30$. The fully discrete schemes do give two peaks near $\WHO=1$ and $\WHO=\sqrt{\epsilon_{s}/\epsilon_{\infty}}$. As seen in Section \ref{time}, the phase errors for schemes based on semi-discretizations in time have two peaks in this range, while only one peak in observed for the semi-discrete spatial schemes as seen in Section \ref{semi}. Thus, it is reasonable to believe that the second peak results from time discretization, while the first one is associated with both space and time discretization. 
Comparing FDTD schemes with the same time discretization, phase error for the scheme with $M=2$ is smaller than that of the second order scheme for $M=1$. However, there is no significant difference among the phase errors with $M\geq2$  indicating that dispersion errors are dominated by time discretizations  when $M\geq2$. These observations   are consistent with our analysis. 
On the other hand, difference in phase error plots between LF$(2,2M)$ and TP$(2,2M)$ is similar to the results obtained for the semi-discrete in time schemes as seen in the second plot of Figure \ref{Fig:semi_time0}. 

\begin{figure}[h]
	\centering
	\includegraphics[scale= 0.3] {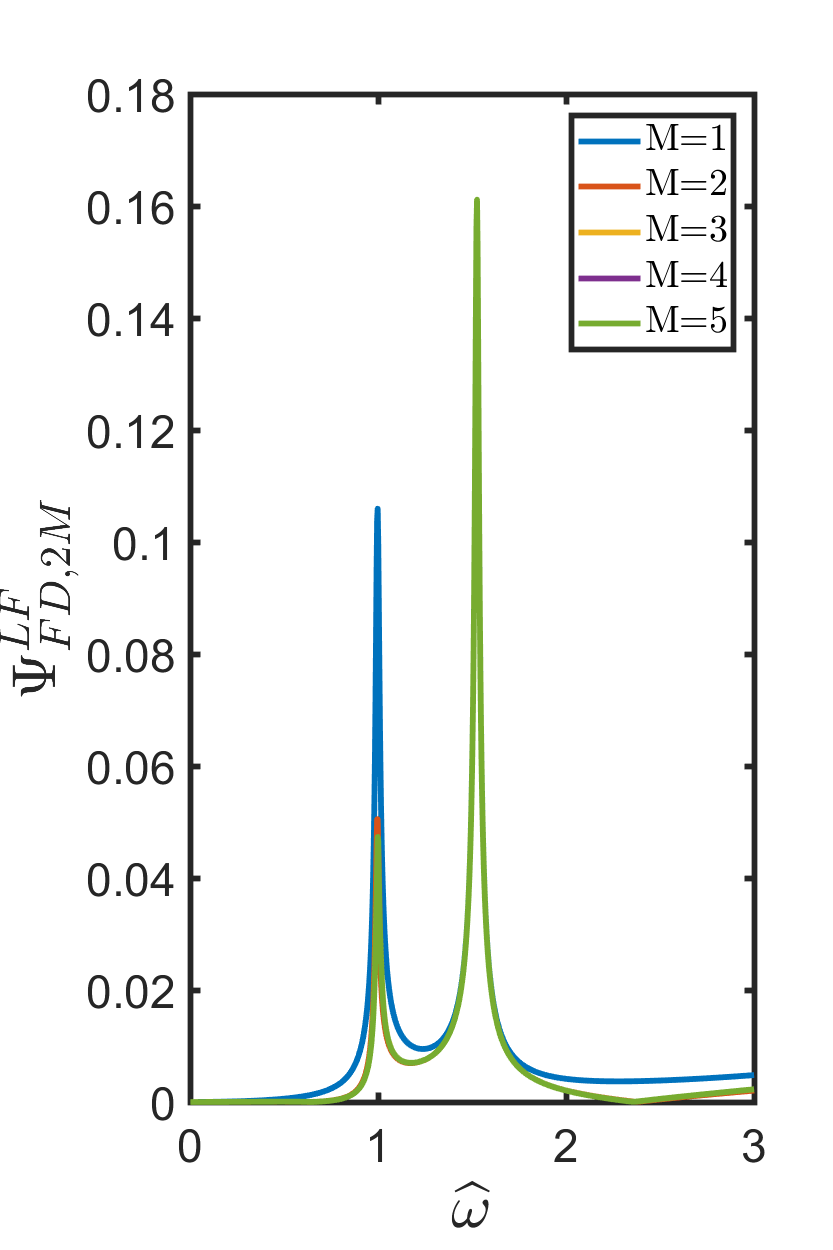}
	\includegraphics[scale= 0.3] {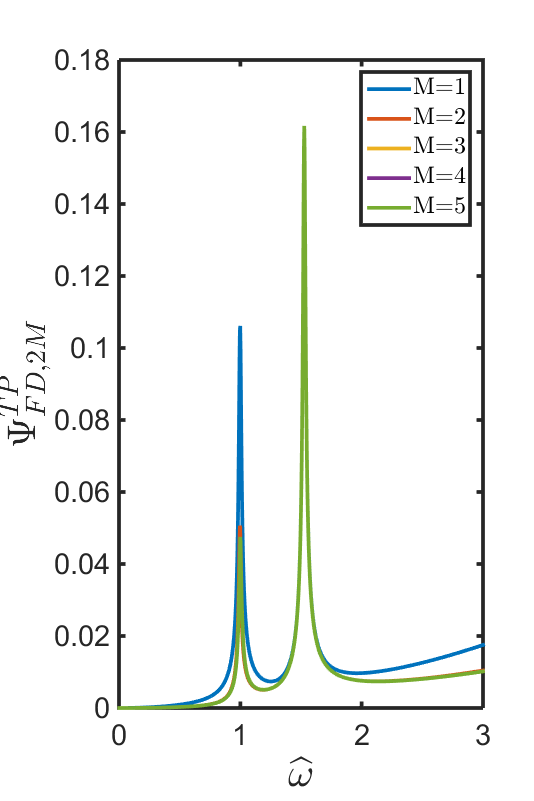}
	\caption{The relative phase error of fully discrete FDTD schemes for the physical modes with $\nu/\nu_{max}^{2M}=0.7$ and $W_1=\pi/30$. Left: the leap-frog scheme; right: the trapezoidal scheme.}
	\label{Fig:Phase_Error_FD_fully1}
\end{figure}

In the second experiment, we will consider the fully discrete trapezoidal FDTD scheme with various CFL numbers. We give the contour plots of the dispersion error at $\WHO=1$ in Figure \ref{Fig:Phase_Error_FD_fully2}, with $W_{1}\in[0.05,0.3]$ and $\omega_{1} h \in[0.01,0.1]$. 
 Here, the vertical coordinate is $W=\WHO\,W_{1}$  and the horizontal coordinate is $\ds |K| =|\WHO\sqrt{\epsilon(\WHO;\mathbf{p})} \, \omega_{1} h|$.  
In this coordinate system, for the range of values considered, the dispersion error of TP(2,2) can be improved by both taking smaller time steps and/or refining the spatial grid. With  fixed time step and spatial grid, we can also reduce the phase error by increasing the scheme to fourth order.
The contour lines in Figure \ref{Fig:Phase_Error_FD_fully2} of higher order ($M=2, 3$) schemes are horizontal, and the contours for TP(2,4) and TP(2,6) have no visible difference. Neither decreasing space mesh size nor increasing spatial order can reduce the phase error, which also illustrates the dominant role of temporal errors.

Both our analysis and figures demonstrate that FDTD schemes with  $M\geq3$ do not improve the phase error of fully discrete schemes significantly beyond that achieved for $M=2$. Hence, LF$(2,4)$ and TP$(2,4)$ seem to be the ``best'' schemes to work with from this perspective for most parameter choices (except for materials with large loss, or low-loss materials with certain range of frequencies as shown in Figures \ref{Fig:coeff} and \ref{Fig:coeff2}).

\begin{figure}
	\centering
	\includegraphics[scale= 0.23] {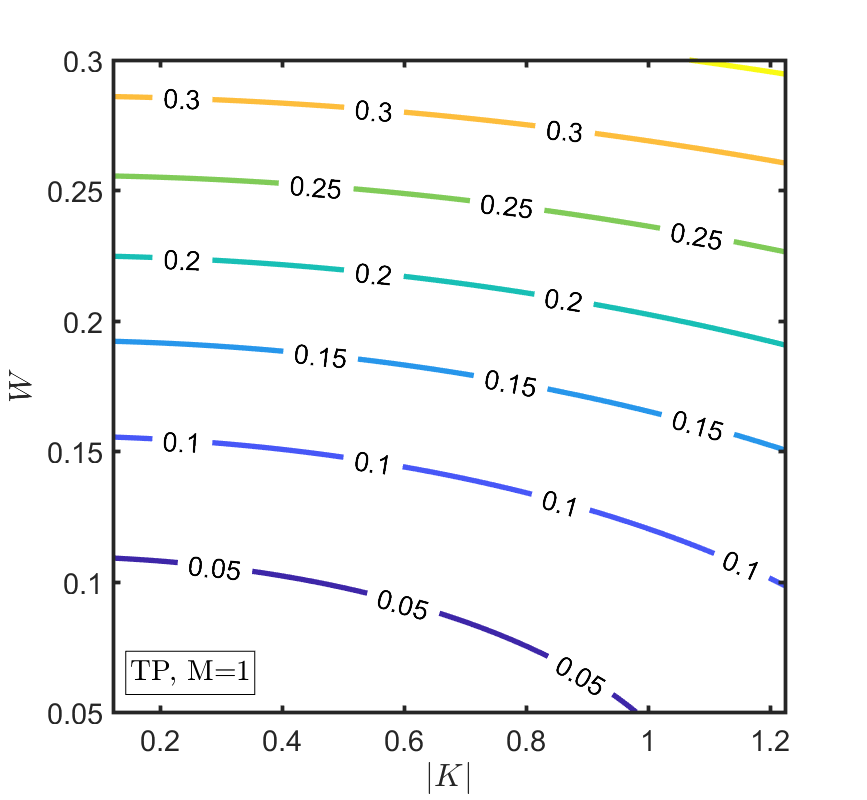} 
	\includegraphics[scale= 0.23] {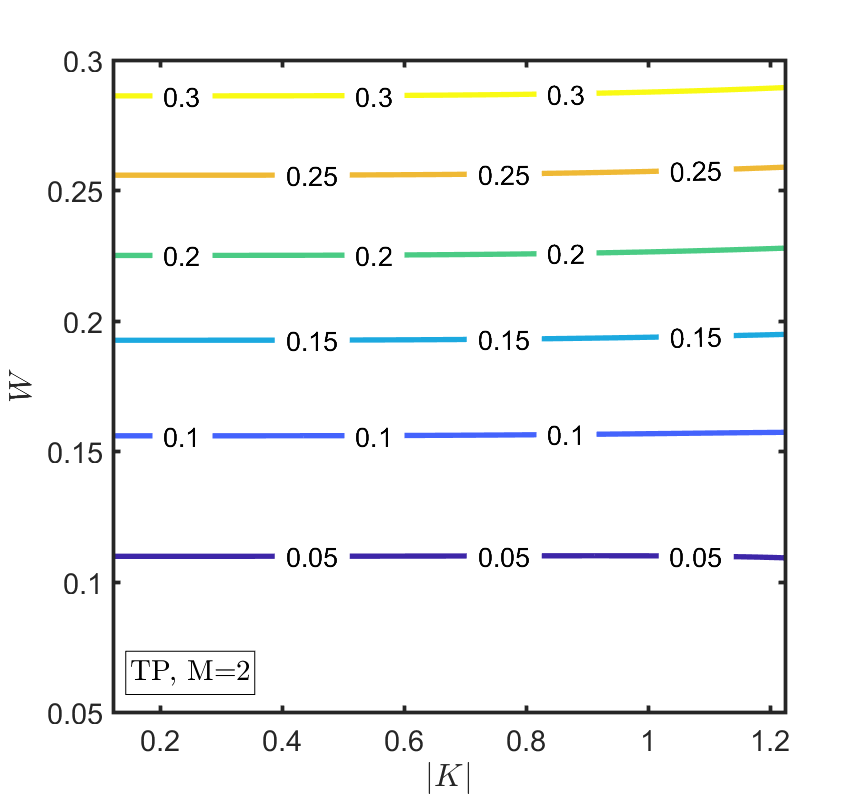} 
	\includegraphics[scale= 0.23] {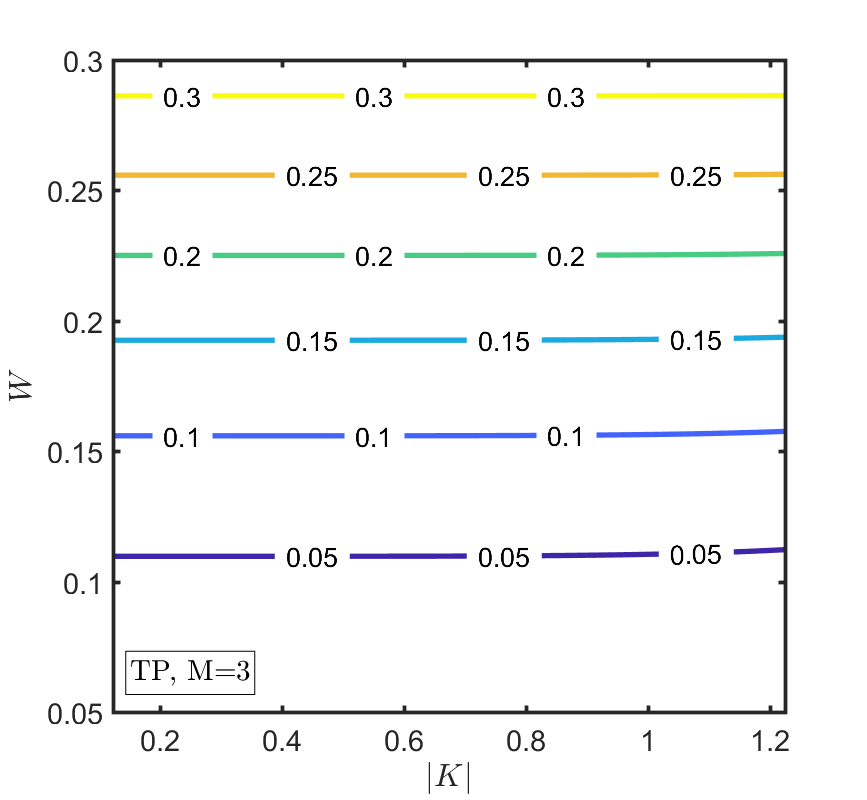} 	
	\caption{The contour plots of relative phase error of fully discrete FD schemes for the physical modes with trapezoidal scheme. $\WH{\omega}=1$. }
	\label{Fig:Phase_Error_FD_fully2}
\end{figure}

\section{Spatial Discretization: Discontinuous Galerkin Schemes}
\label{semidg}

In this section, similar to Section \ref{semifdtd}, we perform semi-discrete and fully discrete analysis when the spatial variable is discretized by DG schemes.
Here, we define the grid as
$x_{j+1/2}=(j+1/2)h,  \, j\in\mathbb{Z},$
with uniform mesh size $h$. Let $I_j=[x_{j-1/2},x_{j+1/2}]$ be a mesh element, with $x_j=\frac{1}{2}(x_{j-\frac{1}{2}}+x_{j+\frac{1}{2}})$ as its center. 
We now define a finite dimensional discrete space,
\begin{equation}\label{ldg:vhk}
V_h^p=\{v : v|_{I_j} \in P^p(I_j), \, j\in \mathbb{Z} \},
\end{equation}
which consists of piecewise polynomials of degree up to $p$ with respect to the mesh.
For any $v\in V_h^p$, let $v^+_{j+\frac{1}{2}}$ (resp. $v^-_{j+\frac{1}{2}}$) denote the limit value of $v$ at $x_{j+ \frac{1}{2}}$ from the element $I_{j+1}$ (resp. $I_j$), $[v]_{j+\frac{1}{2}}=v^+_{j+\frac{1}{2}} - v^-_{j+\frac{1}{2}}$  denote its jump, and $\{v\}_{j+\frac{1}{2}}=\frac{1}{2}(v^+_{j+\frac{1}{2}}+v^-_{j+\frac{1}{2}})$ be its average, again at $x_{j+\frac{1}{2}}$.

The semi-discrete DG method for the system \eqref{eq:sys} is formulated as follows: find $H_h(t,\cdot)$, $D_h(t,\cdot)$, $E_h(t,\cdot)$,  $P_h(t,\cdot)$, $J_h(t,\cdot)\in V_h^p$, such that  $\forall j,$
\begin{subequations}
	\label{eq:1d:sch}
	\begin{align}
	&\int_{I_j}\dd{t}{H_h}\phi dx +\int_{I_j} E_h\dd{x}\phi dx- (\widehat{E_h}\phi^-)_{j+1/2}
	+ (\widehat{E_h}\phi^+)_{j-1/2}=0,\quad \forall \phi\in V_h^p, \label{eq:sch1}\\
	&\int_{I_j}\dd{t}{D_h} \phi dx +\int_{I_j} H_h\dd{x}\phi dx- (\widetilde{H_h}\phi^-)_{j+1/2}
	+ (\widetilde{H_h}\phi^+)_{j-1/2}=0,\quad \forall \phi\in V_h^p, \label{eq:sch2}\\
	& \dd{t}{P_h}=J_h,  \label{eq:sch3}\\
	& \dd{t}{J_h}= -2\gamma J_h -\omega_1^2P_h+\omega_p^2E_h, \label{eq:sch4}\\
	& D_{h}=\epsilon_{\infty} E_{h} + P_{h}. \label{eq:sch5}
	\end{align}
\end{subequations}
Both the terms $\widehat{E_h}$ and $\widetilde{H_h}$ are numerical fluxes, and they are single-valued functions
defined on the cell interfaces and should be designed to ensure numerical stability and accuracy. In
the present work, we consider the following general form of numerical fluxes similar to the ones introduced in \cite{cheng2017L2},
\begin{subequations}
	\label{eq:flux}
	\begin{align}
	& \widehat{E_{h}} = \{E_{h}\} + \alpha[E_{h}] + \beta_1[H_{h}], \label{eq:flux_E} \\
	& \widetilde{H_h} = \{H_{h}\} - \alpha[H_{h}] + \beta_2[E_{h}]. \label{eq:flux_H}
	\end{align}
\end{subequations}
Here, $\alpha$, $\beta_1$ and $\beta_2$ are constants that are taken to be $\mathcal{O}(1)$, with $\beta_1$ and $\beta_2$ being non-negative for stability. For example, if we take $\alpha=\beta_1=\beta_2=0$, we have the central flux
\begin{align}
\label{eq:flux:c}
\widehat{E_{h}}=\{E_{h}\}, \quad \widetilde{H_h}=\{H_{h}\};
\end{align}
if $\alpha=\pm1/2$ and $\beta_1=\beta_2=0$, we have the alternating flux
\begin{align}
\label{eq:flux:a}
\widehat{E_{h}}=E_{h}^{-}, \quad\widetilde{H_h}=H_{h}^{+}; \quad \text{or} \quad
\widehat{E_{h}}=E_{h}^{+}, \quad\widetilde{H_h}=H_{h}^{-};
\end{align}
and if $\alpha=0$, $\beta_{1}=1/(2\sqrt{\epsilon_\infty})$, and $\beta_{2}=\sqrt{\epsilon_{\infty}}/2$, we have the ``upwind" flux for the   Maxwell's equations neglecting Lorentz dispersion
\begin{align}
\label{eq:flux:u}
\widehat{E_{h}}=\{E_{h}\}+\frac{1}{2\sqrt{\epsilon_{\infty}}}[H_{h}], \quad \widetilde{H_h}=\{H_{h}\}+\frac{\sqrt{\epsilon_{\infty}}}{2}[E_{h}].
\end{align}

In particular, when using the alternating flux with $p=0$, it is easy to check that the DG scheme is equivalent to FD2 discretization.

\subsection{Semi-discrete in space dispersion analysis}

In order to carry out the dispersion analysis, 
for piecewise $P^p$ polynomials, we  choose the basis functions on each element $I_{j}$ to be Lagrange polynomials $\phi_m^j(x)=\phi_{m}(\xi)$, $\xi=(x-x_{j})/h$:
$$\phi_{m}(\xi_{n})=\delta_{m,n}=
\left\{
\begin{array}{ll}
1, & n=m\\
0, & n\neq m\\
\end{array}
\right. ,\quad m,n=0,\ldots,p, \quad
\text{with} \quad 
\xi_{n}=
\left\{
\begin{array}{ll}
0, & p=0,\\
\frac{n}{p}-\frac{1}{2}, & \text{otherwise}.\\
\end{array}
\right. $$
Then, the numerical solution on $I_{j}$ can be written as 
\begin{align}
X_{h}=\sum_{m=0}^{p}\mathcal{X}^{m}_{j}\phi_{m}^j.
\end{align}
Here, $X$ can be $H$, $D$, $E$, $P$ and $J$. In particular, $\mathcal{X}^{m}_{j}$ is the point value of $X_{h}$ at $x_{j}+\xi_{m}h$. We define the vector $\textbf{X}_{j}=[\mathcal{X}^{0}_{j}, \cdots, \mathcal{X}^{p}_{j} ]^{T}$. Then, the semi-discrete scheme \eqref{eq:1d:sch} can be transformed into
\begin{subequations}
	\label{eq:DG_matrix}
	\begin{align}
	& \mc{M} (\textbf{H}_{j})_{t} +\mc{V}\textbf{E}_{j} +\mc{Q}_{-1}(\alpha)\textbf{E}_{j-1} +\mc{Q}_{0}(\alpha)\textbf{E}_{j} +\mc{Q}_{1}(\alpha)\textbf{E}_{j+1} +\mc{S}_{-1}(\beta_{1})\textbf{H}_{j-1} +\mc{S}_{0}(\beta_{1})\textbf{H}_{j} +\mc{S}_{1}(\beta_{1})\textbf{H}_{j+1} = 0\\
	& \mc{M} (\textbf{D}_{j})_{t} + \mc{V}\textbf{H}_{j} +\mc{Q}_{-1}(-\alpha)\textbf{H}_{j-1} +\mc{Q}_{0}(-\alpha)\textbf{H}_{j} +\mc{Q}_{1}(-\alpha)\textbf{H}_{j+1} +\mc{S}_{-1}(\beta_{2})\textbf{E}_{j-1} +\mc{S}_{0}(\beta_{2})\textbf{E}_{j} +\mc{S}_{1}(\beta_{2})\textbf{E}_{j+1} = 0 \\
	& (\textbf{P}_{j})_{t}=\textbf{J}_{j}\\
	& (\textbf{J}_{j})_{t}=-2\gamma\textbf{J}_{j} -\omega_{1}^{2} \textbf{P}_{j} +\omega_{p}^{2}\textbf{E}_{j}\\
	& \textbf{D}_{j}=\epsilon_{\infty}\textbf{E}_{j} +\textbf{P}_{j}
	\end{align}
\end{subequations}
where, $\mc{M}$, $\mc{V}$, $\mc{Q}_{*}$ and $\mc{S}_{*}$ are $(p+1)\times(p+1)$ matrices, with $*$ being $\pm1$ or 0. $\mc{M}$ is the element mass matrix, 
$$(\mathcal{M})_{m,n}=h \int_{-1/2}^{1/2} \phi_{m}(\xi) \phi_{n}(\xi) d\xi.$$
$\mathcal{V}$ is the element stiffness matrix,
$$(\mathcal{V})_{m,n}=\int_{-1/2}^{1/2} \phi'_{m}(\xi) \phi_{n}(\xi) d\xi.$$
 $\mathcal{Q}_{*}$ and $\mathcal{S}_{*}$ are related to the numerical flux, 
 \begin{align*}
 (\mathcal{Q}_{-1}(z))_{m,n}=\left\{\begin{array}{ll}
 \frac{1}{2}-z, & m=0,n=p,\\
 0, & \text{otherwise},\\
 \end{array}
 \right. && 
 (\mathcal{S}_{-1}(z))_{m,n} =\left\{\begin{array}{ll}
 -z, & m=0,n=p,\\
 0, & \text{otherwise},\\
 \end{array}
 \right. 
\end{align*}
\begin{align*}
 (\mathcal{Q}_{1}(z))_{m,n} =\left\{\begin{array}{ll}
 -\frac{1}{2}-z, & m=p,n=0,\\
 0, & \text{otherwise},\\
 \end{array}
 \right. &&
 (\mathcal{S}_{1}(z))_{m,n} =\left\{\begin{array}{ll}
 -z, & m=p,n=0,\\
 0, & \text{otherwise},\\
 \end{array}
 \right.
 \end{align*}
 \begin{align*}
(\mathcal{Q}_{0}(z))_{m,n} =\left\{\begin{array}{ll}
\frac{1}{2}+z, & m=n=0,\\
-\frac{1}{2}+z, & m=n=p,\\
0, & \text{otherwise},\\
\end{array}
\right.  &&
(\mathcal{S}_{0}(z))_{m,n} =\left\{\begin{array}{ll}
z, & m=n=0,\\
z, & m=n=p,\\
0, & \text{otherwise}.\\
\end{array}
\right.
 \end{align*}

Following standard practice for dispersion analysis for DG schemes, we  formulate the linear system resulting from \eqref{eq:DG_matrix}. With the assumption that 
$$\mathcal{X}^{m}_{j}(t)=X_{0}^{m}e^{i (k_{\text{DG},p} j h -\omega t)}, \quad m=0, \ldots, p,$$
we can obtain  
\begin{align}
\mathcal{A}_{\text{DG},p}\textbf{U}_{\text{DG}} = \textbf{0}, 
\end{align}
with $\textbf{U}_{\text{DG}}=[H^{0}_{0},\cdots,H^{p}_{0}, E^{0}_{0},\cdots,E^{p}_{0}, P^{0}_{0},\cdots, P^{p}_{0}, J^{0}_{0},\cdots,J^{p}_{0}]^T$, and
\begin{align}
\mathcal{A}_{\text{DG},p}=\begin{pmatrix}
	-i\omega \mc{M}+\mc{R} & \mc{P} & 0  & 0\\
	\widetilde{\mc{P}} & -i\omega \epsilon_{\infty}\mc{M}+\widetilde{\mc{R}} & -i\omega\mc{M} & 0\\
	0 & 0 & -i\omega \mathcal{I} & -\mathcal{I}\\
	0 & -\omega_{p}^{2}\mathcal{I} & \omega_{1}^{2}\mathcal{I} & (-i\omega+2\gamma) \mathcal{I} \\
	\end{pmatrix}.
\end{align}
Here, $\mc{I}$ is the $(p+1)\times(p+1)$ identity matrix, and
\begin{align*}
\displaystyle 
& \mc{P}= \mc{V} +\mc{Q}_{-1}(\alpha)e^{-i k_{\text{DG},p} h} +\mc{Q}_{0}(\alpha) +\mc{Q}_{1}(\alpha)e^{i k_{\text{DG},p} h} , && 
\mc{R}= \mc{S}_{-1}(\beta_1)e^{-i k_{\text{DG},p} h} +\mc{S}_{0}(\beta_1) +\mc{S}_{1}(\beta_1)e^{i k_{\text{DG},p} h} ,\\  
& \widetilde{\mc{P}}= \mc{V} +\mc{Q}_{-1}(-\alpha)e^{-i k_{\text{DG},p} h} +\mc{Q}_{0}(-\alpha) +\mc{Q}_{1}(-\alpha)e^{i k_{\text{DG},p} h} , && 
\widetilde{\mc{R}}=  \mc{S}_{-1}(\beta_2)e^{-i k_{\text{DG},p} h} +\mc{S}_{0}(\beta_2) +\mc{S}_{1}(\beta_2)e^{i k_{\text{DG},p} h} .
\end{align*}
Then, we can derive the corresponding numerical dispersion relation for the DG
methods by solving $\det(\mc{A}_{\text{DG},p})=0$.

Due to the dependence on the flux parameters $\alpha, \beta_1, \beta_2$ and the coupling of the local degrees of freedom, the dispersion relation is more complicated than that of the FD scheme.  We have the following theorem, which characterizes the dispersion relation satisfied by $k_{\text{DG},p}$.

\begin{theorem} \label{thm1}
	Consider the DG scheme \eqref{eq:1d:sch} with $V^{p}_{h}$ as the discrete space, then $k_{\text{DG},p}$ are the roots of a quartic polynomial equation in terms of $\xi=e^{ik_{\text{DG},p}h}$ if $\alpha^2+\beta_{1}\beta_{2}\ne1/4$, and $k_{\text{DG},p}$ are the roots of a quadratic polynomial equation in terms of $\xi=e^{ik_{\text{DG},p}h}$   when $\alpha^2+\beta_{1}\beta_{2}=1/4.$
	\end{theorem}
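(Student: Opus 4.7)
The plan is to reduce the $4(p+1)\times 4(p+1)$ determinantal condition $\det(\mathcal{A}_{\text{DG},p}) = 0$ to a small determinant whose polynomial structure in $\xi$ can be read off directly. First, I would eliminate the local (ODE) unknowns $P_0$ and $J_0$ from the system. The lower $2(p+1)\times 4(p+1)$ block of $\mathcal{A}_{\text{DG},p}$ involves no neighbor coupling, so it decouples element-wise and yields the pointwise relation $D_0 = \epsilon(\WH{\omega};\mathbf{p})\,E_0$, exactly as in the continuous derivation of Section \ref{dispersion}. Substituting into the second block row reduces the problem to $\det \mathcal{M}_{\text{red}}(\xi) = 0$, where
\begin{equation*}
\mathcal{M}_{\text{red}}(\xi) = \begin{pmatrix} -i\omega\,\mathcal{M} + \mathcal{R} & \mathcal{V} + \mathcal{B}_P \\ \mathcal{V} + \mathcal{B}_{\widetilde P} & -i\omega\,\epsilon(\WH{\omega};\mathbf{p})\,\mathcal{M} + \widetilde{\mathcal{R}} \end{pmatrix},
\end{equation*}
with $\mathcal{B}_P := \mathcal{P} - \mathcal{V}$ and $\mathcal{B}_{\widetilde P} := \widetilde{\mathcal{P}} - \mathcal{V}$. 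All $\xi$-dependence sits in the four ``boundary'' blocks $\mathcal{R}, \widetilde{\mathcal{R}}, \mathcal{B}_P, \mathcal{B}_{\widetilde P}$.

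A direct computation from \eqref{eq:flux} reveals that these four matrices share a common rank-one structure: setting $u := e_0 - \xi e_p$, $v := e_0 - \xi^{-1} e_p$, $w := e_0 + \xi^{-1} e_p$, one finds $\mathcal{R} = \beta_1 u v^T$, $\widetilde{\mathcal{R}} = \beta_2 u v^T$, $\mathcal{B}_P = u\bigl(\tfrac{1}{2}w + \alpha v\bigr)^T$, and $\mathcal{B}_{\widetilde P} = u\bigl(\tfrac{1}{2}w - \alpha v\bigr)^T$. Hence $\mathcal{M}_{\text{red}}(\xi) = \mathcal{M}_0 + X(\xi)\,Y(\xi)^T$, where $\mathcal{M}_0$ collects the $\xi$-independent bulk terms and
\begin{equation*}
X(\xi) = \begin{pmatrix} u & 0 \\ 0 & u \end{pmatrix}, \qquad Y(\xi)^T = \begin{pmatrix} \beta_1 v^T & \alpha v^T + \tfrac{1}{2} w^T \\ -\alpha v^T + \tfrac{1}{2} w^T & \beta_2 v^T \end{pmatrix}
\end{equation*}
are of sizes $2(p+1)\times 2$ and $2\times 2(p+1)$, respectively. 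Since this perturbation has rank at most two, Sylvester's determinant identity yields
\begin{equation*}
\det \mathcal{M}_{\text{red}}(\xi) = \det(\mathcal{M}_0)\,\det\!\bigl(I_2 + Y(\xi)^T \mathcal{M}_0^{-1} X(\xi)\bigr),
\end{equation*}
valid whenever $\mathcal{M}_0$ is invertible (and otherwise by a limiting argument). Each entry of the $2\times 2$ matrix $Y^T \mathcal{M}_0^{-1} X$ is a Laurent expression $c_0 + c_+\,\xi + c_-\,\xi^{-1}$, whose coefficients depend on $\alpha, \beta_1, \beta_2$ and on the four corner entries $M_{00}, M_{0p}, M_{p0}, M_{pp}$ of the four $(p+1)\times(p+1)$ blocks of $\mathcal{M}_0^{-1}$, but not on $\xi$. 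Expanding the $2\times 2$ determinant therefore produces a Laurent polynomial supported on $\{\xi^{-2}, \xi^{-1}, 1, \xi, \xi^2\}$, so that $\xi^2 \det(I_2 + Y^T \mathcal{M}_0^{-1} X)$ is a polynomial of degree at most four in $\xi$.

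To identify the degenerate case, I would compute the coefficient of $\xi^2$ in $\det(I_2 + Y^T \mathcal{M}_0^{-1} X)$ explicitly. Using that the $\xi$-coefficient of both $v^T M u$ and $w^T M u$ equals $-M_{0p}$ for any $(p+1)\times(p+1)$ matrix $M$, and writing $A, B, C, D$ for the four $(p+1)\times(p+1)$ blocks of $\mathcal{M}_0^{-1}$, a short bookkeeping of the two $2\times 2$ cross-products reveals the clean factorization
\begin{equation*}
[\xi^2]\det\!\bigl(I_2 + Y^T \mathcal{M}_0^{-1} X\bigr) = \bigl(\alpha^2 + \beta_1\beta_2 - \tfrac{1}{4}\bigr)\bigl(A_{0p} D_{0p} - B_{0p} C_{0p}\bigr),
\end{equation*}
with an entirely analogous expression (involving the $p0$ entries) for the $\xi^{-2}$ coefficient. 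Hence when $\alpha^2 + \beta_1 \beta_2 \ne \tfrac{1}{4}$, $k_{\text{DG},p}$ satisfies a polynomial equation of degree at most four in $\xi$; when $\alpha^2 + \beta_1 \beta_2 = \tfrac{1}{4}$, both leading coefficients vanish simultaneously, so $\xi^2 \det(I_2 + Y^T \mathcal{M}_0^{-1} X) = \xi\, q(\xi)$ for some quadratic $q$, and since $\xi = e^{ik_{\text{DG},p} h} \neq 0$, the nontrivial roots satisfy $q(\xi) = 0$. The principal obstacle is the algebraic bookkeeping needed to isolate the factor $\alpha^2 + \beta_1\beta_2 - \tfrac{1}{4}$ in both leading coefficients; the remainder of the argument is standard linear algebra.
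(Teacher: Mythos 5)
Your proof is correct, but it takes a genuinely different route from the paper's. The paper argues directly on the full $4(p+1)\times 4(p+1)$ determinant: it observes that $e^{\pm ik_{\text{DG},p}h}$ can appear only in a handful of entries (confined to two rows/columns), which bounds the Laurent support of $\det(\mc{A}_{\text{DG},p})$ by $\{\xi^{-2},\dots,\xi^{2}\}$, and for the degenerate case $\alpha^2+\beta_1\beta_2=1/4$ it splits into two sub-cases --- alternating-type fluxes with $\beta_1\beta_2=0$, where the $\xi$-entries sit in a single column, and the case $\beta_1,\beta_2\neq 0$, handled by an explicit unimodular row-operation matrix $\mc{D}$ built from the identity $(1/2+\alpha)/\beta_1=\beta_2/(1/2-\alpha)$. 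You instead eliminate the ODE unknowns by a Schur complement (correctly recovering $D_0=\epsilon(\WH{\omega};\mathbf{p})E_0$), recognize the $\xi$-dependent part of the reduced $2(p+1)\times 2(p+1)$ matrix as a rank-two update $XY^{T}$, and apply the matrix determinant lemma to land on a $2\times 2$ determinant whose Laurent structure in $\xi$ is transparent. I verified the rank-one factorizations of $\mc{R}$, $\widetilde{\mc{R}}$, $\mc{P}-\mc{V}$, $\widetilde{\mc{P}}-\mc{V}$ against the definitions of $\mc{Q}_{*}$ and $\mc{S}_{*}$, and the bookkeeping for the leading coefficient: the factorization $\bigl(\alpha^2+\beta_1\beta_2-\tfrac14\bigr)\bigl(A_{0p}D_{0p}-B_{0p}C_{0p}\bigr)$ is right, and the $\xi^{-2}$ coefficient works out the same way with the $p0$ entries. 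What your approach buys is uniformity (no case split on the flux parameters) and sharper information: it shows not only that the quartic collapses to a quadratic when $\alpha^2+\beta_1\beta_2=1/4$, but also that the equation is genuinely of degree four otherwise, provided $\det(\mc{M}_0)\neq 0$ and $A_{0p}D_{0p}\neq B_{0p}C_{0p}$ --- something the paper's argument does not establish for $p\geq 1$. Two points should be made explicit in a final write-up: both the Schur elimination of $(P_0,J_0)$ and the use of $\mc{M}_0^{-1}$ require invertibility that holds only for generic $\omega$, so the identical vanishing of the $\xi^{\pm 2}$ coefficients must be concluded by the polynomiality/continuity argument you allude to; and the paper's explicit $p=0$ computation \eqref{eq:p0}, whose $\xi^{\pm 2}$ coefficient is exactly $-1+4(\alpha^2+\beta_1\beta_2)$, provides a useful sanity check of your leading-coefficient formula.
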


\begin{proof} 
	For $p=0$, we can obtain  
		\begin{align}
	\label{eq:p0}
	\det(\mc{A}_{\text{DG},0})
	=&	\left(e^{-2i k_{\text{DG},0} h}+e^{2i k_{\text{DG},0} h}\right)  \left(-1+4(\alpha^2+\beta_{1}\beta_{2})\right) \nonumber\\
	& + 4 \left(e^{-i k_{\text{DG},0} h}+e^{i k_{\text{DG},0} h}\right) \left(-4 (\alpha^{2} +\beta_{1}\beta_{2}) + i  \left( \beta_{1}\, \epsilon(\WH{\omega}; \mathbf{p}) +\beta_{2} \right) \omega h \right) \nonumber\\
	&  +  2 \left(1 + 12 (\alpha^{2} +\beta_{1}\beta_{2}) - 4 i  \left( \beta_{1}\, \epsilon(\WH{\omega}; \mathbf{p}) +\beta_{2} \right) \omega h - 
	2  (k^{\text{ex}} h)^2 \right).
	\end{align}
	Hence, the conclusion is straightforward.

	For $p\geq1$, note that the term $e^{ik_{\text{DG},p}h}$ only appears in $\mc{P}_{p0}$, $\widetilde{\mc{P}}_{p0}$, $\mc{R}_{p0}$ and $\widetilde{\mc{R}}_{p0}$, and $e^{-ik_{\text{DG},p}h}$ only appears in $\mc{P}_{0p}$, $\widetilde{\mc{P}}_{0p}$, $\mc{R}_{0p}$ and $\widetilde{\mc{R}}_{0p}$. Hence, by the properties of determinant under row or colume operations, $\det(\mc{A}_{\text{DG},p})$ is in the form of $C_{0} +C_{1}e^{ik_{\text{DG},p}h} +C_{2} e^{2ik_{\text{DG},p}h} + C_{-1} e^{-ik_{\text{DG},p}h} + C_{-2} e^{-2ik_{\text{DG},p}h}$, where $C_{i}$, $i=-2,-1,0,1,2$, do not depend on $k_{\text{DG},p}$. Hence, $k_{\text{DG},p}$ is the root of $C_{-2} + C_{-1}\xi+C_{0}\xi^2 +C_{1}\xi^3 +C_{2}\xi^4=0 $ with $\xi=e^{ik_{\text{DG},p}h}$. 		
	Furthermore, if $\alpha^2+\beta_{1}\beta_{2}=1/4$, we have the following two cases: 
	\begin{itemize}
		\item Case 1: $\alpha=\pm1/2$ and at least one of $\beta_{1}$ and $\beta_{2}$ is zero. Without loss of generality, we assume $\alpha=1/2$ and $\beta_{1}=0$. It is easy to check that $e^{ik_{\text{DG},p}h}$ can only appear in $\mc{P}_{p0}$, and $\widetilde{\mc{R}}_{p0}$, and $e^{-ik_{\text{DG},p}h}$ can only appear in $\widetilde{\mc{P}}_{0p}$ and $\widetilde{\mc{R}}_{0p}$. Therefore, the determinant $\det(\mc{A}_{\text{DG},p})$ is in the form of $C_{0} +C_{1}e^{ik_{\text{DG},p}h}+ C_{-1} e^{-ik_{\text{DG},p}h}$. 
		
		\item Case 2: $\alpha\neq\pm1/2$, $\beta_{1}\neq0$ and $\beta_{2}\neq0$. Then, all of $\mc{P}$, $\widetilde{\mc{P}}$, $\mc{R}$ and $\widetilde{\mc{R}}$ include $e^{ik_{\text{DG},p}h}$ and $e^{-ik_{\text{DG},p}h}$. However, with the help of the fact that
		$$\frac{1/2+\alpha}{\beta_{1}} = \frac{\beta_{2}}{1/2-\alpha},$$
		we can check that the new matrix $\left(\mc{D}\,\mc{A}_{\text{DG},p}\right)$ only has $e^{-ik_{\text{DG},p}h}$ in its first row and $e^{ik_{\text{DG},p}h}$ in its $(p+1)$-th row, where the matrix $\mc{D}$ is defined as following:
		\begin{align}
		\mc{D}_{m,n}=\left\{ \begin{array}{ll}
		1, & m=n ,\\
		-(1/2+\alpha)/\beta_{1}, & m=p+2, n=1,\\
		(1/2+\alpha)/\beta_{1}, & m=2p+2, n=p+1,\\
		0, & \text{otherwise},\\
		\end{array}
		\right.
		\qquad m, n=1,\cdots 4(p+1).
		\end{align}
		Hence, we can obtain that $\det(\mc{D}\,\mc{A}_{\text{DG},p})$ should be in the form of $C_{0} +C_{1}e^{ik_{\text{DG},p}h}+ C_{-1} e^{-ik_{\text{DG},p}h}$. On the other hand, it is easy to check that $\det(\mc{D})=1$.
		Therefore, the determinant $\det(\mc{A}_{\text{DG},p})=\det(\mc{D}\,\mc{A}_{\text{DG},p})$ . 
	
	\end{itemize}	
	 In both cases, we can reach the conclusion that $k_{\text{DG},p}$ is the root of $C_{0} +C_{1}\xi +C_{2}\xi^2=0 $ with $\xi=e^{ik_{\text{DG},p}h}$.   
\end{proof}

By Theorem \ref{thm1}, we can see that for the DG scheme \eqref{eq:1d:sch} employing the central flux \eqref{eq:flux:c} ($\alpha=\beta_{1}=\beta_{2}=0$), there are four discrete wave numbers $k_{\text{DG},p},$ corresponding to two physical modes and two spurious modes. While for the alternating fluxes \eqref{eq:flux:a} and the upwind flux \eqref{eq:flux:u} ($\alpha^2+\beta_{1}\beta_{2}=1/4$), there are only two discrete wave numbers $k_{\text{DG},p}$, corresponding to the physical modes. This conclusion holds for arbitrary $p.$ Unlike the FD scheme, when we increase the order of the accuracy of the scheme, the number of modes won't change when the dispersion relation is expressed by representing the discrete wavenumber as a function of the angular frequency.

Unfortunately, we can not obtain the analytical dispersion relation formula or formula with closed form for general $p\geq0$. Instead, in the following, we will discuss the cases of $p=0, \ldots, 3$   based on the small wave number limit $K \rightarrow 0$, while for higher order cases the dispersion relation becomes more cumbersome and is not included in this paper. 
In the following, we write
\begin{align}
\label{eq:b}
b=\omega \left( \beta_{1}\, \epsilon(\WH{\omega}; \mathbf{p}) +\beta_{2} \right),
\quad \text{and} \quad B=b\, h.
\end{align}
Note that, $b(\omega)=0$ if and only if $\beta_{1}=\beta_{2}=0$, and in the Taylor expansion, we assume $B\ll 1$ as well.
The results are given as follows.

\begin{itemize}
	\item When $\alpha=\beta_{1}=\beta_{2}=0$, there are four discrete wave numbers. Two of them correspond to the physical modes
	\renewcommand{\arraystretch}{1.5}
	\begin{align}
	\label{eq:dis_DG_semi1}
	k_{\text{DG}^{\text{phys}},p} =& \left\{ \begin{array}{ll}
	\pm k^{\text{ex}} \left(  1 +\frac{1}{6} K^2 +\mc{O}\left( K^4 \right) \right), & p=0, \\
	\pm k^{\text{ex}} \left(  1 -\frac{1}{48} K^2 +\mc{O}\left( K^4 \right) \right), & p=1, \\
	\pm k^{\text{ex}} \left(  1 +\frac{1}{16800} K^6 +\mc{O}\left( K^8 \right) \right), & p=2, \\
	\pm k^{\text{ex}} \left( 1  -\frac{1}{806400} K^6  +\mc{O}\left( K^8 \right) \right), & p=3. \\
	\end{array}\right.
	\end{align}
	\renewcommand{\arraystretch}{1}	
	The other two are the spurious modes
	\renewcommand{\arraystretch}{1.5}
	\begin{align}
	k_{\text{DG}^{\text{spur}}, p} =& \left\{ \begin{array}{ll}
	\pm k^{\text{ex}} \left( -\frac{\pi}{K} + 1 +\frac{1}{6}K^2 +\mc{O}( K^4 ) \right)  , & p=0, \\
	\pm k^{\text{ex}} \left( \frac{1}{3} +\frac{5}{1296}K^2 + \mc{O}\left( K^4 \right) \right), & p=1, \\
	\pm k^{\text{ex}} \left( -\frac{\pi}{K} + \frac{1}{5} +\frac{1}{375}K^2 +\mc{O}\left( K^4 \right) \right) , & p=2, \\
	\pm k^{\text{ex}} \left( \frac{1}{7} +\frac{4}{5145}K^2 +\mc{O}\left( K^4 \right) \right), & p=3. \\
	\end{array}\right.
	\end{align}
	\renewcommand{\arraystretch}{1}
	These formulas show that, when using the central flux, the physical modes have a dispersion error with order
	\begin{align}
	\left\{\begin{array}{ll}
		2p+2, & \text{if $p$ is even},\\
		2p, & \text{if $p$ is odd}.\\
		\end{array}
	\right.
	\end{align}
	 Moreover, the relative phase errors do not rely on other model parameters except $K$. 
    	 When $p$ is odd, the spurious modes  consists of two waves with wave length $(2p+1)$ times the actual wave length. And when $p$ is even, the spurious modes $k_{\text{DG}, s1, s2}$ will be inversely proportional to $h$, similar to the FD case.

	\item When $\alpha^2+\beta_{1}\beta_{2}=1/4$,  we have two physical modes:
	\renewcommand{\arraystretch}{1.5}
	\begin{align}
	\label{eq:dis_DG_semi2}
	k_{\text{DG}^{\text{phys}},p} =& \left\{ \begin{array}{ll}
	\pm k^{\text{ex}}  \left(  1 +\frac{1}{2} i B +\frac{1}{24}\left( K^2-9B^2\right)  +\mc{O}( i K^2 B + i B^3) \right), & p=0, \\
	\pm k^{\text{ex}} \left( 1 +\frac{1}{72} i K^2 B +\frac{1}{1080} \left(K^4-5 K^2 B^2\right) +\mc{O}\left( i K^2 B^3 \right) \right), & p=1, \\
	\pm k^{\text{ex}} \left( 1 +\frac{1}{7200} i K^4 B +\frac{1}{252000}\left( K^6-7 K^4 B^2 \right) +\mc{O}(i K^6 B + i K^4 B^3) \right), & p=2, \\
	\pm k^{\text{ex}} \left( 1 +\frac{1}{1411200} i K^6 B  +\frac{1}{88905600} \left( K^8-9 K^6 B^2\right) +\mc{O}(i K^8 B + i K^6 B^3) \right), & p=3. \\
	\end{array}\right.
	\end{align}
	\renewcommand{\arraystretch}{1}
	 Therefore, when $B=0$, i.e. with the alternating fluxes, the scheme has a dispersion error of order $(2p+2)$. In particular, the dispersion errors for $\alpha=1/2$ and $\alpha=-1/2$ are the same.
	On the other hand, for the upwind flux ($B\neq0$), we can observe a $(2p+1)$-th order dispersion error, 
	which is related to $K$ and $B$ at the same time.  
\end{itemize}

It is clear that the order of dispersion error for DG scheme is   higher than that of the $L^2$ convergence. This is an advantage of DG schemes, and differs from FD schemes significantly.

To verify the results above, in Figure \ref{Fig:semi_DG}, we study the relative phase error of the physical modes of the semi-discrete DG scheme \eqref{eq:1d:sch} with parameters in \eqref{eq:parameter} using the alternating flux (DG-AL) (only the result of one version of the alternating fluxes is shown, because they are identical to each other), the central flux (DG-CE) and the upwind flux (DG-UP).  The numerical wave number $k_{\text{DG},p}$ is obtained by solving $\det(\mathcal{A}_{\text{DG},p})=0$ exactly.  First, we fix $\omega_{1} h=\pi/30$, and plot the dependence of relative phase error as a function of $\WHO$, see the first row of Figure \ref{Fig:semi_DG}. It is clear that DG-AL always gives smallest error when the same discrete space is used. This can also be verified by comparing the orders and coefficients in \eqref{eq:dis_DG_semi1} and \eqref{eq:dis_DG_semi2}. All schemes have significant larger errors around $\WHO=1$. 
For DG-AL and DG-CE, the phase errors  approach zero near $\WHO=\sqrt{\epsilon_{s}/\epsilon_{\infty}}$ where $K$ is close to zero. For   DG-UP with $p=0$, the error is dominated by $B$ (see equation \eqref{eq:dis_DG_semi2}). Therefore, the ``zero'' point would shift to the ``zero'' point of $B$, which is about $\sqrt{1+\epsilon_{d}/(2\epsilon_{\infty})}\approx1.291$. Comparing FD2 ( Figure \ref{Fig:FD}) and DG-AL with $P^{0}$, they have the same performance. However, once we increase the order to $p=1$, DG-AL has significantly smaller error than FD4, resulting from the smaller coefficients in leading error terms, see \eqref{eq:dis_FD_semi1}, \eqref{eq:dis_FD_semi2} and \eqref{eq:dis_DG_semi2}. 
In the second row of Figure \ref{Fig:semi_DG}, we  present the errors at $\WHO_{1}=1$ with mesh refinement. Slopes indicate the order of accuracy for each scheme, which agree with our analysis in \eqref{eq:dis_DG_semi1} and \eqref{eq:dis_DG_semi2}.  
\begin{figure}[h]
	\centering
	\includegraphics[scale= 0.28] {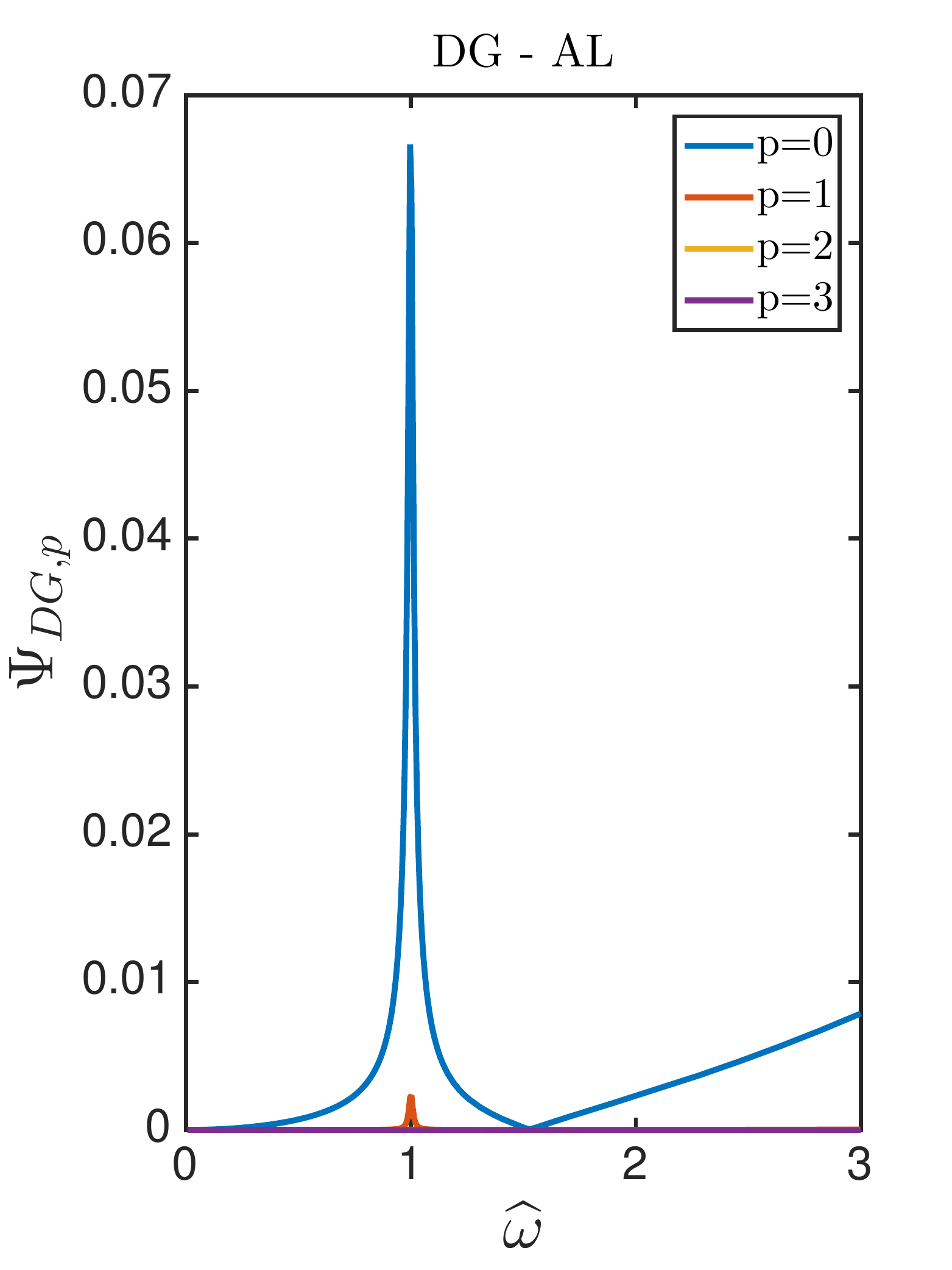}
	\includegraphics[scale= 0.28] {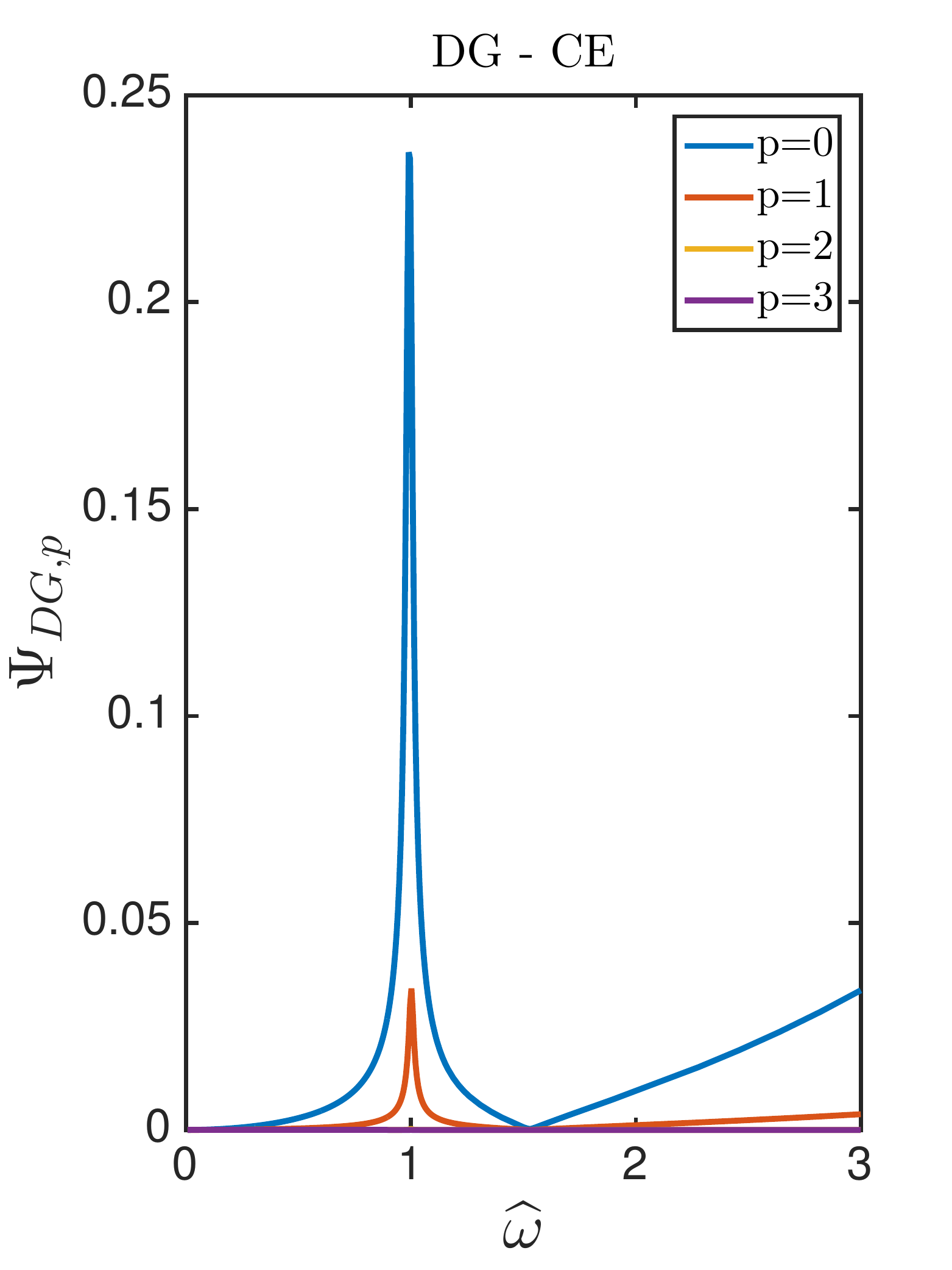}
	\includegraphics[scale= 0.28] {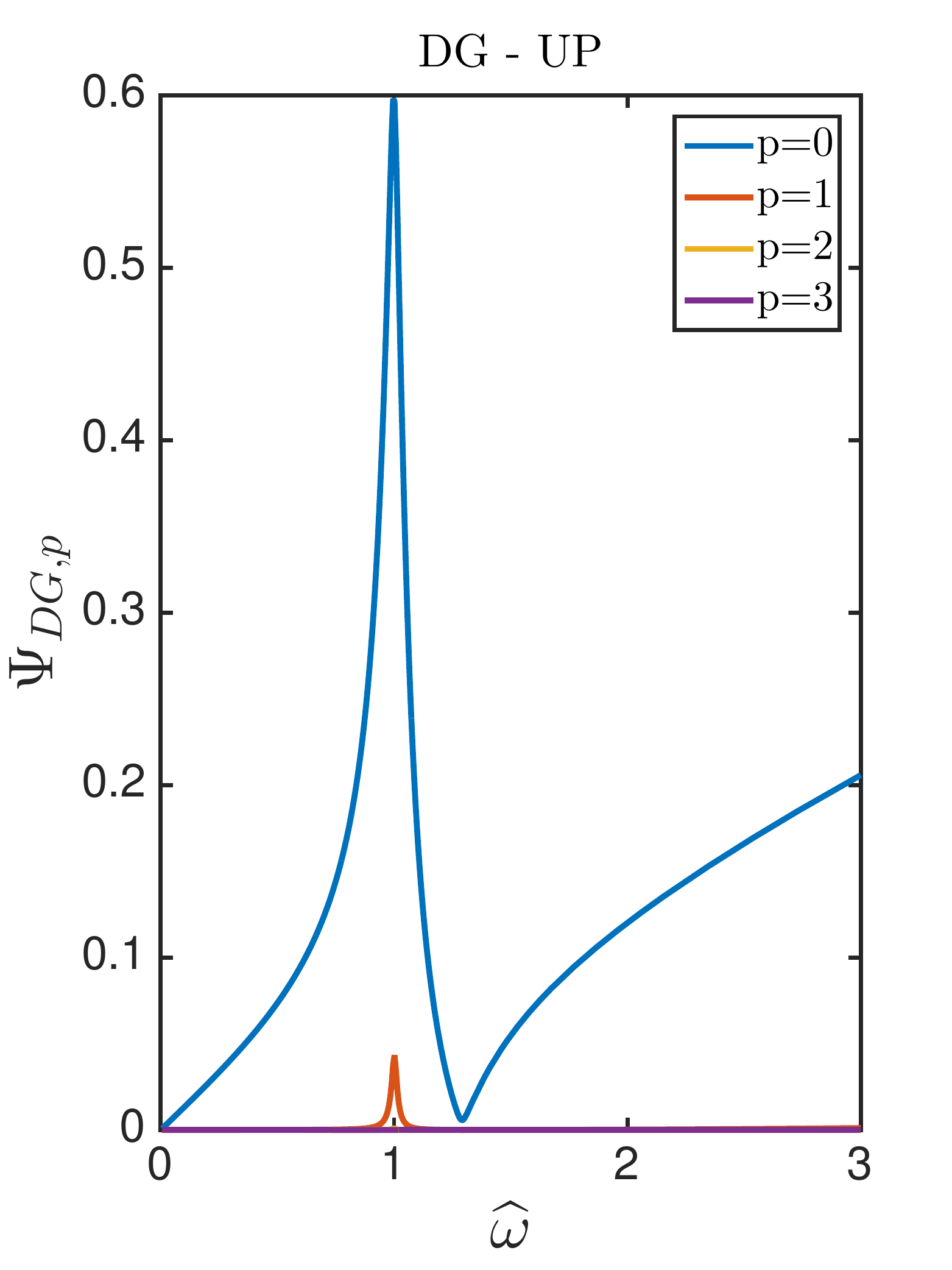}\\
	\includegraphics[scale= 0.28] {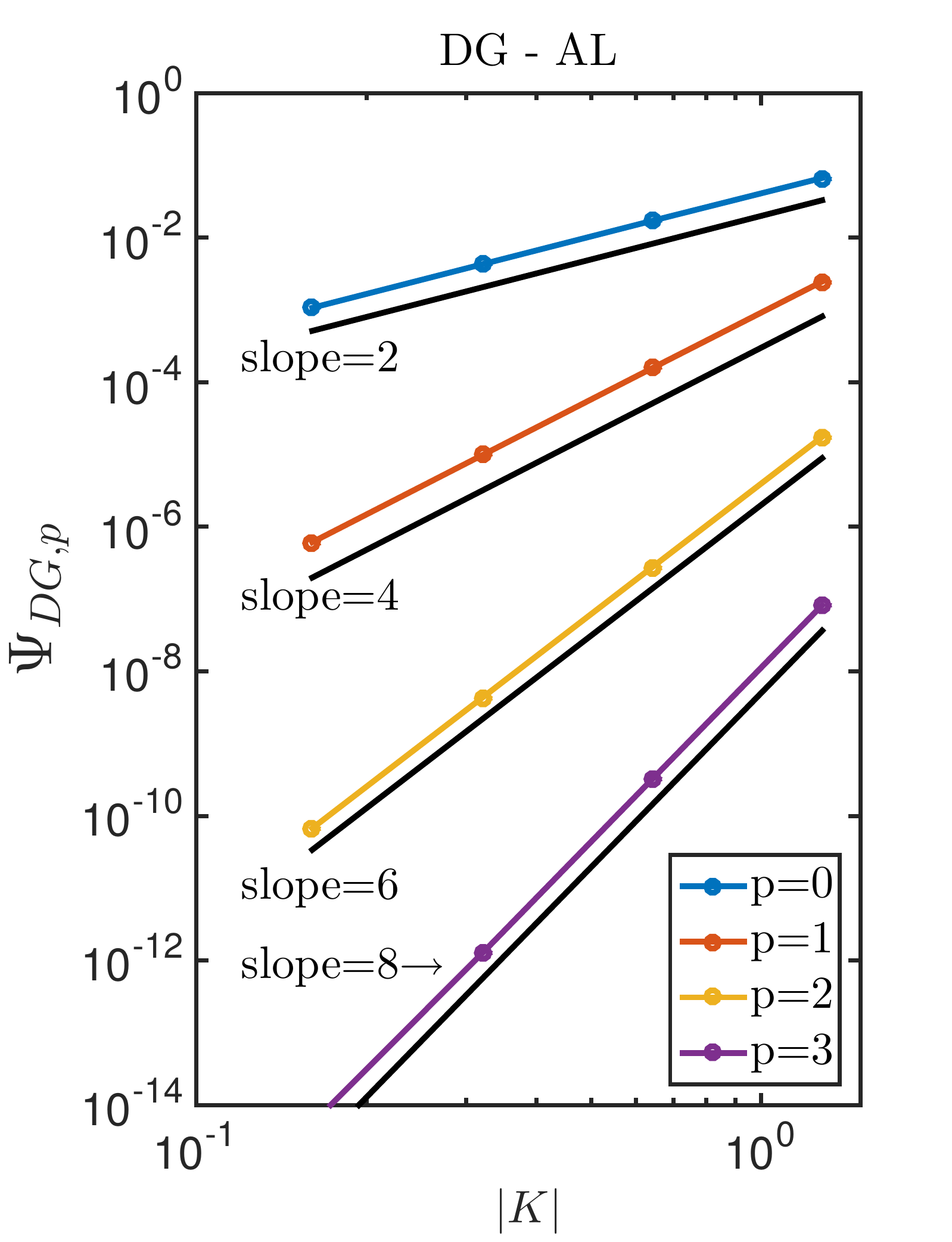}
	\includegraphics[scale= 0.28] {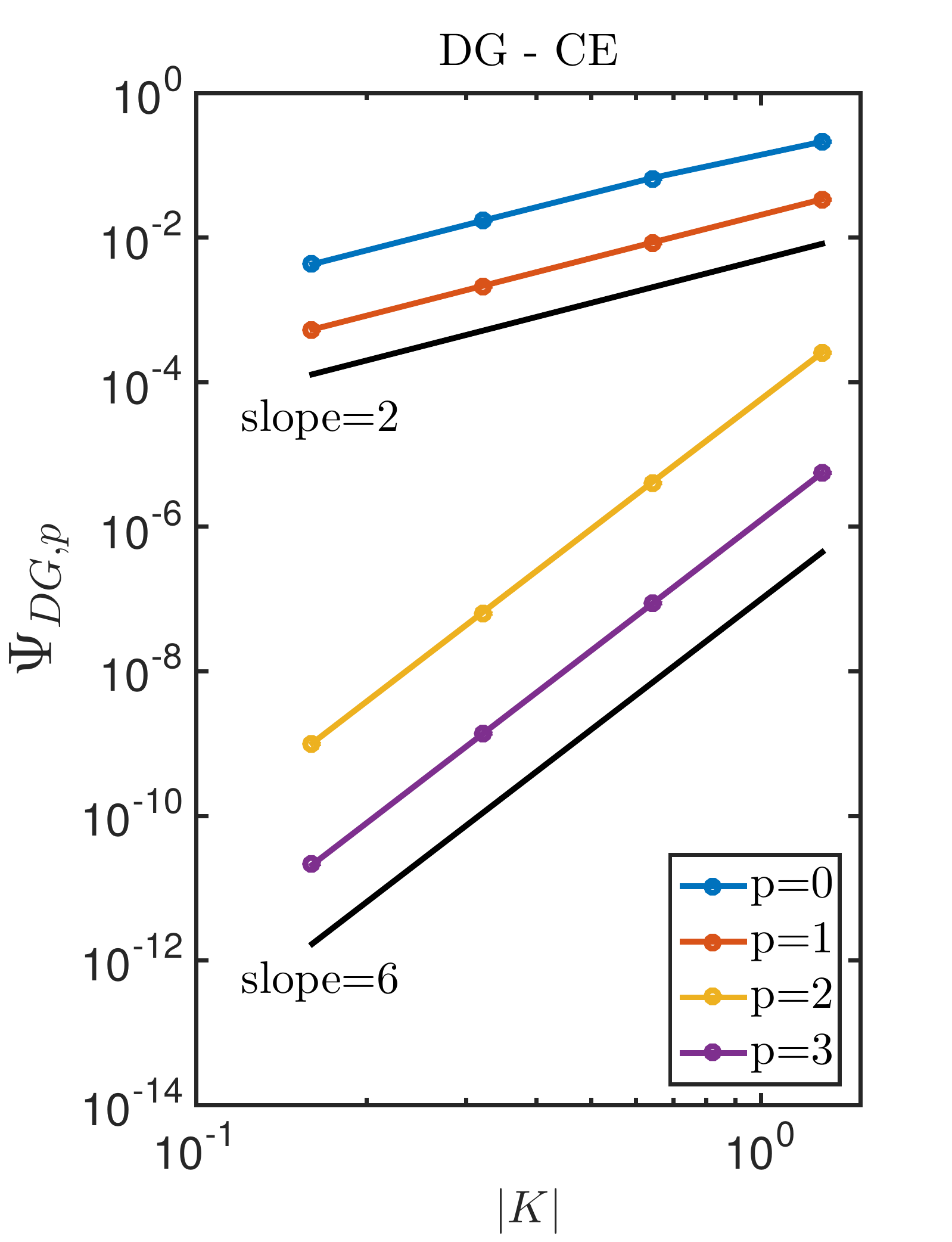}
	\includegraphics[scale= 0.28] {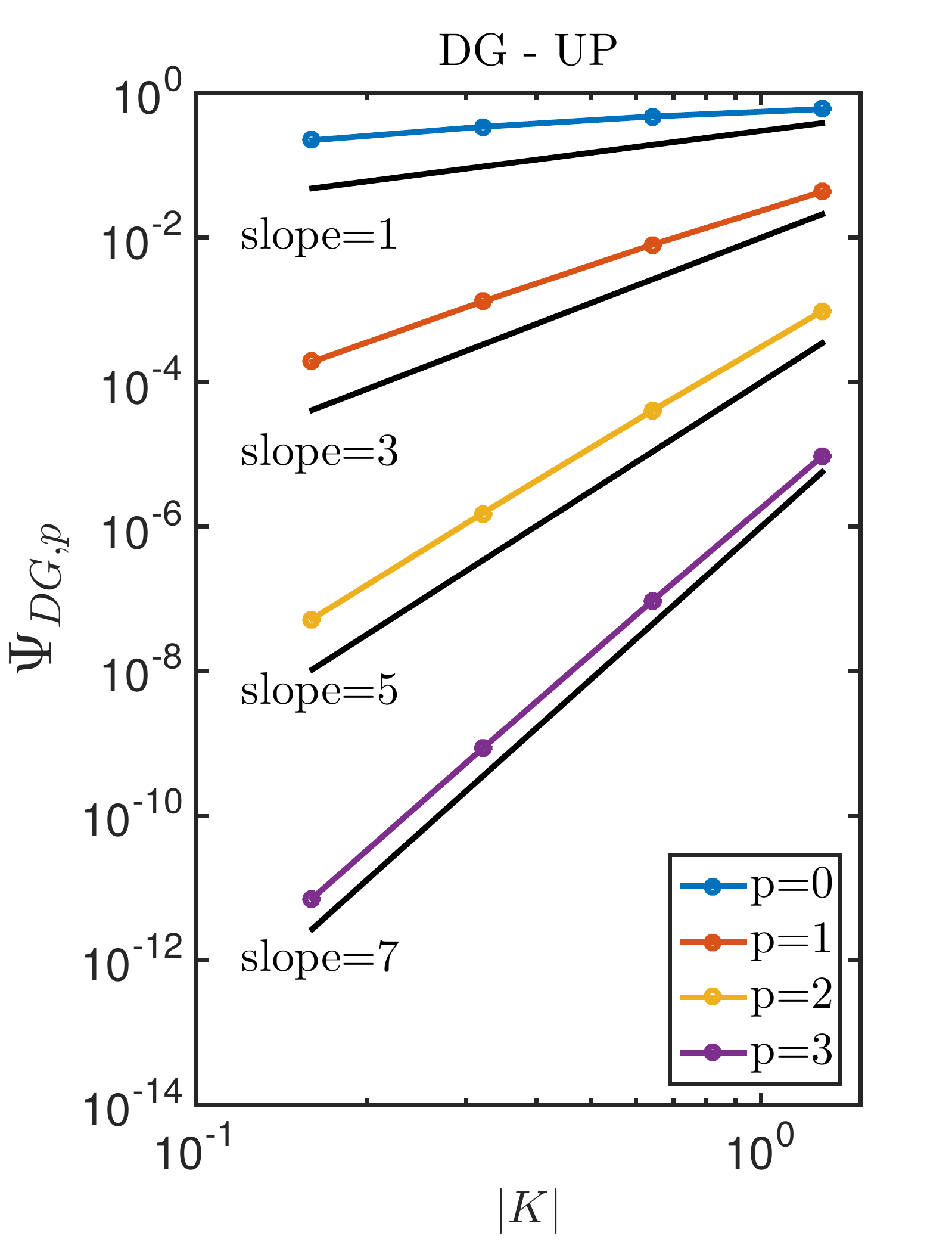}
	\caption{The relative phase error of semi-discrete DG scheme for the physical modes. First row: fix $\omega_{1}h=\pi/30$ with $\WHO_{1}\in[0,3]$; second row: fix $\WHO_{1}=1$ with different $\omega_{1}h\in\{ \frac{\pi}{30},\frac{\pi}{60},\frac{\pi}{120},\frac{\pi}{240} \}$.}
	\label{Fig:semi_DG}
\end{figure}

\section{Fully discrete Discontinuous Galerkin Methods} $\,$ 
\label{dgtd}
Here, we consider the DG scheme \eqref{eq:1d:sch} coupled with the leap-frog time discretization \eqref{eq:LF} and the trapezoidal time discretization \eqref{eq:TP}.
To analyze dispersion relation for those fully discrete   schemes, we assume  numerical solutions in the form of
$$(\mathcal{X}^{m})^{n}_{j}=X^{m}_{0}e^{i(k_{\text{DG},p}^{\text{*}} j h -\omega n\Delta t)}, \quad m=0, \ldots, p.$$
Then the fully discrete system will yield a linear system
\begin{align}
\mathcal{A}_{\text{DG},p}^{\text{*}}\textbf{U}_{\text{DG}}=0,
\end{align}
where $*$ can be LF and TP.

\subsection{Fully discrete dispersion analysis: leap-frog-DG schemes}

From simple algebra, $\mathcal{A}^{\text{LF}}_{\text{DG},p}$ is given by
\renewcommand{\arraystretch}{1.5}
\begin{align*}
\begin{small}
\begin{pmatrix}
-i\sin(\frac{W}{2}) \mc{M}+\frac{\Delta t}{2}\cos(\frac{W}{2})\mc{R} & \frac{\Delta t}{2}\mc{P} & 0  & 0\\
\frac{\Delta t}{2}\widetilde{\mc{P}} & -i\epsilon_{\infty}\sin(\frac{W}{2})\mc{M}+\frac{\Delta t}{2} \cos(\frac{W}{2})\widetilde{\mc{R}} & -i\sin(\frac{W}{2})\mc{M} & 0\\
0 & 0 & i\sin(\frac{W}{2}) \mathcal{I} & \frac{\Delta t}{2}\cos(\frac{W}{2})\mathcal{I}\\
0 & \omega_{p}^{2}\frac{\Delta t}{2}\cos(\frac{W}{2})\mathcal{I} & -\omega_{1}^{2}\frac{\Delta t}{2}\cos(\frac{W}{2})\mathcal{I} & (i\sin(\frac{W}{2})-\gamma\Delta t \cos(\frac{W}{2})) \mathcal{I} \\
\end{pmatrix}.
\end{small}
\end{align*}
\renewcommand{\arraystretch}{1}

Since $\mc{P}$, $\widetilde{\mc{P}}$, $\mc{R}$ and $\widetilde{\mc{R}}$ are multiplied by different factors, we can not cancel $e^{ik^{\text{LF}}_{\text{DG},p}h}$ or $e^{-i k^{\text{LF}}_{\text{DG},p}h}$ in $\mathcal{A}^{\text{LF}}_{\text{DG},p}$ as what we did in case 2 of the proof of Theorem \ref{thm1}. Hence,  the dispersion relation satisfied by $k^{\text{LF}}_{\text{DG},p}$ will satisfy the following theorem which differs from Theorem \ref{thm1}. 

\begin{theorem} \label{thm2}
	Consider the fully discrete leap-frog-DG scheme with $V^{p}_{h}$ as the discrete space, if $\alpha=\pm\frac{1}{2}$ and $\beta_{1}=\beta_{2}=0$, then $k^{\text{LF}}_{\text{DG},p}$ are the roots of a quadratic polynomial equation in terms of $\xi=e^{ik^{\text{LF}}_{\text{DG},p}h}$. Otherwise, $k^{\text{LF}}_{\text{DG},p}$ are the roots of a quartic polynomial equation in terms of $\xi=e^{ik^{\text{LF}}_{\text{DG},p}h}$.
\end{theorem}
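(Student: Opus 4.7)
The plan is to extend the determinantal bookkeeping used for Theorem~\ref{thm1} to the leap-frog matrix $\mathcal{A}^{\text{LF}}_{\text{DG},p}$, paying close attention to the fact that in this case the blocks $\mathcal{P},\widetilde{\mathcal{P}}$ carry the scalar weight $\tfrac{\Delta t}{2}$ while the blocks $\mathcal{R},\widetilde{\mathcal{R}}$ carry the different weight $\tfrac{\Delta t}{2}\cos(\tfrac{W}{2})$. First I would locate all entries of $\mathcal{A}^{\text{LF}}_{\text{DG},p}$ in which $\xi := e^{ik^{\text{LF}}_{\text{DG},p}h}$ or $\xi^{-1}$ occurs: from the definitions of $\mathcal{Q}_{\pm 1}$ and $\mathcal{S}_{\pm 1}$, each of the blocks $\mathcal{P},\widetilde{\mathcal{P}},\mathcal{R},\widetilde{\mathcal{R}}$ contributes $\xi$ only at its $(p,0)$ entry and $\xi^{-1}$ only at its $(0,p)$ entry, while $\mathcal{V},\mathcal{M},\mathcal{Q}_0,\mathcal{S}_0$ carry no $\xi$-dependence at all. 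Globally this confines $\xi$ to rows $p+1$ and $2p+2$ and $\xi^{-1}$ to rows $1$ and $p+2$, and each such row depends linearly on its exponential variable. The degenerate case $p=0$ is handled by an explicit computation mirroring \eqref{eq:p0}, suitably modified to include the leap-frog weights.

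For the general case I would expand $\det(\mathcal{A}^{\text{LF}}_{\text{DG},p})$ by multilinearity in these four distinguished rows. Writing each of them as a fixed row plus $\xi^{\pm 1}$ times a perturbation, the expansion yields a Laurent polynomial $\sum_{m=-2}^{2} C_m\,\xi^m$ whose coefficients $C_m$ are independent of $k^{\text{LF}}_{\text{DG},p}$. Multiplying the equation $\det(\mathcal{A}^{\text{LF}}_{\text{DG},p})=0$ by $\xi^{2}$ then converts this into a polynomial equation of degree at most four in $\xi$, which is the claimed quartic.

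For the special case $\alpha=\pm\tfrac{1}{2}$, $\beta_1=\beta_2=0$ the reduction to quadratic is direct. With $\beta_1=\beta_2=0$ the flux identities $\mathcal{S}_{\pm 1}(0)=\mathcal{S}_0(0)=0$ give $\mathcal{R}=\widetilde{\mathcal{R}}=0$, so the diagonal blocks $(1,1)$ and $(2,2)$ lose all exponential entries. Taking $\alpha=\tfrac{1}{2}$ (the case $\alpha=-\tfrac{1}{2}$ being symmetric), the vanishings $\tfrac{1}{2}-\alpha=0$ and $-\tfrac{1}{2}+\alpha=0$ eliminate the $\xi^{-1}$-contribution from $\mathcal{P}$ and the $\xi$-contribution from $\widetilde{\mathcal{P}}$. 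Hence the only surviving exponential entries are a single $\xi$ at global position $(p+1,p+2)$ from $\mathcal{P}$ and a single $\xi^{-1}$ at $(p+2,p+1)$ from $\widetilde{\mathcal{P}}$. Expanding multilinearly in just these two rows now produces a Laurent polynomial with powers only in $\{-1,0,1\}$, and multiplication by $\xi$ yields a quadratic polynomial in $\xi$, as asserted.

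The main subtlety, and where Theorem~\ref{thm2} genuinely departs from Theorem~\ref{thm1}, lies in justifying that no further reduction is available in the remaining flux cases, in particular those with $\alpha^2+\beta_1\beta_2=\tfrac{1}{4}$ but $(\beta_1,\beta_2)\neq(0,0)$, such as the upwind flux. In the semi-discrete proof, the row combination realised through the matrix $\mathcal{D}$ with multiplier $(1/2+\alpha)/\beta_1$ eliminated the surplus exponential entries precisely because $\mathcal{P},\widetilde{\mathcal{P}},\mathcal{R},\widetilde{\mathcal{R}}$ all entered $\mathcal{A}_{\text{DG},p}$ with a common scalar weight. In the leap-frog setting the weights $\tfrac{\Delta t}{2}$ and $\tfrac{\Delta t}{2}\cos(\tfrac{W}{2})$ differ, so any analogous row operation aimed at cancelling a $\xi$-entry of $\widetilde{\mathcal{P}}$ against one of $\widetilde{\mathcal{R}}$ (or of $\mathcal{P}$ against $\mathcal{R}$) leaves behind a residual proportional to $1-\cos(\tfrac{W}{2})$, and the quadratic reduction breaks down whenever that residual is nonzero. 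Capturing this failure cleanly, and thereby separating the two parameter regimes in the statement, is the principal obstacle in the argument.
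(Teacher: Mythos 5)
Your proposal is correct and follows essentially the same route as the paper: the paper offers no formal proof of Theorem~\ref{thm2} beyond the one-sentence remark that the Case-2 row operation from Theorem~\ref{thm1} fails because $\mc{P},\widetilde{\mc{P}}$ carry the weight $\frac{\Delta t}{2}$ while $\mc{R},\widetilde{\mc{R}}$ carry $\frac{\Delta t}{2}\cos(\frac{W}{2})$, and your argument is exactly that remark fleshed out with the Laurent-polynomial bookkeeping of Theorem~\ref{thm1} (your alternating-flux reduction is the analogue of its Case~1). The residual obstacle you flag at the end --- rigorously excluding any alternative reduction to a quadratic in the upwind case --- is likewise left unaddressed by the paper, so your write-up is, if anything, more candid about the limits of the argument.
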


Below, we analyze numerical dispersion property of the physical modes when $W\ll 1$. Note that $\displaystyle B=bh=\frac{b/\omega}{\sqrt{\epsilon_{\infty}}\, \nu} W\ll 1$ with a fixed CFL number $\nu$, with $B$ given in \eqref{eq:b}.

\begin{itemize}
	\item When using the central flux, i.e. $\alpha=\beta_{1}=\beta_{2}=0$, we   have four solutions, and two of them correspond to the physical modes,  
	\begin{align}
	\label{eq:disp_LF_DG_CE1}
	\renewcommand{\arraystretch}{2}
	k^{\text{LF}}_{\text{DG}^{\text{ phys}},p}
	= \left\{\begin{array}{ll}
	\displaystyle \pm k^{\text{ex}}\left( 1 + \frac{1}{12}\left(  \frac{\delta(\WHO;\mathbf{p})}{\epsilon(\WHO;\mathbf{p})} -\frac{1}{2}\right) W^2 +\frac{1}{6}K^2 +\mc{O}\left( W^4 + W^2 K^2 + K^4 \right)\right) , & p=0, \\
	\displaystyle \pm k^{\text{ex}}\left( 1 + \frac{1}{12}\left(  \frac{\delta(\WHO;\mathbf{p})}{\epsilon(\WHO;\mathbf{p})} -\frac{1}{2} \right) W^2 -\frac{1}{48}K^2 +\mc{O}\left( W^4 + W^2 K^2 + K^4\right)\right) , & p=1, \\
	\displaystyle \pm k^{\text{ex}}\left( 1 + \frac{1}{12}\left(  \frac{\delta(\WHO;\mathbf{p})}{\epsilon(\WHO;\mathbf{p})} -\frac{1}{2} \right) W^2  +\mc{O}\left( W^4 +K^{6}\right)\right) , & p=2,3,\\
	\end{array}
	\right.
	\renewcommand{\arraystretch}{1}
	\end{align}
	in the case of $K\ll 1$ and $W\ll 1$. They can be further written as 
	\begin{align}
	\label{eq:disp_LF_DG_CE2}
	\renewcommand{\arraystretch}{2}
	k^{\text{LF}}_{\text{DG}^{\text{ phys}},p}
	= \left\{\begin{array}{ll}
	\displaystyle \pm k^{\text{ex}}\left( 1 + \frac{1}{12}\left(  \frac{\delta(\WHO;\mathbf{p})}{\epsilon(\WHO;\mathbf{p})} -\frac{1}{2} + \frac{2\epsilon(\WHO;\mathbf{p})}{\epsilon_{\infty}\nu^2} \right) W^2  +\mc{O}\left( W^4 \right)\right) , & p=0, \\
	\displaystyle \pm k^{\text{ex}}\left( 1 + \frac{1}{12}\left(  \frac{\delta(\WHO;\mathbf{p})}{\epsilon(\WHO;\mathbf{p})} -\frac{1}{2} - \frac{\epsilon(\WHO;\mathbf{p})}{4\epsilon_{\infty}\nu^2}\right) W^2  +\mc{O}\left( W^4 \right)\right) , & p=1, \\
	\displaystyle \pm k^{\text{ex}}\left( 1 + \frac{1}{12}\left(  \frac{\delta(\WHO;\mathbf{p})}{\epsilon(\WHO;\mathbf{p})} -\frac{1}{2} \right) W^2  +\mc{O}\left( W^4 \right)\right) , & p=2, 3, \\
	\end{array}
	\right.
	\renewcommand{\arraystretch}{1}
	\end{align}
	 with $W\ll 1$ and a fixed CFL number $\nu$.

	\item When using the alternating flux, i.e. $\alpha=\pm1/2$ and $\beta_{1}=\beta_{2}=0$, there are only two solutions, corresponding to the physical modes,
	\begin{align}
	\label{eq:disp_LF_DG_AL1}	
	\renewcommand{\arraystretch}{2} 
	k^{\text{LF}}_{\text{DG}^{\text{ phys}},p}
	= \left\{\begin{array}{ll}
	\displaystyle \pm k^{\text{ex}}\left( 1 + \frac{1}{12}\left(  \frac{\delta(\WHO;\mathbf{p})}{\epsilon(\WHO;\mathbf{p})} -\frac{1}{2} \right) W^2 +\frac{1}{24}K^2 +\mc{O}\left( W^4 + W^2K^2 + K^4\right)\right) , & p=0, \\
	\displaystyle \pm k^{\text{ex}}\left( 1 + \frac{1}{12}\left(  \frac{\delta(\WHO;\mathbf{p})}{\epsilon(\WHO;\mathbf{p})} -\frac{1}{2} \right) W^2  +\mc{O}\left( W^4 + K^{2p+2}\right)\right) , & p=1, 2, 3,\\
	\end{array}
	\right.
	\renewcommand{\arraystretch}{1}
	\end{align}
	in  the case of $K\ll 1$ and $W\ll 1$, or
	\begin{align}
	\label{eq:disp_LF_DG_AL2}
	\renewcommand{\arraystretch}{2} 
	k^{\text{LF}}_{\text{DG}^{\text{ phys}},p}
	= \left\{\begin{array}{ll}
	\displaystyle \pm k^{\text{ex}}\left( 1 + \frac{1}{12}\left(  \frac{\delta(\WHO;\mathbf{p})}{\epsilon(\WHO;\mathbf{p})} -\frac{1}{2} + \frac{\epsilon(\WHO;\mathbf{p})}{2\epsilon_{\infty}\nu^2} \right) W^2  +\mc{O}\left( W^4 \right)\right) , & p=0, \\
	\displaystyle \pm k^{\text{ex}}\left( 1 + \frac{1}{12}\left(  \frac{\delta(\WHO;\mathbf{p})}{\epsilon(\WHO;\mathbf{p})} -\frac{1}{2} \right) W^2  +\mc{O}\left( W^4 \right)\right) , & p=1, 2, 3.\\
	\end{array}
	\right.
	\renewcommand{\arraystretch}{1}
	\end{align}
	 with $W\ll 1$ and a fixed CFL number $\nu$.

	\item When using the upwind flux, i.e. $\alpha=0$, $\ds \beta_{1}=\frac{1}{2\sqrt{\epsilon_{\infty}}}$ and $\ds \beta_{2}=\frac{\sqrt{\epsilon_{\infty}}}{2}$, there are four solutions. 
	The two physical modes are
	\begin{align}
	\label{eq:disp_LF_DG_UP1}
	\renewcommand{\arraystretch}{2} 
	k^{\text{LF}}_{\text{DG}^{\text{ phys}},p}
	= \left\{\begin{array}{ll}
	\displaystyle \pm k^{\text{ex}} \left(  1 +\frac{1}{2} i B +\mc{O}\left(W^2 + K^2 \right) \right) , & p=0, \\
	\displaystyle \pm k^{\text{ex}} \left( 1 + \frac{1}{12}\left(  \frac{\delta(\WHO;\mathbf{p})}{\epsilon(\WHO;\mathbf{p})} -\frac{1}{2} \right) W^2 +\mc{O}\left(W^4 +i K^{2p} B \right) \right), & p=1, 2, 3, \\
	\end{array}
	\right.
	\renewcommand{\arraystretch}{1}
	\end{align}
	which can also be written as
	\begin{align}
	\label{eq:disp_LF_DG_UP2}
	\renewcommand{\arraystretch}{2} 
	k^{\text{LF}}_{\text{DG}^{\text{ phys}},p}
	= \left\{\begin{array}{ll}
	\displaystyle \pm k^{\text{ex}} \left( 1 +i \frac{b/\omega}{2\sqrt{\epsilon_{\infty}}\,\nu} W +\mc{O}\left(W^2 \right) \right) , & p=0, \\
	\displaystyle \pm k^{\text{ex}} \left( 1 + \frac{1}{12}\left(  \frac{\delta(\WHO;\mathbf{p})}{\epsilon(\WHO;\mathbf{p})} -\frac{1}{2} \right) W^2 +\mc{O}\left( i W^3 \right) \right), & p=1, \\
	\displaystyle \pm k^{\text{ex}} \left( 1+ \frac{1}{12}\left(  \frac{\delta(\WHO;\mathbf{p})}{\epsilon(\WHO;\mathbf{p})} -\frac{1}{2} \right) W^2 +\mc{O}\left( W^4 \right) \right), & p=2, 3,\\
	\end{array}
	\right.
	\renewcommand{\arraystretch}{1}
	\end{align} 
	with $W\ll 1$ and a fixed CFL number $\nu$. 
\end{itemize}

The formulations above demonstrate that all fully discrete schemes are second order accurate in numerical dispersion, except for the upwind flux with $P^{0}$, for which the error is of first order. Comparing the leading error terms   with the same flux but with different $p$ values, we can see that the temporal error would be dominant when $p\geq2$ by upwind flux or central flux and $p\geq1$ by alternating fluxes.  
Moreover, it is observed that the leading error terms for high order schemes, when the temporal error dominates, are the same with the FD schemes \eqref{eq:disp_FD_LF1} and \eqref{eq:disp_FD_LF2}, which come from the time discretization \eqref{eq:dis_LF}. 
 In particular, the leading terms of DG scheme with alternating flux are the same as those of FD scheme. Hence, we can also have counterintuitive results that the lower order scheme performs better than higher order scheme when numerical dispersion is concerned for some given dispersive media and discretization parameters. {Similar results are observed for other numerical fluxes as well as for the  trapzoidal-DG schemes in next section. 

\subsection{Fully discrete dispersion analysis: trapezoidal-DG schemes}

We can deduce that $\mathcal{A}^{\text{TP}}_{\text{DG},p}$ is given by
\renewcommand{\arraystretch}{1.5}
\begin{align*}
\begin{small}
\begin{pmatrix}
-i\sin(\frac{W}{2}) \mc{I}+\frac{\Delta t}{2}\cos(\frac{W}{2})\mc{R} & \frac{\Delta t}{2}\cos(\frac{W}{2})\mc{P} & 0  & 0\\
\frac{\Delta t}{2}\cos(\frac{W}{2})\widetilde{\mc{P}} & -i\epsilon_{\infty}\sin(\frac{W}{2})\mc{I}+\frac{\Delta t}{2} \cos(\frac{W}{2})\widetilde{\mc{R}} & -i\sin(\frac{W}{2})\mc{I} & 0\\
0 & 0 & i\sin(\frac{W}{2}) \mathcal{I} & \frac{\Delta t}{2}\cos(\frac{W}{2})\mathcal{I}\\
0 & \frac{\Delta t}{2}\omega_{p}^{2}\cos(\frac{W}{2})\mathcal{I} & -\frac{\Delta t}{2}\omega_{1}^{2}\cos(\frac{W}{2})\mathcal{I} & (i\sin(\frac{W}{2})-\gamma\Delta t \cos(\frac{W}{2})) \mathcal{I} \\
\end{pmatrix}.
\end{small}
\end{align*}
\renewcommand{\arraystretch}{1}

Because we multiply $\mc{P}$, $\widetilde{\mc{P}}$, $\mc{R}$ and $\widetilde{\mc{R}}$ by the same factor $\cos(\frac{W}{2})$, it is easy to check that  a similar theorem as Theorem \ref{thm1}  holds for this fully discrete scheme. 

\begin{theorem} \label{thm3}
	Consider the fully discrete trapezoidal-DG scheme with $V^{p}_{h}$ as the discrete space, then $k^{\text{TP}}_{\text{DG},p}$ are the roots of a quartic polynomial equation in terms of $\xi=e^{ik^{\text{TP}}_{\text{DG},p}h}$ if $\alpha^2+\beta_{1}\beta_{2}\ne1/4$, and $k_{\text{DG},p}$ are the roots of a quadratic polynomial equation in terms of $\xi=e^{ik^{\text{TP}}_{\text{DG},p}h}$ when $\alpha^2+\beta_{1}\beta_{2}=1/4.$
\end{theorem}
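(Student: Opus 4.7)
The plan is to mirror the proof of Theorem \ref{thm1} almost verbatim, since the key structural feature that made that argument work is preserved here: in $\mathcal{A}^{\text{TP}}_{\text{DG},p}$ the four spatial-coupling blocks $\mathcal{P}, \widetilde{\mathcal{P}}, \mathcal{R}, \widetilde{\mathcal{R}}$ (the only places where $e^{\pm ik^{\text{TP}}_{\text{DG},p}h}$ enters) all appear multiplied by the single common scalar $\frac{\Delta t}{2}\cos(W/2)$. This is precisely the contrast with $\mathcal{A}^{\text{LF}}_{\text{DG},p}$ (which in Theorem \ref{thm2} produced a strictly quartic equation in general) and it is what will let the quartic in $\xi = e^{ik^{\text{TP}}_{\text{DG},p}h}$ collapse to a quadratic in the borderline flux case.

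First I would handle $p=0$ directly by expanding $\det(\mathcal{A}^{\text{TP}}_{\text{DG},0})$ exactly as in \eqref{eq:p0}: the $2\times 2$ blocks corresponding to $(H,E)$ contribute the $e^{\pm ik^{\text{TP}}_{\text{DG},0}h}$ and $e^{\pm 2ik^{\text{TP}}_{\text{DG},0}h}$ terms, with the coefficient of the highest powers being a nonzero multiple of $-1 + 4(\alpha^2+\beta_1\beta_2)$; this immediately exhibits the dichotomy. For $p\geq 1$, I would then observe, exactly as in Theorem \ref{thm1}, that $e^{ik^{\text{TP}}_{\text{DG},p}h}$ appears only in the entries $\mathcal{P}_{p0}, \widetilde{\mathcal{P}}_{p0}, \mathcal{R}_{p0}, \widetilde{\mathcal{R}}_{p0}$ and $e^{-ik^{\text{TP}}_{\text{DG},p}h}$ only in $\mathcal{P}_{0p}, \widetilde{\mathcal{P}}_{0p}, \mathcal{R}_{0p}, \widetilde{\mathcal{R}}_{0p}$. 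Standard Laplace/multi-linearity arguments in these two rows and two columns then give
\[
\det(\mathcal{A}^{\text{TP}}_{\text{DG},p}) \;=\; C_{-2}\, \xi^{-2} + C_{-1}\,\xi^{-1} + C_0 + C_1\,\xi + C_2\,\xi^2,
\]
with the coefficients $C_i$ independent of $k^{\text{TP}}_{\text{DG},p}$. Clearing $\xi^{-2}$ produces a quartic in $\xi$, which proves the generic case.

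For the degenerate case $\alpha^2+\beta_1\beta_2=1/4$ I would split exactly as in Theorem \ref{thm1}. In Case 1 ($\alpha=\pm 1/2$ with $\beta_1\beta_2=0$), a direct entrywise inspection, unchanged from the semi-discrete argument, shows that the quadratic-in-$\xi$ terms in each of the two block rows cancel, so $C_2 = C_{-2} = 0$. In Case 2 ($\alpha\neq\pm 1/2$ with $\beta_1,\beta_2\neq 0$), I would introduce the same auxiliary matrix $\mathcal{D}$ as in Theorem \ref{thm1}, which is built from the identity by adding the scaling $-(1/2+\alpha)/\beta_1$ of rows $1$ and $p+1$ into rows $p+2$ and $2p+2$ respectively; the reason this still works is that the uniform prefactor $\frac{\Delta t}{2}\cos(W/2)$ multiplies $\mathcal{P},\widetilde{\mathcal{P}},\mathcal{R},\widetilde{\mathcal{R}}$ identically, so the identity $\frac{1/2+\alpha}{\beta_1} = \frac{\beta_2}{1/2-\alpha}$ continues to produce cancellation of the $e^{ik^{\text{TP}}_{\text{DG},p}h}$ contributions in every row except row $p+1$ (and analogously for $e^{-ik^{\text{TP}}_{\text{DG},p}h}$ in row $1$). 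Since $\det(\mathcal{D})=1$, we have $\det(\mathcal{A}^{\text{TP}}_{\text{DG},p}) = \det(\mathcal{D}\,\mathcal{A}^{\text{TP}}_{\text{DG},p})$, and the latter is now visibly of the form $C_0 + C_1\xi + C_{-1}\xi^{-1}$, giving a quadratic in $\xi$ after clearing denominators.

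The only step where I expect any real care, rather than routine imitation of Theorem \ref{thm1}, is verifying in Case 2 that the extra temporal factors (the $\sin(W/2)$ terms along the diagonal and the ODE block) do not spoil the row-reduction identity: I would need to check that the rows of $\mathcal{A}^{\text{TP}}_{\text{DG},p}$ being combined by $\mathcal{D}$ contain the spatial-coupling entries in exactly the same proportion as in the semi-discrete case, which follows because those entries sit in the $(H,E)$ block and the common prefactor $\frac{\Delta t}{2}\cos(W/2)$ cancels from both sides of $\frac{1/2+\alpha}{\beta_1}=\frac{\beta_2}{1/2-\alpha}$. Once this is verified, the proof concludes identically to that of Theorem \ref{thm1}.
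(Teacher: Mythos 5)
Your proposal is correct and takes essentially the same route as the paper: the paper's entire proof of Theorem~\ref{thm3} is the one-sentence observation that $\mathcal{P}$, $\widetilde{\mathcal{P}}$, $\mathcal{R}$ and $\widetilde{\mathcal{R}}$ all carry the common factor $\frac{\Delta t}{2}\cos\left(\frac{W}{2}\right)$ in $\mathcal{A}^{\text{TP}}_{\text{DG},p}$, so the argument of Theorem~\ref{thm1} (including the row-reduction matrix $\mathcal{D}$ in the case $\alpha^2+\beta_1\beta_2=1/4$) applies verbatim, which is exactly the structure you identify and verify.
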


Below, we list the physical modes $k^{\text{TP}}_{\text{DG},p}$ for $p\geq0$, and perform an asymptotic analysis when $W\ll 1$ and $K\ll 1.$ 
\begin{itemize}
	\item When using the central flux, i.e. $\alpha=\beta_{1}=\beta_{2}=0$, we  have four solutions, and two of them correspond to the physical modes.  When $W\ll1$ and $K\ll 1$, the physical solutions have the form as
	\begin{align}
	\label{eq:disp_TP_DG_CE1}
	\renewcommand{\arraystretch}{2} 
	k^{\text{TP}}_{\text{DG}^{\text{ phys}},p}
	= \left\{\begin{array}{ll}
	\displaystyle \pm k^{\text{ex}}\left( 1 + \frac{1}{12}\left(  \frac{\delta(\WHO;\mathbf{p})}{\epsilon(\WHO;\mathbf{p})} +1 \right) W^2 +\frac{1}{6}K^2 +\mc{O}\left( W^4 + W^2K^2 + K^4\right)\right) , & p=0, \\
	\displaystyle \pm k^{\text{ex}}\left( 1 + \frac{1}{12}\left(  \frac{\delta(\WHO;\mathbf{p})}{\epsilon(\WHO;\mathbf{p})} +1 \right) W^2 - \frac{1}{48}K^2  +\mc{O}\left( W^4 + W^2K^2 + K^4 \right)\right) , & p=1, \\
	\displaystyle \pm k^{\text{ex}}\left( 1 + \frac{1}{12}\left(  \frac{\delta(\WHO;\mathbf{p})}{\epsilon(\WHO;\mathbf{p})} +1 \right) W^2  +\mc{O}\left( W^4 +K^{6}\right)\right) , & p=2, 3,\\
	\end{array}
	\right.
	\renewcommand{\arraystretch}{1}
	\end{align}
	and can be further rewritten into	 
	\begin{align}
	\label{eq:disp_TP_DG_CE2}
	\renewcommand{\arraystretch}{2}
	k^{\text{TP}}_{\text{DG}^{\text{ phys}},p}
	= \left\{\begin{array}{ll}
	\displaystyle \pm k^{\text{ex}}\left( 1 + \frac{1}{12}\left(  \frac{\delta(\WHO;\mathbf{p})}{\epsilon(\WHO;\mathbf{p})} +1 + \frac{2\epsilon(\WHO;\mathbf{p})}{\epsilon_{\infty}\nu^2} \right) W^2  +\mc{O}\left( W^4 \right)\right) , & p=0, \\
	\displaystyle \pm k^{\text{ex}}\left( 1 + \frac{1}{12}\left(  \frac{\delta(\WHO;\mathbf{p})}{\epsilon(\WHO;\mathbf{p})} +1 - \frac{\epsilon(\WHO;\mathbf{p})}{4\epsilon_{\infty}\nu^2} \right) W^2  +\mc{O}\left( W^4 \right)\right) , & p=1, \\
	\displaystyle \pm k^{\text{ex}}\left( 1 + \frac{1}{12}\left(  \frac{\delta(\WHO;\mathbf{p})}{\epsilon(\WHO;\mathbf{p})} +1 \right) W^2  +\mc{O}\left( W^4 \right)\right) , & p=2, 3.\\
	\end{array}
	\right.
	\renewcommand{\arraystretch}{1}
	\end{align}
	in the case of $W\ll 1$ and with a fixed CFL number $\nu.$

	\item When using the alternating flux, i.e. $\alpha=\pm1/2$ and $\beta_{1}=\beta_{2}=0$, there are only two solutions corresponding to the physical modes,
	\renewcommand{\arraystretch}{2} 
	\begin{align}
	\label{eq:disp_TP_DG_AL1}
	k^{\text{TP}}_{\text{DG}^{\text{ phys}},p}
	= \left\{\begin{array}{ll}
	\ds \pm k^{\text{ex}}\left( 1 + \frac{1}{12}\left(  \frac{\delta(\WHO;\mathbf{p})}{\epsilon(\WHO;\mathbf{p})} +1 \right) W^2 +\frac{1}{24}K^2 +\mc{O}\left( W^4 + W^2 K^2 + K^4 \right)\right) , & p=0, \\
	\ds \pm k^{\text{ex}}\left( 1 + \frac{1}{12}\left(  \frac{\delta(\WHO;\mathbf{p})}{\epsilon(\WHO;\mathbf{p})} +1 \right) W^2  +\mc{O}\left( W^4 + K^{2p+2}\right)\right) , & p=1, 2, 3,\\
	\end{array}
	\right.
	\end{align}
	\renewcommand{\arraystretch}{1}
	in the case of $W\ll 1$ and $K\ll 1$, or
	\renewcommand{\arraystretch}{2} 
	\begin{align}
	\label{eq:disp_TP_DG_AL2}
	k^{\text{TP}}_{\text{DG}^{\text{ phys}},p}
	= \left\{\begin{array}{ll}
	\ds \pm k^{\text{ex}}\left( 1 + \frac{1}{12}\left(  \frac{\delta(\WHO;\mathbf{p})}{\epsilon(\WHO;\mathbf{p})} +1 + \frac{\epsilon(\WHO;\mathbf{p})}{2\epsilon_{\infty}\nu^2} \right) W^2  +\mc{O}\left( W^4 \right)\right) , & p=0, \\
	\ds \pm k^{\text{ex}}\left( 1 + \frac{1}{12}\left(  \frac{\delta(\WHO;\mathbf{p})}{\epsilon(\WHO;\mathbf{p})} +1 \right) W^2  +\mc{O}\left( W^4 \right)\right) , & p=1, 2, 3.\\
	\end{array}
	\right.
	\end{align}
	\renewcommand{\arraystretch}{1}
	in the case of $W\ll 1$ and with a fixed CFL number $\nu.$

	\item When using the upwind flux, i.e. $\alpha=0$, $\beta_{1}=\frac{1}{2\sqrt{\epsilon_{\infty}}}$ and $\beta_{2}=\frac{\sqrt{\epsilon_{\infty}}}{2}$, there are only two solutions corresponding to the physical modes,
	\renewcommand{\arraystretch}{2} 
	\begin{align}
	\label{eq:disp_TP_DG_UP1}
	k^{\text{TP}}_{\text{DG}^{\text{ phys}},p}
	= \left\{\begin{array}{ll}
	\ds \pm k^{\text{ex}} \left(  1 +\frac{1}{2} i B +\mc{O}\left(W^2 + K^2 \right) \right) , & p=0, \\
	\ds \pm k^{\text{ex}} \left( 1 + \frac{1}{12}\left(  \frac{\delta(\WHO;\mathbf{p})}{\epsilon(\WHO;\mathbf{p})} +1 \right) W^2 +\mc{O}\left(W^4 +i K^{2p} B \right) \right), & p=1, 2, 3, \\
	\end{array}
	\right.
	\end{align}
	\renewcommand{\arraystretch}{1}
	which can be rewritten as 
	\renewcommand{\arraystretch}{2} 
	\begin{align}
	\label{eq:disp_TP_DG_UP2}
	k^{\text{TP}}_{\text{DG}^{\text{ phys}},p}
	= \left\{\begin{array}{ll}
	\ds \pm k^{\text{ex}} \left( 1 +i \frac{b/\omega}{2\sqrt{\epsilon_{\infty}}\nu} W +\mc{O}\left(W^2 \right) \right) , & p=0, \\
	\ds \pm k^{\text{ex}} \left( 1 + \frac{1}{12}\left(  \frac{\delta(\WHO;\mathbf{p})}{\epsilon(\WHO;\mathbf{p})} +1 \right) W^2 +\mc{O}\left( i W^3 \right) \right), & p=1, \\
	\ds \pm k^{\text{ex}} \left( 1+ \frac{1}{12}\left(  \frac{\delta(\WHO;\mathbf{p})}{\epsilon(\WHO;\mathbf{p})} +1 \right) W^2 +\mc{O}\left( W^4 \right) \right), & p=2, 3,\\
	\end{array}
	\right.
	\end{align}
	\renewcommand{\arraystretch}{1} 
	in the case of $W\ll 1$ and with a fixed CFL number $\nu.$ 
\end{itemize}

We have  similar conclusions as those for the fully discrete leap-frog DG schemes, except that the leading error terms for high order schemes come from the fully implicit time discretization \eqref{eq:dis_TP}.

\subsection{Comparison among fully discrete DG schemes}

In our previous work \cite{bokil2017energy}, we have proved that the  fully discrete DG schemes based on the trapezoidal rule is unconditionally stable and the leap-frog schemes are conditionally stable. Following the proof in \cite{bokil2017energy}, we can find $\nu^{p}_{max}$ such that under the condition $\nu\leq \nu^{p}_{max}$, the leap-frog schemes using $P^{p}$ space   are stable .  Those $\nu^{p}_{max}$ values for $p=0,\ldots,3, \infty$ are listed in Table 6.1. For comparison, we also list the CFL condition for FD scheme for various $M$ values in the same table. We can see that the CFL number for DG scheme is much smaller than that for FD scheme, particularly for high order case.

\renewcommand{\arraystretch}{1.5}
\begin{table}[H]
	\label{tab:CFL}
	\centering
	\caption{$\nu^{p}_{max}$ for DG scheme and $\nu^{2M}_{max}$ for FD scheme.}
	\begin{tabular}{c|cccccc}
		& $p=0$ & $p=1$ & $p=2$ &  $p=3$ & &   $p\rightarrow\infty$ \\ \hline
		DG-CE	 & 1 &  0.211325 & 0.101287  &  0.0605268 & &  0 \\
		DG-AL & 1 &  0.192450 & 0.089115  &  0.0521629  & &  0\\
		DG-UP		 & 1 &  0.211325 & 0.101287  &  0.0605268 &  & 0 \\\hline\hline
		&  $M=1$  & $M=2$  & $M=3$ & $M=4$ & $M=5$ & $M\rightarrow\infty$  \\\hline
		FD & 1  &  0.857143  & 0.805369  &  0.777418 & 0.759479 &  0.636620  \\
	\end{tabular}
\end{table}
\renewcommand{\arraystretch}{1}

 In Figure \ref{Fig:Phase_Error_DG_fully1}, we plot the relative phase errors of fully discrete DG schemes with leap-frog discretization for   $W_1= \pi/30, \pi/300$   using    material parameters \eqref{eq:parameter}. We can observe that the overall behavior of the plot with  $W_1= \pi/30$ is quite different from the FD plots and the plots obtained with $W_1= \pi/300$, and the magnitude of the errors is very large. This phenomenon results from $\ds\omega_1 h = \frac{1}{\sqrt{\epsilon_{\infty}}\,\nu}W_1$ and the tiny CFL numbers restricted by the stability condition which makes the mesh size $h$   extra large. We conclude that the small CFL number is one disadvantage of high order DG schemes.  When comparing the figures obtained with $W_1= \pi/300$ using the three numerical fluxes, it is clear that the alternating flux has the smallest error, while the error obtained by the upwind flux is the largest. The overall dependence of the error on $\WH{\omega}$ is very similar to those from the FD schemes.

\begin{figure}[h]
	\centering
	\includegraphics[scale= 0.28] {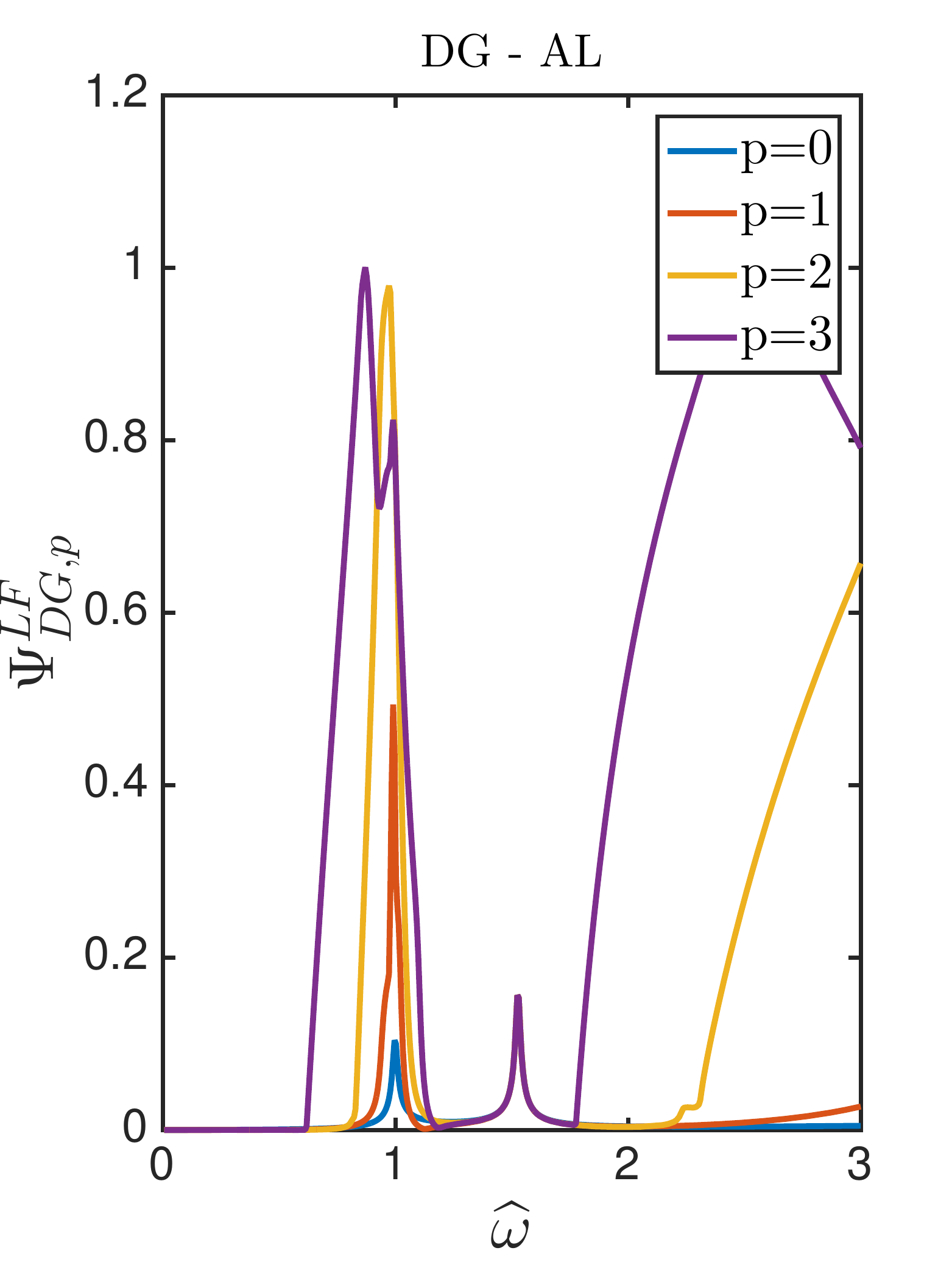}
	\includegraphics[scale= 0.28] {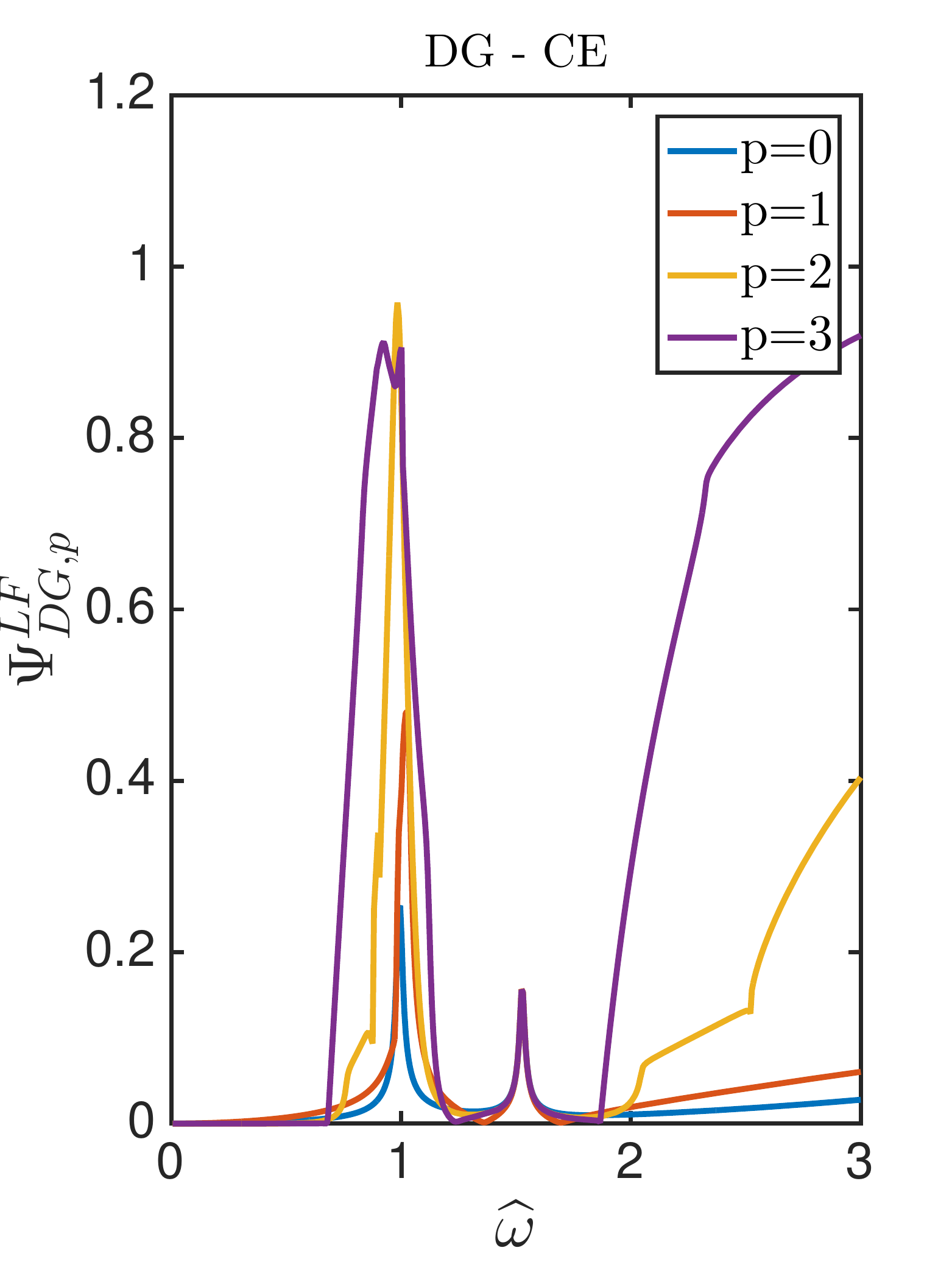}
	\includegraphics[scale= 0.28] {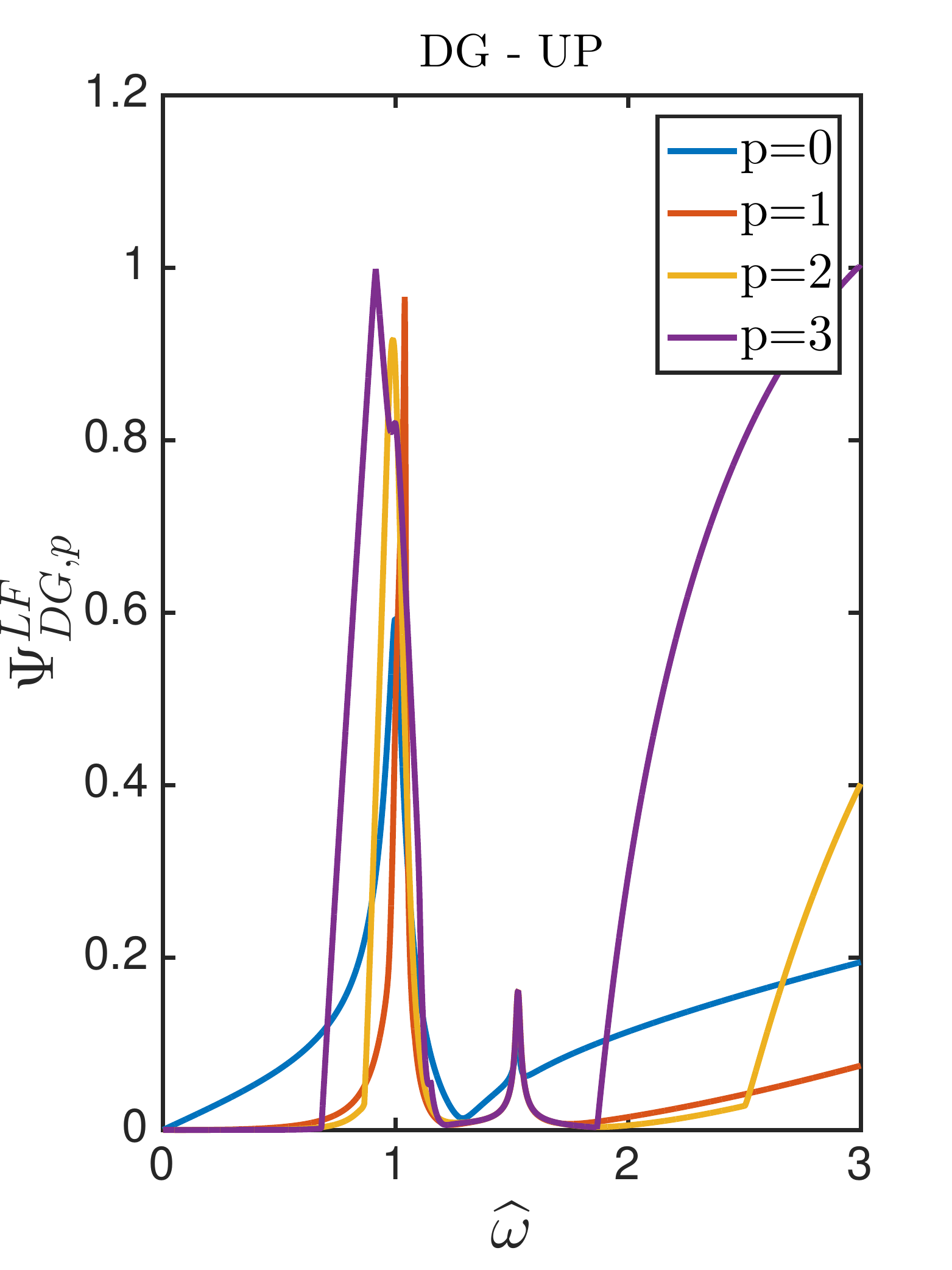}\\
	\includegraphics[scale= 0.28] {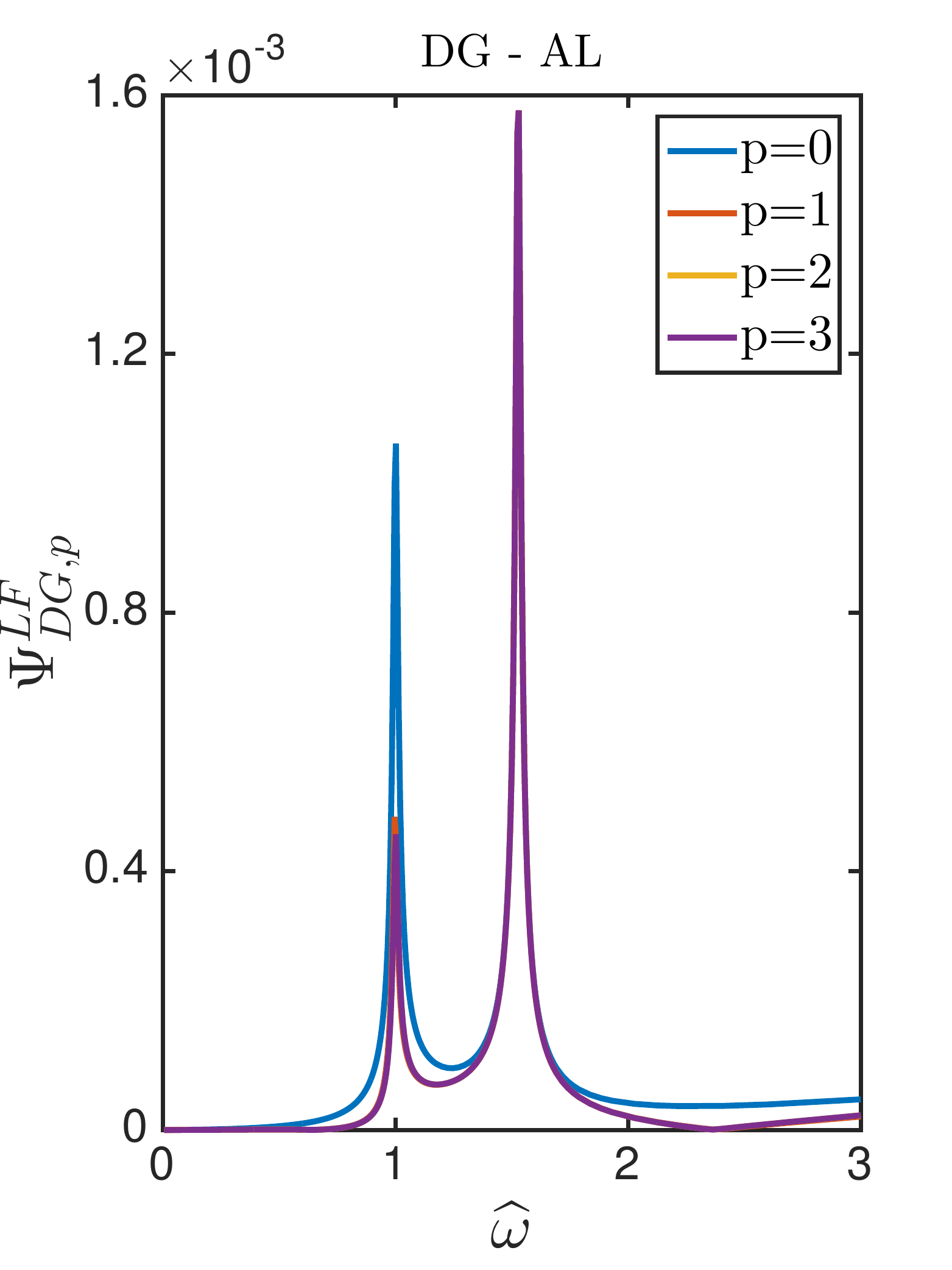}
	\includegraphics[scale= 0.28] {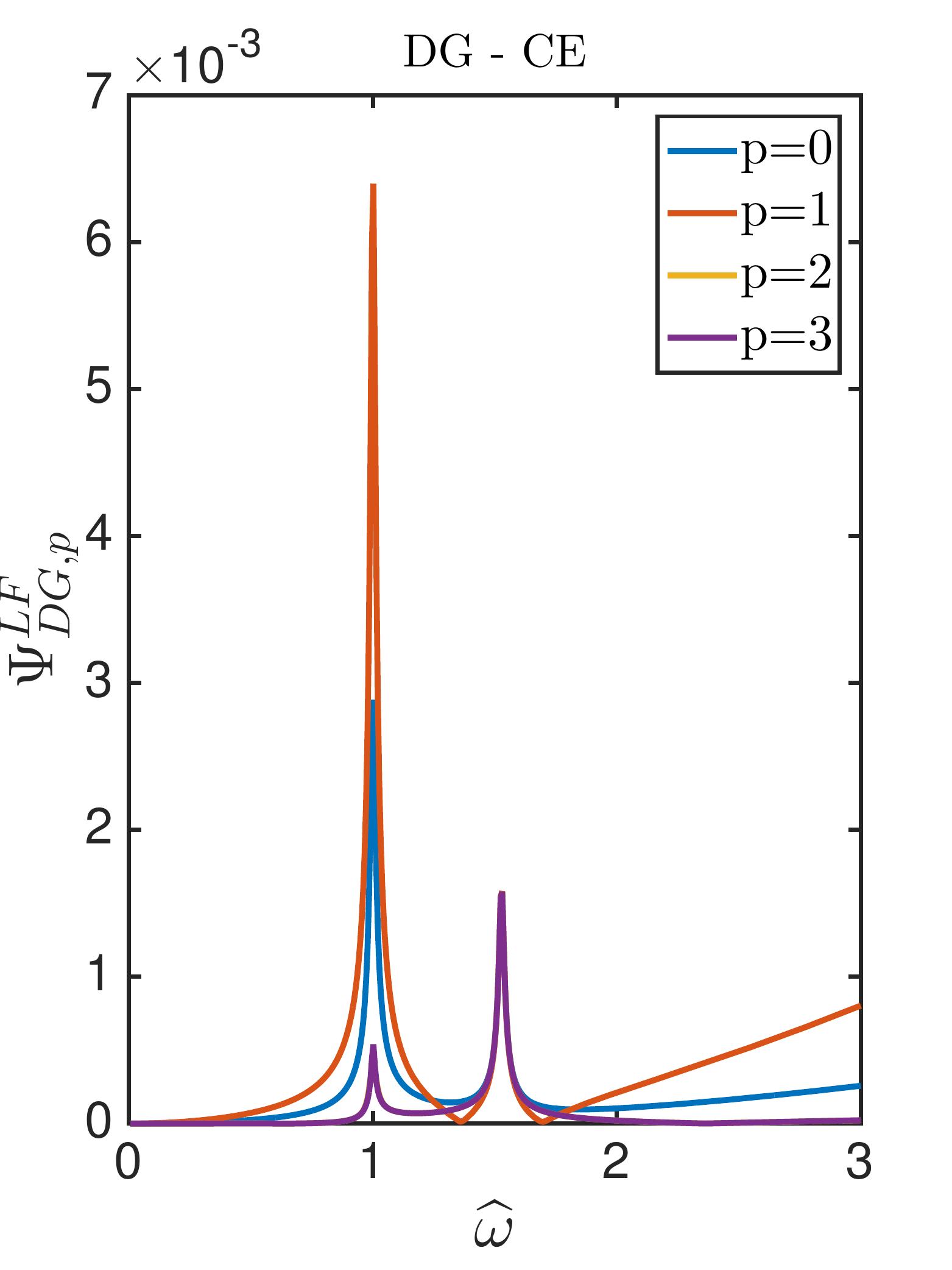}
	\includegraphics[scale= 0.28] {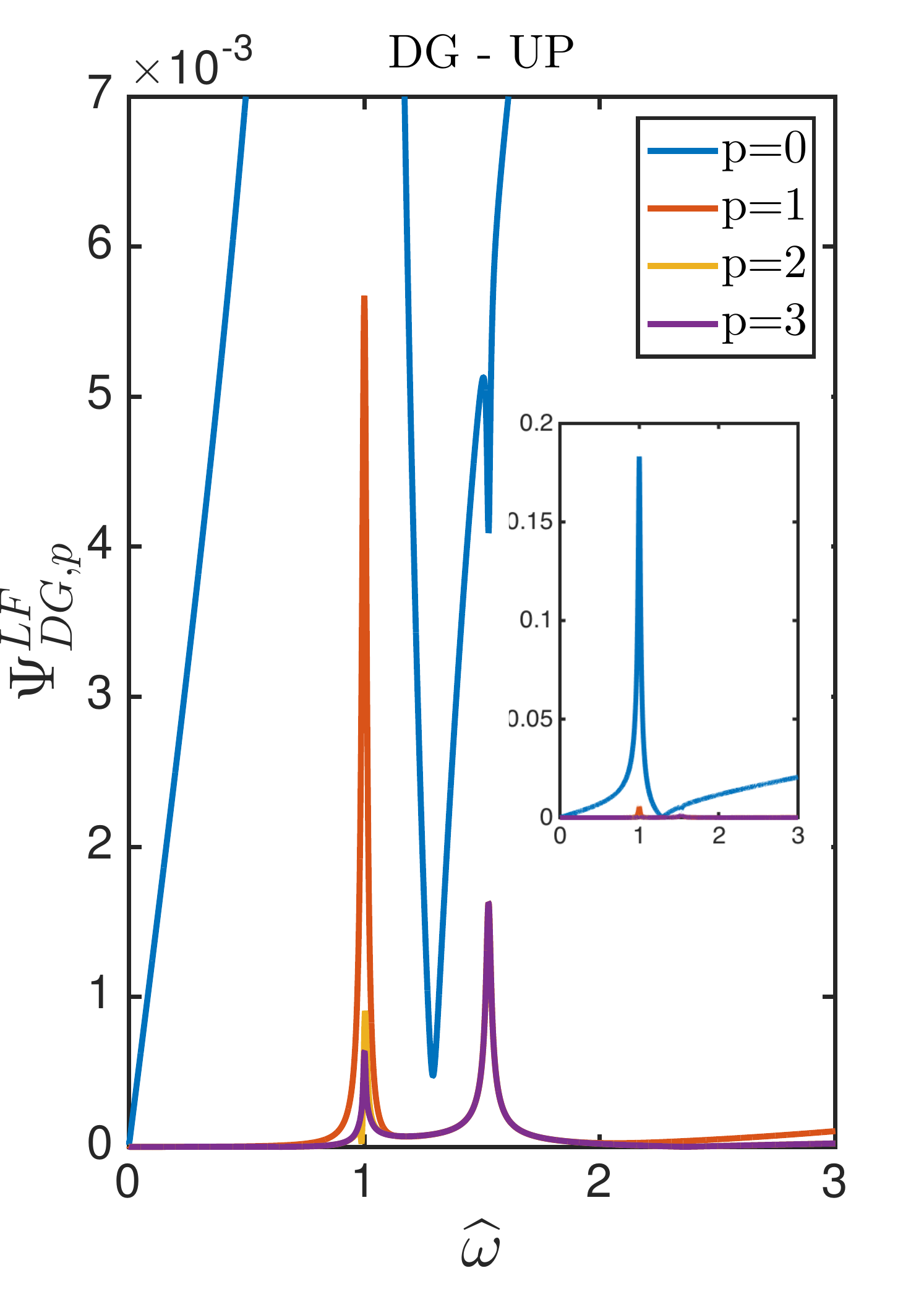}
	\caption{The relative phase error in physical modes of the fully discrete DG schemes with leap-frog time discretization, using $\nu/\nu_{max}^{p}=0.7$. First row: $W_1=\pi/30$ ; second row:   $W_1=\pi/300.$ }
	\label{Fig:Phase_Error_DG_fully1}
\end{figure}

Next, we consider the unconditionally stable DG scheme with trapezoidal rule and varying CFL numbers. Figure \ref{Fig:Phase_Error_DG_fully2} shows the contour plots of the dispersion error at $\WH{\omega}=1$ with $(W_1,\omega_{1} h) \in[0.05,0.3]\times[0.01,0.1]$.  
It is observed that DG-AL with $p\geq1$ have horizontal contour lines, indicating dispersion errors are dominated by temporal ones. 
In comparison, DG-UP and DG-CE have horizontal contour lines when $p\geq2$. The values of numerical dispersion errors obtained by high order DG schemes are very similar, which also illustrates the dominant role of temporal errors. This observation is consistent with our theoretical analysis.

\begin{figure}
	\centering	
	\includegraphics[scale= 0.18] {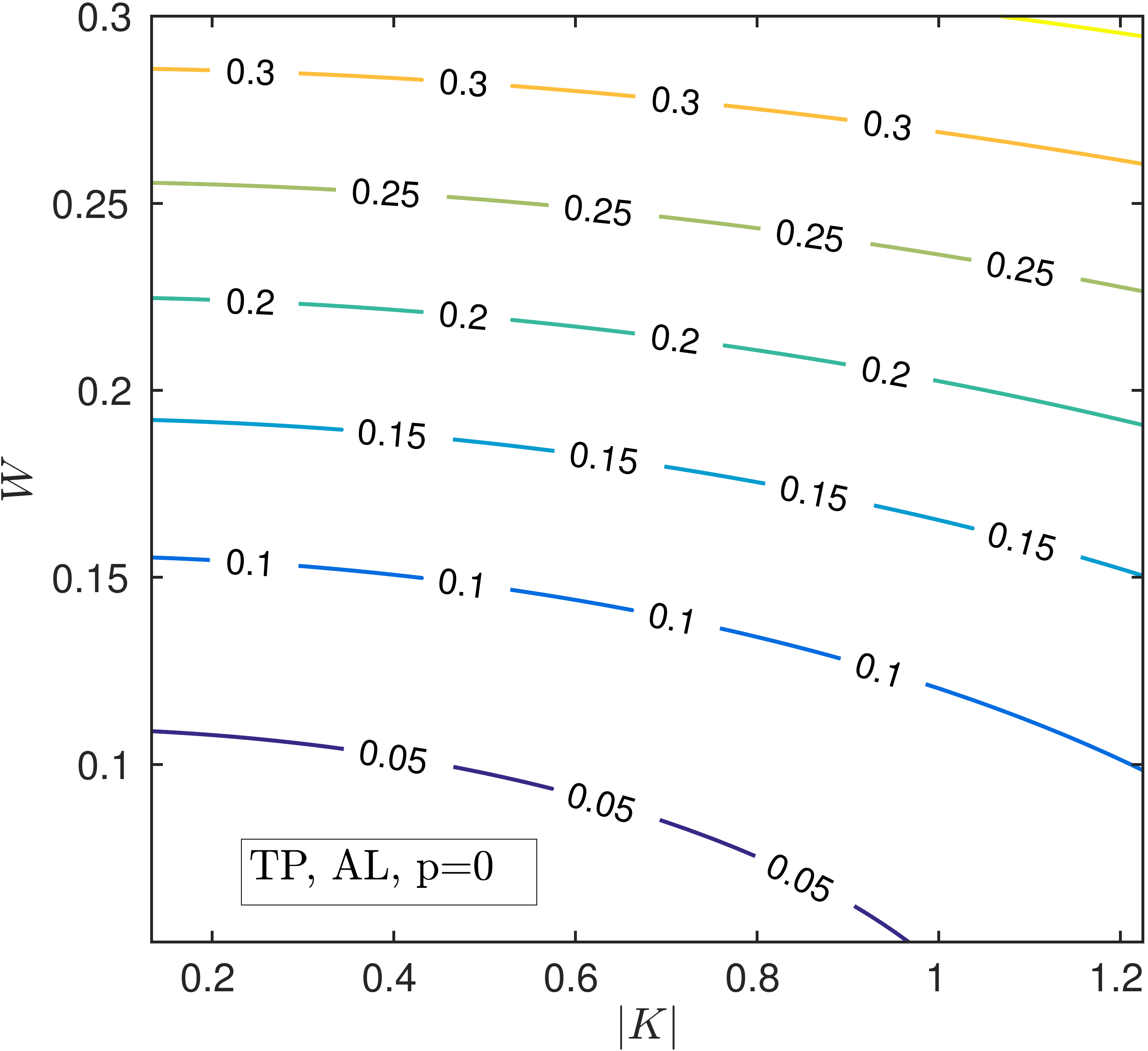}
	\includegraphics[scale= 0.18] {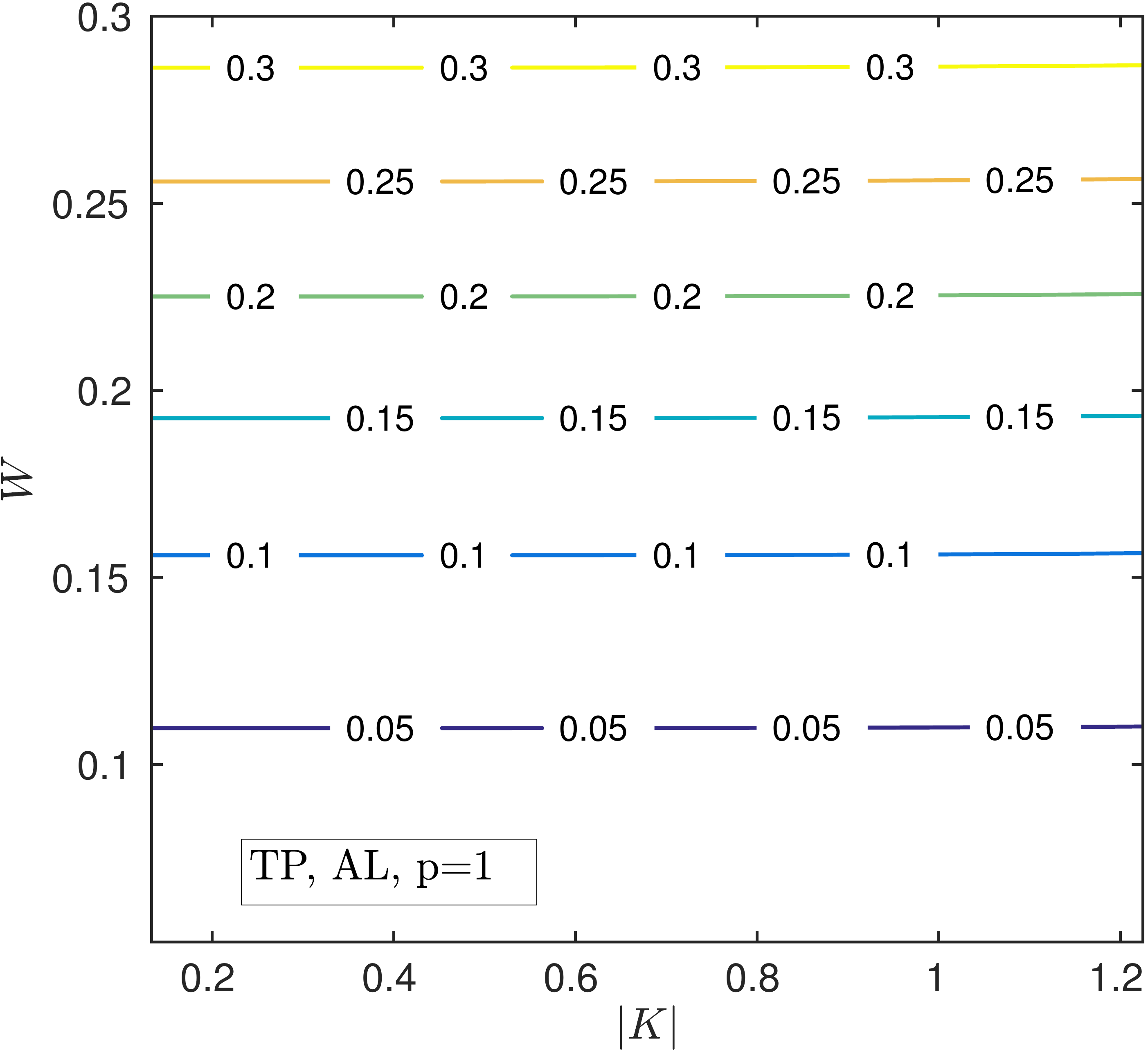}
	\includegraphics[scale= 0.18] {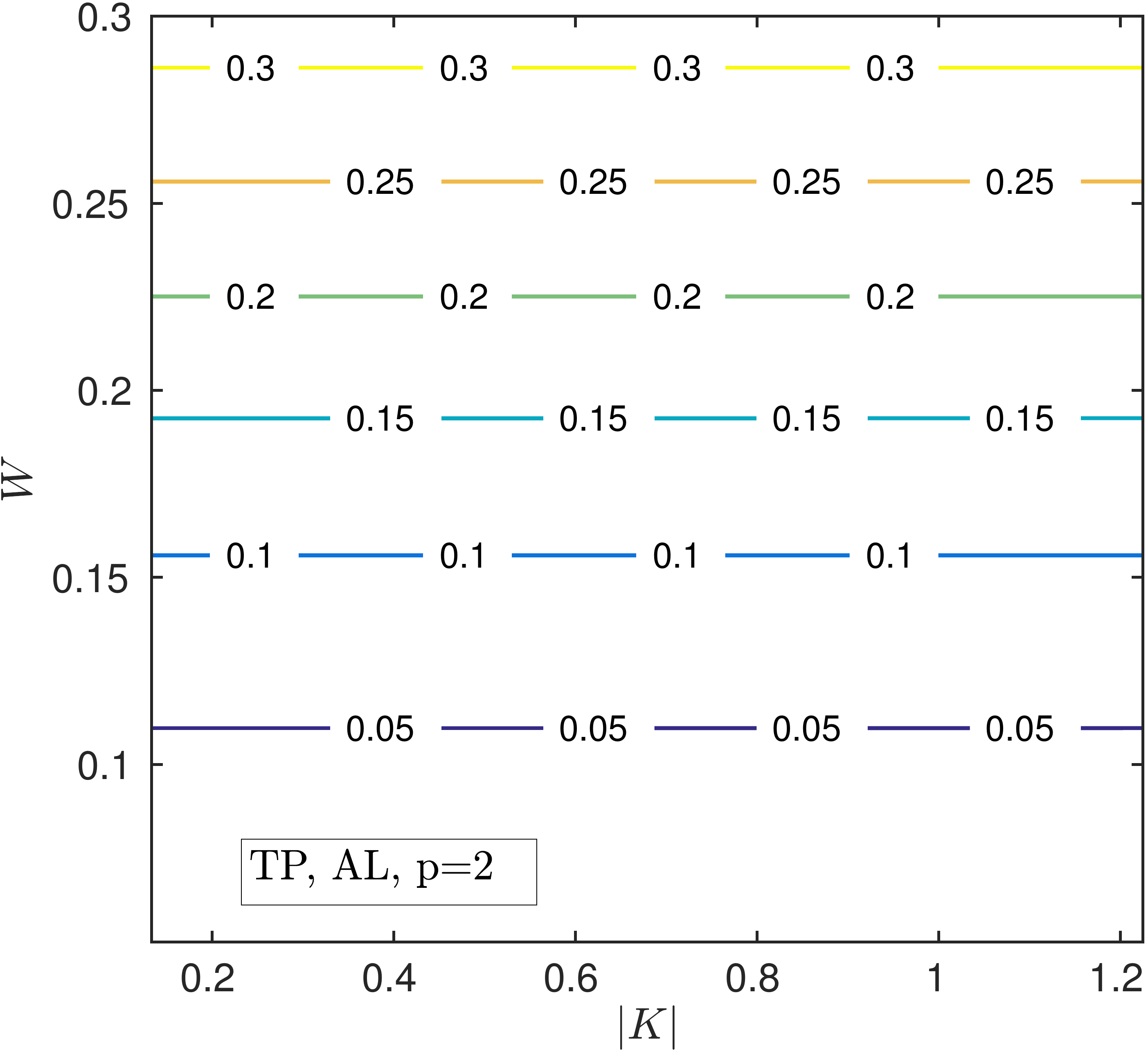}\\
	\includegraphics[scale= 0.18] {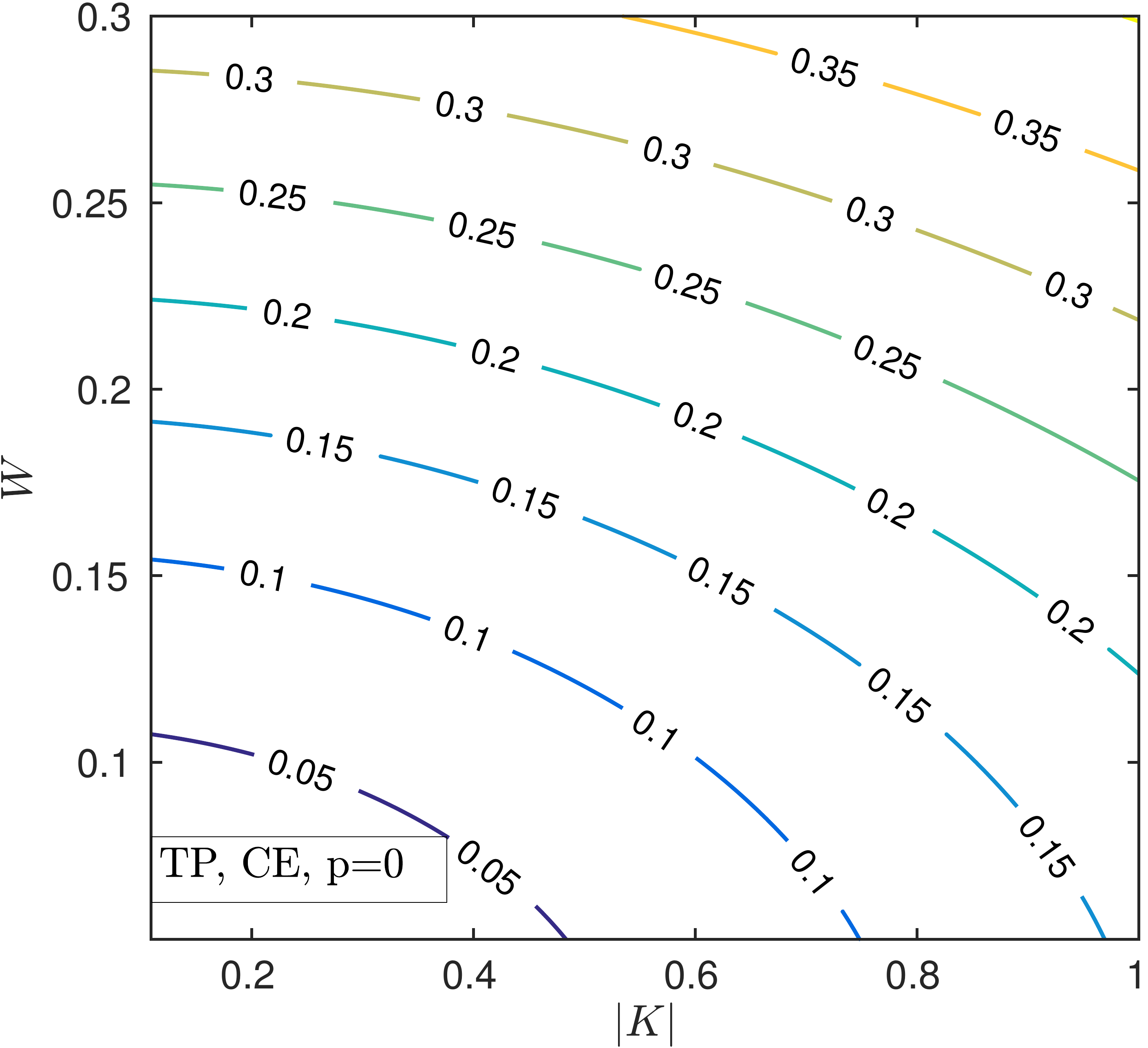}
	\includegraphics[scale= 0.18] {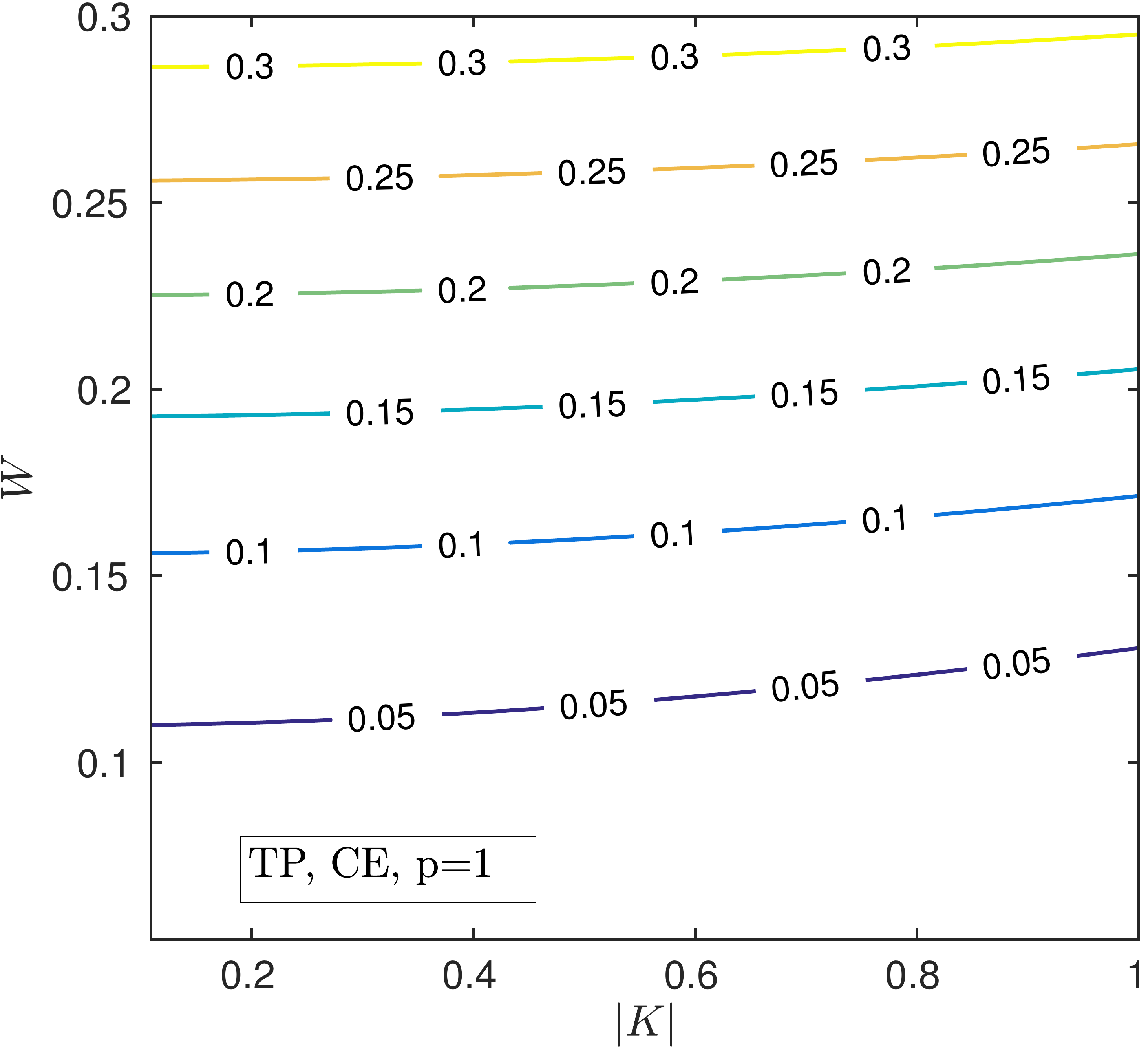}
	\includegraphics[scale= 0.18] {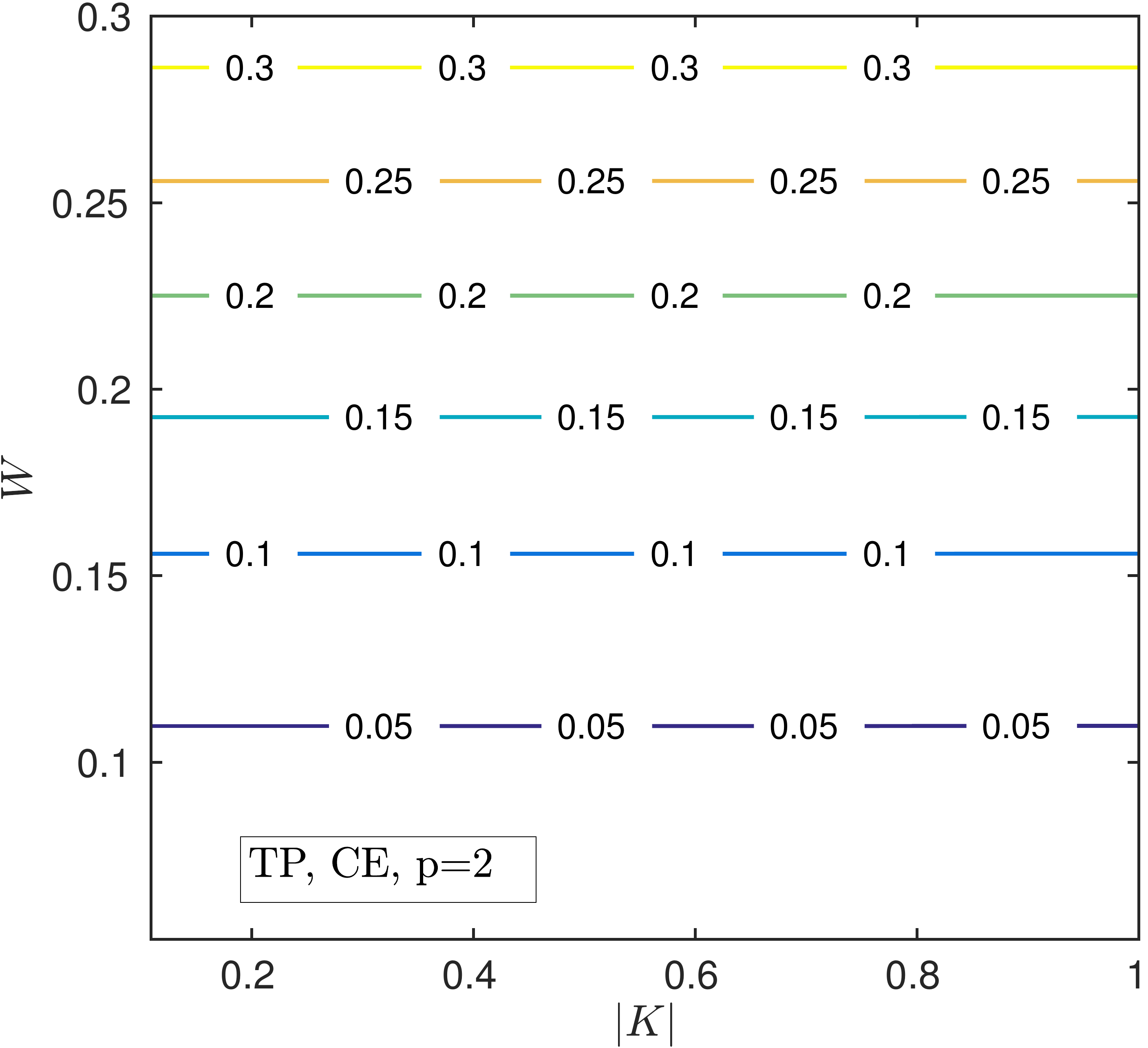}\\
	\includegraphics[scale= 0.18] {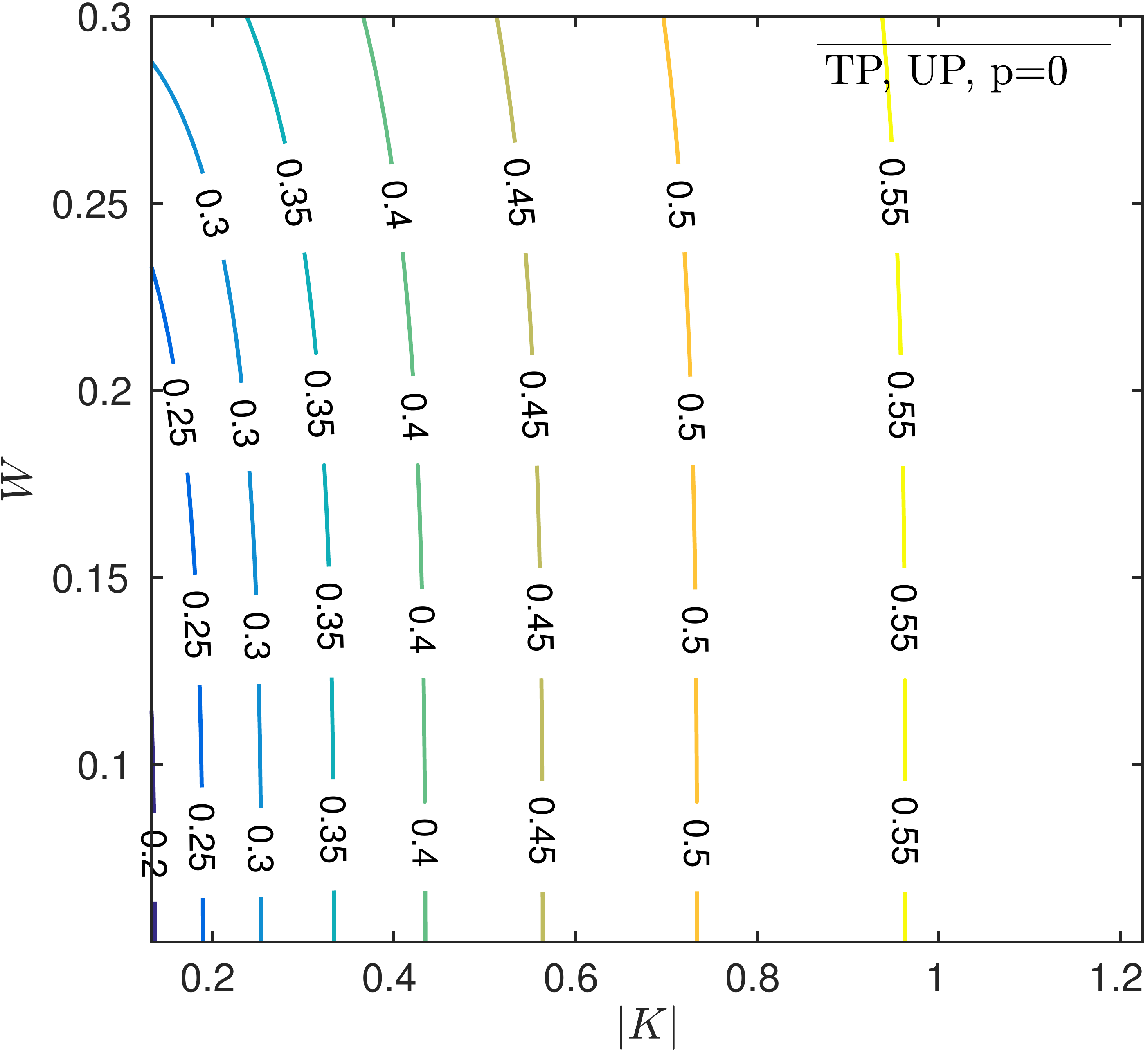}
	\includegraphics[scale= 0.18] {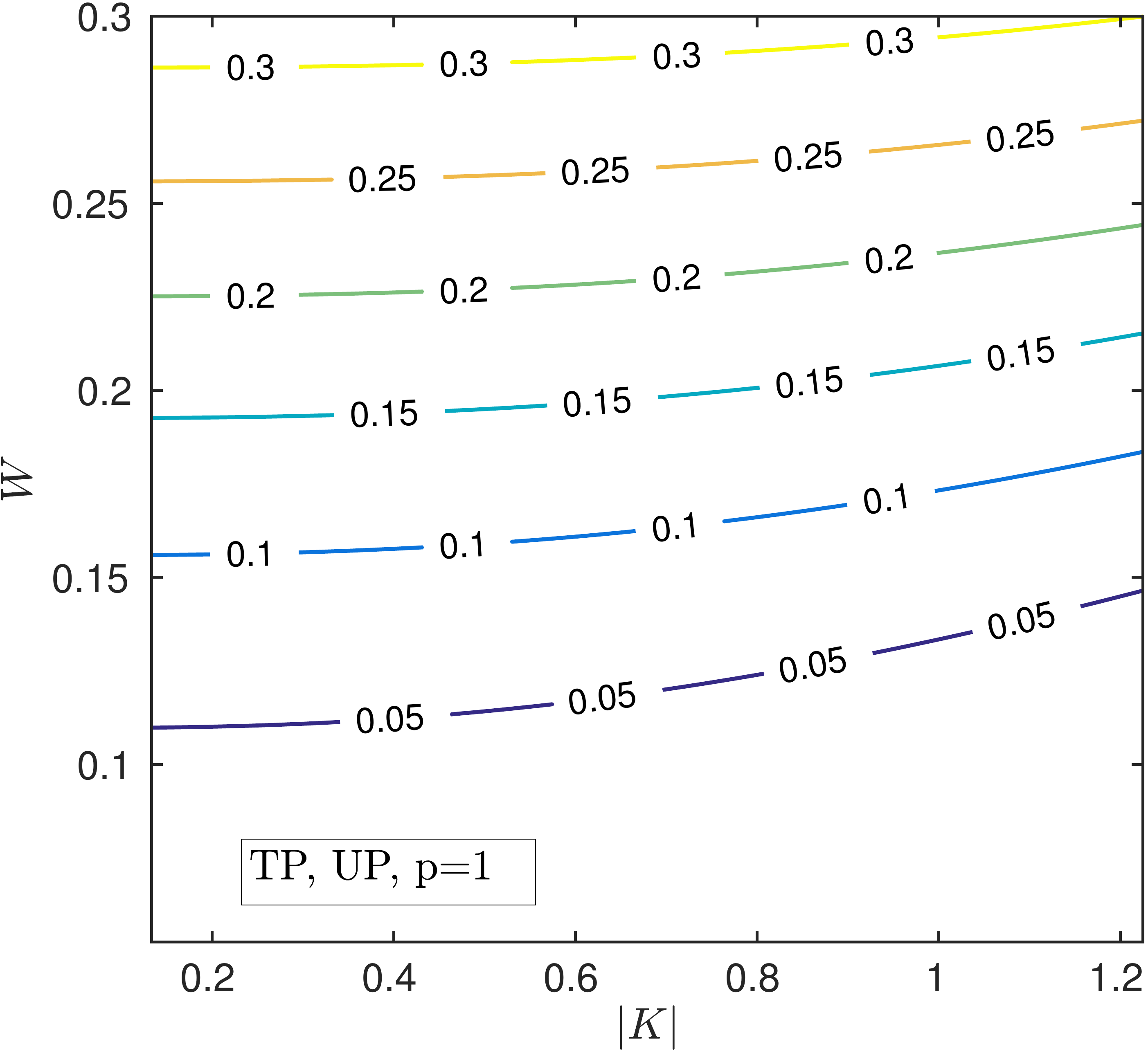}
	\includegraphics[scale= 0.18] {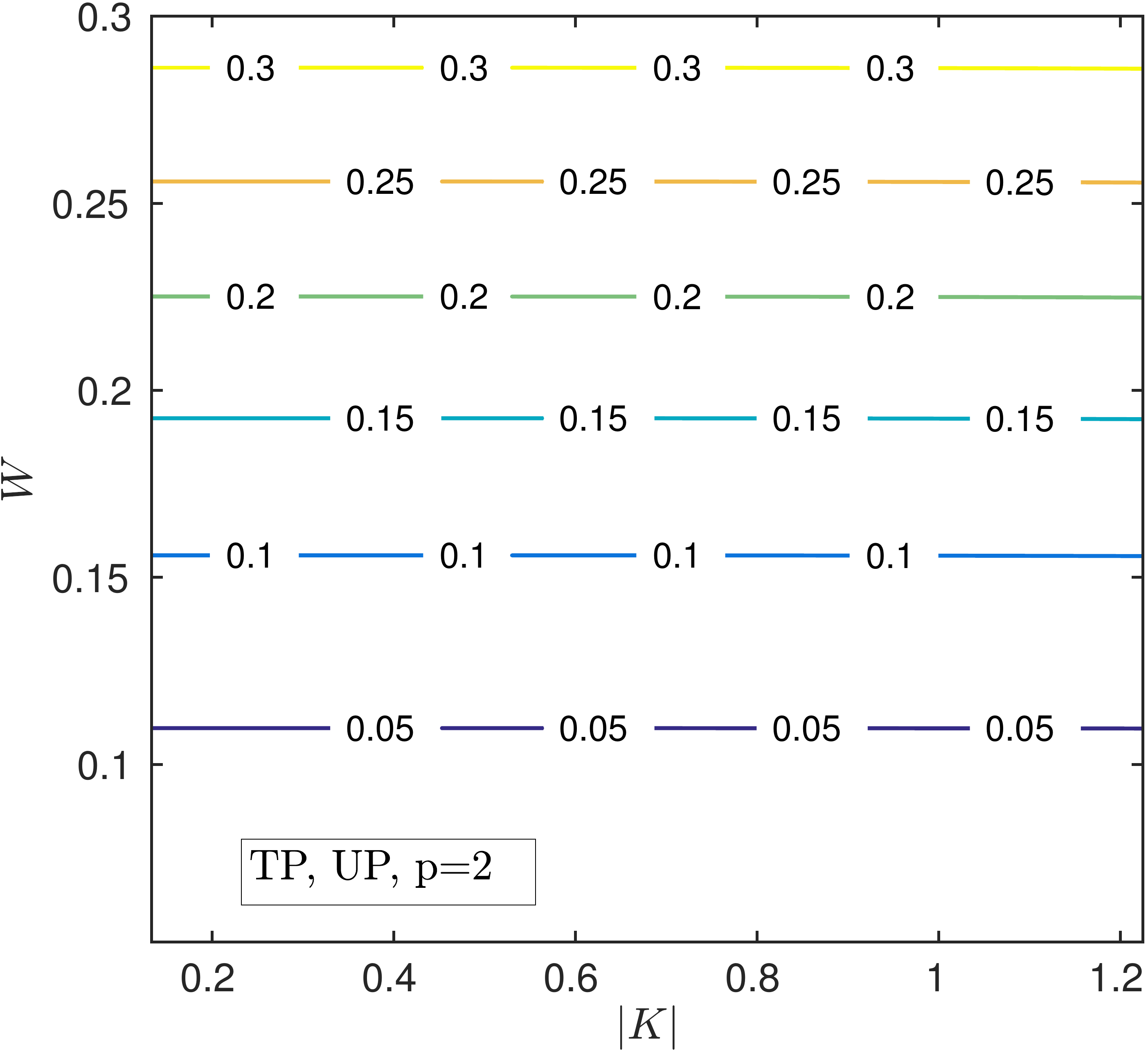}
	\caption{The contour plot of relative phase error of fully discrete DG schemes for the physical modes with trapezoid rule. $\WHO=1$. First row: DG-AL; second row: DG-CE; third row: DG-UP.}
	\label{Fig:Phase_Error_DG_fully2}
\end{figure}

\section{Benchmark on Physical Quantities}
\label{sec:numerical}

In this section, we will  verify the performance of the finite difference and discontinuous Galerkin methods by plotting quantities that are important for wave propagation such as the normalized ratio between the numerical and exact phase velocity (also refractive index); normalized attenuation constant; normalized energy velocity; and normalized group velocity, to validate the performance of the numerical methods (see \cite{gilles2000comparison}). The model parameters in \eqref{eq:parameter} are used in computing all the quantities and plots below.

We first define $\psi$, given as 
\begin{align}
\displaystyle 
\label{def:reflex}
\psi = \frac{k}{\omega} = \sqrt{\epsilon(\WHO;\mathbf{p}) }.
\end{align}
We note that $\psi$ is the complex index of refraction of the medium, whose real part is the real refractive index of the medium, whereas the imaginary part is related to the absorption or extinction coefficient \cite{oughstun1988velocity}. We use $\Re$ and $\Im$ to denote the real   and the imaginary parts of a complex number. Let the superscripts $E$ and $ N$ denote the value of a quantity related to the exact solution of system \eqref{eq:sys} and a numerical approximation, respectively. We have the following definitions (see \cite{gilles2000comparison}):
\begin{itemize}
	\item \textbf{Normalized Phase Velocity}: We consider the ratio between the real parts of the exact and numerical phase velocities, with the phase velocity, $v_p$, defined as $v_{p}=\omega/k = 1/\psi $. We define 
	\begin{align}
	\label{eq:pha_vel}
	\text{Normalized Phase Velocity} 
	=  \frac{\Re(v_p^N)}{\Re(v_p^E)}.
	\end{align}
	
	\item \textbf{Normalized Attenuation Constant}: We consider the ratio between the imaginary parts of the exact and numerical $\psi$, which is also the ratio between the imaginary parts of the exact and numerical indices of refraction. We define 
	\begin{align}
	\label{eq:att_con}
	\text{Normalized Attenuation Constant} = \frac{\Im\left( \psi^N \right)}{\Im\left( \psi^E \right)}.
	\end{align}
	
      \item \textbf{Normalized Energy Velocity}: The {\it velocity of energy transport} of a (monochromatic) plane-wave
field is an important concept of wave propagation in a dispersive medium. In \cite{oughstun1988velocity} this velocity is defined as a ratio of the time-average value of the Poynting vector to the total time-average electromagnetic energy density stored in both the field
and the medium. The normalized energy velocity is a quantity that is defined
        (see \cite{gilles2000comparison, oughstun1988velocity})
        as a function of the real and imaginary parts of the quantity $\psi$ given as 
	\[\displaystyle
	\text{Energy Velocity} = \left[ \Re(\psi) 
	+ \frac{\left( \Re\left( \psi^2\right)  - \epsilon_s\right) \left( \Re\left( \psi^2\right)  - \epsilon_\infty\right)
		+ (\Im\left( \psi^2 \right))^2}{\left( \epsilon_s - \epsilon_\infty\right) \Re\left( \psi \right)}
	\right]^{-1}.
	\]
Based on the definition of the energy transport velocity, we define the ratio between the exact and numerical energy transport velocity to be the normalized energy velocity as
        \begin{align}
	  \label{eq:ene_vel}
          	\text{Normalized Energy Velocity} = \frac{\text{Energy Velocity}\,^N }{\text{Energy Velocity}\,^E }.
	\end{align}
\item \textbf{Normalized Group Velocity}: We define the normalized group velocity to be the real part of the ratio of group velocities of the exact and numerical solutions. We have 
	\begin{align}
	\label{eq:grp_vel}
	\text{Normalized Group Velocity} = \Re \left( \frac{v_g^N}{v_g^E} \right),
	\end{align}
	where the group velocity is defined by $\displaystyle v_g = \frac{\partial \omega}{\partial k}$. Here, both $v^{E}_g$ and $v^{N}_g$ are obtained numerically by  
	$$\displaystyle (v_g)^{-1} = \frac{\partial k}{\partial \WH{\omega}} \frac{\partial \WH{\omega}}{\partial \omega} \approx \frac{k(\WH{\omega}+0.001)-k(\WH{\omega})}{0.001} \frac{\partial \WH{\omega}}{\partial \omega}.$$
	\end{itemize}

In Figures \ref{Fig:phy_FD1} and \ref{Fig:FD_TP_phy}, we plot the four physical quantities defined in \eqref{eq:pha_vel}-\eqref{eq:grp_vel} for the leap-frog and trapezoidal FD schemes in various ranges of values for $\WHO$: below resonance ($\WHO < 1$), near resonance ($\WHO \approx 1$), at the upper edge of the medium absorption band ($\WHO \approx 1.527$), and far above resonance ($\WHO >3$). Figure \ref{Fig:phy_FD1} offers excellent agreement with the plots in \cite{gilles2000comparison} for the (2,2) Yee scheme (leap-frog FDTD scheme with $M=1$) and a (2,4) leap-frog FDTD scheme ($M=2$). Both schemes have large errors at the resonance frequency and the upper edge of the medium absorption band $\WHO=\sqrt{\epsilon_{s}/\epsilon_{\infty}}$.  Higher order schemes have values for the physical quantities that are closer to 1, which indicates smaller dispersion error with increase in the spatial order of the scheme.   We note that, while the increase in spatial order reduces the four physical quantities near resonance for both the leap-frog and trapezoidal FDTD methods, there is virtually no change with spatial order at the upper edge of the medium absorption band. This is also true for the DG schemes.  A comparison between Figures \ref{Fig:phy_FD1} and \ref{Fig:FD_TP_phy} suggests that the main differences between the two temporal discretizations can be observed for frequencies below and far beyond resonance. For $\WHO<1,$ the plots obtained by the trapezoidal FDTD schemes are monotone. This is not the case for the leap-frog FDTD method as shown in Figure \ref{Fig:phy_FD1}. The results can be understood by comparing equations \eqref{eq:dis_LF} with \eqref{eq:dis_TP}. The leading error coefficients in the two time schemes are different, with one being monotone on $\WHO$ and the other not.  For high frequencies, the leap-frog FDTD scheme can no longer resolve frequencies beyond $14.8$, when the fields start to decay exponentially and have an increasing phase velocity. This number changes to around $10$ for the trapezoidal scheme, which shows different resolution offered by the two temporal schemes.  

In Figure  \ref{Fig:phy_DG_AL1}, we plot the four physical quantities defined in \eqref{eq:pha_vel}-\eqref{eq:grp_vel}, obtained by DG-AL scheme using trapezoidal time discretization with a fixed CFL number $\nu=0.7.$ This choice is made based on previous observations that the DG-AL performs the best among all three fluxes. The overall behaviors of the physical quantities for FD and DG schemes are very similar when comparing Figure \ref{Fig:FD_TP_phy} with Figure \ref{Fig:phy_DG_AL1}. The main difference lies in the last column for high frequencies. The increasing resolution in the higher order DG scheme is evident, while increasing order does not impact this much for FD schemes. Thus, high order DG schemes in space can have a better performance in resolving high frequencies when compared with the FD scheme using the same mesh size. Similar conclusion holds with leap frog time discretization, and the plots are omitted for brevity.

\begin{figure}
	\centering
	\includegraphics[scale=0.258]{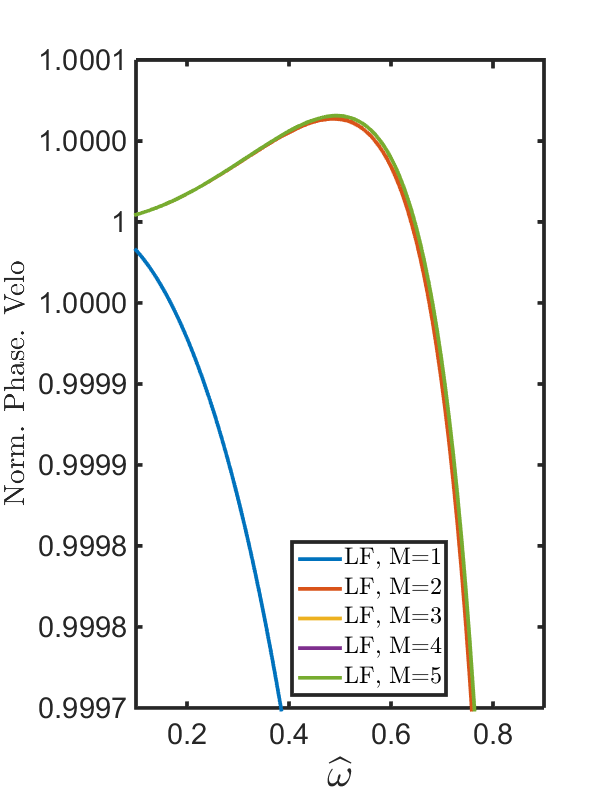}
	\includegraphics[scale=0.258]{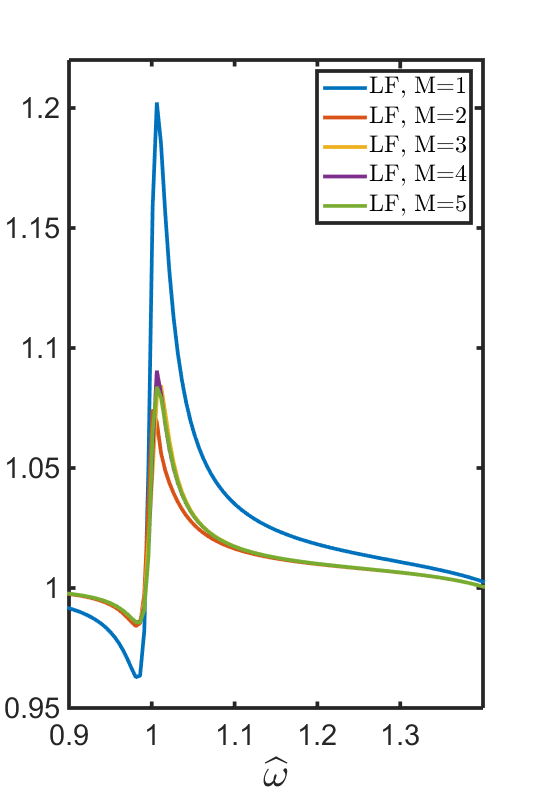}
	\includegraphics[scale=0.258]{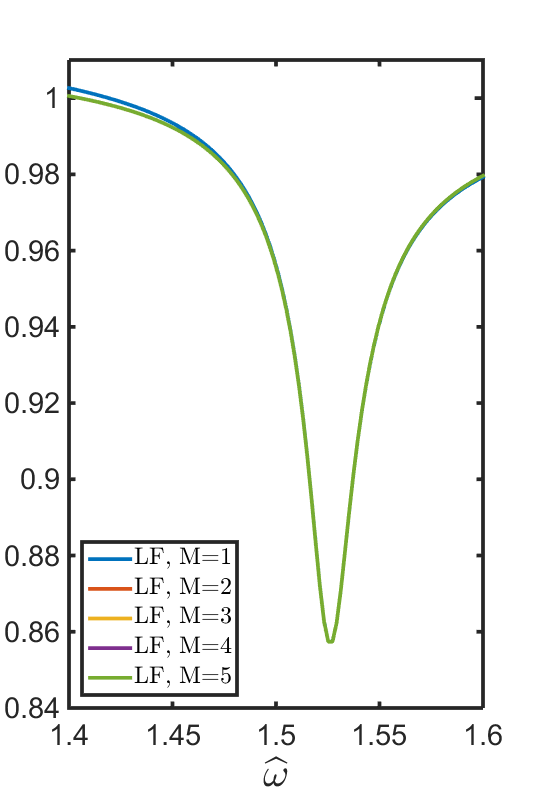}
	\includegraphics[scale=0.258]{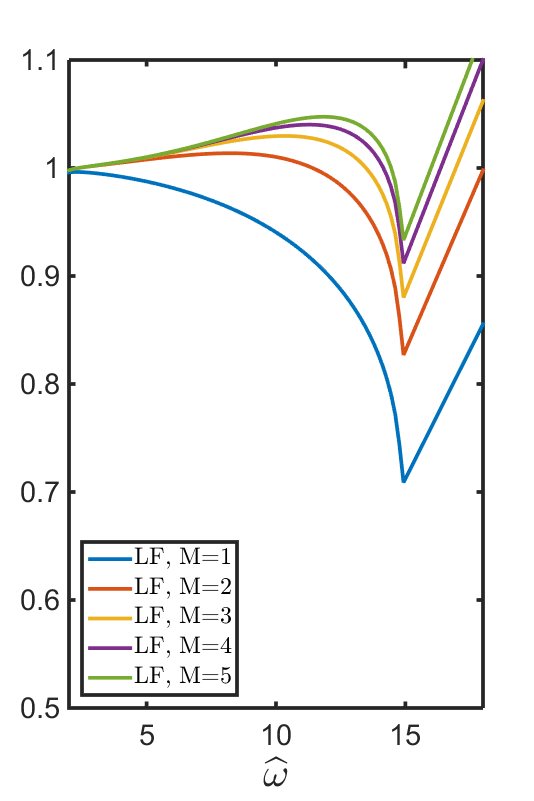} \\
	\includegraphics[scale=0.258]{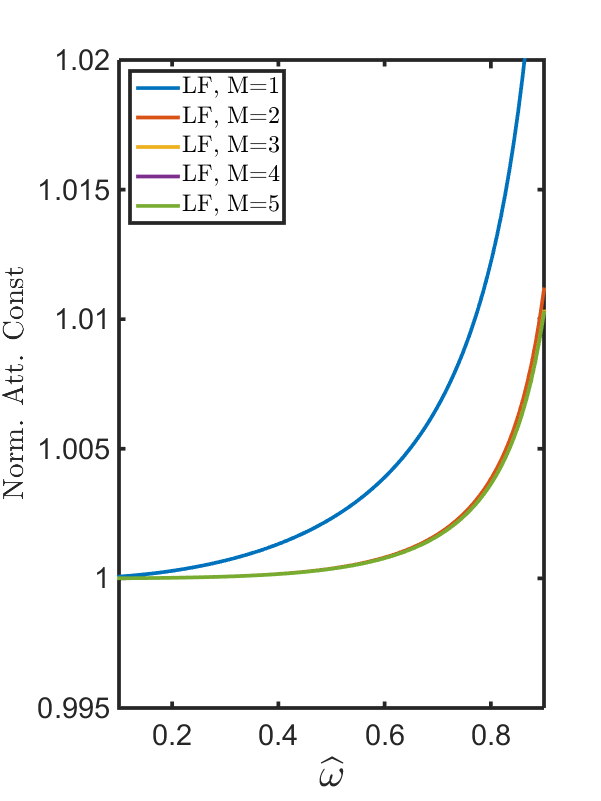}
	\includegraphics[scale=0.258]{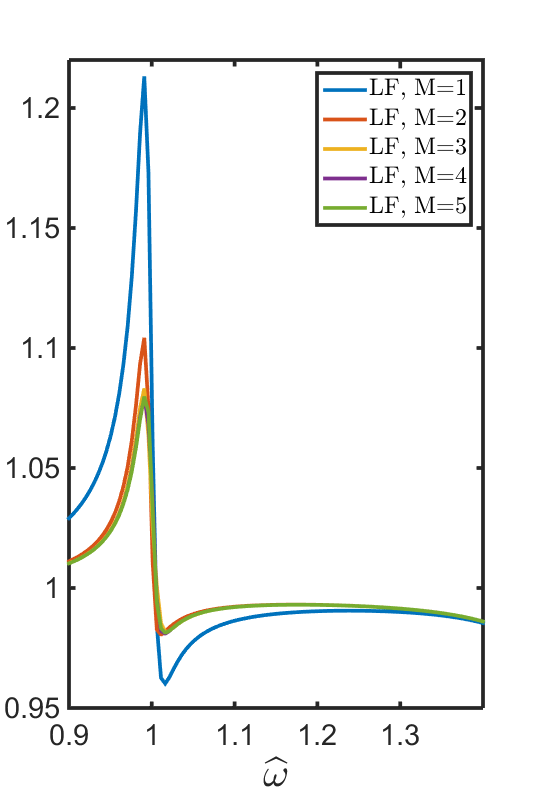}
	\includegraphics[scale=0.258]{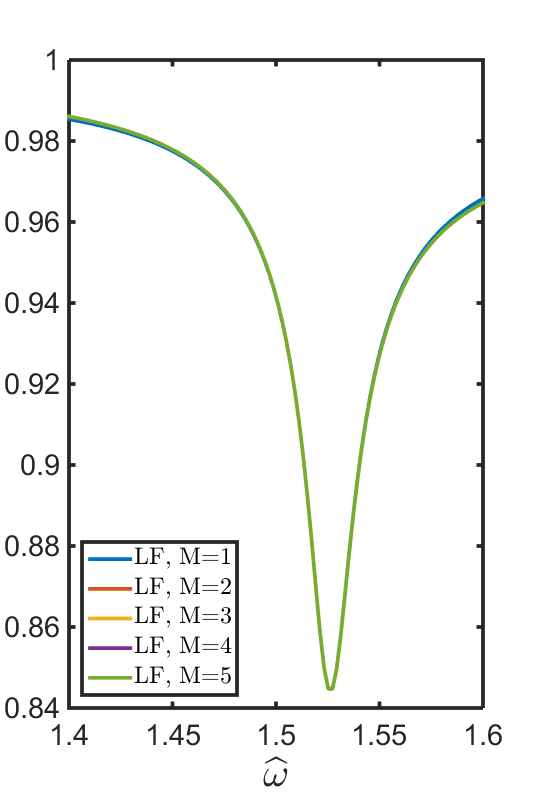}
	\includegraphics[scale=0.258]{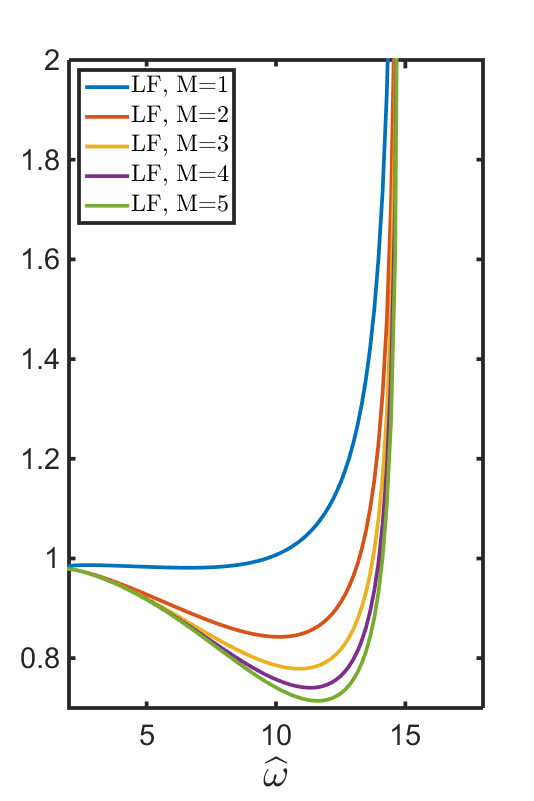} \\
	\includegraphics[scale=0.258]{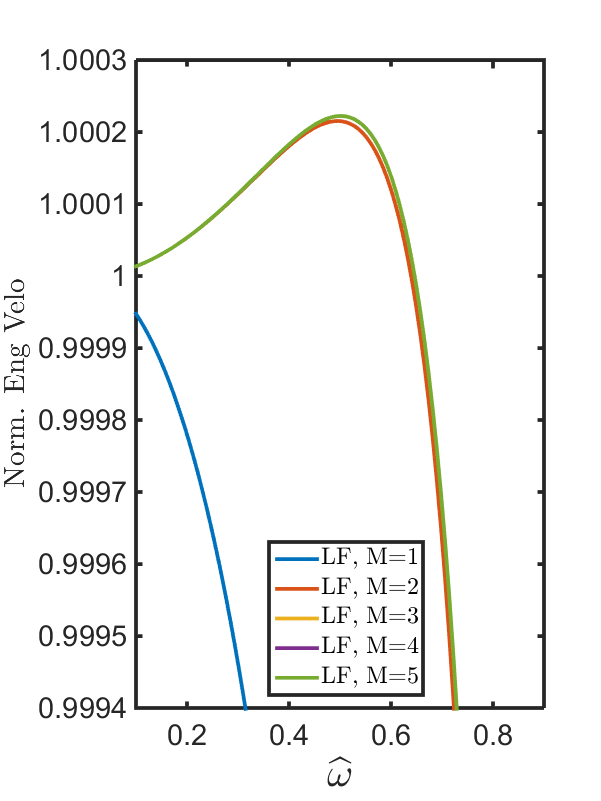}
	\includegraphics[scale=0.258]{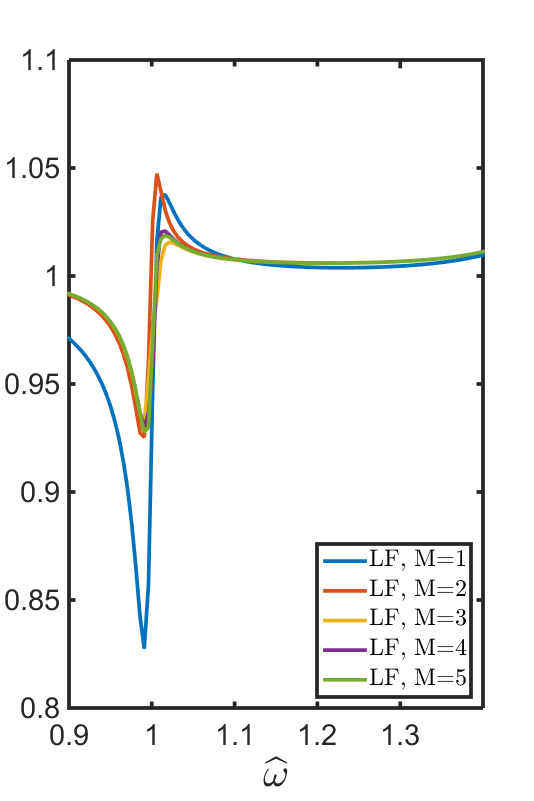}
	\includegraphics[scale=0.258]{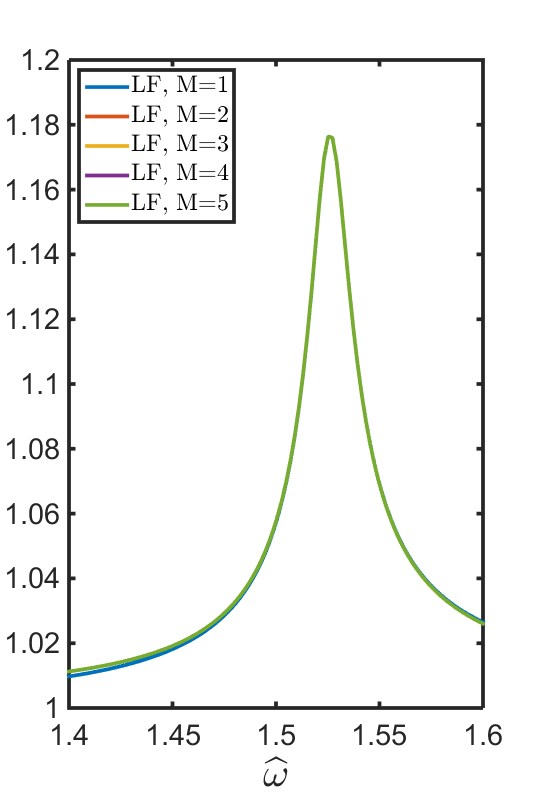}
	\includegraphics[scale=0.258]{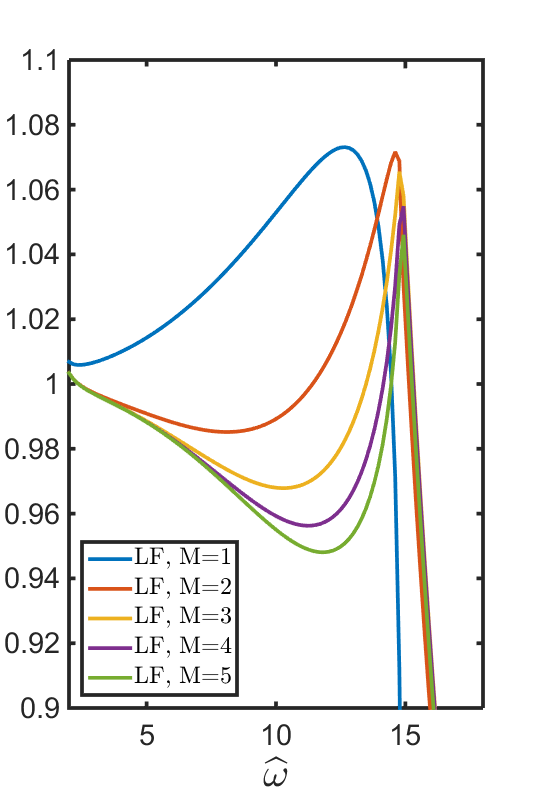} \\
	\includegraphics[scale=0.258]{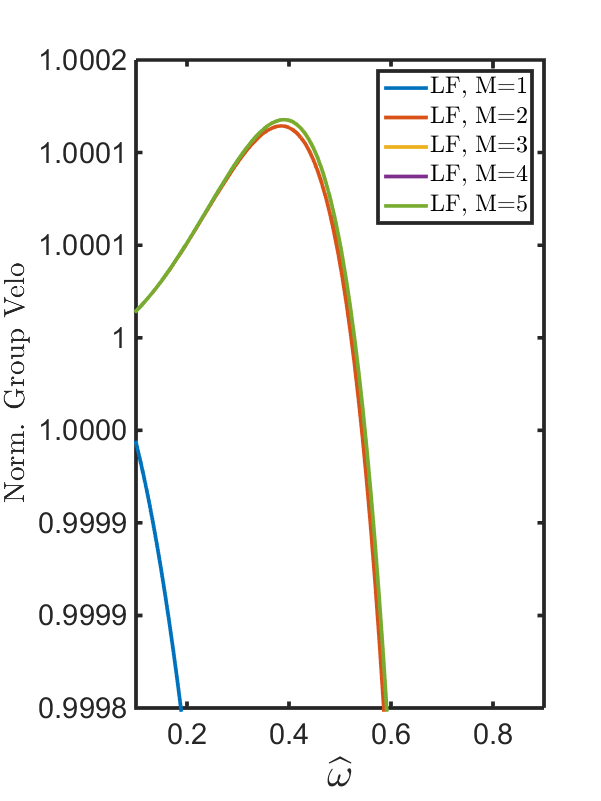}
	\includegraphics[scale=0.258]{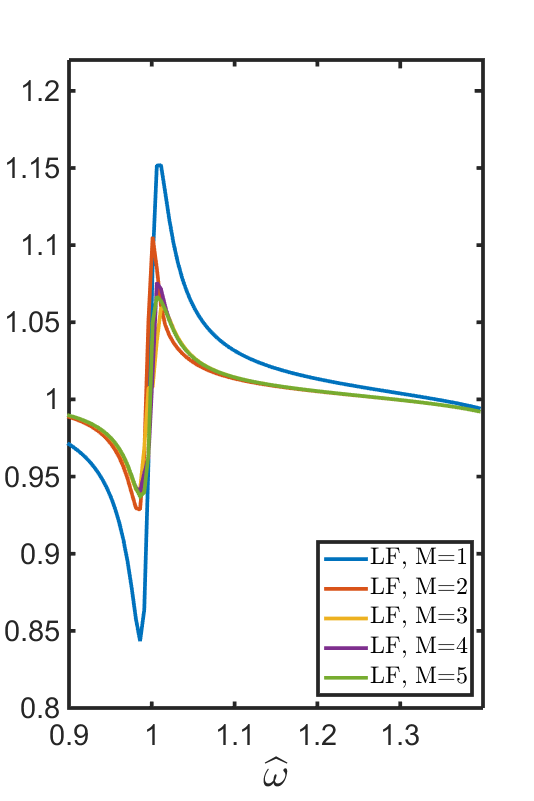}
	\includegraphics[scale=0.258]{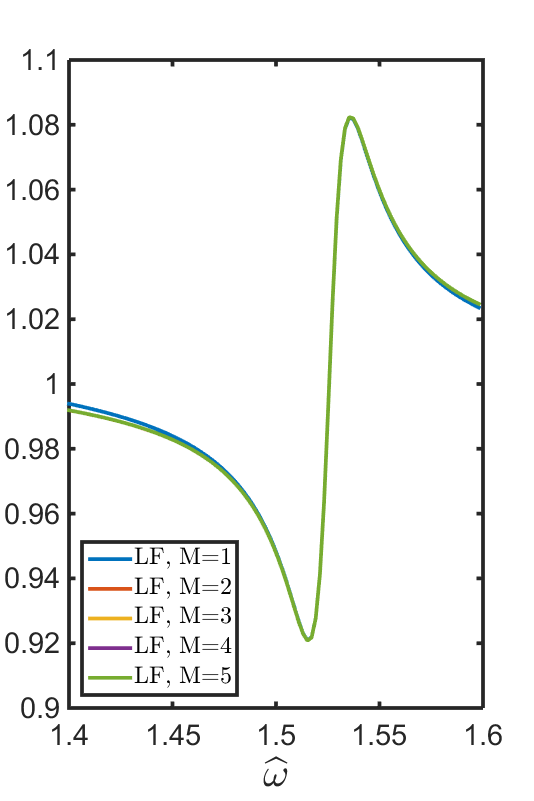}
	\includegraphics[scale=0.258]{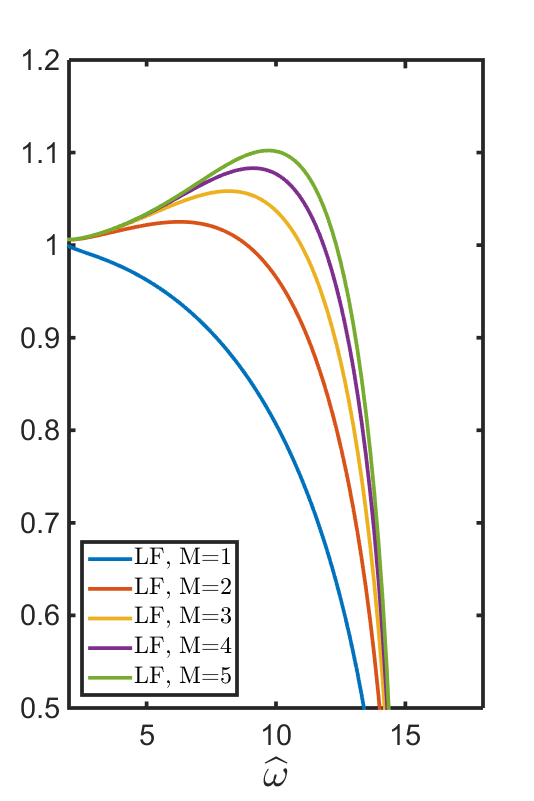} 
	
	\caption{Results for the leap-frog time discretization and FD2M with CFL number $\nu/\nu^{2M}_{max}=0.7$. First row: Normalized phase velocity; Second row: normalized attenuation constants; Third row: normalized energy velocity; Fourth row: normalized group velocity.}
	\label{Fig:phy_FD1}
\end{figure}

\begin{figure}
	\centering
	\includegraphics[scale=0.258]{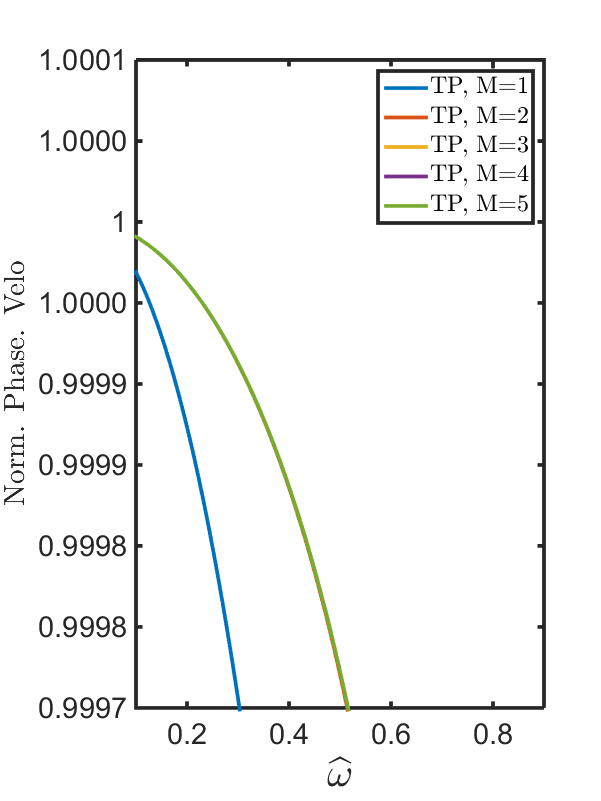}
	\includegraphics[scale=0.258]{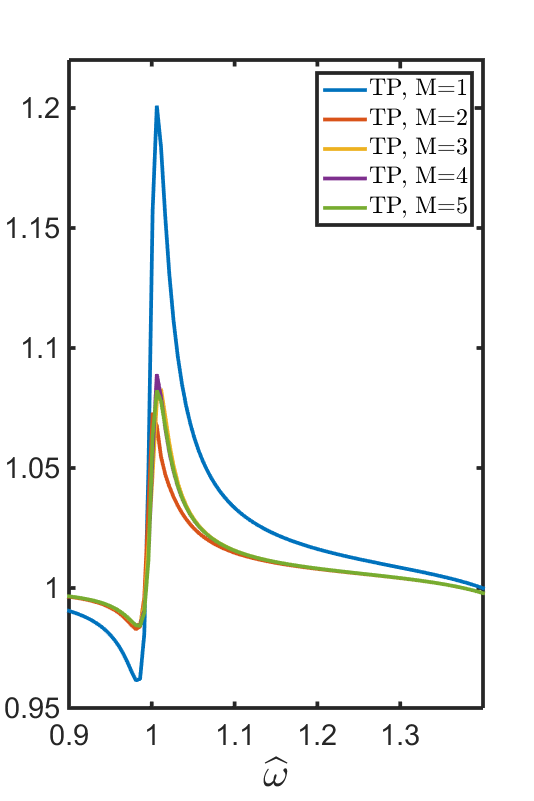}
	\includegraphics[scale=0.258]{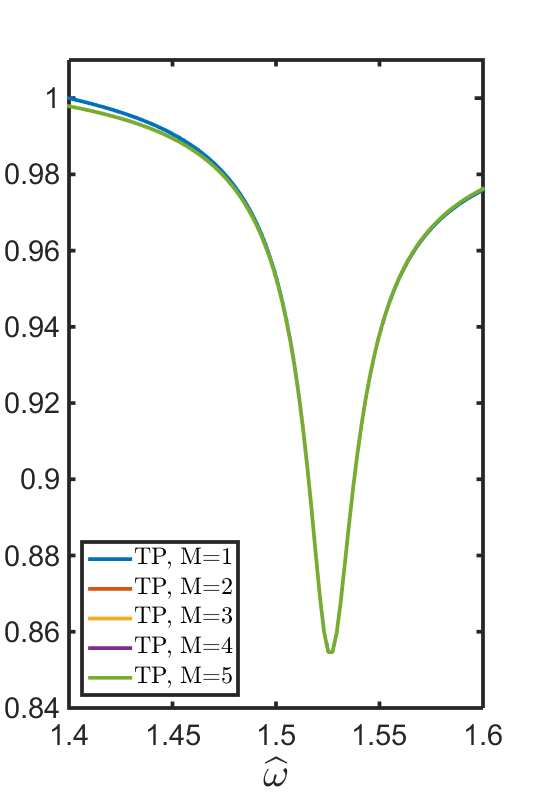}
	\includegraphics[scale=0.258]{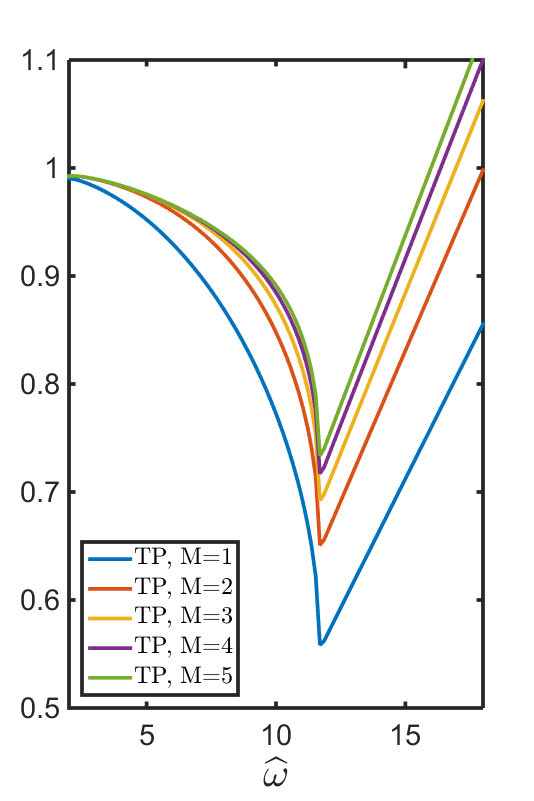} \\
	\includegraphics[scale=0.258]{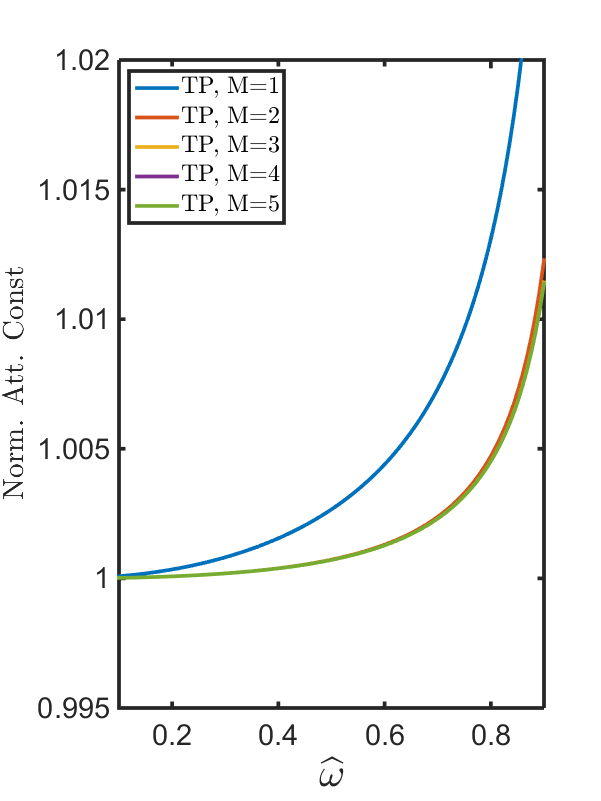}
	\includegraphics[scale=0.258]{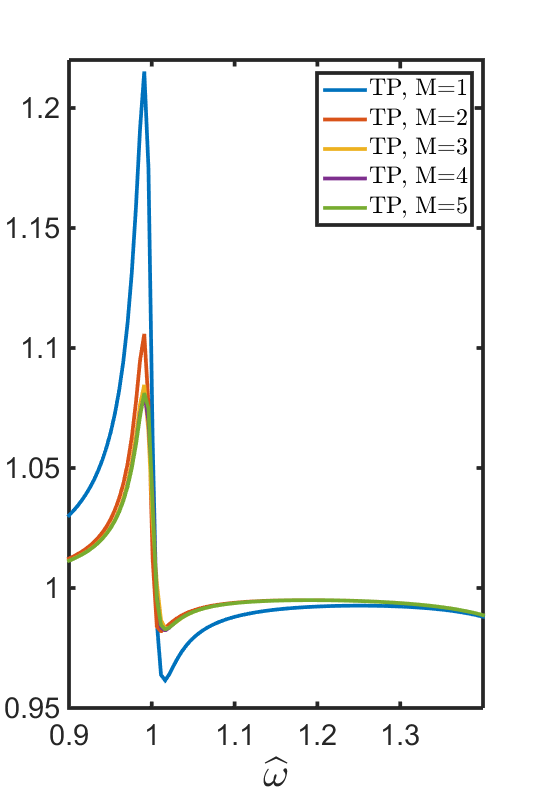}
	\includegraphics[scale=0.258]{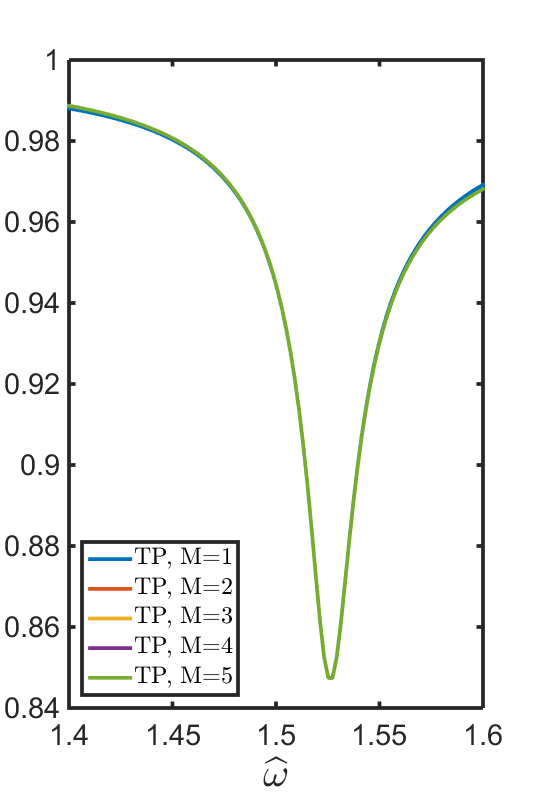}
	\includegraphics[scale=0.258]{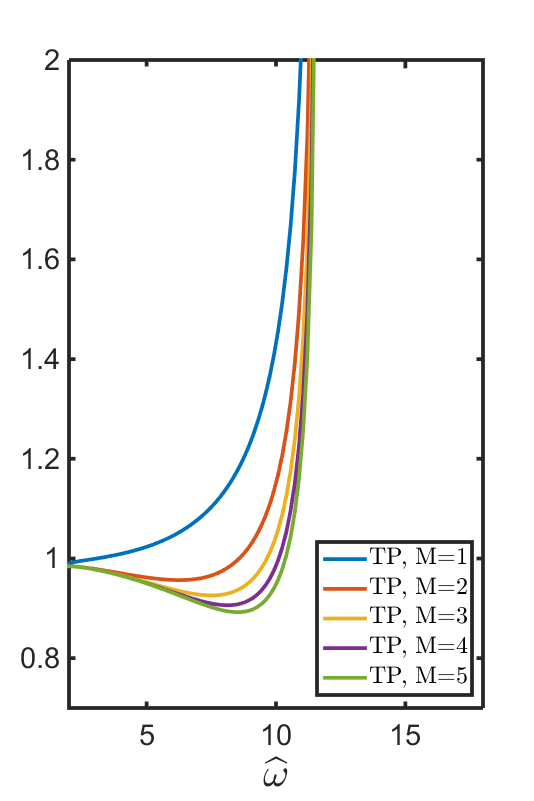} \\
	\includegraphics[scale=0.258]{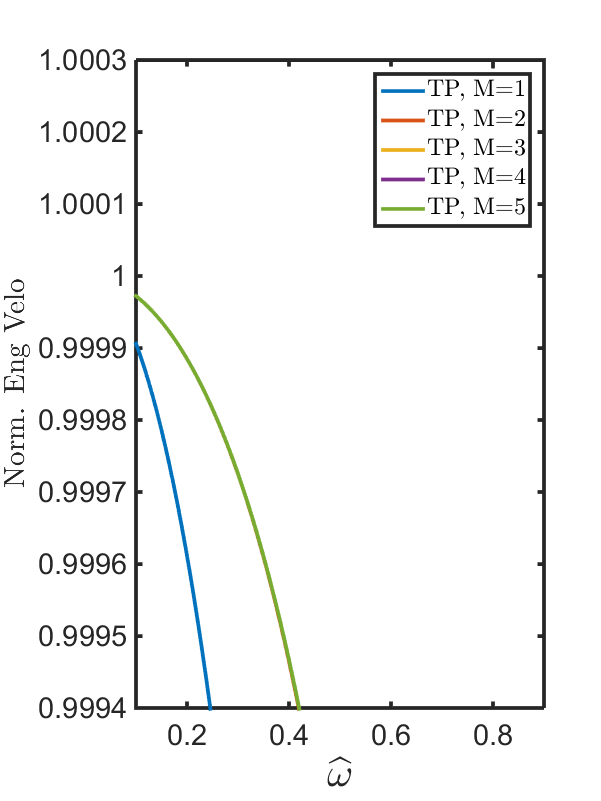}
	\includegraphics[scale=0.258]{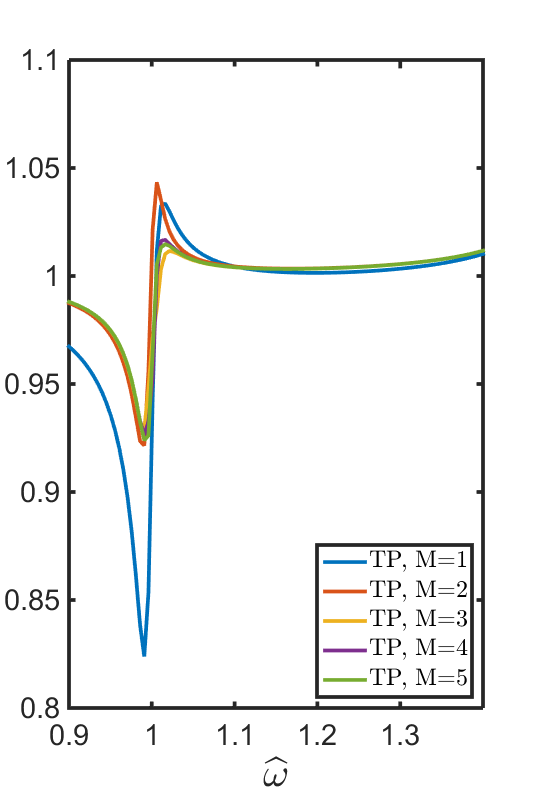}
	\includegraphics[scale=0.258]{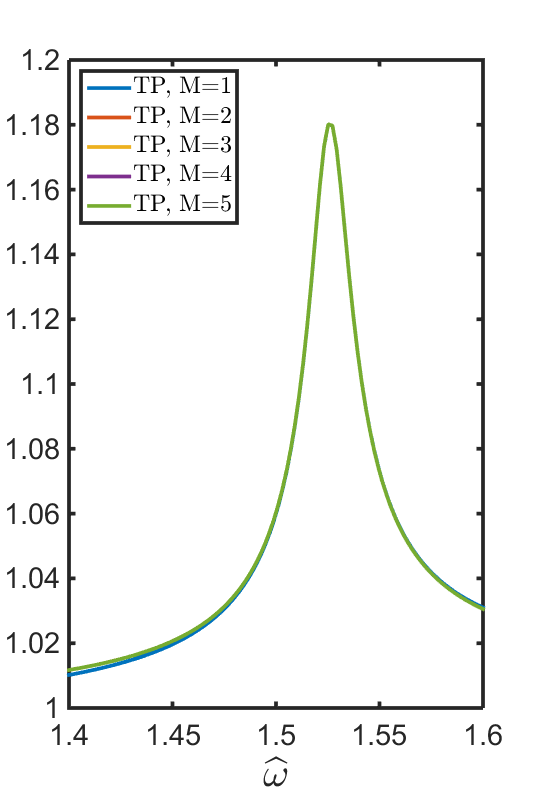}
	\includegraphics[scale=0.258]{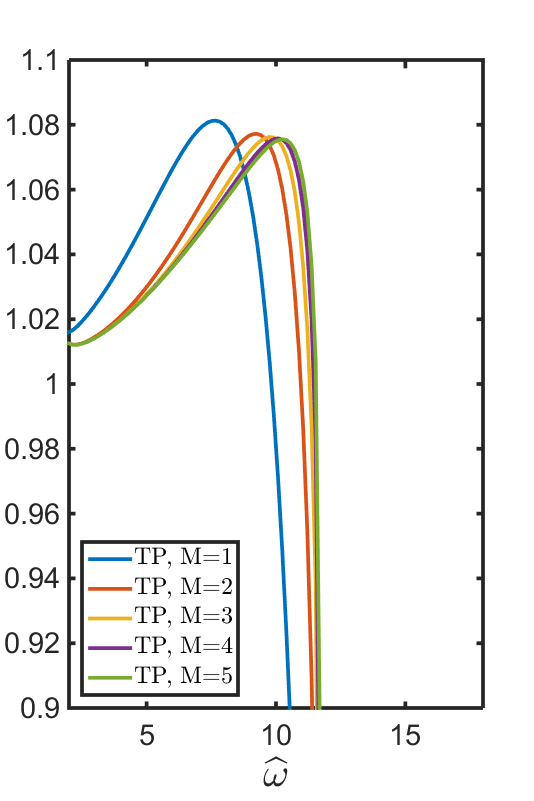} \\
		\includegraphics[scale=0.258]{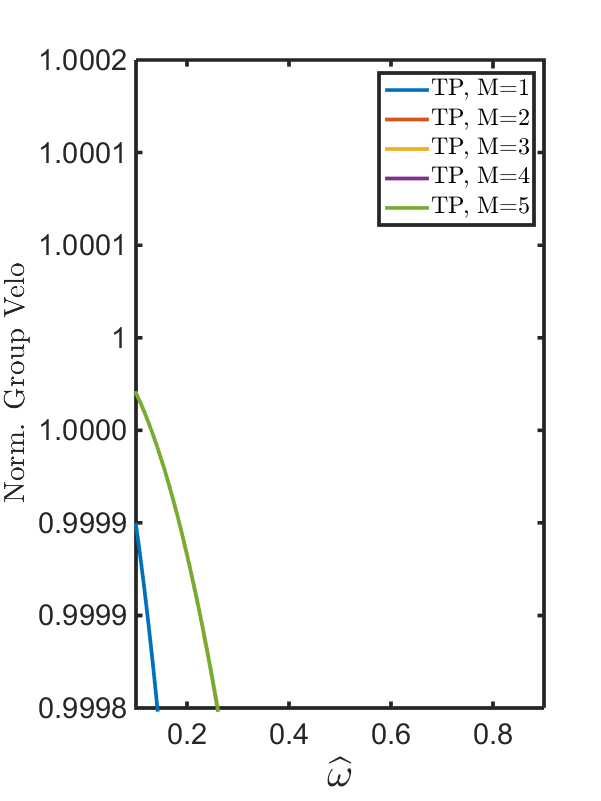}
		\includegraphics[scale=0.258]{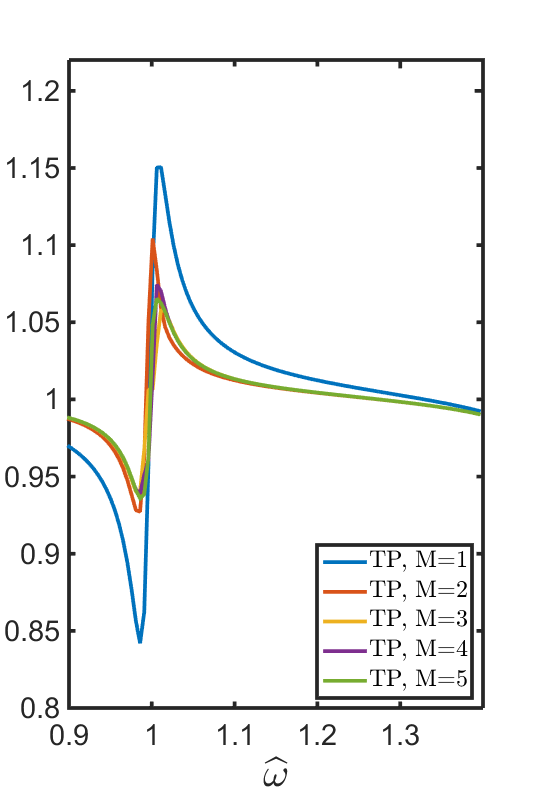}
		\includegraphics[scale=0.258]{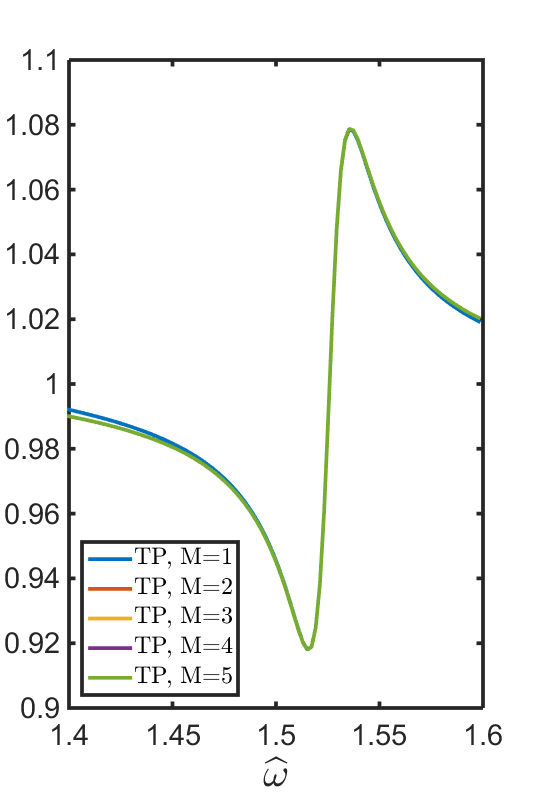}
		\includegraphics[scale=0.258]{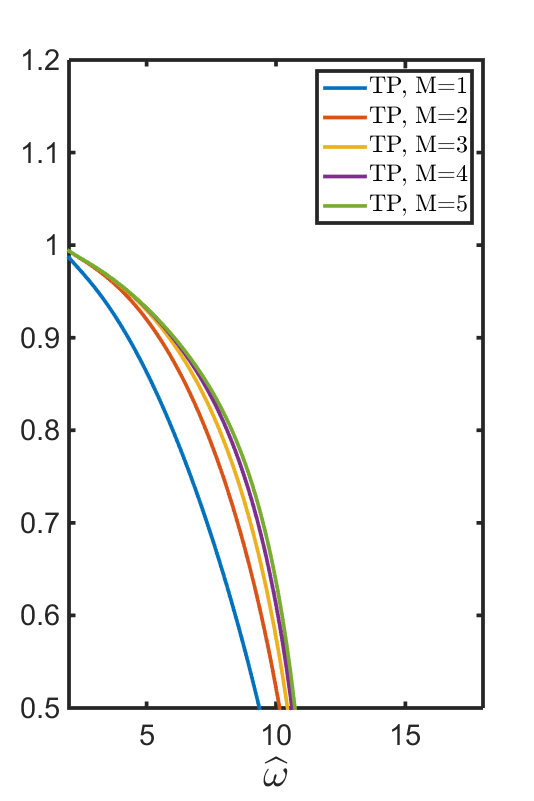} 
	\caption{Results for the trapezoidal time discretization and FD2M with CFL number $\nu/\nu^{2M}_{\max}=0.7$. First row: Normalized phase velocity; Second row: normalized attenuation constants; Third row: normalized energy velocity; Fourth row: normalized group velocity.}
	\label{Fig:FD_TP_phy}
\end{figure}

\begin{figure}
	\centering
	\includegraphics[scale= 0.258] {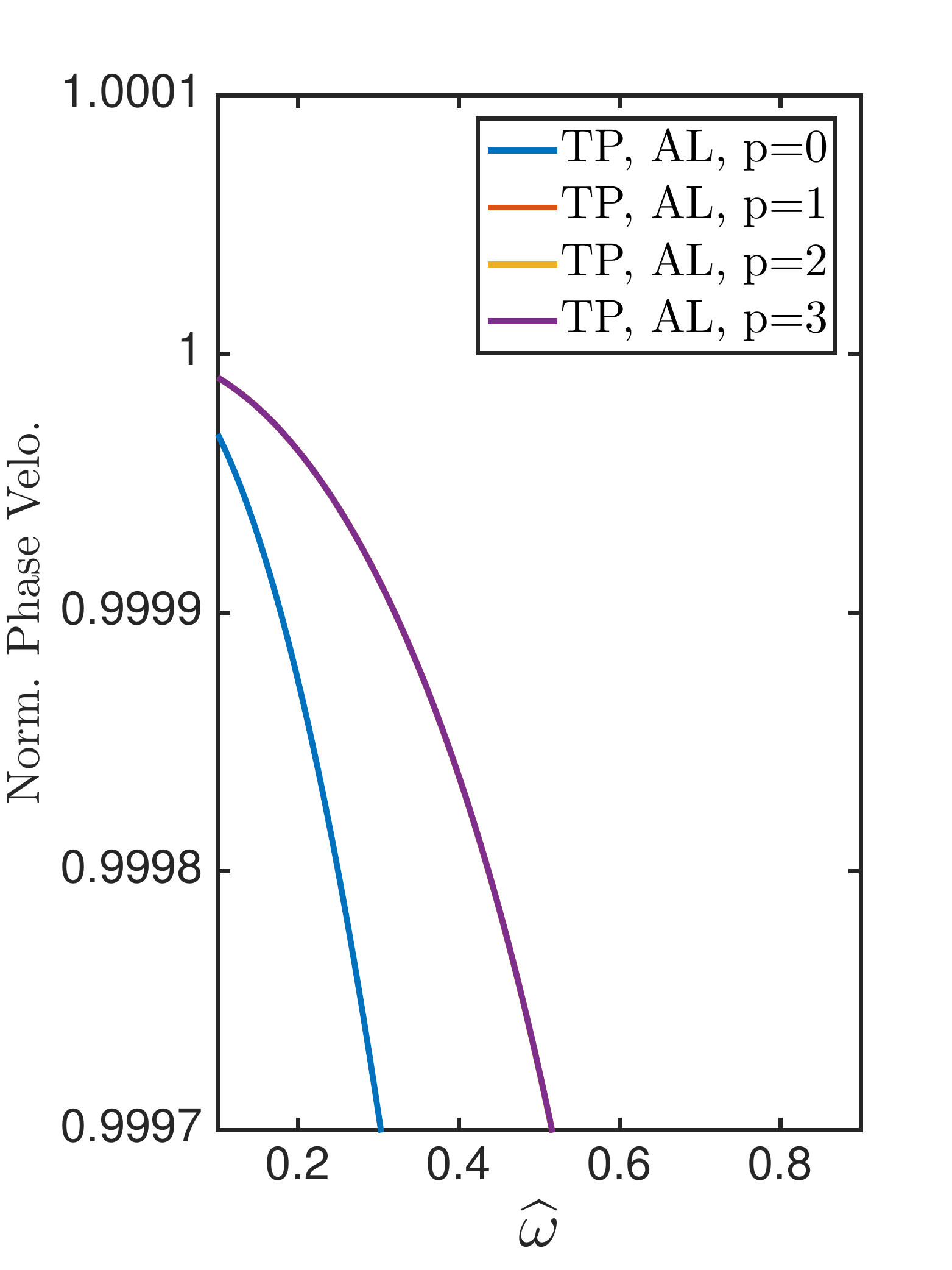}
	\includegraphics[scale= 0.258] {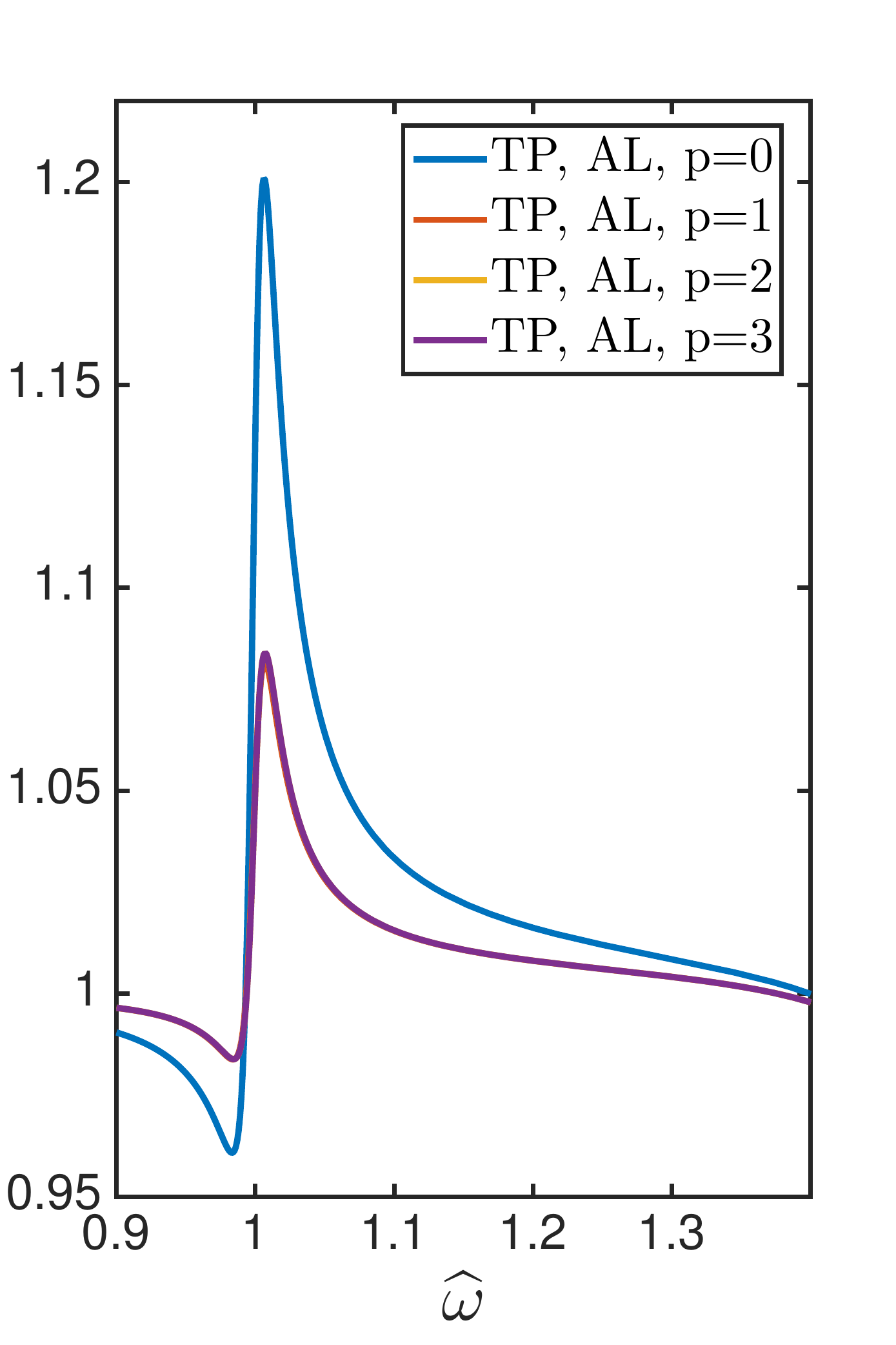}
	\includegraphics[scale= 0.258] {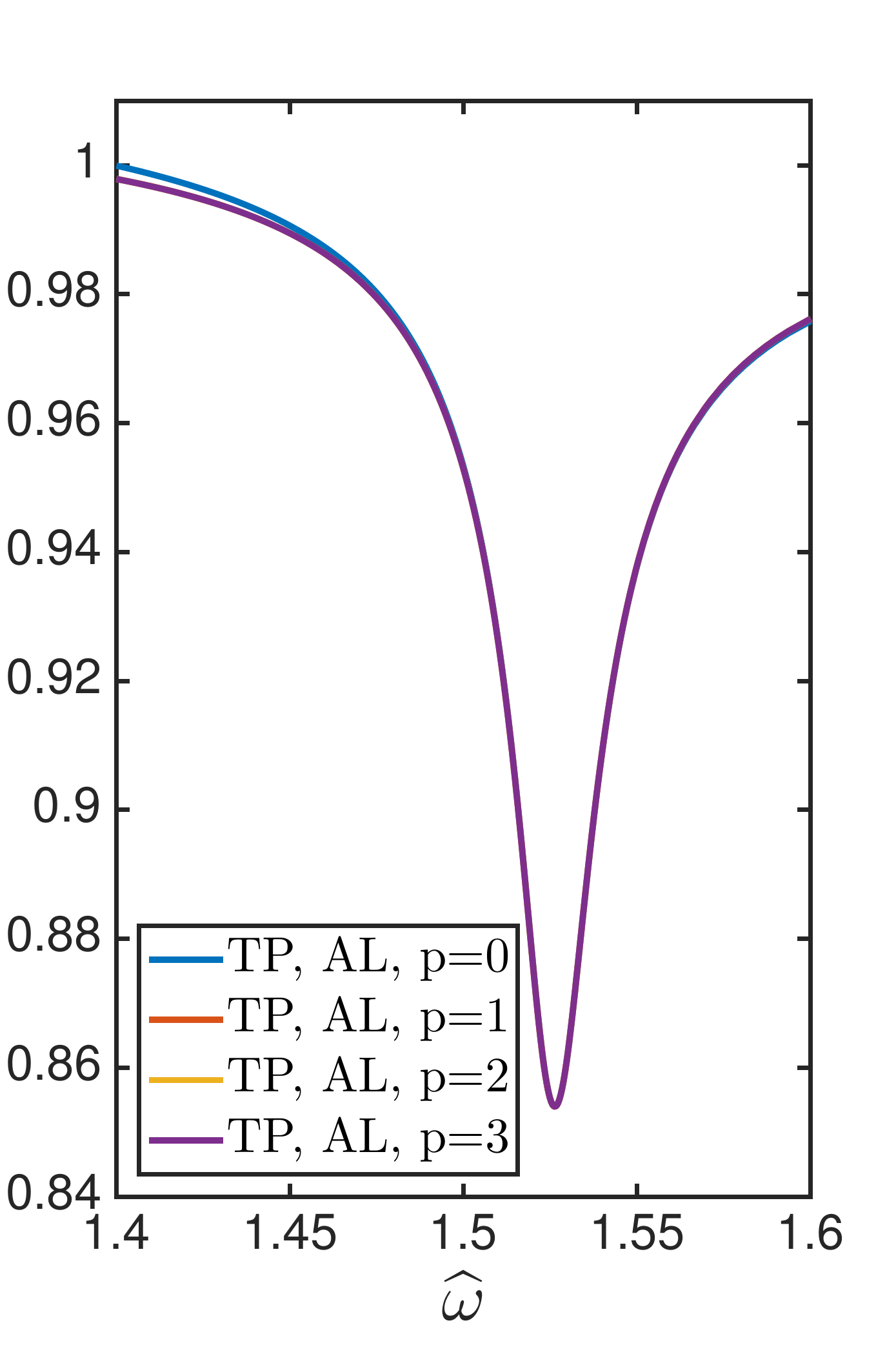}
	\includegraphics[scale= 0.258] {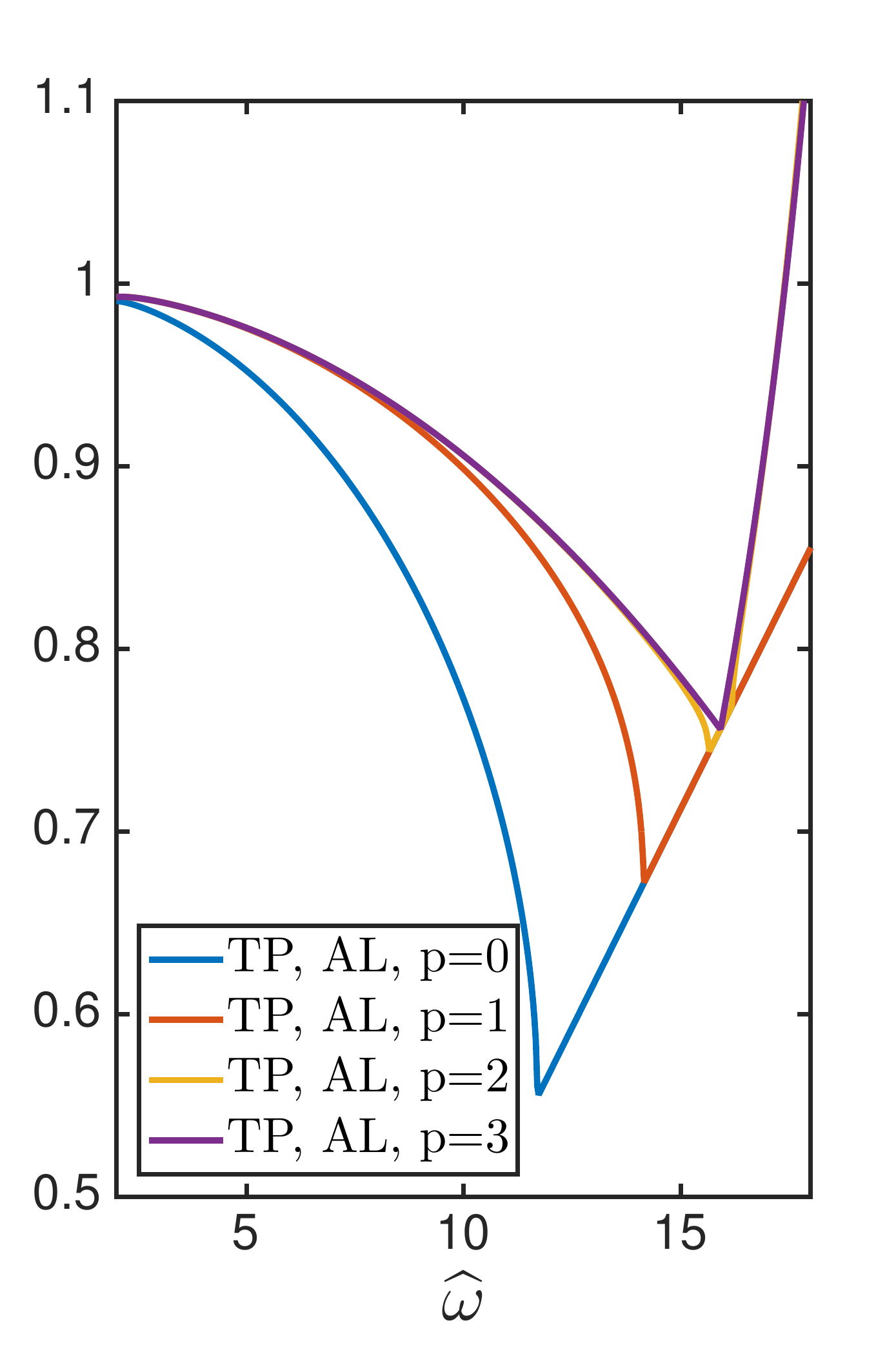} \\
	\includegraphics[scale= 0.258] {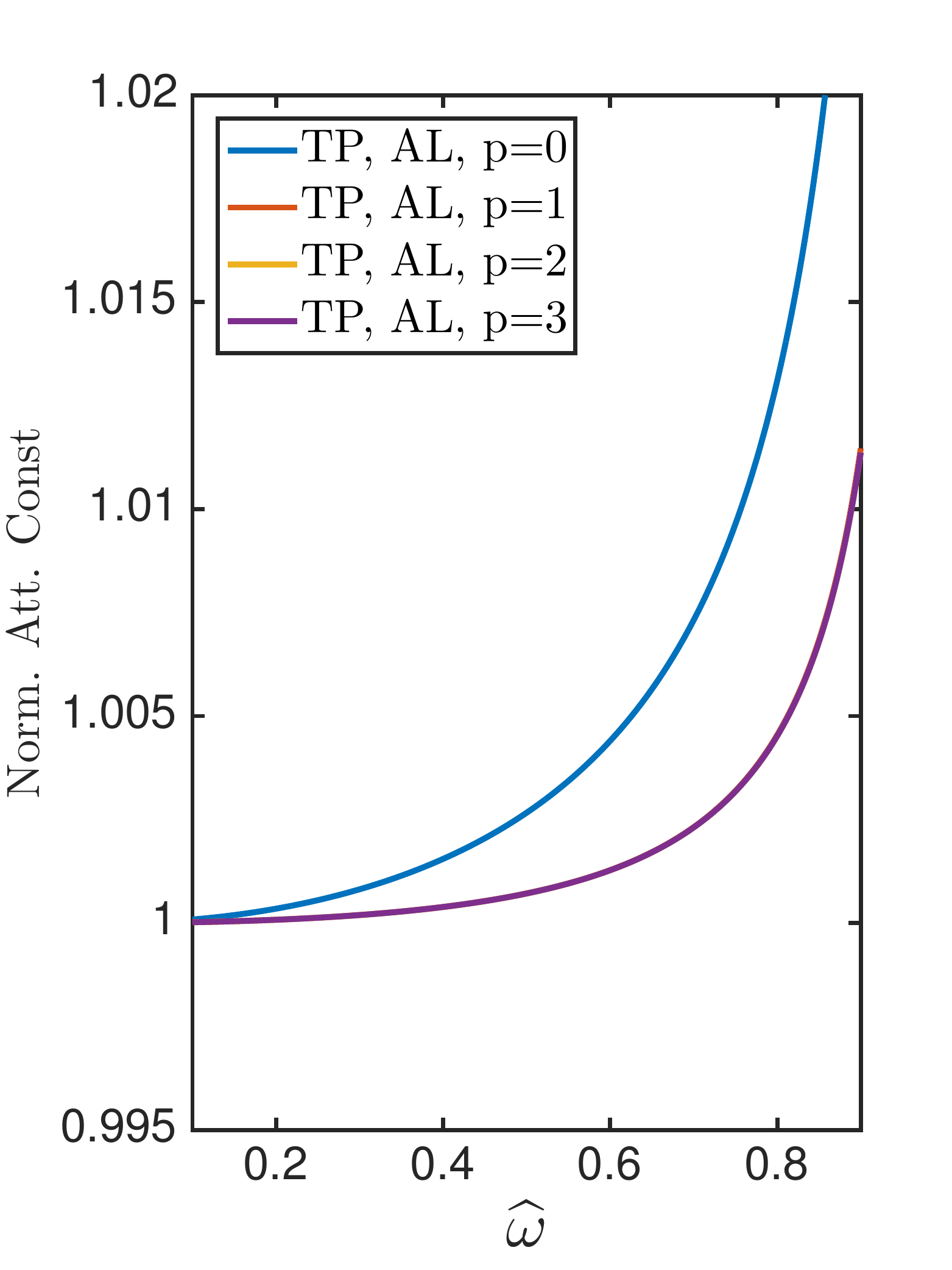}
	\includegraphics[scale= 0.258] {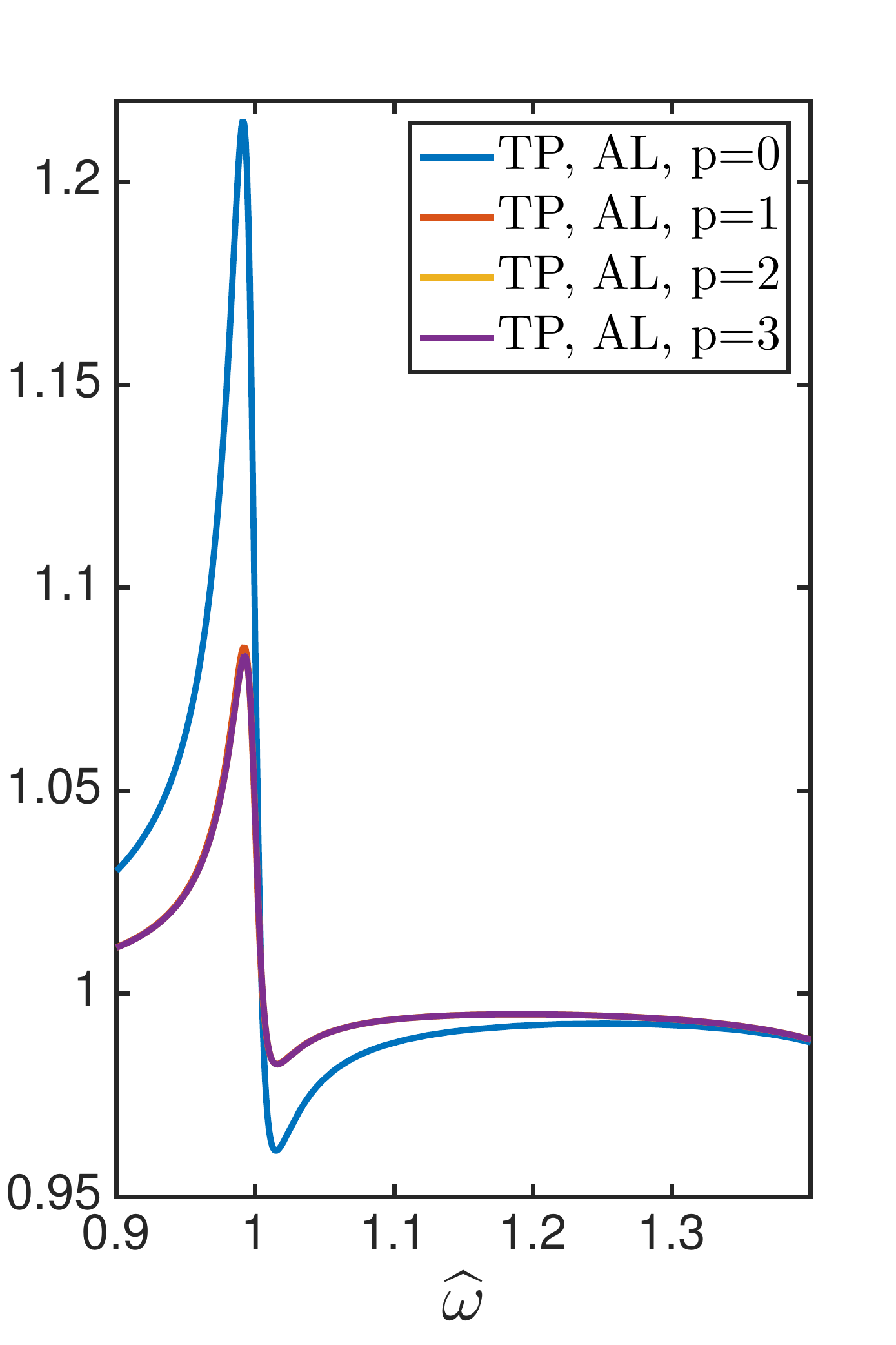}
	\includegraphics[scale= 0.258] {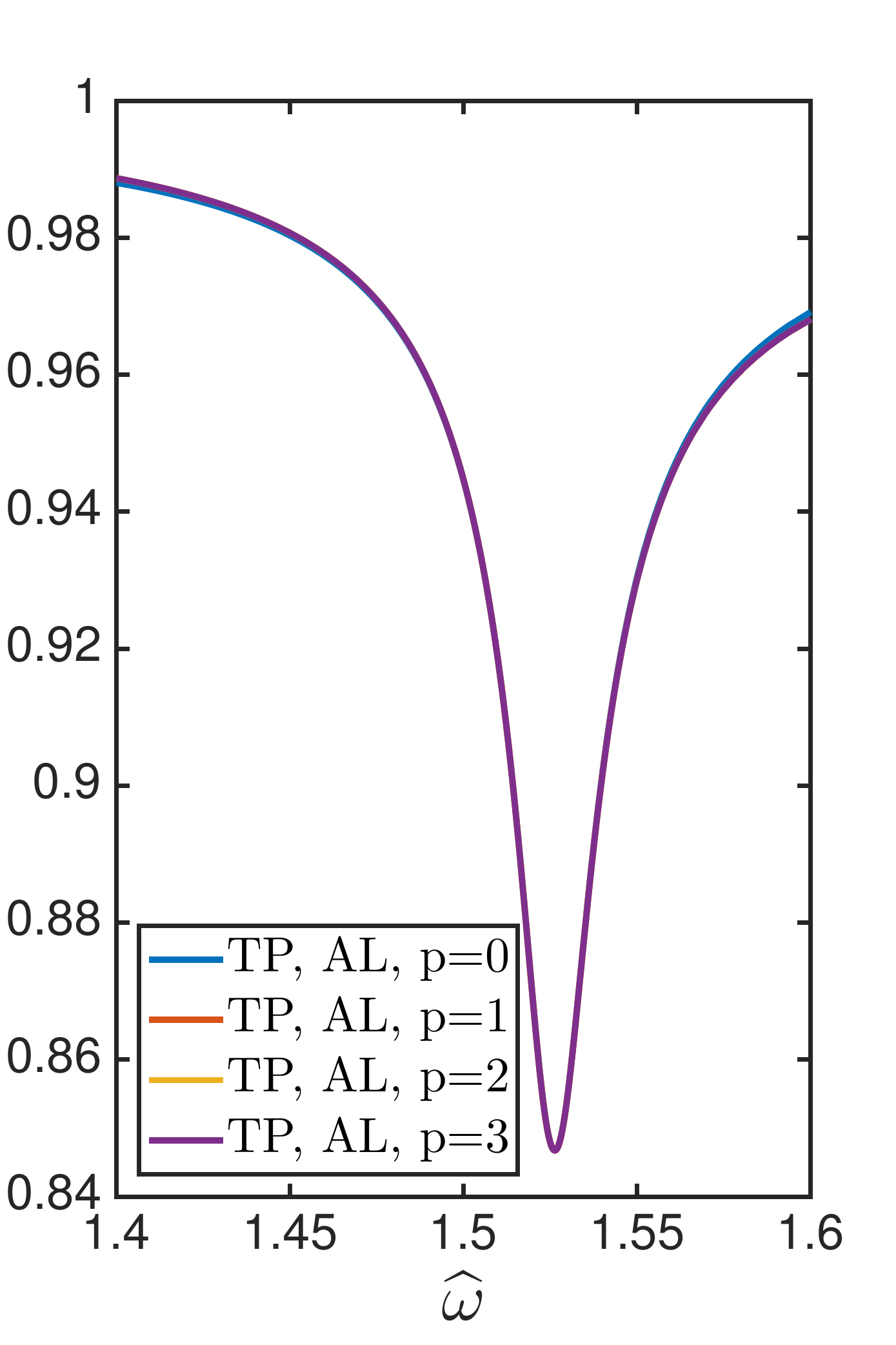}
	\includegraphics[scale= 0.258] {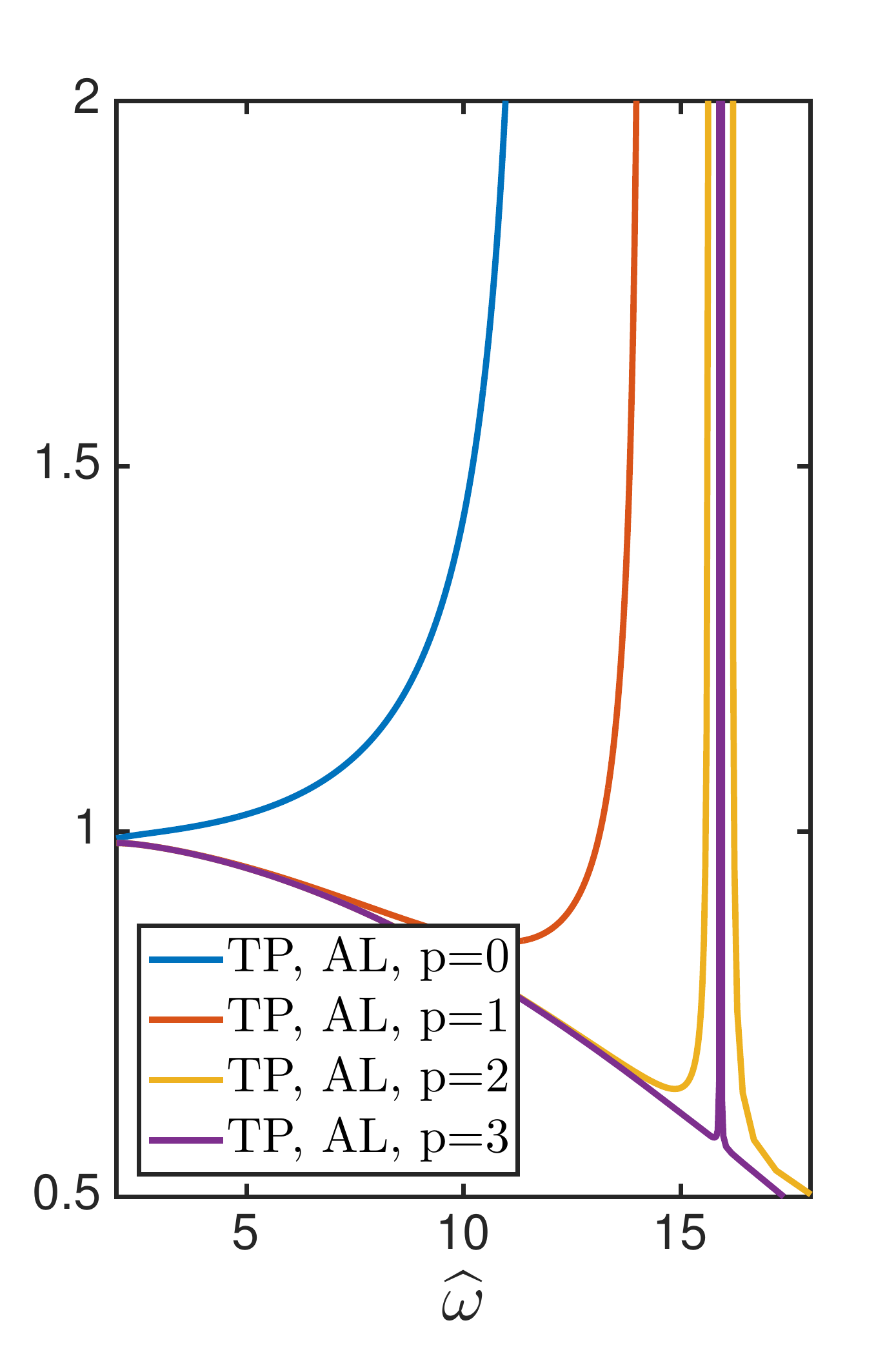}\\
	\includegraphics[scale= 0.258] {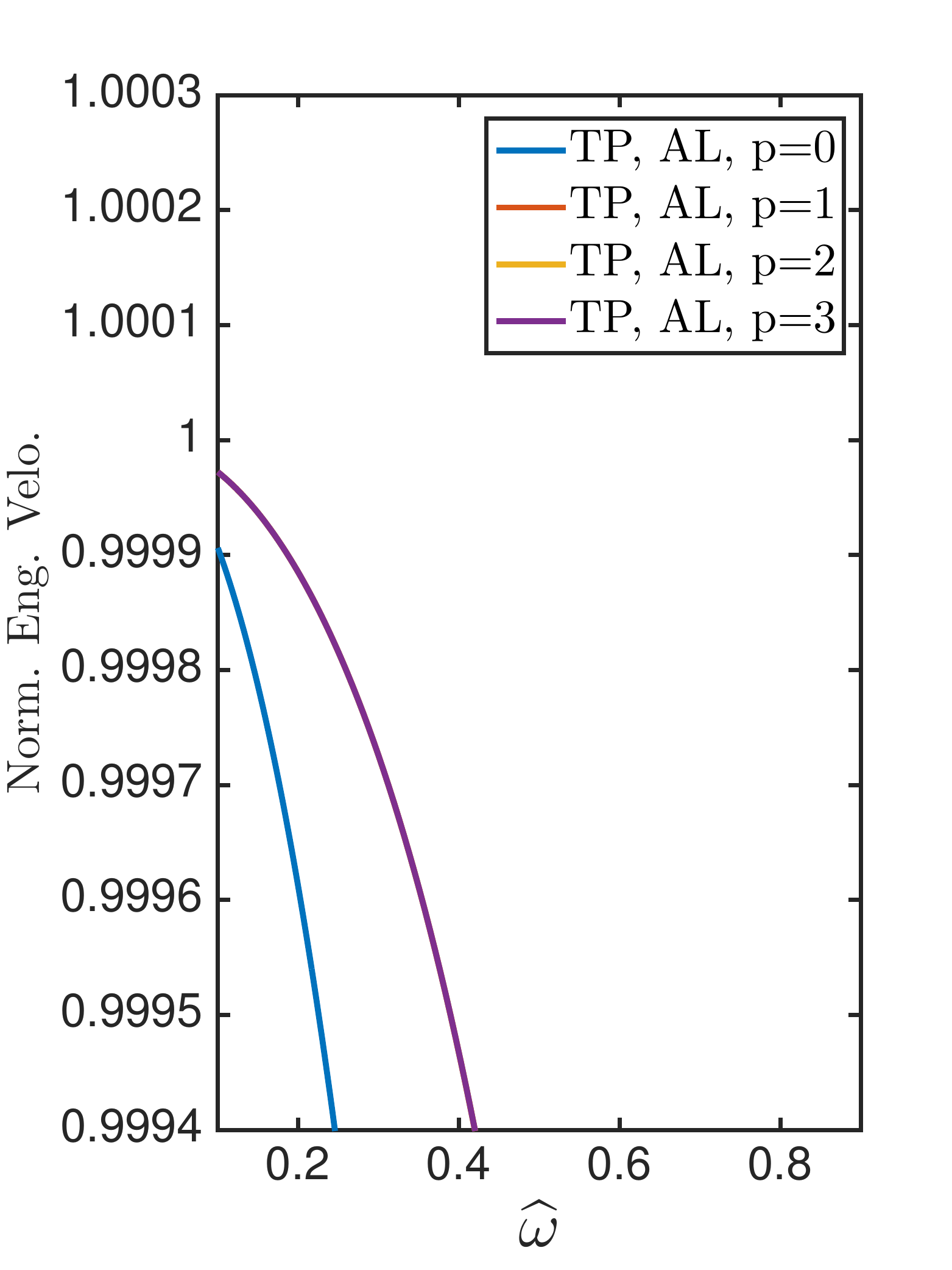}
	\includegraphics[scale= 0.258] {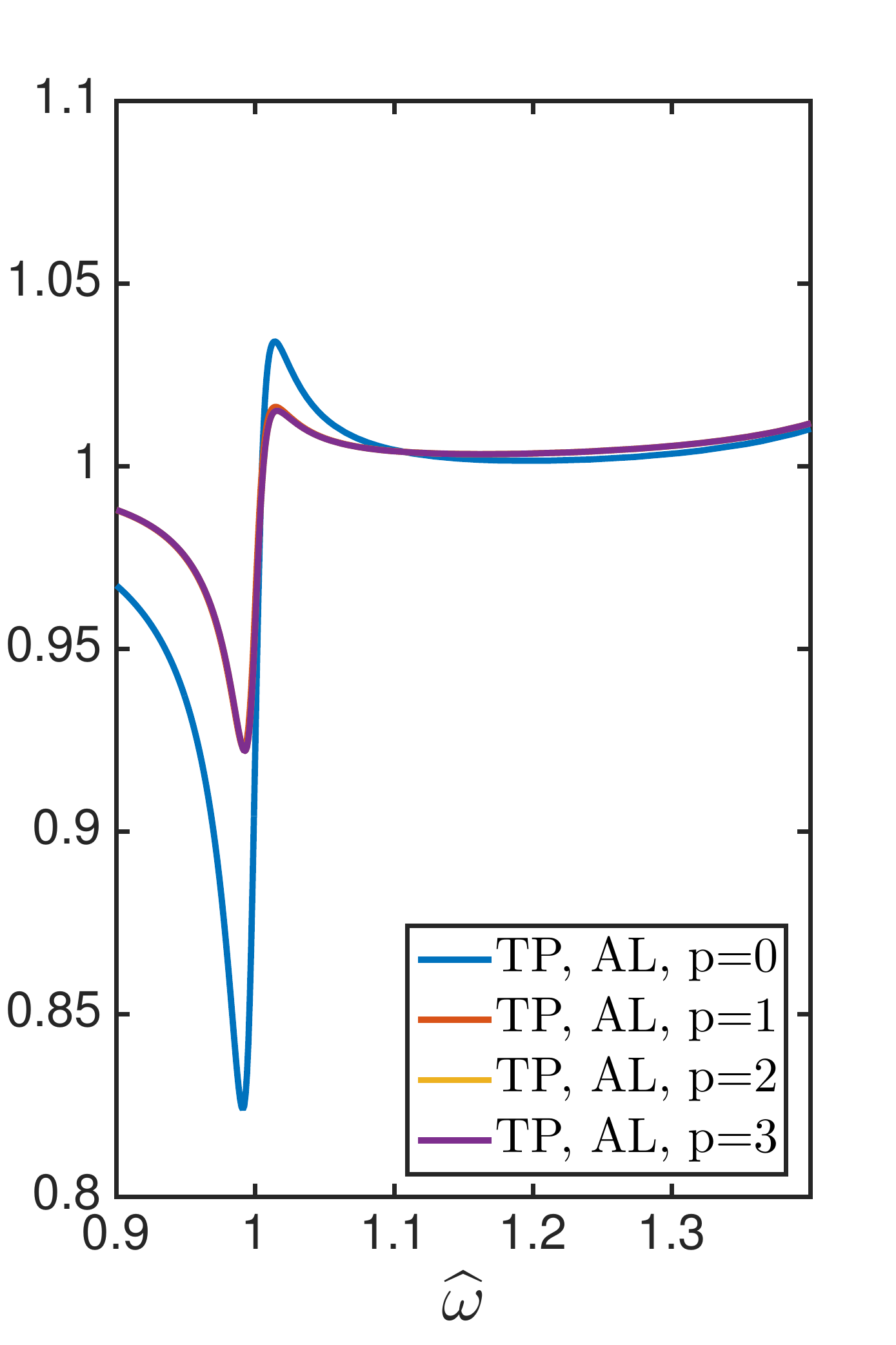}
	\includegraphics[scale= 0.258] {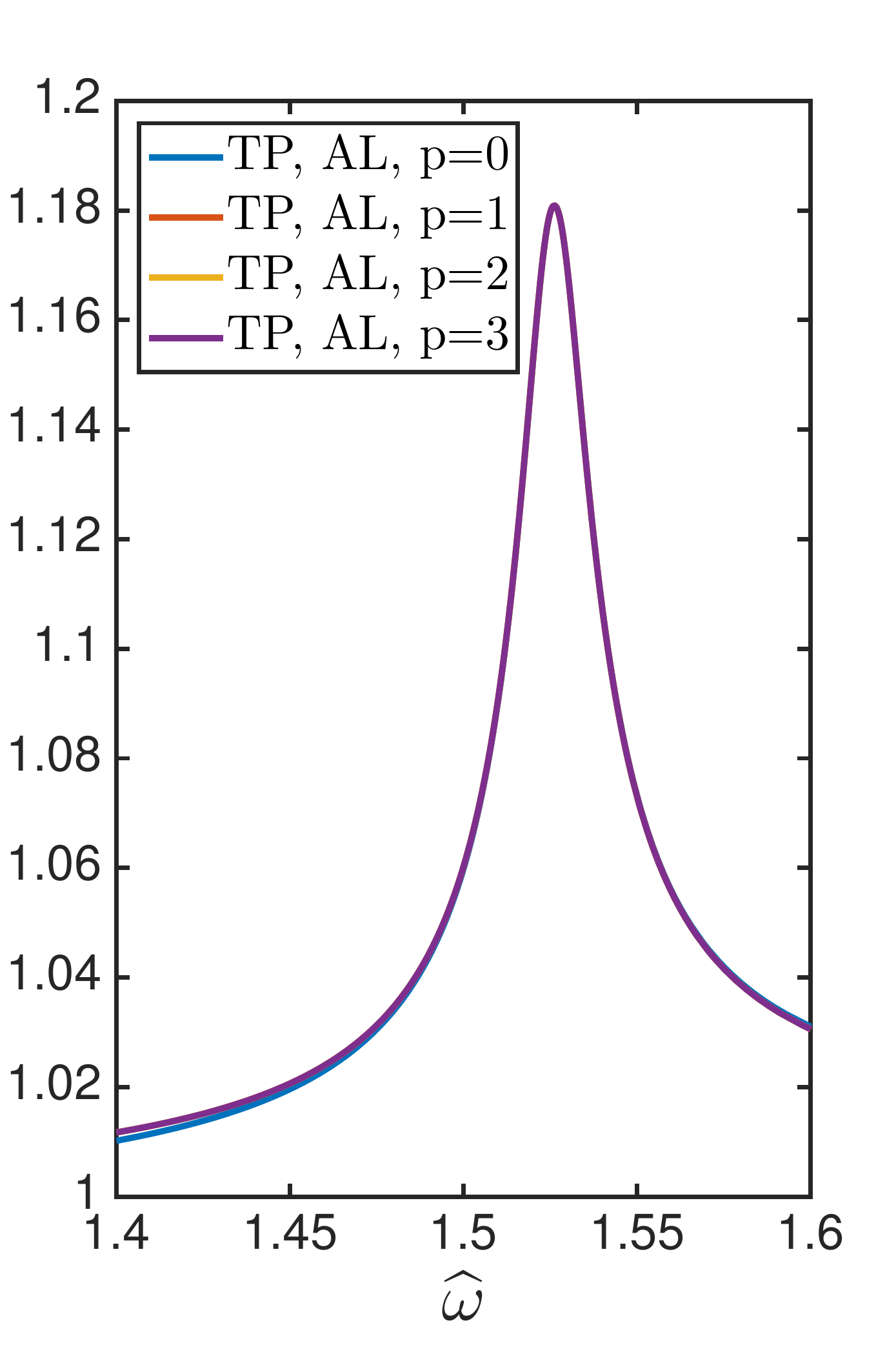}
	\includegraphics[scale= 0.258] {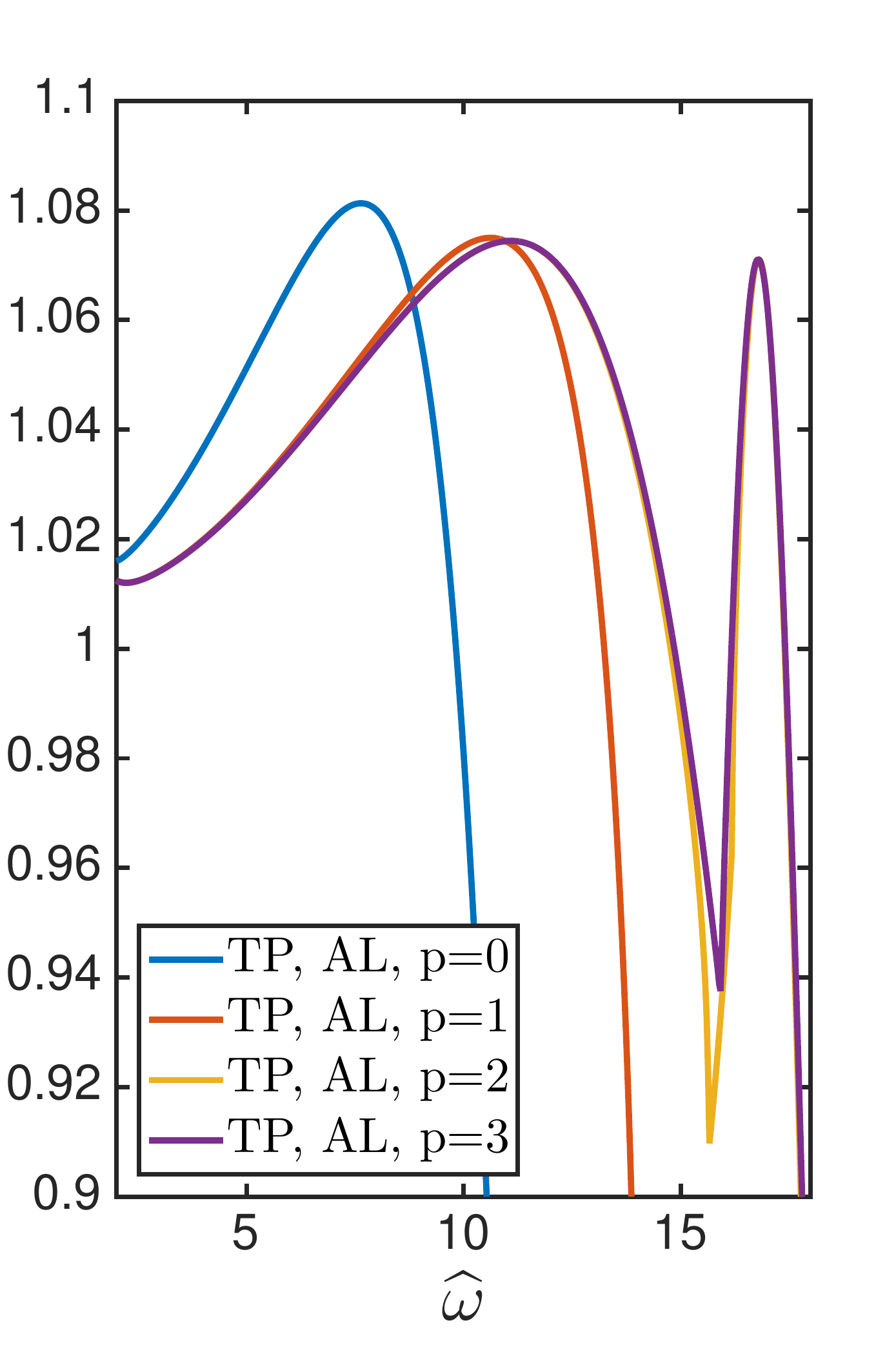} \\
	\includegraphics[scale= 0.258] {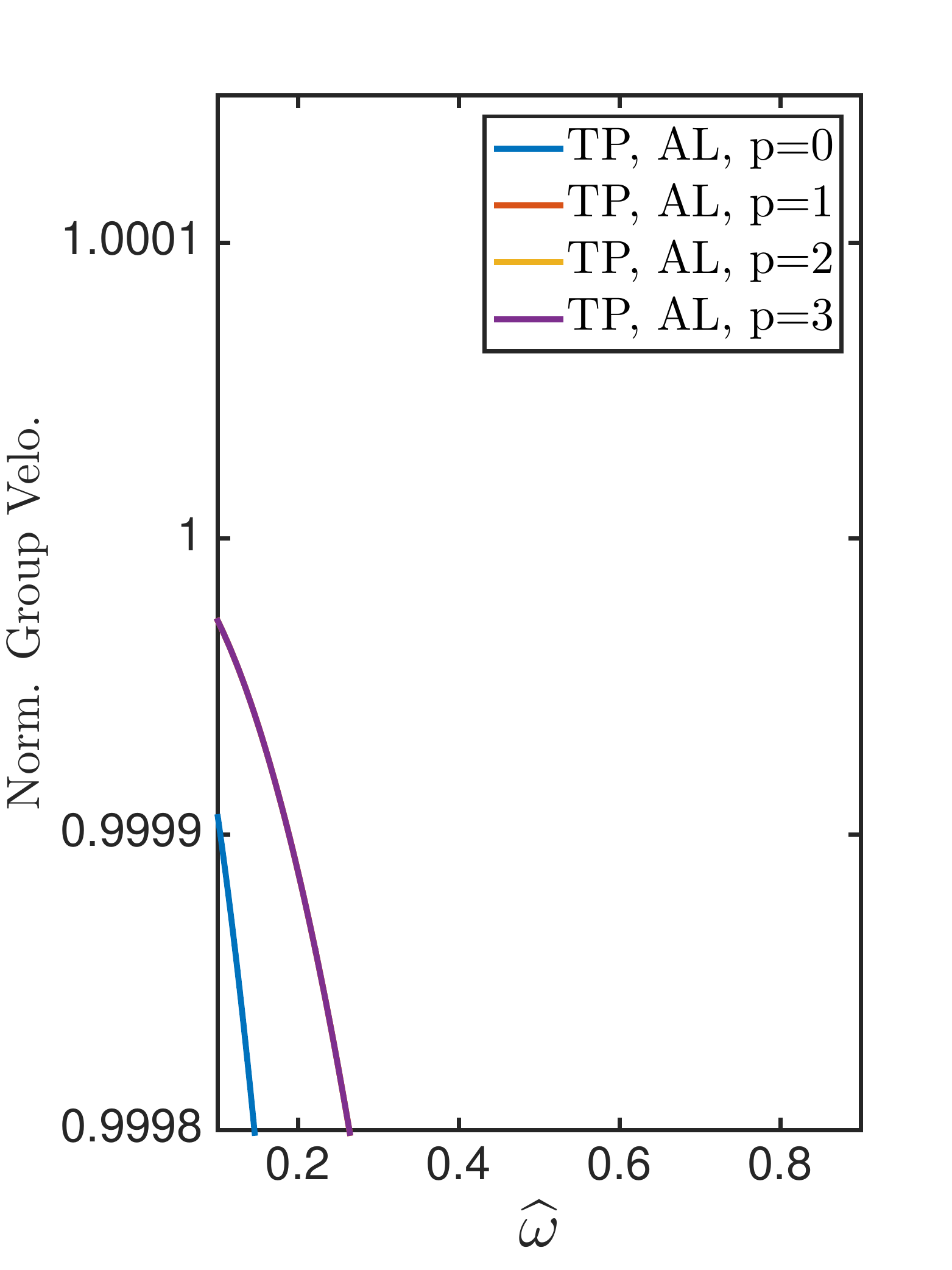}
	\includegraphics[scale= 0.258] {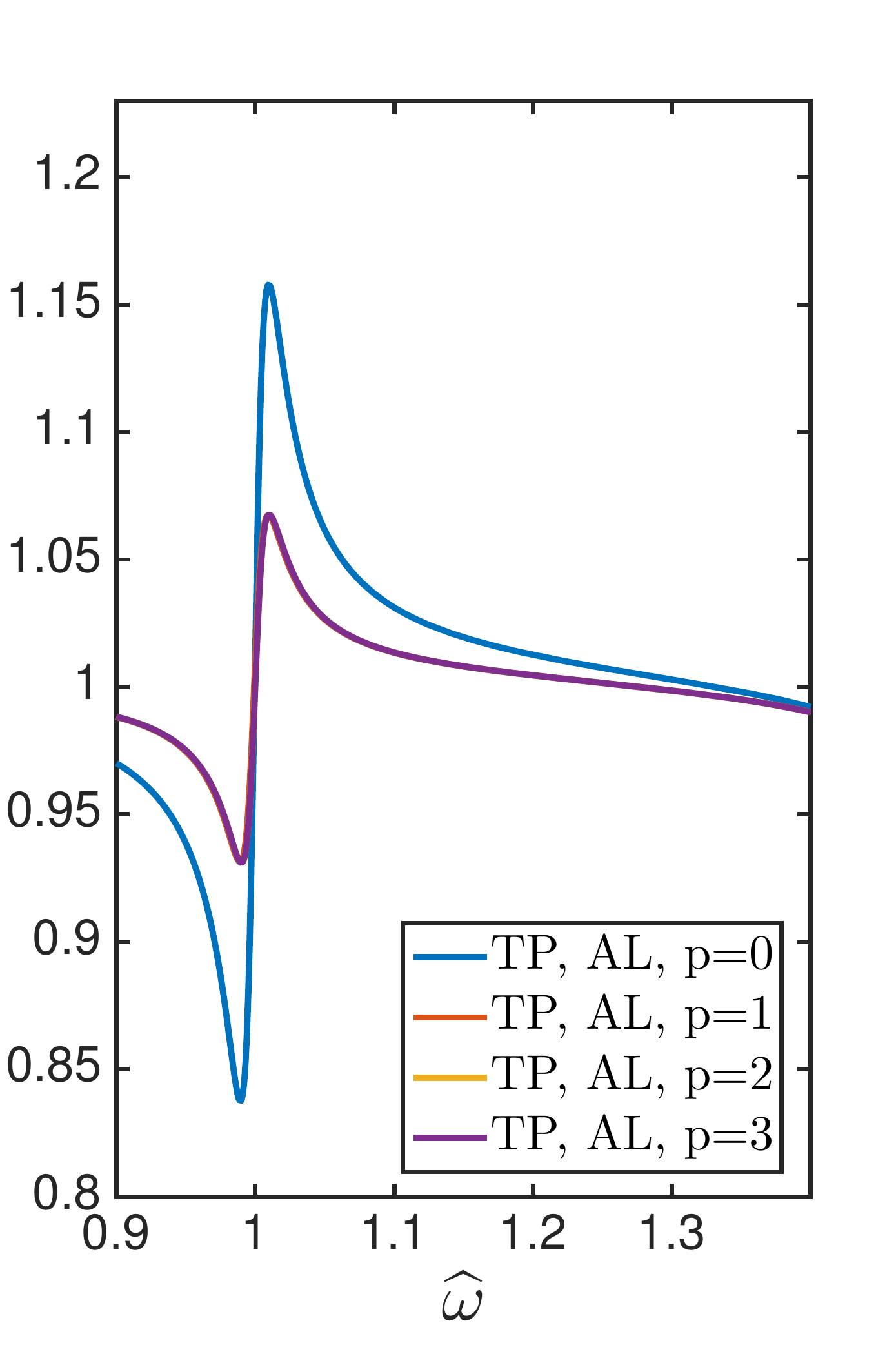}
	\includegraphics[scale= 0.258] {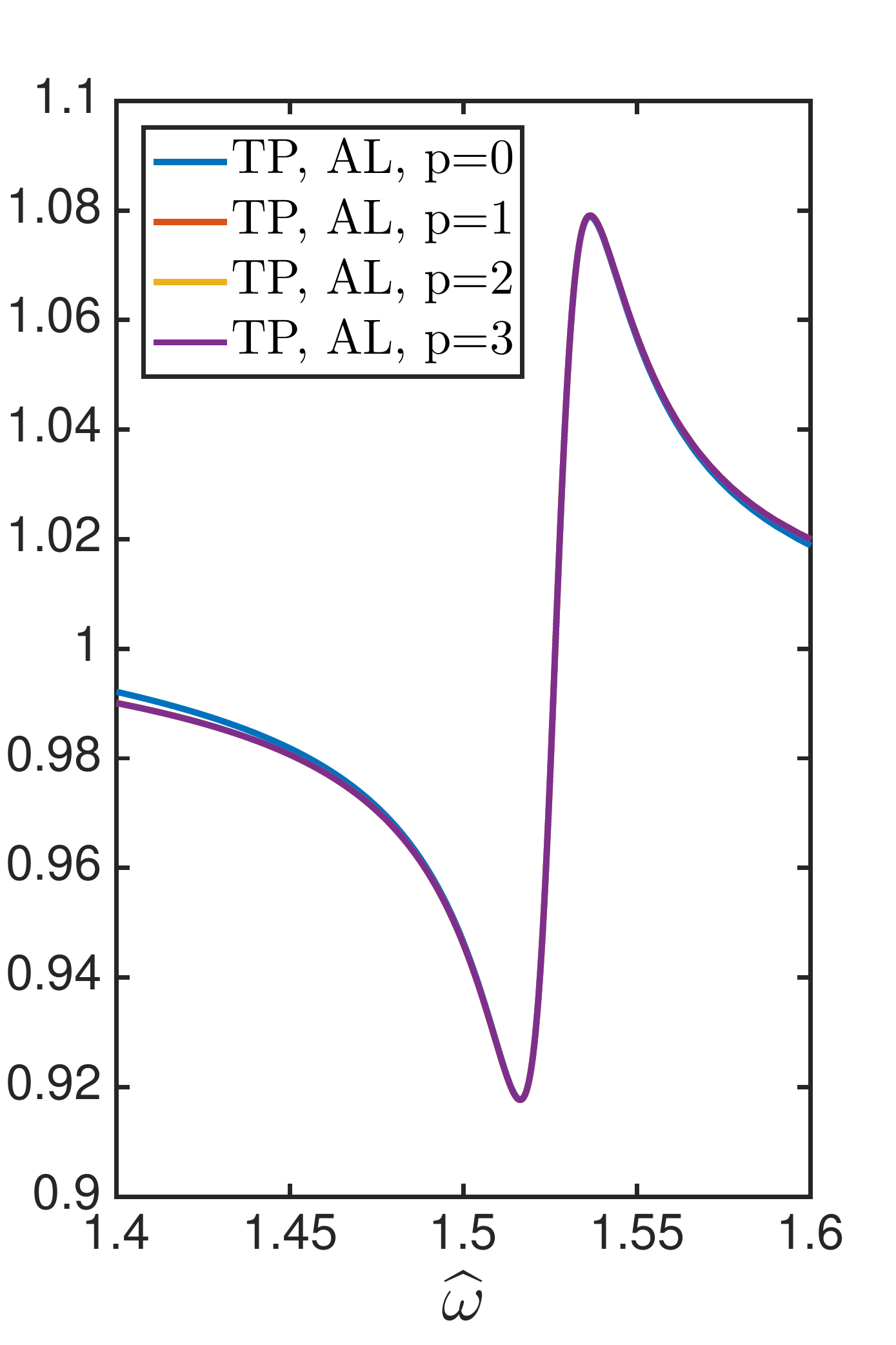}
	\includegraphics[scale= 0.258] {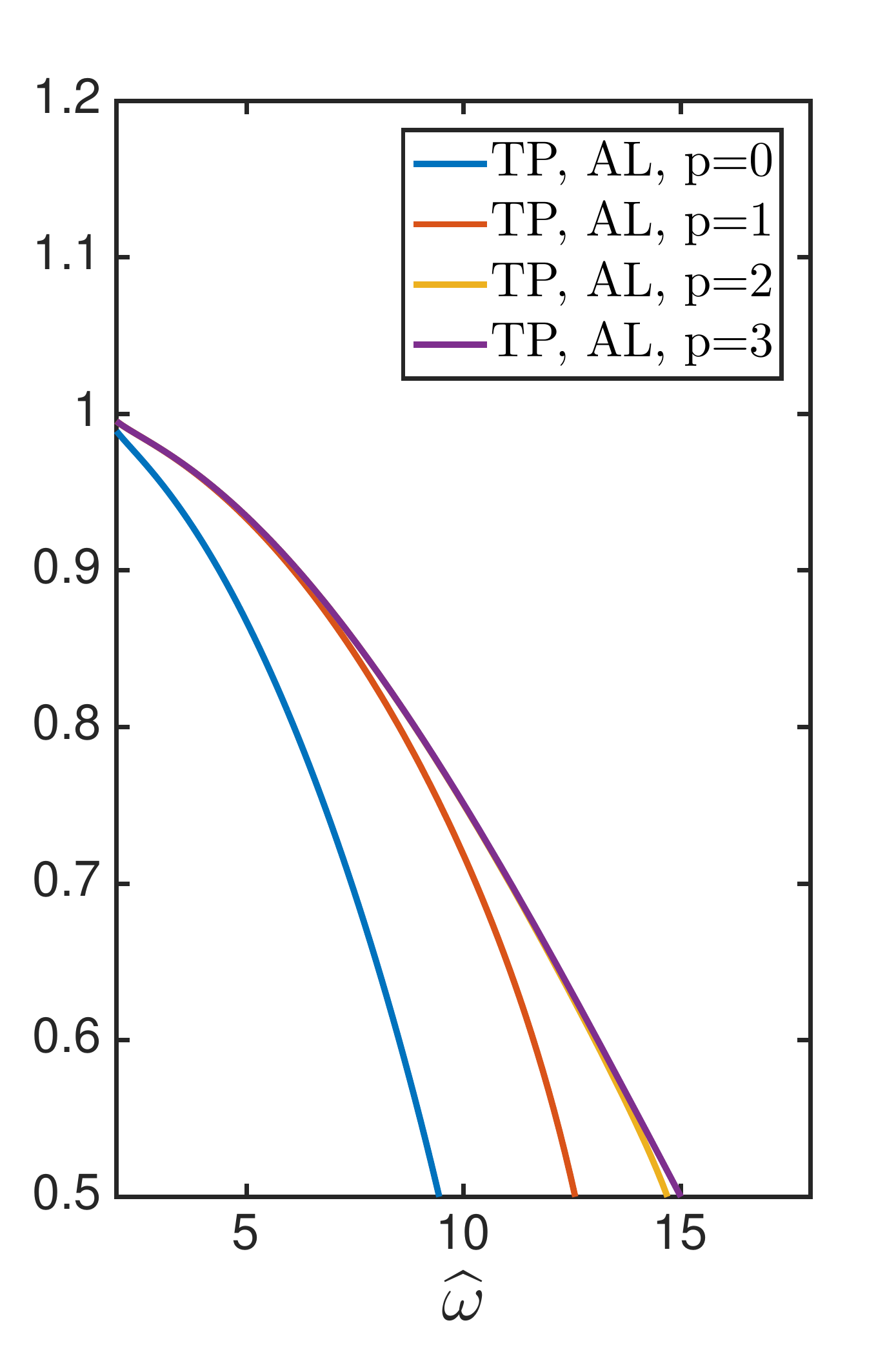} \\
	
	\caption{Results for the trapezoidal time discretization and DG-AL with CFL number $\nu=0.7$. First row: Normalized phase velocity; Second row: normalized attenuation constants; Third row: normalized energy velocity; Fourth row: normalized group velocity.  }
	\label{Fig:phy_DG_AL1}
\end{figure}

\section{Conclusions}  
\label{conclude}
 In this paper, we studied the exact and numerical dispersion relations of a one-dimensional Maxwell's equations in a linear dispersive material characterized by a  single pole Lorentz model for electronic polarization with low loss (i.e. when $\WH{\gamma}$ is small). We  consider two different high order spatial discretizations, the FD and DG methods, each coupled with two different second order temporal discretizations, leap-frog and trapezoidal integrators, to construct both semi-discrete and fully discrete schemes. In addition, for the DG schemes we have considered three different types of fluxes: central, upwind and alternating fluxes. 
 Comparisons based on dispersion analysis are made of the FD and DG methods and the leap-frog and trapezoidal time discretizations.

 It is well known that the FD and DG (which are a class of finite element methods) schemes, both being very popular discretizations, differ quite a lot in how they simulate wave phenomenon in their discrete grids. For example, DG schemes work well for multi-dimensional problems and can be constructed on unstructured meshes for complicated geometries. The FD schemes are simpler to code, and are mostly defined on structured meshes. The extension of the FD methods to non-uniform and unstructured meshes are cumbersome.

 Both types of spatial discretizations can be designed with high spatial order accuracy. The FD scheme achieves this by extending the stencil of the discretization, while higher order polynomials are needed for the DG construction. When we express the dispersion relation for the discrete wavenumber as a function of the angular frequency $\omega$, the number of spurious modes will increase with  $M$ (the accuracy order) of the FD scheme, while for DG schemes, the number of spurious modes is independent of $p$ (the polynomial order). However, as shown in the Appendix of the FD scheme, using an alternative description of phase error, when the discrete angular frequency $\omega$ is expressed as a function of the wave number $k$ the conclusions are reversed. Namely, there are no spurious modes for FD schemes, while more spurious modes will be present for higher order DG schemes, see \cite{cheng2017L2} for relevant discussions in  free space.  

 When comparing the order of numerical errors, the FD schemes manifest the same order of accuracy of the dispersion error and point-wise convergence error, while the DG schemes have higher order of accuracy in dispersion error than  in the $L^2$ errors \cite{cohen1,cohen2} (superconvergence in dispersion error). The CFL numbers for the two methods when coupled with an explicit time stepping are also different. It is known  that the CFL number will approach a constant other than zero when $M\rightarrow\infty$ for the FD scheme, but the CFL number will go to zero when $p\rightarrow\infty$ for the DG scheme. Therefore, high order DG schemes require much smaller time steps than high order FD schemes.

Based on the numerical dispersion results in this paper, we observe that the physical dispersion of the material plays an important role in the numerical dispersion errors. For the low-loss materials considered, we can observe that the error is largest near the resonance frequency. This is no longer true for materials with high loss (i.e. when $\WH{\gamma}$ is not small). An interesting finding is that for some materials and discretization parameters, we observe  counterintuitive results that the dispersion error of a low order scheme can be potentially smaller than that of high order schemes (see for example Figure \ref{Fig:coeff}).
This demonstrates that the dispersion analysis conducted for free space may not be revealing for general dispersive media.

We find that the second order accuracy of the temporal discretizations limits the accuracy of the numerical dispersion errors, and is a good motivator for considering high order temporal discretizations, which are non-trivial to construct for the case of dispersive Maxwell models \cite{Youngcold}. This limiting behavior in the medium absorption band is made clear by the difference in errors in the semi-discrete schemes versus the fully discrete schemes. In our future work we will investigate higher order temporal discretizations.

\appendix
\section{An Alternative Dispersion Analysis for Semi-Discrete Finite Difference Schemes}

In this appendix, we  provide an alternative method of analyzing the dispersion error of the semi-discrete in space high order FD schemes (FD2M). We express the discrete angular frequency $\omega$ as a function of the continuous wavenumber $k\in\mathbb{R}$, and measure the relative errors that result for different $M, M\in \mathbb{N}$, with $2M$ being the spatial accuracy of the schemes. 

We introduce the following definitions
\begin{align}
\displaystyle \label{Nota}
\WH{k}:= kh, \qquad
  F_{2M}(\WH{k}):= 
2\sum_{p=1}^M \frac{[(2p-3)!!]^2}{(2p-1)!} 
\sin^{2p-1}\left( \frac{\WH{k}}{2} \right).
\end{align}
\noindent For the exact dispersion relation of Maxwell's equations in a one spatial dimensional Lorentz dielectric, by solving $\det(\mathcal{A})=0$ with $\mathcal{A}$ given by  \eqref{DisEx2}, we get the following quartic equation for the continuous angular frequency $\WH{\omega}^{\text{ex}} = \omega^{\text{ex}}/\omega_{1}$,
\begin{align}
\displaystyle
(\WH{\omega}^{\text{ex}})^4 + 2i\,\WH{\gamma}\,(\WH{\omega}^{\text{ex}})^3
- \frac{1}{\epsilon_\infty}
\left(
\epsilon_s +  \frac{\WH{k}^2}{(\omega_1h)^2} 
\right) (\WH{\omega}^{\text{ex}})^2
- \frac{2i}{\epsilon_\infty} \WH{\gamma}\, \frac{\WH{k}^2}{(\omega_1h)^2} \, \WH{\omega}^{\text{ex}}
+ \frac{1}{\epsilon_\infty}\frac{\WH{k}^2}{(\omega_1h)^2}  = 0. \label{qq6}
\end{align}
\noindent Similarly, considering the dispersion relation of semi-discrete FD2M scheme \eqref{Dissemi4}, we  have  
\begin{align}
\displaystyle \label{qq9}
(\WH{\omega}^{\text{FD},2M})^4 
+ 2i\,\WH{\gamma}\,(\WH{\omega}^{\text{FD},2M})^3 
- \frac{1}{\epsilon_\infty}
\left(
\epsilon_s + \frac{F_{2M}(\WH{k})^2}{(\omega_1 h)^2}
\right) (\WH{\omega}^{\text{FD},2M})^3 
- \frac{2i}{\epsilon_\infty} \WH{\gamma}\, \frac{F_{2M}(\WH{k})^2}{(\omega_1 h)^2}\, \WH{\omega}^{\text{FD},2M}
+ \frac{1}{\epsilon_\infty} \frac{F_{2M}(\WH{k})^2}{(\omega_1 h)^2}= 0.
\end{align}

Clearly, both \eqref{qq6} and \eqref{qq9} have four (complex) roots each. Therefore, the FD scheme has no spurious modes for the discrete angular frequency. To better understand the errors, similar to previous sections, we first consider the lossless material ($\WH{\gamma}=0$) as an example. In this case, only even order terms appear in  \eqref{qq6} and \eqref{qq9}, and we can get
\begin{subequations}
	\label{QAex}
	\begin{align}
	\displaystyle
	\WH{\omega}^{\text{ex}}_{1,2}( \WH{k} )
	&= \pm \frac{1}{\sqrt{2}}
	\left[
	\frac{\epsilon_s}{\epsilon_\infty} + \frac{\WH{k}^2}{\epsilon_\infty(\omega_1h)^2}
	- \sqrt{
	\left(
	\frac{\epsilon_s}{\epsilon_\infty} + \frac{\WH{k}^2}{\epsilon_\infty(\omega_1h)^2}
	\right)^2
	- \frac{4\WH{k}^2}{\epsilon_\infty(\omega_1h)^2}
	}\,
	\right]^{1/2}
	,   \label{QAex1} \\
	\WH{\omega}^{\text{ex}}_{3,4} (\WH{k})
	&=
	\pm \frac{1}{\sqrt{2}}
	\left[
	\frac{\epsilon_s}{\epsilon_\infty} + \frac{\WH{k}^2}{\epsilon_\infty(\omega_1h)^2}
	+ \sqrt{
		\left(
		\frac{\epsilon_s}{\epsilon_\infty} + \frac{\WH{k}^2}{\epsilon_\infty(\omega_1h)^2}
		\right)^2
		- \frac{4\WH{k}^2}{\epsilon_\infty(\omega_1h)^2}
	}\,
	\right]^{1/2}
	, \label{QAex2} 
	\end{align}
\end{subequations}
and $\WH{\omega}^{\text{FD},2M}_{1,2}( \WH{k} )=\WH{\omega}^{\text{ex}}_{1,2}(F_{2M}(\WH{k})), \quad \WH{\omega}^{\text{FD},2M}_{3,4}( \WH{k} )=\WH{\omega}^{\text{ex}}_{3,4}(F_{2M}(\WH{k})).$ 

In Figure \ref{Fig: AA2}, we present the relative dispersion errors  with $\WH{k}\in[0,2\pi]$ and the parameter values
$$\epsilon_s = 5.25, \quad \epsilon_\infty = 2.25, \quad \omega_1 h = \frac{\pi}{30}.$$ In this figure, we can observe the decrease of error when $M$ (order of the scheme) increases.   The numerical error in the first and second solutions of the discrete angular frequency, are smaller than that of the third and fourth solution, which can be understood if
we consider the small wavenumber limit. In this case, we can derive expressions for the relative phase error as 
\begin{align}
\renewcommand{\arraystretch}{2.5}
\WH{\Psi}_{\text{FD},2M}(\WH{k})
:=
 \left| \frac{\WH{\omega}^{\text{ex}}_{i}(\WH{k}) - \WH{\omega}^{\text{FD},2M}_{i}(\WH{k})}{\WH{\omega}^{\text{ex}}_{i}(\WH{k})} \right|
= \left\{ \begin{array}{ll}
\displaystyle \frac{[(2M-1)!!]^2}{2^{2M}(2M+1)!}\, \WH{k}^{2M} 
+ \mathcal{O}(\WH{k}^{2M+2}), & i = 1, 2, \\
\displaystyle \frac{[(2M-1)!!]^2}{2^{2M}(2M+1)!}
\,\frac{\epsilon_d}{\epsilon_{s}^2 } \,\frac{k^2}{\omega_1^2}
\, \WH{k}^{2M} 
+ \mathcal{O}(\WH{k}^{2M+2}), & i = 3, 4, \\
\end{array}
\right.
\renewcommand{\arraystretch}{1}
\label{A5}
\end{align}
which indicates a dispersion error of order $2M$ and is consistent to  our previous conclusion (see Theorem \ref{thm5}). By comparing the coefficients, we verify that, for the parameters we consider, the leading error coefficient corresponding to $\WH{\omega}^{\text{FD},2M}_{3,4}( \WH{k} )$ is indeed much larger than that for $\WH{\omega}^{\text{FD},2M}_{1,2}( \WH{k} ).$

\begin{figure}[h]
	\centering
	\includegraphics[scale=0.3]{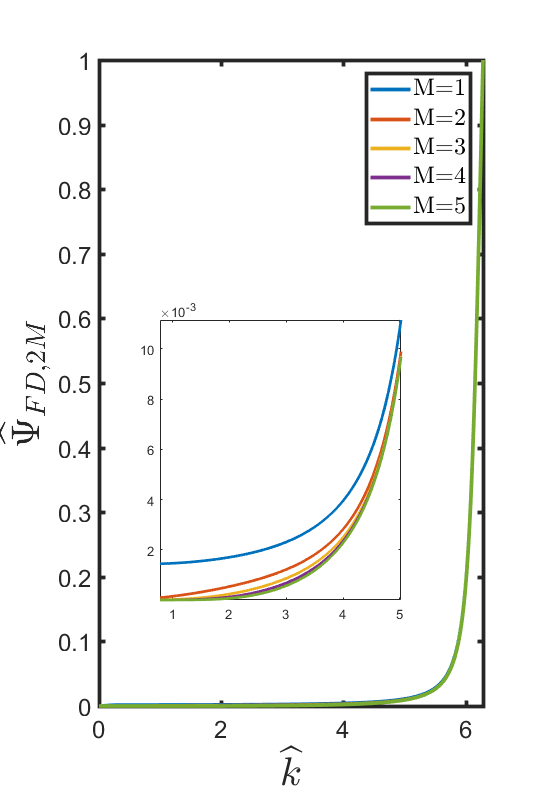}
	\includegraphics[scale=0.3]{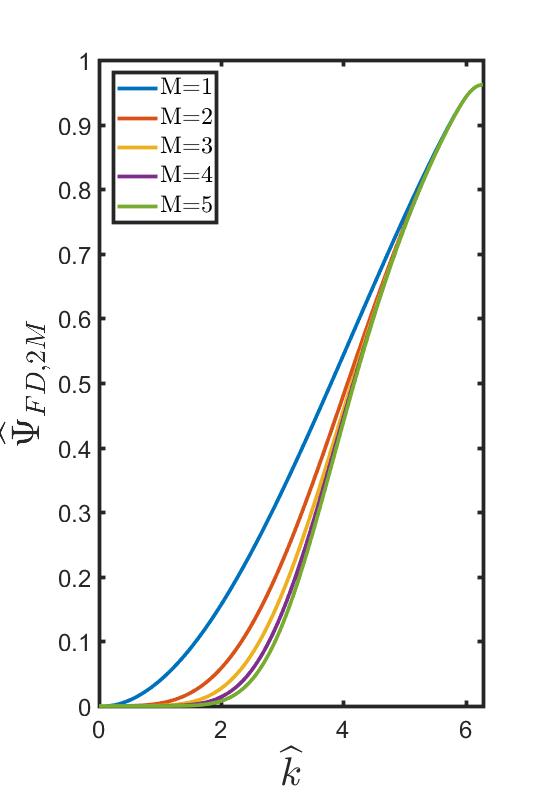} 
	\caption{Relative phase error \eqref{A5} for the spatial discretization FD2M with $\WH{\gamma}=0$. $\WH{k}\in[0,2\pi]$. Left: $i=1,2$; The inset in the left plot displays a zoomed-in region of the relative phase error for low values of $\WH{k}$; Right: $i=3,4$. }
	\label{Fig: AA2}
\end{figure}

For low-loss material, e.g. $\WH{\gamma}= 0.01,$ the conclusions are very similar. The error plots show no visible difference from the no loss case, and are thus omitted.

\bibliographystyle{siam}
\bibliography{Elec_disp}

\end{document}